\documentclass[11pt, reqno]{amsart}

\usepackage{amssymb,amsmath,amsthm,amsfonts,amscd,bbm,stmaryrd}
\usepackage{enumitem}
\usepackage{pdflscape}
\usepackage{caption}
\usepackage{bm}
\usepackage{comment}

\usepackage{ifpdf}
\ifpdf
\usepackage[pdftex]{graphicx}
\else
\usepackage[dvips]{graphicx}
\fi
\usepackage[all]{xy}
\usepackage{hyperref}
\usepackage{xcolor}
\usepackage{graphicx}
\usepackage[outdir=./]{epstopdf}
\usepackage{enumitem}
\usepackage{tensor}
\usepackage{tikz-cd}
\usepackage{multicol}
\usepackage{mathtools}
\usepackage{tocvsec2}
\usepackage{bbm}
\usepackage{longtable}
\usepackage{fullpage}

\newcommand{\doi}[1]{\href{https://dx.doi.org/#1}{{\small DOI:#1}}}

\newcommand{\googlebooks}[1]{(preview at \href{https://books.google.com/books?id=#1}{google books})}

\newcommand{\numdam}[1]{}

\usepackage{scalerel,stackengine}
\stackMath
\newcommand\Widehat[1]{%
\savestack{\tmpbox}{\stretchto{%
  \scaleto{%
    \scalerel*[\widthof{\ensuremath{#1}}]{\kern-.6pt\bigwedge\kern-.6pt}%
    {\rule[-\textheight/2]{1ex}{\textheight}}
  }{\textheight}%
}{0.5ex}}%
\stackon[1pt]{#1}{\tmpbox}%
}

\usepackage{mathrsfs}
\DeclareMathAlphabet{\mathpzc}{OT1}{pzc}{m}{it}

\def\semicolon{;}
\def\applytolist#1{
    \expandafter\def\csname multi#1\endcsname##1{
        \def\multiack{##1}\ifx\multiack\semicolon
            \def\next{\relax}
        \else
            \csname #1\endcsname{##1}
            \def\next{\csname multi#1\endcsname}
        \fi
        \next}
    \csname multi#1\endcsname}

\def\calc#1{\expandafter\def\csname c#1\endcsname{{\mathcal #1}}}
\applytolist{calc}QWERTYUIOPLKJHGFDSAZXCVBNM;
\def\bbc#1{\expandafter\def\csname bb#1\endcsname{{\mathbb #1}}}
\applytolist{bbc}QWERTYUIOPLKJHGFDSAZXCVBNM;
\def\bfc#1{\expandafter\def\csname bf#1\endcsname{{\mathbf #1}}}
\applytolist{bfc}QWERTYUIOPLKJHGFDSAZXCVBNM;
\def\sfc#1{\expandafter\def\csname s#1\endcsname{{\sf #1}}}
\applytolist{sfc}QWERTYUIOPLKJHGFDSAZXCVBNM;
\def\fc#1{\expandafter\def\csname f#1\endcsname{{\mathfrak #1}}}
\applytolist{fc}QWERTYUIOPLKJHGFDSAZXCVBNM;

\usepackage{tikz}
\usepackage{tikz-cd}

\makeatletter
\def\fixtikzforbreqn#1#2{%
  \protected\edef#1{\noexpand\ifmmode\mathchar\the\mathcode`#2 \noexpand\else#2\noexpand\fi}%
}
\fixtikzforbreqn\tikz@nonactivesemicolon;
\fixtikzforbreqn\tikz@nonactivecolon:
\fixtikzforbreqn\tikz@nonactivebar|
\fixtikzforbreqn\tikz@nonactiveexlmark!
\makeatother

\usepackage{breqn}   

\usetikzlibrary{arrows,backgrounds,patterns.meta}
\usetikzlibrary{positioning,shadings,cd}
\usetikzlibrary{shapes}
\usetikzlibrary{backgrounds}
\usetikzlibrary{decorations,decorations.pathreplacing,decorations.markings,decorations.pathmorphing}
\usetikzlibrary{fit,calc,through}
\usetikzlibrary{external}
\usetikzlibrary{arrows}
\tikzset{vertex/.style = {shape=circle,draw,fill=black,inner sep=0pt,minimum size=5pt}}
\tikzset{edge/.style = {->,> = latex', bend right}}
\tikzset{
	super thick/.style={line width=3pt}
}
\tikzset{
    quadruple/.style args={[#1] in [#2] in [#3] in [#4]}{
        #1,preaction={preaction={preaction={draw,#4},draw,#3}, draw,#2}
    }
}
\tikzstyle{shaded}=[fill=red!10!blue!20!gray!30!white]
\tikzstyle{unshaded}=[fill=white]
\tikzstyle{empty box}=[circle, draw, thick, fill=white, opaque, inner sep=2mm]
\tikzstyle{annular}=[scale=.7, inner sep=1mm, baseline]
\tikzstyle{rectangular}=[scale=.75, inner sep=1mm, baseline=-.1cm]
\tikzstyle{mid>}=[decoration={markings, mark=at position 0.5 with {\arrow{>}}}, postaction={decorate}]
\tikzstyle{mid<}=[decoration={markings, mark=at position 0.5 with {\arrow{<}}}, postaction={decorate}]
\tikzstyle{over}=[double, draw=white, super thick, double=]
\tikzstyle{snake}=[decorate, decoration={snake, segment length=1mm, amplitude=.3mm}]
\tikzstyle{saw}=[decorate, decoration={saw, segment length=.7mm, amplitude=.25mm}]
\tikzstyle{knot}=[preaction={super thick, white, draw}]
\tikzstyle{knotthin}=[preaction={ultra thick, white, draw}] 
\tikzstyle{knotrColor}=[preaction={super thick, \rColor, draw}]

\tikzstyle{coupon}=[draw, very thick, rectangle, rounded corners=5pt]
\tikzset{Rightarrow/.style={double equal sign distance,>={Implies},->},
triplecd/.style={-,preaction={draw,Rightarrow}},
quadruplecd/.style={preaction={draw,Rightarrow,
shorten >=0pt
},
shorten >=1pt,
-,double,double
distance=0.2pt}}
\tikzset{
    tripleline/.style args={[#1] in [#2] in [#3]}{
        #1,preaction={preaction={draw,#3},draw,#2}
    }
}
\tikzstyle{triple}=[tripleline={[line width=.15mm,black] in
      [line width=.7mm,white] in
      [line width=1mm,black]}] 
\tikzset{
    quadrupleline/.style args={[#1] in [#2] in [#3] in [#4]}{
        #1,preaction={preaction={preaction={draw,#4},draw,#3}, draw,#2}
    }
}
\tikzstyle{quadruple}=[quadrupleline={[line width=.3mm,white] in
      [line width=.6mm,black] in
      [line width=1.2mm,white] in
      [line width=1.5mm,black]}]
      
\newcommand{\roundNbox}[6]{
	\draw[rounded corners=5pt, very thick, #1] ($#2+(-#3,-#3)+(-#4,0)$) rectangle ($#2+(#3,#3)+(#5,0)$);
	\coordinate (ZZa) at ($#2+(-#4,0)$);
	\coordinate (ZZb) at ($#2+(#5,0)$);
	\node at ($1/2*(ZZa)+1/2*(ZZb)$) {#6};
}

\newcommand{\tikzmath}[2][]
     {\vcenter{\hbox{\begin{tikzpicture}[#1]#2
                     \end{tikzpicture}}}
     }

\newcommand*{\Scale}[2][4]{\scalebox{#1}{$#2$}}%

\theoremstyle{plain}
\newtheorem{thm}{Theorem}[section]
\newtheorem*{thm*}{Theorem}
\newtheorem{thmalpha}{Theorem}

\newtheorem{cor}[thm]{Corollary}
\newtheorem{coralpha}[thmalpha]{Corollary}
\newtheorem*{cor*}{Corollary}

\newtheorem*{conj*}{Conjecture}
\newtheorem{lem}[thm]{Lemma}
\newtheorem*{lem*}{Lemma}

\newtheorem{prop}[thm]{Proposition}

\newtheorem*{quest*}{Question}
\newtheorem*{claim*}{Claim}

\theoremstyle{definition}
\newtheorem{defn}[thm]{Definition}
\newtheorem{fact}[thm]{Fact}
\newtheorem{facts}[thm]{Facts}
\newtheorem{construction}[thm]{Construction}

\newtheorem{nota}[thm]{Notation}

\newtheorem{ex}[thm]{Example}
\newtheorem{sub-ex}[thm]{Sub-Example}
\newtheorem{counter-ex}[thm]{Counter-Example}
\newtheorem{rem}[thm]{Remark}
\newtheorem*{rem*}{Remark}

\usepackage{xcolor}
\definecolor{dark-red}{rgb}{0.7,0.25,0.25}
\definecolor{dark-blue}{rgb}{0.15,0.15,0.55}
\definecolor{medium-blue}{rgb}{0,0,.8}
\definecolor{DarkGreen}{RGB}{0,150,0}
\definecolor{rho}{named}{red}
\hypersetup{
   colorlinks, linkcolor={purple},
   citecolor={medium-blue}, urlcolor={medium-blue}
}

\newcommand{\XColor}{red} 
\newcommand{\YColor}{blue}

\newcommand{\QsColor}{black}

\newcommand{\rColor}{gray!20} 

\newcommand{\id}{\operatorname{id}}
\newcommand{\Int}{\operatorname{Int}}
\newcommand{\Inv}{\operatorname{Inv}}
\newcommand{\Inn}{\operatorname{Int}}

\newcommand{\im}{\operatorname{im}}

\newcommand{\tr}{\operatorname{tr}}

\newcommand{\Hom}{\operatorname{Hom}}

\newcommand{\End}{\operatorname{End}}
\newcommand{\Out}{\operatorname{Out}}
\newcommand{\loc}{\operatorname{loc}}
\newcommand{\ai}{\operatorname{ai}}
\newcommand{\ct}{\operatorname{ct}}

\newcommand{\Aut}{\operatorname{Aut}}
\newcommand{\Ct}{\operatorname{Ct}}
\newcommand{\Ai}{{\overline{\operatorname{Int}}}}
\newcommand{\Ad}{\operatorname{Ad}}

\newcommand{\uAo}{\underline{\operatorname{Aut}}_{\otimes}}

\newcommand{\Rep}{\mathsf{Rep}}

\newcommand{\Chi}{\tilde{\chi}}

\newcommand{\fgpMod}{\mathsf{Mod_{fgp}}}
\newcommand{\Bim}{\mathsf{Bim}}

\newcommand{\Hilb}{\mathsf{Hilb}}

\newcommand{\Fun}{\mathsf{Fun}}

\newcommand{\fgpBim}{\mathsf{Bim_{fgp}}}

\newcommand{\GInn}{\overline{\mathrm{GInt}}}
\newcommand{\GCt}{\mathrm{GCt}}
\newcommand{\ootimes}{\overline{\otimes}}
\newcommand{\bigootimes}{\overline{\bigotimes}}
\newcommand{\alge}{\mathsf{alg}}

\newcommand{\ctBim}{{\Bim}_{\ct}}
\newcommand{\aiBim}{{\Bim}_{\ai}}

\def\altdb{\vadjust{\vbox to 0pt{\vss\hbox{\kern \hsize
\quad{\dbend}}\kern\baselineskip\kern-10pt}}}

\newcommand{\noshow}[1]{}

\title{Gauging the Categorical Connes' $\tilde{\chi}(M)$}
\author{Quan Chen}

\begin{document}

\begin{abstract}
We prove that if a finite group $G$ acts outerly on a McDuff $\rm II_1$ factor $M$, then $\mathsf{Rep}(G/KL)$ is a braided monoidal full subcategory of the categorical Connes' $\tilde{\chi}(M\rtimes G)$ defined in arXiv:2111.06378, where $K$ and $L$ are the centrally trivial and approximately inner parts in $G$ respectively. 

When $L$ is trivial, we give an explicit formula for the $G/K$-gauging procedure on $\tilde{\chi}(M\rtimes G)$. This is the categorical generalization of Connes' short exact sequence on $\chi(M\rtimes G)$. 
Using this machinery, for any finite group $G$, we construct a McDuff $\rm II_1$ factor $M$, whose $\tilde{\chi}(M)$ is braided equivalent to $\mathsf{Rep}(G)$.
This is the first example of a braided fusion category which is not modular as $\tilde\chi$.
\end{abstract}

\maketitle
\tableofcontents

\section{Introduction}

Tensor categories provide a fundamental algebraic language for describing symmetries in a wide range of mathematical and physical contexts, from topological quantum field theory to bimodules of operator algebras. 
A pivotal example arises from Jones' subfactor theory, where the standard invariant of a finite index $\rm II_1$-subfactor \cite{MR1278111,MR4374438} is captured by a unitary tensor category, together with a chosen Q-system in this category \cite{MR1966524,MR1027496,MR2909758,MR3948170,MR4419534}. 

In Algebraic Quantum Field Theory (AQFT), the principles of locality and covariance are encoded in a local net of operator algebras, and the Doplicher-Haag-Roberts (DHR) program describes superselection sectors as a braided tensor category \cite{MR0297259,MR1016869,MR1405610,MR1838752,MR4362722,MR4814692}. A parallel and powerful theme is the gauging of a finite global symmetry, which studies the transition of a theory enriched by symmetry defects into a new dynamical phase. 
Taking the fixed-point net under an action of a finite group $G$  yields a gauged theory, whose representation category is the $G$-equivariantization of the original theory's category\cite{MR2183964}.
Similarly, in the context of topological phases, gauging a global symmetry group results in a new phase whose excitations are described by the $G$-equivariantization of the original category \cite{MR2200691}. These constructions highlight a universal mechanism: passing from a symmetric system to its orbifold quotient is categorically realized by equivariantization.

This paper situates these ideas within the realm of $\rm II_1$ factors. 
In the recent work, we introduce the categorical Connes' invariant $\Chi(M)$ \cite{MR4753059}, a unitary braided tensor category constructed from bimodules over a $\rm II_1$ factor $M$ that are both approximately inner and centrally trivial. 
This invariant generalizes Connes' original group-theoretic invariant $\chi(M)$ \cite{MR377534} and Jones' quadratic form $\kappa$ \cite{MR585235}.
Our goal is to develop a gauging framework for this new invariant.

\begin{thmalpha} 
Let $\psi:G\to\Aut(M)$ be an outer action of a finite group $G$ on a McDuff $\rm II_1$ factor $M$. 
Let $K=\psi^{-1}(\Ct(M))\cap G$ and $L=\psi^{-1}(\Ai(M))\cap G$, then
$\Rep(G/KL)$ is a unitary braided full subcategory of $\Chi(M\rtimes G)$, where $\Ct(M)$ is the group of centrally trivial automorphisms, and $\Ai(M)$ is the group of approximately inner automorphisms.
\end{thmalpha}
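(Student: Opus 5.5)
We construct the embedding directly from the dual action. Write $N=M\rtimes G$ and let $\widehat\psi$ denote the dual coaction of $G$ on $N$, which is an action of $\Rep(G)$ on $N$. For $\pi\in\Rep(G)$ on a finite-dimensional Hilbert space $H_\pi$, we form the $N$-$N$ bimodule
\[
X_\pi = H_\pi\otimes L^2(N),
\]
with left action $x u_g\cdot(\xi\otimes\eta)=\pi(g)\xi\otimes xu_g\eta$ and the obvious right action. Equivalently, $X_\pi$ is the bimodule associated to the unitary corepresentation $u_g\mapsto u_g\otimes \pi(g)$ of $N$ into $N\otimes B(H_\pi)$. The assignment $\pi\mapsto X_\pi$ is a unitary tensor functor $\Rep(G)\to\Bim(N)$, and it is fully faithful. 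Indeed, intertwiners $X_\pi\to X_\sigma$ are elements of $B(H_\pi,H_\sigma)\otimes N$ commuting appropriately. Because $M'\cap N=\mathbb C$ by outerness, such elements reduce to $\Hom_G(\pi,\sigma)\otimes 1$. These are the standard facts about the dual action, and we cite them as background.

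The core of the proof is to determine exactly when $X_\pi$ lies in $\Chi(N)$, that is, when it is both approximately inner and centrally trivial. The claim is that this holds if and only if $KL\subseteq\ker\pi$, i.e.\ $\pi$ factors through $G/KL$.

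For approximate innerness, we use central sequences $(x_n)$ of $M$ that are also $\psi$-invariant. These exist because $M$ is McDuff and $G$ is finite, so we may pass to $G$-invariant central sequences of $M$. Such sequences are central for $N$. For $k\in K$, the automorphism $\psi_k$ acts trivially on central sequences, so $u_k$ asymptotically commutes with $M'\cap M^\omega$. Dually, for $l\in L$ we can pick unitaries $v_n\in M$ with $\Ad v_n\to\psi_l$; then $v_n^*u_l$ asymptotically lies in $N'\cap N^\omega$.

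With these in hand, centrally trivial $X_\pi$ should mean that $\pi(g)\otimes u_g$ acts on $N^\omega$-central sequences as $1\otimes u_g$. Since $u_g$ commutes with central sequences exactly for $g$ in $K$, roughly, we need $\pi$ to be trivial on the elements moving central sequences. Approximate innerness of $X_\pi$ instead asks for approximate unitaries in $B(H_\pi)\otimes N$ implementing it. These can be built from $\pi(l)\otimes v_n^*u_l$-type elements.

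I expect this step to be the main obstacle: getting the precise characterization of which $X_\pi$ belong to $\Chi(N)$. Relating the bimodule-level definitions from \cite{MR4753059} to the group-level data $K$ and $L$ requires care. In particular, one must show that the characters of $G$ detecting $K$ and $L$ force $\pi|_{KL}$ to be trivial, and that the condition is also sufficient. I would first work with the relative commutant $N'\cap N^\omega$, computed as $(M'\cap M^\omega)^{G}$ plus the correction coming from $L$, and then apply the criteria from \cite{MR4753059}.

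Finally, we check compatibility with the braiding. The braiding on $\Chi(N)$ is defined through central sequences. On $X_\pi\boxtimes X_\sigma$ it should reduce to the flip $H_\pi\otimes H_\sigma\to H_\sigma\otimes H_\pi$, because the implementing central sequences act trivially on the $H$ factors. This makes $\Rep(G/KL)$ a symmetric, hence braided, full subcategory. Fullness follows from the Hom computation above.
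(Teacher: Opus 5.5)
There is a genuine gap. The step you flag as the main obstacle is the entire content of the theorem, and your heuristics for it assign $K$ and $L$ to the wrong properties.

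\textbf{Central triviality is governed by $L$, not $K$.} A central sequence of $N=M\rtimes G$ has the form $a_n=\sum_g a_{n,g}u_g$ with $\|a_{n,g}\psi_g(y)-ya_{n,g}\|_2\to 0$ for all $y\in M$. The missing ingredient is Jones' lemma: for McDuff $M$ and $\psi_g\notin\Ai(M)$, this forces $\|a_{n,g}\|_2\to 0$. So only the components with $g\in L$ survive, and if $\pi|_L=1$ the left and right actions of $a_n$ on $X_\pi$ agree asymptotically.

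Whether $u_g$ commutes with central sequences of $M$ (i.e.\ whether $g\in K$) plays no role here. For instance, when $L=\{e\}$ every $X_\pi$ is centrally trivial. That contradicts your suggestion that $\pi$ must be trivial on the elements moving central sequences.

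Your auxiliary claim that $v_n^*u_l$ lies asymptotically in $N'\cap N^\omega$ is also false in general. It asymptotically commutes with $M$, but $u_gv_n^*u_lu_g^*=\psi_g(v_n)^*u_{glg^{-1}}$.

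\textbf{Approximate innerness is governed by $K$.} It needs a cocycle-vanishing argument in $M_\omega$, not $G$-invariant central sequences or elements built from $v_n^*u_l$. Take a candidate basis $x_i^{(n)}=\sum_j\eta_j\otimes y_{ij;n}$ with $y_{ij;n}\in M$. Asymptotic commutation with $u_g$ reads $\sum_j\pi_g\eta_j\otimes\psi_g(y_{ij;n})\approx\sum_j\eta_j\otimes y_{ij;n}$. If the $y_{ij;n}$ are $G$-invariant, this together with the basis condition forces $\pi_g=1$.

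What you actually need is a unitary $t\in B(H_\pi)\otimes M_\omega$ with $\pi_g=t^*(\id\otimes\psi_\omega)_g(t)$. In other words, the $1$-cocycle $\pi$ for $\id\otimes\psi_\omega$ must be a coboundary. The paper gets this from two facts:
\begin{itemize}
\item Connes' characterization $\Ct(M)=\{\alpha:\alpha_\omega\in\Inn(M_\omega)\}$, so $(\psi_g)_\omega$ is inner exactly for $g\in K$.
\item Jones' trace criterion for coboundaries, whose hypothesis holds because $\pi|_K=1$.
\end{itemize}
Then $x_i^{(n)}=\sum_j\eta_j\otimes t_{ij;n}^*$ is an approximately inner basis.

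\textbf{The braiding step depends on this basis.} The paper computes $u_{X_\pi,X_\sigma}$ using it, and the flip comes out of $\sum_i t_{iq;n}^*t_{ik;n}\to\delta_{q,k}$. It does not come from central sequences acting trivially on $H_\pi$; the $t_{ij;n}$ genuinely mix the $H_\pi$-coordinates.

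Finally, only the implication $KL\subseteq\ker\pi\Rightarrow X_\pi\in\Chi(N)$ is needed. The converse you plan to prove is unnecessary, and nothing in your sketch supports it.
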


The proof of this theorem relies on two key technical observations, which are of independent interest. 
This shows how the representation theory factors through the two components of Connes' invariant:
Let $\psi:G\to\Aut(M)$ be an outer action of a finite group $G$ on a McDuff $\rm II_1$ factor $M$.
From the natural embedding $\Rep(G)\hookrightarrow \Bim(M\rtimes G)$,
\begin{align*}
    \Rep(G/G\cap \psi^{-1}(\Ct(M))) &\hookrightarrow \Bim(M\rtimes G) \text{ is approximately inner;}\\
    \Rep(G/G\cap \psi^{-1}(\Ai(M))) &\hookrightarrow \Bim(M\rtimes G) \text{ is centrally trivial.}
\end{align*}
As a full tensor subcategory, $\Rep(G/KL)\subseteq \Rep(G/K)\cap \Rep(G/L)\subseteq \aiBim(M\rtimes G)\cap \ctBim(M\rtimes G)=\Chi(M\rtimes G)$ is also a braided subcategory.

Gauging is a well-known procedure of group equivariantization and deequivariantization \cite{MR2183964,MR2183279,MR2609644}. 
\[
\begin{tikzpicture}[node distance=5cm, every node/.style={align=center}]
\node (A) {$\left\{ \parbox{4.5cm}{\rm \centering Braided tensor categories \\ containing $\Rep(G)$ \\ as braided subcategory} \right\}$};
\node (B) [right=of A] {$\left\{ \parbox{3.5cm}{\centering $G$-crossed braided \\ tensor categories} \right\}$};
\draw[->] ([yshift=3pt] A.east) -- ([yshift=3pt] B.west) 
    node[midway, above] {deequivariantization};
\draw[<-] ([yshift=-3pt] A.east) -- ([yshift=-3pt] B.west) 
    node[midway, below] {$G$-equivariantization};
\end{tikzpicture}
\]

From the categorical point of view, a $G$-crossed braided tensor category is a 3-category with one object, and the set of 1-morphisms is equivalent to the group $G$ \cite{MR4535015}. The $G$-crossed braiding arises as the interchanger of 3-categories. 
The deequivariantization of $\Chi(M\rtimes G)$ as a $G/KL$-crossed braided tensor category suggests a stronger evidence that von Neumann algebras should be objects in a yet to be discovered 3-category.

\begin{thmalpha}
Suppose $G$ is a finite group, $M$ is a McDuff $\rm II_1$ factor. 
Suppose $\psi:G\to \Aut(M)$ is an outer action and $\psi$ is not approximately inner. 
Then the $G/K$-deequivariantization of $\Chi(M\rtimes G)$
$$\Chi(M\rtimes G)_{G/K}\cong \bigoplus_{g\in G/K} ({}_{\psi_g}L^2(M\rtimes K)\boxtimes_{M\rtimes K} \ctBim(M\rtimes K))\cap \aiBim(M\rtimes K)$$
as unitary $G/K$-crossed braided tensor categories, where $K=\psi^{-1}(\Ct(M))\cap G$.
\end{thmalpha}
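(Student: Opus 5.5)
Since $\psi$ is not approximately inner and $L=\psi^{-1}(\Ai(M))\cap G$, I will first reduce to the situation where $L$ plays no role. Concretely, I take the hypothesis to give $L\subseteq K$, so that $KL=K$ and Theorem A places $\Rep(G/K)$ inside $\Chi(M\rtimes G)$ as a braided full subcategory. If the hypothesis is only that $L$ is trivial, I use that case directly. The strategy is then to identify the deequivariantization with a category of bimodules over an intermediate crossed product, and to check that the two conditions, approximate innerness and central triviality, transport correctly.

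\textbf{Step 1: pass to $M\rtimes K$.} The algebra object $\mathbb C[G/K]=\mathrm{Fun}(G/K)\in\Rep(G/K)\subseteq\Bim(M\rtimes G)$ corresponds to the Q-system $M\rtimes K\subseteq M\rtimes G$. Modules over it in $\Bim(M\rtimes G)$ are therefore equivalent to $M\rtimes K$--$M\rtimes G$ bimodules, and local or dyslectic issues are handled by the $G/K$-grading. This gives a $G/K$-crossed tensor equivalence
$$\Bim(M\rtimes G)_{G/K}\simeq \bigoplus_{g\in G/K}{}_{\psi_g}L^2(M\rtimes K)\boxtimes_{M\rtimes K}\Bim(M\rtimes K),$$
where the $g$-graded piece consists of $M\rtimes K$-bimodules twisted on the left by $\psi_g$, which extends to $M\rtimes K$ because $K$ is normal. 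Crossed braiding and the $G/K$-action by conjugation both come from the standard deequivariantization formulas, using the half-braiding of $\Rep(G/K)$ as a subcategory of $\Chi$. I will then restrict this equivalence to the full subcategory $\Chi(M\rtimes G)$ and determine its image.

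\textbf{Step 2: transport approximate innerness.} Approximate innerness is a condition on the induced action of the bimodule on $\omega$-ultrapowers, namely that it is implemented by approximate unitaries. Induction and restriction along the finite-index inclusion $M\rtimes K\subseteq M\rtimes G$ should preserve this condition. So I expect an object of $\Chi(M\rtimes G)$ to correspond to a sum of graded components, each approximately inner as an $M\rtimes K$-bimodule. The hypothesis that $\psi$ is not approximately inner, i.e.\ $L$ trivial mod $K$, is what prevents $\psi_g$ for $g\notin K$ from being absorbed into the approximately inner part. This explains why the right-hand side intersects with $\aiBim(M\rtimes K)$ globally rather than componentwise with a twist.

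\textbf{Step 3: transport central triviality.} This is the main obstacle. Central triviality in $M\rtimes G$ must be compared with central triviality of the $g$-twisted components over $M\rtimes K$. The tool here is the relative commutant $(M\rtimes G)'\cap (M\rtimes G)^\omega$ versus $(M\rtimes K)'\cap(M\rtimes K)^\omega$. Since $K$ acts centrally trivially and the action is outer, I would first show, in the spirit of Connes' computation for $\chi(M\rtimes G)$, that $(M\rtimes K)_\omega=M_\omega^{K}=M_\omega$ and $(M\rtimes G)_\omega=M_\omega^{G/K}$. Consequently, an object of the $g$-component is centrally trivial for $M\rtimes G$ exactly when, after untwisting by $\psi_g$, it lies in ${}_{\psi_g}L^2\boxtimes\ctBim(M\rtimes K)$; the twist accounts for $\psi_g$ acting nontrivially on $M_\omega$.

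The delicate point is to show that every centrally trivial object decomposes with components of this form, and conversely that every such component lies in $\Chi$. Here the McDuff property ensures central sequences are rich enough for these arguments to go through. Combining Steps 1–3 and checking that the crossed braiding restricts yields the stated equivalence of unitary $G/K$-crossed braided categories.
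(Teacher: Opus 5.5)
Your outline aims at the right target, but the two mechanisms you actually propose do not work, and the step you yourself call the crux has no argument.

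\textbf{Step~1 cannot be set up as stated.} For a right $A$-module $X$, three pieces of structure are all built from the half-braidings $u_{A,X}$ and $u_{X,A}$: the left $A$-action, the degree $\partial_X$, and the crossed braiding. These half-braidings exist only because $A\in\aiBim(M\rtimes G)\cap\ctBim(M\rtimes G)$ and $X$ is centrally trivial, respectively approximately inner. So there is no $G/K$-crossed category $\Bim(M\rtimes G)_{G/K}$ to restrict from.

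Your right-hand side also degenerates without central triviality. Since ${}_{\psi_g}L^2(M\rtimes K)$ is invertible, each summand ${}_{\psi_g}L^2(M\rtimes K)\boxtimes_{M\rtimes K}\Bim(M\rtimes K)$ is all of $\Bim(M\rtimes K)$. The grading only becomes meaningful once twisted central triviality is imposed.

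The functor therefore has to be built on $\Chi(M\rtimes G)_A$ directly. The paper realizes $X$ as a bimodule $|X|$ over $|A|=\langle M\rtimes G,e_{M\rtimes K}\rangle$, with left action $\rho_X\circ u_{A,X}^\dagger$. Only at the end does it pass to $M\rtimes K$, via the Morita equivalence $|A|\sim M\rtimes K$ and Morita invariance of $\aiBim$ and $\ctBim$. The $M\rtimes K$-bimodule attached to $X$ is the corner of $|X|$ cut down on both sides by the Jones projection. This uses the braiding-induced left action; it is not a restriction of $X$.

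\textbf{The principle behind Step~2 is false.} Restriction along a finite-index inclusion does not preserve approximate innerness. Take $K$ trivial. Then $L^2(M\rtimes G)$ is approximately inner over $M\rtimes G$, but its restriction to $M$ has the summand $L^2(M)u_g\cong L^2(M)_{\psi_g}$ for $g\neq e$. An approximately inner basis of that summand would consist of asymptotic intertwiners for $\psi_g$, and these tend to $0$ in $\|\cdot\|_2$ by Proposition~\ref{Prop:nonAI}. Since approximate innerness passes to direct summands, the restriction is not approximately inner.

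What actually has to be shown is that an approximately inner basis $\{x_i^{(n)}\}$ of $X_{M\rtimes G}$ also asymptotically commutes with the new elements of $|A|$, which act through the braiding. This comes from expanding $u_{X,A}^\dagger$ in that basis, using that $\langle x_j^{(n)}|x_i^{(n)}\rangle$ is central in $M\rtimes G$ and that $A$ is centrally trivial. The converse direction uses that an approximately inner basis of $|A|_{M\rtimes G}$ consists of central sequences of $|A|$ (Facts~\ref{facts:CommQsysInChi}).

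\textbf{Step~3 locates the obstacle but gives no mechanism.} You are right that central triviality over $M\rtimes G$ only tests $(M\rtimes G)_\omega=(M_\omega)^{G/K}$, while the $g$-component requires a twisted condition against all of $(M\rtimes K)_\omega=M_\omega$. Both identifications need the non-approximately-inner hypothesis via Corollary~\ref{cor:CSinMrtimesG}, not just outerness. The McDuff remark is not an argument.

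The missing idea is again the half-braiding. Take a central sequence $(q_n)$ of $|A|$. The defining relation of $\partial_X$ rewrites $q_n\rhd_{\partial_X}x$ as $\rho_X$ applied after the other braiding of $A$ past $X$. Expanding that braiding with an approximately inner basis $\{q_k^{(n)}\}$ of $A$ reduces everything to the sequences $\langle q_k^{(n)}|q_n\rangle$, which are central in $M\rtimes G$. Central triviality of $X$ then gives $q_n\rhd_{\partial_X}x\approx x\lhd q_n$.
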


Jones discusses it in his notes on $\chi(M)$ \cite{chiNotes} about the Connes' short exact sequence:
\begin{thm*}\cite{chiNotes}
Suppose the outer action $\psi:G\to \Aut(M)$ is not approximately inner. 
There exist group homomorphisms $\delta:\widehat{G/K}\to \chi(M\rtimes G)$ and $\Pi:\chi(M\rtimes G)\to (\GCt(M)\cap \GInn(M))/\Inn(M)$ such that 
$$0\to \widehat{G/K}\xrightarrow{\delta} \chi(M\rtimes G)\xrightarrow{\Pi}(\GCt(M)\cap \GInn(M))/\Inn(M)\to 0$$
is a short exact sequence, where $\GInn(M)$ is
the group of $G$-approximately inner automorphisms, consisting of $\alpha\in \Aut(M)$ such that there exists a sequence of unitaries $(u_n)\subseteq M^G$ with $\|\alpha(x)-u_nxu_n^*\|_2\to 0$ for all $x\in M$;
$\GCt(M)$ is the group of $G$-centrally trivial automorphisms, consisting of $\alpha\in \Aut(M)$ that there exists $g\in G$ with $\|\alpha(a_n)-\psi_g(a_n)\|_2\to 0$ for all central sequence $(a_n)\in M_\omega=M'\cap M^\omega$. 
\end{thm*}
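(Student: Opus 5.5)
We construct the two maps by hand and then verify exactness at each of the three spots. Write $N=M\rtimes G$ with implementing unitaries $u_g\in N$.

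\emph{Construction of $\delta$.} For a character $\mu\in\widehat{G/K}$, let $\theta_\mu\in\Aut(N)$ be the dual automorphism: it fixes $M$ and sends $u_g\mapsto \mu(g)u_g$. To show $\theta_\mu\in\Ct(N)$, we use that a central sequence $(x_n)$ of $N$ is asymptotically of the form $\sum_g a_n^g u_g$. Commutation with $M$ forces $a_n^g x\approx \psi_g(x)a_n^g$ for all $x\in M$. For $g\notin K$, $\psi_g$ is not centrally trivial; by Connes' characterization of central triviality on McDuff factors, such an asymptotic intertwiner must have $\|a_n^g\|_2\to 0$. Hence central sequences of $N$ live asymptotically on $M\rtimes K$, where $\theta_\mu$ acts trivially because $\mu|_K=1$.

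To show $\theta_\mu\in\Ai(N)$, we use that $\psi$ is not approximately inner, together with the McDuff property, to realize $\mu$ asymptotically. This is Connes' argument: find unitaries $v_n\in M$ with $\psi_g(v_n)\approx \mu(g)v_n$ (a Rohlin/cocycle-type argument on $M_\omega$). Then $\Ad v_n\to\theta_\mu$. Finally, $\delta(\mu)$ is the class of $\theta_\mu$ in $\chi(N)$, and it is a homomorphism since $\mu\mapsto\theta_\mu$ is.

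\emph{Construction of $\Pi$.} Take $\alpha\in\Ct(N)\cap\Ai(N)$. Since $\alpha$ is approximately inner, $\alpha\approx\Ad w_n$ with $w_n\in N$. We perturb $\alpha$ by an inner automorphism so that $\alpha(M)=M$; this uses that $M\subset N$ has finite index and the standard unitary-conjugacy argument for approximately unitarily equivalent finite-index subalgebras. Set $\Pi(\alpha)=\alpha|_M$. Approximate innerness by $w_n$, after projecting onto the relative-commutant component, gives unitaries in $M^G$ approximating $\alpha|_M$, so $\alpha|_M\in\GInn(M)$. Central triviality of $\alpha$ on $N$, tested on central sequences of $M$ (which map into $N$'s central sequences modulo the $\psi_g$ twisting), gives $\alpha|_M\in\GCt(M)$ with the witness $g$.

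\emph{Exactness.} Well-definedness of $\Pi$ modulo $\Inn(N)$ versus $\Inn(M)$ needs the normalizer: an inner automorphism of $N$ preserving $M$ is $\Ad(m u_g)$, whose restriction is $\Ad m\circ\psi_g$. So the target should really be read modulo $\Inn(M)$ together with $\psi(G)$. I will check how the paper's convention handles this, since Jones' notes presumably absorb $\psi_g$.

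For exactness in the middle: if $\alpha|_M=\Ad m$, then after composing with $\Ad m^*$ we have $\alpha|_M=\id$. Then $\alpha(u_g)u_g^*\in M'\cap N=\mathbb C$ by outerness, so $\alpha=\theta_\mu$ for a character $\mu$, and $\mu|_K=1$ follows from central triviality of $\alpha$. For injectivity of $\delta$: if $\theta_\mu$ is inner, say $\Ad w$, then $w\in M'\cap N=\mathbb C$, so $\theta_\mu=\id$.

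\emph{Surjectivity of $\Pi$.} Given $\beta\in\GCt(M)\cap\GInn(M)$, approximate it by $\Ad v_n$ with $v_n\in M^G$. Then $\beta$ commutes with $\psi$ modulo $\Inn(M)$ in the limit, and we adjust $\beta$ to commute with $\psi$ exactly; this uses vanishing of a $2$-cohomology obstruction via McDuff and $v_n\in M^G$. The adjusted $\beta$ extends to $\tilde\beta$ on $N$ with $u_g\mapsto u_g$. This $\tilde\beta$ is approximately inner via the same $v_n$, which commute with the $u_g$. Its central triviality follows from $\beta\in\GCt(M)$ combined with the description of central sequences of $N$ above.

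I expect this surjectivity step, making $\beta$ exactly $G$-equivariant so that it extends, to be the main obstacle. My first attempt will be to exploit $v_n\in M^G$ directly, passing to the limit in $M^\omega$ and using a reindexing argument.
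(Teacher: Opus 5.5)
The essential gap is in the construction of $\Pi$. You perturb $\alpha\in\Ct(N)\cap\Ai(N)$ by an inner automorphism so that $\alpha(M)=M$, citing finite index and a ``standard unitary-conjugacy argument''. No such general principle exists. The relation $\alpha=\lim\Ad(w_n)$ holds only pointwise, so $\alpha(M)$ is not uniformly close to any $w_nMw_n^*$. The claim already fails for the index-two inclusion $R^\sigma\subseteq R$: take an outer action $\phi$ of $S_3$ on $R$, with $\sigma=\phi_{(12)}$ and $\beta=\phi_{(13)}\in\Ai(R)$. If $\Ad(w)\beta(R^\sigma)=R^\sigma$, then $\Ad(w)\phi_{(23)}\Ad(w^*)$ fixes $R^\sigma$ pointwise, hence lies in $\{\id,\sigma\}$, contradicting outerness.

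What makes the step true here is specific to the crossed product. Since no $\psi_g$ with $g\neq e$ is approximately inner, Corollary \ref{cor:CSinMrtimesG} gives $N_\omega=(M_\omega)^G$. Jones' perturbation lemma (Proposition \ref{prop:AinM,aiM->aiA}, applied to $M^G\subseteq N$) then writes every $\alpha\in\Ai(N)$ as $\Ad(w)\circ\lim\Ad(z_n)$ with $z_n\in U(M^G)$. This is the key idea your proposal is missing, and it gives several things at once:
\begin{itemize}
\item $\alpha'=\lim\Ad(z_n)$ preserves $M$, fixes every $u_g$, and has $\alpha'|_M\in\GInn(M)$. An arbitrary $M$-preserving representative need not have this last property (e.g.\ $\Ad(v)$ with $v\in U(M)$), and ``projecting onto the relative-commutant component'' is not an argument.
\item It removes your $\psi(G)$ worry without changing the target group. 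Any other $M$-preserving representative restricts to $\Ad(m)\psi_g\alpha'|_M$, which is approximately inner only when $g=e$.
\end{itemize}
Even so, $\alpha'\in\Ct(N)$ only tells you that $\alpha'_\omega$ is trivial on $(M_\omega)^G$. Getting a single $g$ with $\alpha'_\omega=(\psi_g)_\omega$ on all of $M_\omega$ is a Galois-type statement that your parenthetical does not supply: central sequences of $M$ are not central in $N$ unless they are $G$-fixed. The paper does not reprove this theorem directly. It recovers it from Theorem \ref{Thm:Rep(G)subcatChi}, the gauging exact sequence, and the computation of the invertible liftable objects. There the single $g$ is the degree of an invertible, hence simple, hence homogeneous object of the $G/K$-graded deequivariantization.

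You have also interchanged the roles of the two hypotheses. What kills the $u_g$-components of central sequences is that $\psi_g$ is not approximately inner (Proposition \ref{Prop:nonAI}). Failure of central triviality cannot do this, since an approximately inner but not centrally trivial $\psi_g$ has unitary asymptotic intertwiners. Used correctly, this gives the stronger statement $N_\omega=(M_\omega)^G$. Consequently $\theta_\mu$ is centrally trivial for every $\mu\in\widehat G$, and your claim in the middle-exactness step that $\mu|_K=1$ follows from central triviality is false. It follows instead from approximate innerness. A sequence implementing $\theta_\mu$ asymptotically commutes with $M$, so by Proposition \ref{Prop:nonAI} it is asymptotically a central sequence of $M$. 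Such a sequence is fixed by $\psi_k$ for $k\in K$, which forces $\theta_\mu(u_k)=u_k$. Dually, it is the failure of central triviality on $G\setminus K$ that makes $\theta_\mu$ approximately inner for $\mu\in\widehat{G/K}$: it makes $\mu$ a coboundary for the action on $M_\omega$, as in Theorem \ref{thm:NotCT->AI}.

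Finally, the obstacle you anticipate in surjectivity does not arise. If $\beta=\lim\Ad(v_n)$ with $v_n\in U(M^G)$, then $\psi_g\beta\psi_g^{-1}=\beta$ exactly, as in the proof of Proposition \ref{prop:InvLiftAI<->GInt}. So $\tilde\beta(au_g)=\beta(a)u_g$ is already an automorphism of $N$, approximately inner via the same $v_n$, with no cohomological correction needed.
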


This short exact sequence is one of the motivations for applying the \textit{gauging paradigm} to $\Chi(M\rtimes G)$. 
The group characters can be viewed as $1$-dimensional representations. Therefore, Theorem A is the categorical generalization of $\delta:\widehat{G/K}\to \chi(M\rtimes G)$.

In general, suppose $G$ is a finite group, $\cB$ is a braided $\rm C^*$-tensor category and $\Rep(G)\hookrightarrow \cB$ is a unitary braided fully faithful tensor subcategory. 
We prove that 
$$0\to \widehat{G}\to \Inv(\cB)\to \Inv(\cB_G)_{\rm{lift}}\to 0$$ is a short exact sequence, where 
$\Inv(-)$ is the group of isomorphism classes of invertible objects and 
$$\Inv(\cB_G)_{\rm{lift}}=\{[y]\in \Inv(\cB_G)\mid y\ \mathrm{admits\ a\ unitary\ } G\mathrm{-equivariant\ structure}\}$$

We show that the invertible liftable part of the $G/K$-deequivariantization on $\Chi(M\rtimes G)$ is equivalent to $(\GCt(M)\cap\GInn(M))/\Inn(M)$. 
This entails that the gauging procedure is a complete categorical generalization of Connes' short exact sequence.

\begin{thmalpha}
Suppose the outer action $\psi:G\to \Aut(M)$ is not approximately inner and $K=\psi^{-1}(\Ct(M))\cap G$.
The group of invertible $G/K$-liftable bimodules in the $G/K$-deequivariantization $\Chi(M\rtimes G)_{G/K}$ is equivalent to the group of the intersection of $G$-centrally trivial automorphisms and $G$-approximately inner automorphisms $\GCt(M)\cap \GInn(M)$ modulo inner automorphisms, i.e.,
$$\Scale[0.91]{\Inv\left(\bigoplus\limits_{g\in G/K} ({}_{\psi_g}L^2(M\rtimes K)\boxtimes_{M\rtimes K}\ctBim(M\rtimes K))\cap \aiBim(M\rtimes K)\right)_{\mathsf{lift}}\cong (\GCt(M)\cap\GInn(M))/\Inn(M).}$$   
\end{thmalpha}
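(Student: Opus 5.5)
The plan is to compare two short exact sequences with the same kernel $\widehat{G/K}$ and the same middle term $\chi(M\rtimes G)=\Inv(\Chi(M\rtimes G))$, and then identify the cokernels via the five lemma.

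\textbf{The first sequence.} By Theorem A (with $L$ trivial, since $\psi$ is not approximately inner), $\Rep(G/K)$ is a unitary braided full subcategory of $\Chi(M\rtimes G)$. Applying the general categorical statement $0\to\widehat{G}\to\Inv(\cB)\to\Inv(\cB_G)_{\rm lift}\to 0$ to $\cB=\Chi(M\rtimes G)$ with the group $G/K$ gives
\[
0\to \widehat{G/K}\to \Inv(\Chi(M\rtimes G))\to \Inv\big(\Chi(M\rtimes G)_{G/K}\big)_{\rm lift}\to 0 .
\]
Here $\Inv(\Chi(M\rtimes G))=\chi(M\rtimes G)$, because invertible approximately inner, centrally trivial bimodules are exactly ${}_\alpha L^2$ with $\alpha\in(\Ai\cap\Ct)(M\rtimes G)$, taken modulo inner automorphisms. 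By Theorem B, the right-hand term is the $\Inv(\cdot)_{\rm lift}$ appearing in the statement.

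\textbf{The second sequence and the comparison.} Jones' sequence is $0\to\widehat{G/K}\xrightarrow{\delta}\chi(M\rtimes G)\xrightarrow{\Pi}(\GCt(M)\cap\GInn(M))/\Inn(M)\to 0$. I will show that the two kernel maps agree. In the categorical sequence, a character $\chi\in\widehat{G/K}$ is sent to the one-dimensional representation, viewed as the bimodule ${}_{\hat\chi}L^2(M\rtimes G)$, where $\hat\chi$ is the dual automorphism $xu_g\mapsto \chi(g)xu_g$. This is exactly Connes' $\delta$. Since both sequences are exact with equal kernels inside the same group, each cokernel is canonically $\chi(M\rtimes G)/\delta(\widehat{G/K})$. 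Hence the induced map $\Inv(\cdots)_{\rm lift}\to(\GCt(M)\cap\GInn(M))/\Inn(M)$ is a well-defined isomorphism. Concretely, it sends $[y]$ to $\Pi([x])$ for any lift $x$ of $y$.

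\textbf{A direct description of the map.} I also want to see the isomorphism explicitly. A liftable invertible $y$ in the $g$-graded summand has the form ${}_{\psi_g}L^2(M\rtimes K)\boxtimes X$. Since $X$ is invertible, it is ${}_{\beta}L^2(M\rtimes K)$ for an automorphism $\beta$. The $G/K$-equivariant structure lets $\beta$ be chosen to commute with the $G$-action up to inner perturbation, so that it restricts to $\alpha\in\Aut(M)$ normalizing $\psi(G)$. The grading shows that $\alpha$ agrees with $\psi_g$ on central sequences, i.e.\ $\alpha\in\GCt(M)$. Approximate innerness of $X$ over $M\rtimes K$, with unitaries that can be averaged into $M^G$ using the equivariance, gives $\alpha\in\GInn(M)$. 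This matches Jones' description of $\Pi$, which restricts $\alpha\in(\Ai\cap\Ct)(M\rtimes G)$, after inner perturbation so that it preserves $M$, to $M$.

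\textbf{Main obstacle.} The hard step is verifying that the categorical quotient map $\Inv(\cB)\to\Inv(\cB_G)_{\rm lift}$, under Theorem B's identification, is compatible with $\Pi$, or at least has the same kernel $\delta(\widehat{G/K})$. The abstract exactness already gives equal kernels, so compatibility is only needed for the explicit formula, not for the isomorphism itself. I would handle this by tracking ${}_\alpha L^2(M\rtimes G)$ through the free module functor $-\boxtimes_{\Rep(G/K)}\mathrm{Fun}(G/K)$, which is implemented by the basic construction $M\rtimes G\subset M\rtimes G\rtimes\widehat{G/K}\cong (M\rtimes K)\otimes B(\ell^2(G/K))$. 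Under this identification the image is visibly ${}_{\psi_g\beta}$ in the $g$-summand, with $\beta|_M=\alpha$ up to inner automorphisms.
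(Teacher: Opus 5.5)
Your argument is valid as a deduction, but it follows a genuinely different route from the paper. You get the statement formally. The gauging sequence $0\to\widehat{G/K}\to\Inv(\Chi(M\rtimes G))\to\Inv(\Chi(M\rtimes G)_{G/K})_{\mathsf{lift}}\to 0$ and Jones' sequence are two surjections out of the same abelian group $\chi(M\rtimes G)\cong\Inv(\Chi(M\rtimes G))$. Both kernels are the image of $\widehat{G/K}$ under the dual automorphisms, so the targets are isomorphic. This is just the first isomorphism theorem; no five lemma is needed. Theorem B transports liftability because it is an equivalence of $G/K$-crossed categories.

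The paper does not use Jones' sequence at all; it proves the statement directly and analytically:
\begin{itemize}
\item Proposition \ref{prop:InvLiftAI<->GInt} combines Proposition \ref{prop:AinM,aiM->aiA} with $(M\rtimes K)_\omega=(M_\omega)^K$. This writes any approximately inner automorphism of $M\rtimes K$ as $\Ad(w)\circ\lim\Ad(z_n)$ with $z_n\in M^K$.
\item It then turns the equivariant structure into a cocycle $t_{\bar g}\in M^K$. It untwists cocycles on $M_\omega$ and on $M^K$, and averages into $M^G$.
\item Finally, the grading condition is matched with $\GCt(M)$ via $N_\omega=M_\omega$.
\end{itemize}

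What your route buys is brevity, plus a formula $[y]\mapsto\Pi([x])$ for any lift $x$. What it costs is twofold. First, it rests on Jones' unpublished sequence, and in particular on his $\delta$ being exactly the dual-action map. The bare existence of some exact sequence with kernel $\widehat{G/K}$, which is all the paper quotes, would not suffice: quotients of an abelian group by isomorphic subgroups need not be isomorphic. Second, and more importantly, it reverses the logic the paper wants. Theorem C is meant to be proved independently, so that together with the gauging sequence it recovers the Connes--Jones exact sequence. Your proof assumes that sequence, so it cannot serve this purpose.

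Your ``direct description'' paragraph would not establish the explicit correspondence as written, though as you note it is not needed for the isomorphism. It attributes approximate innerness to the centrally trivial factor $X$ instead of to $y$. Also, commuting with $G/K$ up to inner perturbation does not force $\beta$ to preserve $M$. In the paper, preservation of $M$ and of the $u_k$ comes from the $z_n\in M^K$ in the first step above, not from the equivariant structure.
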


In \cite{MR2661553}, Popa studies Connes' $\chi(M)$ of a countable infinite spatial tensor product of non-Gamma $\rm II_1$ factors $M=\bigootimes_{i=1}^\infty N_i$. 
He shows $\chi(M)$ is trivial, and $M$ is McDuff but not s-McDuff ($M$ is s-McDuff if $M$ is isomorphic to $R\ootimes N$ for some non-Gamma $N$), which gives a negative answer to Connes' question \cite{MR377534}. 
Building off techniques of Popa and \cite{MR4753059}, we prove that the categorical Connes' $\Chi(M)\cong \Hilb$ is trivial. 
This also gives a negative answer to \cite{MR4753059}'s generalized version of Connes' question:
whether it is true that for a McDuff $\rm II_1$ factor $M$, $M$ is s-McDuff if and only if $\Chi(M)\cong \Hilb$. 

\begin{thmalpha}
Suppose $G$ is a finite group, $N$ is a non-Gamma $\rm II_1$ factor and $\alpha:G\to \Aut(N)$ is an outer action. Let $M=\bigootimes_{i=1}^\infty N$ be the countable infinite spatial tensor product of $N$ and $\psi:G\to \Aut(M)$ is given by $\psi_g:=\bigotimes_{i=1}^\infty \alpha_g$. 
Then $\Chi(M\rtimes_{\psi} G)\cong \Rep(G)$ as unitary braided tensor categories.
\end{thmalpha}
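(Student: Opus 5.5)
We prove Theorem E by combining Theorem A with Theorem B, after computing the two subgroups $K$ and $L$ for this tensor-product action.

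\textbf{Step 1: $L$ is trivial.} We show $\psi_g$ is not approximately inner for $g\neq e$. Suppose $\psi_g=\lim \Ad u_n$ pointwise in $\|\cdot\|_2$. Since $N$ is non-Gamma and $\alpha_g$ is outer, Connes' spectral gap characterization gives a constant: for each finite tensor block $N^{\otimes k}$ (still non-Gamma) and $\alpha_g^{\otimes k}$ (still outer), the unitaries $u_n$ would have to be close to unitaries implementing $\alpha_g^{\otimes k}$ on that block. Following Popa's argument in \cite{MR2661553}, one approximates $u_n$ by elements of $N^{\otimes k}\otimes 1$ up to a uniform error, using spectral gap of $N^{\otimes k}\subset M$ relative to the tail. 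This forces $\alpha_g^{\otimes k}$ to be inner modulo a small perturbation, contradicting outerness.

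\textbf{Step 2: $K=G$.} Each $\psi_g$ is centrally trivial. By Popa's theorem, every central sequence of $M=\bigootimes N$ is asymptotically supported in the tails $\bigootimes_{i>k}N$ while asymptotically commuting with them, by non-Gamma of each $N_i$. We need $\|\psi_g(a_n)-a_n\|_2\to 0$ for every central $(a_n)$. The plan is to use the flip symmetry: $\psi_g$ commutes with the tail shift. A central sequence asymptotically lies in $(\bigootimes_{i\le k}N)'\cap M$, and the diagonal action on far-out tensor factors is approximated via the spectral gap estimates. Alternatively, $\psi_g$ is a pointwise limit (in the $u$-topology) of $\alpha_g^{\otimes k}\otimes\id$, and each of these is trivial on central sequences to the extent that central sequences are asymptotically supported in the tail. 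This uniformity is the main obstacle: Popa only shows that each fixed finite block commutes asymptotically, so we must extract a diagonal subsequence argument at the ultrapower level, working in $M_\omega$.

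\textbf{Step 3: conclude.} By Theorem A, with $K=G$ and $L=\{e\}$, $\Rep(G/KL)$ would be trivial, so Theorem A is not the right tool here. Instead we apply Theorem B, whose hypothesis holds by Step 1. Since $G/K$ is trivial, the deequivariantization degenerates, and Theorem B gives
$\Chi(M\rtimes G)\cong \ctBim(M\rtimes G)\cap\aiBim(M\rtimes G)$ built on $M\rtimes K$. This is not by itself informative, so we instead use the equivalent form:

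\begin{itemize}
\item Reverse the argument. Since $K=G$, the approximately inner statement from Theorem A says $\Rep(G/K)$ is approximately inner, which is trivial.
\item The centrally trivial statement says $\Rep(G/L)=\Rep(G)$ lies in $\ctBim(M\rtimes G)$.
\end{itemize}

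It remains to show directly that $\Rep(G)\subseteq\aiBim(M\rtimes G)$, and that every bimodule in $\Chi(M\rtimes G)$ lies in $\Rep(G)$.

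For the containment, the action is diagonal, so $M\rtimes G$ contains $M^{G}\supseteq$ tails whose relative commutant captures $G$-representations. For the reverse inclusion we would reduce to $M$: restricting a bimodule $X\in\Chi(M\rtimes G)$ to $M$ gives bimodules built from $\Chi(M)\cong\Hilb$ (the trivial-$G$ case of Popa's method) twisted by the $\psi_g$. Approximate innerness then forces only the $g=e$ component, so $X$ lies in the image of $\Rep(G)$.

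Braidedness follows from Theorem A's braided-subcategory statement. The main expected difficulty is the uniform central-sequence control in Steps 2 and 3.
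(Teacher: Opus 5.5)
Your Step~2 is false, and everything after it depends on it. For $g\neq e$ the automorphism $\psi_g$ is \emph{not} centrally trivial, so $K=\{e\}$, not $G$. To see this, choose $a\in N$ with $\|\alpha_g(a)-a\|_2=\epsilon>0$ (possible since $\alpha_g$ is outer). Put $a_n=1\otimes\cdots\otimes 1\otimes a\otimes 1\otimes\cdots$ with $a$ in the $n$-th slot. This sequence eventually commutes with every finite block $\bigootimes_{i\le m}N$, so it is central, yet $\|\psi_g(a_n)-a_n\|_2=\epsilon$ for every $n$.

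Your own observation that central sequences live asymptotically in the tails points the other way: $\psi_g$ acts on every tail by $\bigotimes_{i>k}\alpha_g$, which is as nontrivial as $\psi_g$ itself. It is true that each $\alpha_g^{\otimes k}\otimes\id$ is centrally trivial and that these converge pointwise to $\psi_g$. But this proves nothing, because $\Ct(M)$ is not closed in the pointwise topology. The ``uniformity obstacle'' you flag is exactly why the claim fails, not a technicality to overcome.

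Consequently Step~3 never gets off the ground. With $K=G$ you correctly note that Theorems A and B become vacuous, and the substitutes you offer are not arguments. For instance, $M^G$ does not contain the tails. Also, ``braidedness follows from Theorem A'' contradicts your own $K=G$, under which Theorem A only yields $\Hilb$.

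Once $K=L=\{e\}$ is established, the paper proceeds as follows. Theorem A puts $\Rep(G)$ in $\Chi(M\rtimes G)$ as a braided full subcategory. Theorem B, with $G/K=G$, identifies the $G$-deequivariantization with $\bigoplus_{g\in G}({}_{\psi_g}L^2(M)\boxtimes_M\ctBim(M))\cap\aiBim(M)$. The real work, which is absent from your proposal, is showing this category is $\Hilb$.

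The $e$-component is $\Chi(M)$, and proving $\Chi(M)\cong\Hilb$ takes two ingredients:
(i) a classification of centrally trivial bimodules over $M$ as $Y\ootimes L^2(P_n)$ with $Y\in\Bim(M_n)$, obtained from spectral gap plus a lemma producing a nonzero intertwiner from uniform closeness on $U(P_n)$;
(ii) a non-Gamma argument forcing $Y\cong L^2(M_n)$.

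The $g\neq e$ components vanish because ${}_{\psi_g}L^2(M)\boxtimes_M(Y\ootimes L^2(P_n))$ splits as $(\cdots)\ootimes{}_{\theta_g}L^2(P_n)$ with $\theta_g=\bigotimes_{i>n}\alpha_g$. A tensorized Connes--Mingo spectral gap inequality shows such a bimodule is approximately inner only if $\theta_g\in\Ai(P_n)$, which fails. Then $\Chi(M\rtimes G)\cong\Hilb^G\cong\Rep(G)$ braided, by the equivariantization/deequivariantization correspondence.

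Finally, your Step~1 reaches the right conclusion, but the mechanism is off. The unitaries $u_n$ cannot be close to $N^{\otimes k}\otimes 1$, since $\Ad u_n$ must approximate the nontrivial $\theta_g$ on the tail. What works is the twisted spectral gap inequality $\|\xi\|_2^2\le C\sum_{a\in F}\|\alpha_g(a)\xi-\xi a\|_2^2$ on $L^2(N)$, valid since $\alpha_g$ is outer and $N$ is non-Gamma. Tensor it with $L^2(P_1)$ and apply it to $\xi=u_n$: this gives $1\le C\sum_{a\in F}\|\psi_g(a\otimes 1)u_n-u_n(a\otimes 1)\|_2^2\to 0$, a contradiction.
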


To prove this result, we have to use the gauging construction established before.
We first show that the group action $\psi:G\to\Aut(M)$ is not approximately inner nor centrally trivial, then by Theorem B,
the $G$-deequivariantization on $\Chi(M\rtimes G)$,
$$\Chi(M\rtimes G)_G\cong \bigoplus_{g\in G} ({}_{\psi_g}L^2(M)\boxtimes_M \ctBim(M))\cap \aiBim(M).$$
The $e$-grading component is $\Chi(M)\cong \Hilb$. 
We also prove that ${}_{\psi_g}L^2(M)\boxtimes_M X$ cannot be approximately inner, when $X$ is centrally trivial. 
In other words, the $g$-grading component is empty when $g\ne e$.
Therefore,
$$\Chi(M\rtimes G)_G\cong \Hilb.$$
By the equivariantization/deequivariantization correspondence,
$$\Chi(M\rtimes G)\cong (\Chi(M\rtimes G)_G)^G\cong \Hilb^G\cong \Rep(G).$$

\begin{coralpha}
For any finite group $G$, its unitary representation category $\Rep(G)$ can be realized as $\Chi(M)$ for some McDuff $\rm II_1$ factor $M$.
\end{coralpha}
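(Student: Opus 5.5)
Corollary E follows from Theorem D once we supply its two inputs: a non-Gamma $\rm II_1$ factor $N$ and an outer action $\alpha:G\to\Aut(N)$. We then set $M_0=\bigootimes_{i=1}^\infty N$ with the diagonal action $\psi_g=\bigotimes_i\alpha_g$. Theorem D gives $\Chi(M_0\rtimes_\psi G)\cong\Rep(G)$ as unitary braided tensor categories. It remains to check that the factor realizing the corollary, namely $M:=M_0\rtimes_\psi G$, is a McDuff $\rm II_1$ factor.

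\textbf{Step 1: the pair $(N,\alpha)$.} Embed $G$ into a symmetric group $S_n$ via its left regular representation. Take $N=L(\mathbb F_n)$, which is a non-Gamma $\rm II_1$ factor by Murray--von Neumann. Let $\sigma\in S_n$ act on $\mathbb F_n$ by permuting the free generators $a_1,\dots,a_n$, and let $\alpha_g$ be the induced free Bernoulli-type automorphism of $N$.

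To see that $\alpha_g$ is outer for $g\neq e$, suppose $\alpha_g=\Ad u$. Then $u a_i u^*=a_{g(i)}$ for all $i$. Expand $u$ in the group basis and compare Fourier coefficients. Since the conjugation action of $g$ on $\mathbb F_n\setminus\{e\}$ has infinite orbits (as $g$ moves some generator), $u$ would be forced to have infinitely many coefficients of equal modulus. This contradicts $u\in\ell^2$.

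This freeness-of-action argument is the only step requiring a small check. Alternatively, one could cite any standard outer action of a finite group on a free group factor.

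\textbf{Step 2: $M$ is a $\rm II_1$ factor.} The action $\psi$ is outer: $\psi_g$ restricted to a single tensor factor is $\alpha_g$, which is outer, so $\psi_g$ is not inner. Hence $M_0'\cap (M_0\rtimes G)=\mathbb C$, and $M$ is a $\rm II_1$ factor. The factor $M_0$ itself is McDuff, since it absorbs the hyperfinite factor through the tail tensor factors, as in Popa's setting.

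\textbf{Step 3: $M$ is McDuff.} Choose a $G$-invariant hyperfinite subfactor in the tails. For instance, take a $G$-fixed copy of $M_2(\mathbb C)$ inside $N\ootimes N$; this exists because $N^{\alpha}$ is a $\rm II_1$ factor (as $\alpha$ is outer), so it contains matrix units. Placing these in disjoint tail blocks gives central sequences of $M_0$ that are fixed by $\psi$. These sequences commute with the implementing unitaries $u_g$, so they are central in $M$ as well. They form non-commuting matrix units asymptotically, so $M$ is McDuff by McDuff's criterion.

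Finally, apply Theorem D to conclude that $\Chi(M)\cong\Rep(G)$.
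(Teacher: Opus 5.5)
Your proposal is correct and, at the top level, is the paper's argument: Corollary E is Theorem D applied to the diagonal action $\psi_g=\bigotimes_i\alpha_g$ on $M_0=\bigootimes_{i=1}^\infty N$, with $M:=M_0\rtimes_\psi G$. You differ from the paper in how the inputs are supplied.

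First, the paper obtains a non-Gamma factor with an outer $G$-action by citing the Popa--Shlyakhtenko and Guionnet--Jones--Shlyakhtenko realization theorems (Construction \ref{const:M=otimes N,psi=otimes alpha}). Your permutation action on $L(\mathbb F_n)$ is elementary and self-contained.

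Second, and more substantively, the paper never checks that $M_0\rtimes G$ is McDuff. It only notes that $M_0$ is McDuff and that crossed products by outer actions are factors. The central sequence algebra of the crossed product is only the $G$-fixed part $((M_0)_\omega)^G$ (Corollary \ref{cor:CSinMrtimesG}, applicable by Proposition \ref{prop:psigonMnotAI}). So McDuffness of $M_0$ alone does not settle the question. Your Step 3 produces $\psi$-fixed, non-commuting matrix units in the tail factors, and that is exactly the missing check. Your write-up is more complete than the paper's on this point.

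A few details need tidying.
\begin{itemize}
\item In Step 1, the relevant orbits are not those of the finite-order automorphism $\alpha_g$, which are finite. They are the orbits of twisted conjugation. If $u=\sum_k c_k\lambda(k)$ implements $\alpha_g$, then $c_k=c_{a_{g(i)}^{-1}ka_i}$ for every $i$. Choose $i$ with $g(i)\neq i$. The elements $a_{g(i)}^{-m}ka_i^{m}$, $m\in\mathbb Z$, are pairwise distinct, because $a_i^m$ and $a_{g(i)}^m$ are not conjugate for $m\neq 0$ (their images in the abelianization already differ). Square-summability then forces $u=0$.
\item For trivial $G$ the regular representation gives $n=1$, and $L(\mathbb Z)$ is not a factor. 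Use, say, $\mathbb F_{2|G|}$ with $G$ permuting two copies of itself, or handle trivial $G$ directly by Proposition \ref{prop:ecomponentTrivial}.
\item In Step 2, passing from ``$\alpha_g$ outer'' to ``$\psi_g$ not inner'' needs a slice-map argument in the spirit of Lemma \ref{lem:innerPsitildePsi}. Alternatively, simply cite Proposition \ref{prop:psigonMnotAI}.
\item The $G$-fixed copy of $M_2(\bbC)$ can be taken inside a single copy of $N^{\alpha}$, so passing to $N\ootimes N$ is unnecessary.
\end{itemize}
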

Since $\Rep(G)$ is a symmetric braided tensor category, it is not modular. 
Thus, we obtain an example of a McDuff $\rm II_1$ factor whose $\Chi(M)$ is not modular.

\begin{coralpha}
There exist McDuff $\rm II_1$ factors $M$ whose $\Chi(M)$ is not modular.
\end{coralpha}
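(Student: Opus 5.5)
The statement is Corollary F: there exist McDuff $\rm II_1$ factors $M$ with $\Chi(M)$ not modular. I would deduce it directly from Theorem E, equivalently Corollary D.

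First, fix any nontrivial finite group $G$; already $G=\mathbb{Z}/2$ suffices. Choose a non-Gamma $\rm II_1$ factor $N$ that admits an outer action $\alpha:G\to\Aut(N)$. One concrete choice is $N=L(\mathbb{F}_\infty)$, with $G$ acting by permuting the free generators. The action is outer because a nontrivial free permutation of generators is not implemented by any unitary, since $L(\mathbb{F}_\infty)$ has trivial relative commutant considerations here. A second choice is $N=L(\Gamma)$ for an icc property (T) or free group $\Gamma$ with a suitable outer automorphism.

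Next, set $P=\bigootimes_{i=1}^\infty N$ with the diagonal action $\psi_g=\bigotimes\alpha_g$. The factor $P$ is McDuff, by Popa's result quoted in the introduction. The action $\psi$ is outer, since each tensor factor already carries the outer action. Put $M=P\rtimes_\psi G$. Then $M$ is a $\rm II_1$ factor because the action is outer. It is McDuff because the central sequences of $P$ of the form $x\otimes 1\otimes\cdots$ pushed far out give non-commuting $\psi$-invariant central sequences. Concretely, one takes elements of $\bigotimes_{i}$ symmetric under $\alpha$, or uses that $M_\omega\supseteq (P_\omega)^{G}$ is non-abelian. Theorem E then gives $\Chi(M)\cong\Rep(G)$ as unitary braided tensor categories.

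Finally, I would verify non-modularity. $\Rep(G)$ with its standard braiding is symmetric, so the Müger center is the whole category. For $G\neq 1$ this category is not $\Hilb$, so the braiding is degenerate. Equivalently, the $S$-matrix has entries $S_{V,W}=\dim V\dim W$, which has rank one and is not invertible when $G$ has more than one irreducible representation. Hence $\Chi(M)$ is not modular.

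The main point needing care is confirming that $M=P\rtimes G$ is McDuff, so that $\Chi(M)$ is defined within the paper's setting. I expect this to follow from the proof of Theorem E, which presumably uses $\psi$-invariant central sequences, or from a standard result that crossed products of McDuff factors by finite groups acting via product-type actions remain McDuff.
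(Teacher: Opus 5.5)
Your proposal is correct and follows the paper's route: take a nontrivial $G$, apply Theorem E (via Corollary D) to get $\Chi(P\rtimes G)\cong\Rep(G)$, and observe that a symmetric category other than $\Hilb$ has degenerate braiding. Your extra check that $P\rtimes G$ is McDuff, which the paper leaves implicit, works through the easy inclusion $(P_\omega)^G\subseteq (P\rtimes G)_\omega$, once you note that $N^{\alpha}$ is a $\rm II_1$ factor, so non-commuting $\alpha$-fixed elements placed in the $n$-th tensor slot give non-commuting central sequences; also, outerness of the permutation action on $L(\mathbb{F}_\infty)$ should be justified by all twisted conjugacy classes $\{\theta(g)hg^{-1}\}$ being infinite, not by relative commutants.
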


\hspace{.2cm}

\noindent\textbf{Outline.} 
In \S\ref{section:background}, we recall the categorical Connes' $\Chi(M)$ and Connes' short exact sequence.

In \S\ref{section:Rep(G)subcat}, we first prove $\Rep(G/K)\subseteq \Bim(M\rtimes G)$ is approximately inner and $\Rep(G/L)\subseteq \Bim(M\rtimes G)$ is centrally trivial. 
Then we prove that $\Rep(G/KL)\subseteq \Chi(M\rtimes G)$ is braided.

In \S\ref{section:ConnesSES}, we unpack the gauging procedure, which is necessary for computing the deequivariantization of $\Chi(M\rtimes G)$. 
We prove that the general gauging construction induces a short exact sequence of Connes' type.
Then we prove that the gauging construction indeed generalizes Connes' short exact sequence.

In \S\ref{section:Example}, for $M=\bigootimes_{i=1}^\infty N_i$ with non-Gamma $\rm II_1$ factor $N_i$, we show that $\Chi(M)\cong \Hilb$ is trivial.  
For any finite group $G$, we consider the entry-wise action on $M$ for $N_i=N$.
We show that it is neither approximately inner nor centrally trivial. Then by the gauging construction, we obtain that $\Chi(M\rtimes G)\cong \Rep(G)$.

\hspace{.2cm}

\noindent\textbf{Acknowledgment.} I am grateful to Corey Jones for bringing the gauging construction and Vaughan Jones' unpublished notes on $\chi(M)$ to my attention.
I also would like to thank Dietmar Bisch, Yasu Kawahigashi, Corey Jones, David Penneys, Jinsong Wu, and Feng Xu for helpful conversations and encouragement. 
Quan Chen is supported as a Postdoctoral Scholar
by the Department of Mathematics at Vanderbilt University.

\section{Background} \label{section:background}
\subsection{The central sequence algebra}
Let $M$ be a von Neumann algebra and $\omega$ a free ultrafilter on $\bbN$.  
We denote by $M^\omega=\{(x_n)\mid x_n\in M,\ \sup\|x_n\|<\infty\}/\{(x_n) \mid\lim_{n\to\omega} \|x_n\|_2=0\}$ the ultrapower algebra, and we let $M_\omega:=M'\cap M^\omega$ be the central sequence algebra. 

We say $M$ is a \textit{McDuff} $\rm II_1$ factor if $M\cong M\ootimes R$, where $R$ is the hyperfinite $\rm II_1$ factor.
McDuff shows that $M_\omega$ is either abelian or a $\rm II_1$ von Neumann algebra, and it is abelian if and only if $M$ is not McDuff \cite{MR281018}.
Furthermore, Connes profoundly utilizes central sequences in his groundbreaking proof of injectivity implies hyperfiniteness for $\rm II_1$ factors \cite{MR454659}.

Every automorphism $\alpha\in \Aut(M)$ determines an automorphism $\alpha_\omega$ of $M_\omega$ via $\alpha_\omega((x_n))=(\alpha(x_n))$.

\subsection{Connes' original $\chi(M)$}
For a given $\rm II_1$ factor $M$ and a free ultrafilter $\omega$ on $\bbN$, we call an automorphism $\alpha\in\Aut(M)$ approximately inner if there exists a sequence of unitaries $\{u_n\}_{n\in\omega}$ such that 
$$\lim_{n\to\omega}\|\alpha(x) - u_n x u_n^*\|_2=0$$
for all $x\in M$. 
It is clear that the composition of approximately inner automorphisms remains approximately inner. 
We denote the group of approximately inner automorphisms by $\overline{\Inn}(M)$.

We call an automorphism $\beta\in \Aut(M)$ centrally trivial if for any central sequence $(x_n)_{n\in\omega} \in M_\omega$, 
$$\lim_{n\to\omega}\|\beta(x_n) - x_n\|_2=0.$$
In other words, $\beta$ is centrally trivial if and only if the induced automorphism $\beta_\omega$ on $M_\omega$ is the identity.
Similarly, it is clear that the composition of centrally trivial automorphisms remains centrally trivial. 
We denote the group of centrally trivial automorphisms by $\Ct(M)$.

In \cite{MR377534}, Connes introduced an invariant $\chi(M)$ of a $\rm II_1$-factor $M$ by
$$\chi(M):= (\Ct(M)\cap \Ai(M))/\Inn(M),$$
a subgroup of $\Out(M)$ consisting of the image of approximately inner and centrally trivial automorphisms. 
Connes showed that $\alpha\beta\alpha^{-1}\beta^{-1}\in\Inn(M)$, for any $\alpha\in \Ai(M)$, $\beta\in \Ct(M)$, which implies $\chi(M)$ is an abelian group.  
In \cite{MR585235}, Jones defined the $\kappa$ invariant, which is a quadratic form on $\chi(M)$. 
By using $(\chi(M),\kappa)$, Jones constructed a $\rm II_1$ factor $M$ which is antiisomorphic to itself but has no involutory antiautomorphic.
Therefore, these $\rm II_1$ factors are not group von Neumann algebras.

From the tensor category perspective, Eilenberg and MacLane showed that an abelian group together with a quadratic form defines a braided 2-group \cite{MR65163},
which gives a braided tensor category under linearization. This indicates the existence of a bimodule version of $\chi$ as a braided tensor category generalizing both Connes' $\chi(M)$ and Jones' $\kappa$ invariant.

\subsection{Categorical Connes' $\Chi(M)$}

In this section, we recall the construction of categorical Connes' $\chi(M)$, including approximately inner and centrally trivial bimodules, and properties from 
\cite{MR4753059}.

Let $M$ be a $\rm II_1$ factor with faithful normal trace $\tr_M$. 
We denote by $\Bim(M)$ the category of right $\rm W^*$-correspondences as in \cite[Def.~2.15]{MR4419534} and by $\fgpBim(M)$ a subcategory with finite generated projective bimodules.
In this paper, we only consider finite generated projective (or bifinite) bimodules, and we will denote them as $\Bim(M)$ below.

For $X\in \Bim(M)$, we define the norm of $x\in X$ as
$$\|x\|_2^2: = \| \langle x| x\rangle_M^X \|_2$$
where $\langle \cdot| \cdot\rangle_M^X$ is the $M$-valued inner product on $X$ and $\|\cdot\|_2$ the 2-norm on $M$ defined by $\|a\|_2:=\tr(a^*a)^{\frac{1}{2}}$ for $a\in M$. 

\begin{defn}
\label{defn:ApproximatePPBasis}
For $X\in\fgpMod(M)$, an \emph{approximate $X_M$-basis} is a sequence $\{x_i^{(n)}\}_{i=1}^{K}\subseteq X$,
$\sup\limits_{i,n} \|x_i^{(n)}\| <\infty$,
and
$$
\lim_{n\to\omega} \left\|x-\sum_{i=1}^K x_i^{(n)}\langle x_i^{(n)}|x\rangle_M^X \right\|_2=0
\qquad\qquad
\forall\,x\in X.
$$
For $X\in \Bim(X)$, an \emph{approximately inner} $X_M$-basis is an approximate $X_M$-basis such that
\begin{equation}
\label{eq:AICommutativity}
\lim_{n\to\omega}\left\|ax_i^{(n)}-x_i^{(n)}a\right\|_2= 0
\qquad\qquad
\forall\,a\in M.
\end{equation}
We call a bimodule $X$ \textit{approximately inner} over $M$ if and only if there exists an approximately inner $X_M$-basis.
\end{defn}

\begin{defn}
We call a bimodule $Y\in \Bim(M)$ \textit{centrally trivial} if for all central sequences $(a_n)\in M_\omega$ and for all $y\in Y$, 
$$\lim_{n\to\omega}\|a_ny-ya_n\|_2= 0.$$
\end{defn}

Note that all the approximately inner bimodules form a tensor subcategory of $\Bim(M)$, called $\aiBim(M)$ and all the centrally trivial bimodules form a tensor subcategory called $\ctBim(M)$.

There is a centralizing structure $u$ between $\aiBim(M)$ and $\ctBim(M)$:
For $X,Y \in \Bim(M)$,
choosing approximately inner $X_M$-bases $\{x_i^{(n)}\}_i$
and ordinary $Y_M$-bases $\{y_j\}_j$ respectively,
the unitary braiding $u_{X,Y}:X\boxtimes_M Y\xrightarrow{\sim} Y\boxtimes_M X$ can be expressed as the following $\|\cdot\|_2$-limits of $M$-finite rank operators:
$$u_{X,Y}=\lim_{n\to \omega} \sum_{i,j}|y_j\boxtimes x_i^{(n)}\rangle\langle x_i^{(n)}\boxtimes y_j |.$$

Note that $u_{X,Y}$ is independent of the choice of approximately inner $X_M$-basis and $Y_M$-basis.
When we take $\Chi(M): = \aiBim(M)\cap \ctBim(M)\subseteq \Bim(M)$ be the full subcategory of bimodules that are both approximately inner and centrally trivial, $\Chi(M)$ is a unitary braided tensor category.

There are several properties and results for $\Chi(M)$ proved in \cite{MR4753059}: 
\begin{facts} \label{Facts:categoricalChi}
\mbox{}
\begin{enumerate}
\item Let $N \subseteq M$ be a finite index $\rm II_1$ subfactor such that $Q := {}_NL^2(M)_N$ is a commutative Q-system in $\Chi(N)$. Then $\Chi(M)\cong \Chi(N)^\loc_Q$ as braided unitary tensor categories.
\item Suppose $N\subseteq M$ is a finite depth non-Gamma $\rm II_1$ subfactor and $M_\infty$ is the inductive limit of the basic construction. Then $\Chi(M_\infty)$ is equivalent to the Drinfeld center of the standard invariant $\cC(N\subseteq M)$ as unitary braided tensor categories. 
\item Suppose $\rm II_1$ factors $M$ and $N$ are Morita equivalent (or called stably isomorphic for $\rm II_1$ factors), then $\Chi(M)\cong \Chi(N)$ as unitary braided tensor categories.
\item $\Chi(R)\cong\Chi(N)\cong \Chi(R\ootimes N)\cong \Hilb$, where $R$ is the hyperfinite $\rm II_1$ factor and $N$ is non-Gamma $\rm II_1$ factor.
\end{enumerate}   
\end{facts}

\subsection{Connes' short exact sequence}
Suppose $G$ is a finite group, $M$ is a McDuff $\rm II_1$-factor, $\psi:G\to \Aut(M)$ is an outer action.
Suppose $\psi$ is not approximately inner and $K=G\cap \psi^{-1}(\Ct(M))$.

\begin{defn}
Let $\alpha\in \Aut(M)$.
We say $\alpha$ is \textit{$G$-approximately inner}, denoted $\alpha\in\GInn(M)$, if there exists a sequence of unitaries $(u_n)\subseteq M^G$ such that $\|\alpha(x)-u_nxu_n^*\|_2\to 0$ for all $x\in M$.
We say $\alpha$ is \textit{$G$-centrally trivial}, denoted $\alpha\in \GCt(M)$, if there exists $g\in G$, such that $\|\alpha(a_n)-\psi_g(a_n)\|_2\to 0$ for all central sequences $(a_n)\in M_\omega$. 
In other words, the induced automorphism $\alpha_\omega=(\psi_g)_\omega$ on $M_\omega$ for some $g\in G$.
\end{defn}

\begin{thm}\cite{chiNotes}
There exist group homomorphisms $\delta:\widehat{G/K}\to \chi(M\rtimes G)$ and $\Pi:\chi(M\rtimes G)\to (\GCt(M)\cap \GInn(M))/\Inn(M)$ such that 
$$0\to \widehat{G/K}\xrightarrow{\delta} \chi(M\rtimes G)\xrightarrow{\Pi}(\GCt(M)\cap \GInn(M))/\Inn(M)\to 0$$
is a short exact sequence.
\end{thm}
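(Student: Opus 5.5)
We construct $\delta$ and $\Pi$ directly and check exactness at each of the three places. Write $P=M\rtimes G$ with implementing unitaries $\{u_g\}_{g\in G}$. Throughout we use two standard facts for an outer action on a McDuff factor. First, $P'\cap P^\omega$ sits inside $M^\omega$ as $(M_\omega)^{G}$, the fixed points of the $\psi_\omega$-action. Second, because $\psi$ is not approximately inner, a Connes-type argument shows that every automorphism of $P$ that is approximately inner on $P$ normalizes $M$ up to an inner perturbation.

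\textbf{Construction of $\delta$.} For a character $\chi\in\widehat{G/K}$, let $\hat\chi\in\Aut(P)$ be the dual automorphism, $\hat\chi|_M=\id$ and $\hat\chi(u_g)=\chi(g)u_g$.
\begin{itemize}
\item \emph{Centrally trivial.} A central sequence of $P$ has the form $x_n=\sum_g a^g_n u_g$. Commuting with $M$ forces $a^g_n\approx 0$ unless $\psi_g$ acts trivially on $M_\omega$, i.e.\ $g\in K$. On the surviving terms $\chi(g)=1$, so $\hat\chi$ fixes $(x_n)$.
\item \emph{Approximately inner.} Since $G/K$ acts outerly on $M_\omega$ (through $\psi_\omega$) and $M_\omega$ is a $\rm II_1$ algebra, a Rohlin-type or cocycle-vanishing argument gives a unitary $v\in M_\omega$ with $\psi_{\omega,g}(v)=\chi(g)v$. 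Lifting $v$ to a sequence $v_n$, we get $\Ad v_n\to\hat\chi$ pointwise in $\|\cdot\|_2$.
\item \emph{Injective.} $\hat\chi$ is inner only if $\chi=1$: an implementing unitary would lie in $M'\cap P=\mathbb C$, forcing $\chi$ to be trivial.
\end{itemize}

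\textbf{Construction of $\Pi$.} Take $[\theta]\in\chi(P)$. Using the normalization fact above, perturb $\theta$ by an inner automorphism so that $\theta(M)=M$ and $\theta(u_g)=w_g u_{\sigma(g)}$. Set $\alpha=\theta|_M$.
\begin{itemize}
\item \emph{$\alpha\in\GInn(M)$.} If $\theta$ is approximated by $\Ad U_n$, write $U_n=\sum_g b^g_n u_g$. Matching the action on $M$ shows that the $M$-part can be arranged to be approximately $G$-invariant, and averaging over $G$ then produces unitaries in $M^G$ approximating $\alpha$.
\item \emph{$\alpha\in\GCt(M)$.} Because $\theta$ is centrally trivial on $P$, $\alpha_\omega$ agrees with the identity on $(M_\omega)^G$. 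Since $G$ is finite and the action of $G/K$ on $M_\omega$ is outer, this forces $\alpha_\omega=(\psi_g)_\omega$ for some $g$ (a Galois correspondence for $M_\omega\supseteq (M_\omega)^G$).
\item \emph{Well defined.} Different normalizations change $\alpha$ by inner automorphisms of $M$ and by the $\psi_g$. One checks that the $\psi_g$ lie in $\Inn(M)\cdot(\text{image of }G)$ within the target quotient, so these ambiguities die there.
\end{itemize}

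\textbf{Exactness.}
\begin{itemize}
\item \emph{$\Pi\delta=0$.} This is clear, since $\hat\chi|_M=\id$.
\item \emph{$\ker\Pi\subseteq\operatorname{im}\delta$.} If $\alpha=\Ad u$, replace $\theta$ by $\Ad u^*\circ\theta$, so $\theta|_M=\id$. Then $\theta(u_g)u_g^*\in M'\cap P=\mathbb C$, so $\theta=\hat\chi$ for a character $\chi$. Central triviality of $\hat\chi$ forces $\chi|_K=1$.
\item \emph{Surjectivity of $\Pi$.} Given $\alpha\in\GCt(M)\cap\GInn(M)$, the unitaries approximating $\alpha$ lie in $M^G$, so they commute with $\psi$. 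Hence $\alpha\psi_g\alpha^{-1}$ is approximately equal to $\psi_g$, and a $1$-cohomology vanishing argument gives a cocycle $w_g$ such that $\theta(x u_g)=\alpha(x)w_g u_g$ extends $\alpha$ to $P$. The extension is approximately inner via the same $M^G$-unitaries, after correcting $w_g$. Central triviality of $\theta$ follows from the $\GCt$ condition together with the description of central sequences of $P$.
\end{itemize}

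\textbf{Main obstacle.} The key difficulty is the normalization step: showing that an approximately inner, centrally trivial automorphism of $P$ can be perturbed to preserve $M$. I would try to use the hypothesis that $\psi$ is not approximately inner, in the form that $(M_\omega)^G$ is large enough (relative commutant $M$-control), to force $\theta(M)$ and $M$ to be unitarily conjugate, via Popa's intertwining applied to the approximating unitaries. The surjectivity cocycle correction is the second hardest step.
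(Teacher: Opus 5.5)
Your automorphism-level strategy is reasonable in outline, but two steps are wrong as written, and the step you flag as the main obstacle is left open.

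\textbf{The role of $K$ is inverted.} By Corollary~\ref{cor:CSinMrtimesG}, which you quote, $(M\rtimes G)_\omega=(M_\omega)^G$. So in a central sequence $\sum_g a^g_n u_g$, every coefficient with $g\neq e$ tends to $0$. This is because Proposition~\ref{Prop:nonAI} applies as soon as $\psi_g\notin\Ai(M)$: the survival criterion is approximate innerness of $\psi_g$, not $(\psi_g)_\omega=\id$.

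Hence $\hat\chi$ is centrally trivial for every $\chi\in\widehat G$. The step ``central triviality of $\hat\chi$ forces $\chi|_K=1$'' in your proof of $\ker\Pi\subseteq\operatorname{im}\delta$ is therefore false. What forces $\chi|_K=1$ is approximate innerness of $\hat\chi$:
\begin{itemize}
\item if $\Ad V_n\to\hat\chi$, the $V_n$ asymptotically commute with $M$;
\item Proposition~\ref{Prop:nonAI} then gives $V_n\approx b_n\in M$ with $(b_n)\in M_\omega$;
\item for $k\in K$, $\chi(k)u_k=\lim b_n\psi_k(b_n^*)u_k=u_k$, because $\psi_k$ is centrally trivial.
\end{itemize}
Your own eigen-unitary $v\in M_\omega$ in the construction of $\delta$ can exist only because $\chi|_K=1$. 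This is the crossed pattern of Theorems~\ref{thm:NotCT->AI} and~\ref{thm:NotAI->CT}: $K$ governs approximate innerness and $L$ governs central triviality.

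\textbf{$\Pi$ is not well defined as you set it up.} Asking only $\theta(M)=M$ lets you replace $\theta$ by $\Ad(vu_h)\circ\theta$, which changes $\alpha$ into $\Ad(v)\psi_h\alpha$. These ambiguities do not die in $(\GCt(M)\cap\GInn(M))/\Inn(M)$. That quotient is only by $\Inn(M)$, and $\psi_h\alpha\notin\Ai(M)$ when $\alpha\in\Ai(M)$ and $h\neq e$.

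You must pin the normalization by requiring $\theta|_M\in\Ai(M)$. Two such representatives then differ by some $\Ad(vu_h)$ (the normalizer of $M$ in $M\rtimes G$ is $\{vu_h : v\in U(M),\,h\in G\}$), and $\psi_h\in\Ai(M)$ forces $h=e$.

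\textbf{The missing tool.} What produces this normalization, in place of Popa intertwining, is Proposition~\ref{prop:AinM,aiM->aiA} applied to $M^G\subseteq M\rtimes G$. By Corollary~\ref{cor:CSinMrtimesG}, central sequences of $M\rtimes G$ lie asymptotically in $M^G$. Hence every $\theta\in\Ai(M\rtimes G)$ equals $\Ad(w)\circ\lim\Ad(z_n)$ with $z_n\in U(M^G)$. Then $\Ad(w^*)\circ\theta$ preserves $M$, fixes every $u_g$, and restricts to an element of $\GInn(M)$ by definition. No averaging is needed; with your $w_g$, averaging would first require untwisting a $1$-cocycle.

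The same fact makes surjectivity immediate. If $\alpha=\lim\Ad(z_n)$ with $z_n\in U(M^G)$, then $\alpha\psi_g=\psi_g\alpha$ exactly. So $\theta(xu_g)=\alpha(x)u_g$ is just $\lim\Ad(z_n)$ on $M\rtimes G$, and no cohomology argument is needed. Central triviality follows from $(M\rtimes G)_\omega=(M_\omega)^G$.

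\textbf{The Galois step.} Deducing $\alpha\in\GCt(M)$ by Galois correspondence needs $M_\omega$ to be a factor on which $G/K$ acts outerly. You do not establish this. Without factoriality, an automorphism fixing $(M_\omega)^G$ can agree with different $(\psi_g)_\omega$ on different central summands.

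The paper does not prove this statement directly. It recovers the sequence categorically from Theorem~\ref{Thm:Rep(G)subcatChi}, the gauging exact sequence and Proposition~\ref{prop:InvLiftAI<->GInt}. There the single $g$ is obtained as the degree of the simple object $X\otimes A$ in the deequivariantization (Proposition~\ref{prop:CtAmod->GCt|A|}).
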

This short exact sequence provides a tool for calculating $\chi(M\rtimes G)$ from the data of $M$ and the action $\psi$ on $M$.

\section{Representation subcategories in $\Chi(M\rtimes G)$}\label{section:Rep(G)subcat}

Inspired by the orbifold construction on $\chi(M)$ for Connes' short exact sequence  \cite{chiNotes} (see also \cite[\S15]{MR1642584}), orbifold algebraic quantum field theory \cite{MR2183964} and the observation in \cite[Thm.~B]{MR4753059}, we give a categorical gauging construction on $\Chi(M)$ in this section.

Suppose $\psi:G\to \Aut(M)$ is an outer action of a finite group $G$ on a $\rm II_1$ factor $M$.
Recall that in the crossed product $M\rtimes_\psi G$, every element can be uniquely expressed as $\sum\limits_{g\in G} a_gu_g$, $a_g\in M$, where 
\begin{align*}
u_gu_h &= u_{gh} \text{ and } u_g^* = u_{g^{-1}}\\ 
u_g x &= \psi_g(x) u_g \text{ for }x\in M
\end{align*}
Then $(xu_g)^* = u_g^* x^* = u_{g^{-1}} x^* = \psi_{g^{-1}}(x^*) u_{g^{-1}}$. 
It is known that when $\psi$ is outer, $M\rtimes G$ is also a $\rm II_1$ factor.

Let $\Rep(G)$ be the category of unitary $G$-representations, which is a symmetric braided tensor category. 
There is a canonical embedding $F:\Rep(G)\to\Bim(M\rtimes G)$ defined by $(H,\pi)\mapsto {}_{\pi}H\otimes M\rtimes G$.
Here the left and right $M\rtimes G$-module structures are given by
$$b_hu_h\rhd (\xi\otimes a_gu_g) : = \pi_h(\xi) \otimes (b_hu_h a_gu_g)
\qquad\text{ and }\qquad 
(\xi\otimes a_gu_g)\lhd b_hu_h: = \xi\otimes (a_gu_gb_hu_h)$$
and the $M\rtimes G$-valued inner product is given by
$$\langle \xi\otimes a_gu_g\mid \eta\otimes b_hu_h\rangle : = \langle \xi\mid \eta\rangle (a_gu_g)^*(b_hu_h).$$
For $(H,\pi),(K,\sigma)\in \Rep(G)$, $\eta\in H$, $\xi\in K$, we define the tensorator $F^2_{(H,\pi),(K,\sigma)}$ for $F$:
\begin{align*}
F^2_{(H,\pi),(K,\sigma)}: & F(H,\pi)\boxtimes_{M\rtimes G} F(K,\sigma)\xrightarrow{\sim} F( {}_{\pi\otimes \psi}(H\otimes K)) \\
& (\eta\otimes a_gu_g) \boxtimes (\xi \otimes b_hu_h) = (\eta\otimes 1)\boxtimes (\sigma_g(\xi)\otimes a_gu_gb_hu_h) \mapsto (\eta\otimes \sigma_g(\xi))\otimes a_gu_gb_hu_h
\end{align*}
Note that there is a natural symmetric braiding $\beta$ on $\Rep(G)$.
One can check that 
$$F(\beta_{(H,\pi),(K,\sigma)})((\eta\otimes \xi) \otimes a_gu_g)) = (\xi\otimes \eta)\otimes a_gu_g.$$

It is clear that $\{\eta_i\otimes u_1\}_i$ is a Pimsner-Popa $({}_\pi H\otimes M\rtimes G)_{M\rtimes_{\psi} G}$-basis, where $\{\eta_i\}$ is an ONB for $H$.
\\

Suppose $N$ is a normal subgroup, then $\Rep(G/N)$ is a braided tensor subcategory of $\Rep(G)$.
Indeed, we can construct a braided tensor functor $F:\Rep(G/N)\to \Rep(G)$ as follows: 
for an object $(H,\pi)\in \Rep(G/N)$, we define 
$$F(H,\pi):=(H,\pi'),$$
where $\pi':G\to B(H)$ is given by $\pi'(g) = \pi(gN)$. 

For a morphism $f\in \Rep(G/N)$, we define 
$$F(f):= f$$

Here, for $\rm II_1$ factor $M$, the group of approximately inner automorphisms $\Ai(M)$ and the group of centrally trivial automorphisms $\Ct(M)$ are normal subgroups of $\Out(M)$.
Therefore, for outer action $\psi:G\to \Aut(M)$, $G\cap \psi^{-1}(\Ct(M))$ and $G\cap \psi^{-1}(\Ai(M))$ are normal subgroups of $G$, and we have 
\begin{align*}
    \Rep(G/G\cap \psi^{-1}(\Ct(M))) &\hookrightarrow \Rep(G) \hookrightarrow \Bim(M\rtimes G),\\
    \Rep(G/G\cap \psi^{-1}(\Ai(M))) &\hookrightarrow \Rep(G) \hookrightarrow \Bim(M\rtimes G)
\end{align*}

\subsection{$\Rep(G/G\cap \psi^{-1}(\Ct(M)))$ is approximately inner}
In this section, we show that $\Rep(G/G\cap \psi^{-1}(\Ct(M)))\hookrightarrow \Bim(M\rtimes G)$ is approximately inner.
By unpacking the embedding $\Rep(G/G\cap \psi^{-1}(\Ct(M)) \hookrightarrow \Bim(M\rtimes G)$, every representation $(H,\pi)\in \Rep(G/G\cap \psi^{-1}(\Ct(M)))$ can be viewed as a $G$-representation, in particular $\pi_g=\id_H$ if $\psi_g\in \Ct(M)$. 

We first recall the definition and properties of a cocycle/coboundary of a group action.

\begin{defn}
Let $M$ be a $\rm II_1$ factor and $\psi: G\to \Aut(M)$ is an action.
We call $w:G\to U(M)$ a \emph{cocycle} (or 1-cocycle) if $w_g\psi_g(w_h)=w_{gh}$ for all $g,h\in G$. 
Equivalently, $\frac{1}{|G|}\sum_g w_g u_g$ is a projection in $M\rtimes G$.

A cocycle $w$ is called a \textit{coboundary} if there exists a unitary $t\in N$ such that $w_g = t^* \psi_g(t)$ for all $g\in G$.
\end{defn}

\begin{ex}
\label{ex:RepCocyle}
Suppose $\psi:G\to \Aut(M)$ is an action
and $(H,\pi)$ is a finite dimensional $G$-representation.
Then $\pi:G\to B(H)=B(H)\otimes \bbC\hookrightarrow  B(H)\otimes M$ is automatically a cocycle for the $G$-action $\tilde{\psi}:G \to \Aut(B(H)\otimes M)$ by $\tilde{\psi}_g(a\otimes m) = a \otimes \psi_g(m)$ for $g\in G$, $a\in B(H)$, and $m\in M$. 
Indeed, 
\begin{align*}
\pi(g)\tilde{\psi}_g(\pi(h)) = (\pi_g\otimes 1)\tilde{\psi}_g(\pi_h\otimes 1) = (\pi_g\otimes 1)(\pi_h\otimes \psi_g(1)) = \pi_g\pi_h \otimes 1  = \pi_{gh}\otimes 1 = \pi(gh).
\end{align*}
\end{ex}

Let $N=G\cap \psi^{-1}(\Inn(M))$, which is a normal subgroup of $G$.
For each $h\in N$, since $\psi_h$ is inner, there exists $v_h\in U(M)$ such that $\psi_h=\Ad(v_h)$. 

\begin{defn}\cite[\S1.2]{MR587749}
Given a group action $\psi:G\to\Aut(M)$, a \textit{section} is a choice of unitaries $\{v_h\}_{h\in N}$ such that
$$v_h v_k = \mu(h,k)v_{hk}\qquad\text{ for }h,k\in N,$$
where $\mu:N\times N\to U(1)$ is a 2-cocycle; and 
$$\psi_g(v_{g^{-1}hg}) = \lambda(g,h) v_h\qquad\text{ for }g\in G,\ h\in N,$$
where $\lambda:G\times N\to U(1)$ is compatible with $\psi$ and $\mu$.
\end{defn}

It seems $\mu$ and $\lambda$ are dependent to the choice of $\{v_h\}_{h\in N}$. In fact,
Jones proves that the cohomology class $[\mu,\lambda]$, which he calls the characteristic invariant, is an invariant of the group action $\psi$ \cite[Def.~2.8]{MR587749}.
It is essential for the classification of finite group actions on the hyperfinite $\rm II_1$ factor up to conjugacy \cite[Thm.~1.4.8]{MR587749}.

For our purpose, the notion of section provides a characterization whether a cocycle is a coboundary. 

\begin{fact}\cite[\S3.1]{MR587749}\label{fact:coboundaryTrEQ}
Suppose $\psi: G\to \Aut(M)$ is an action and $w$ is a cocycle. 
The following are equivalent:
\begin{enumerate}
\item $w$ is a coboundary;
\item the projections $\frac{1}{|G|}\sum_g w_g u_g$ and $\frac{1}{|G|}\sum_g u_g$ are von Neumann equivalent in $M\rtimes G$;
\item $\tr(w_hv_h)=\tr(v_h)$ for all $h\in G\cap\psi^{-1}(\Inn(M))$, where $\{v_h\}$ is a section of the action $\psi$.
\end{enumerate}
\end{fact}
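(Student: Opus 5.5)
The plan is to prove (1)$\Leftrightarrow$(2) by a direct computation in $M\rtimes G$, and (2)$\Leftrightarrow$(3) by comparing center-valued traces. Write $p_w=\frac{1}{|G|}\sum_g w_gu_g$ and $p_1=\frac1{|G|}\sum_g u_g$.

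\emph{(1)$\Rightarrow$(2).} If $w_g=t^*\psi_g(t)$, then $t^*u_gt=t^*\psi_g(t)u_g=w_gu_g$. Hence $p_w=t^*p_1t$, and the two projections are unitarily equivalent.

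\emph{(2)$\Rightarrow$(1).} Suppose $V\in M\rtimes G$ satisfies $V^*V=p_1$ and $VV^*=p_w$.
\begin{itemize}
\item Since $u_gp_1=p_1$, we have $(M\rtimes G)p_1=Mp_1$, so $V=yp_1$ for some $y\in M$.
\item A short computation gives $p_wyp_1=\bigl(\frac1{|G|}\sum_g w_g\psi_g(y)\bigr)p_1$. After replacing $y$ by this average, $V=p_wV$ forces $w_g\psi_g(y)=y$ for all $g$.
\item This gives $\psi_g(y^*y)=y^*w_gw_g^*y=y^*y$, so $y^*y\in M^G$.
\item From $p_1xp_1=E_{M^G}(x)p_1$ we get $V^*V=E_{M^G}(y^*y)p_1=y^*y\,p_1$. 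Since $x\mapsto xp_1$ is injective on $M$, $V^*V=p_1$ gives $y^*y=1$.
\item In the finite factor $M$ an isometry is unitary, so $y$ is unitary. With $t=y^*$ we get $t^*\psi_g(t)=y\psi_g(y)^*=w_g$, so $w$ is a coboundary.
\end{itemize}

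\emph{(2)$\Leftrightarrow$(3).} Here $\psi$ need not be outer, so $M\rtimes G$ need not be a factor. Two projections in a finite von Neumann algebra are equivalent if and only if their center-valued traces agree, i.e. $\tau(p_wz)=\tau(p_1z)$ for all central $z$.
\begin{itemize}
\item I would first identify the relative commutant: $M'\cap(M\rtimes G)=\operatorname{span}\{v_h^*u_h: h\in N\}$, because $\psi_h=\Ad(v_h)$. The center $Z(M\rtimes G)$ is the $\Ad(u_g)$-invariant part of this span. The section relation $\psi_g(v_{g^{-1}hg})=\lambda(g,h)v_h$ shows that $u_g$ permutes the lines $\mathbb C v_h^*u_h$ along conjugacy classes of $N$ in $G$, with phases coming from $\lambda$.
\item Next compute $\tau(p_wv_h^*u_h)$. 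Only the term $g=h^{-1}$ survives, giving $\frac1{|G|}\tr\bigl(w_{h^{-1}}\psi_{h^{-1}}(v_h^*)\bigr)$. Using $w_{h^{-1}}=\psi_{h^{-1}}(w_h)^*$ (from the cocycle identity), $\psi_{h^{-1}}=\Ad(v_h^*)$ and traciality, this becomes $\frac1{|G|}\overline{\tr(w_hv_h)}$. Taking $w=1$ gives $\frac1{|G|}\overline{\tr(v_h)}$ for $p_1$.
\item Thus (3) gives equality against every $v_h^*u_h$, hence against the whole relative commutant, in particular against the center. This proves (3)$\Rightarrow$(2).
\end{itemize}

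\emph{Main obstacle: (2)$\Rightarrow$(3).} Equivalence of the projections only gives equality against central elements, which are $\lambda$-twisted class sums of the $v_h^*u_h$. To recover the condition for each individual $h$, I will show that $h\mapsto \tr(w_hv_h)$ transforms under $G$-conjugation with the same $\lambda$-phases that appear in the central elements, and likewise for $h\mapsto\tr(v_h)$. Concretely, I will apply $\psi_g$ inside the trace and use the cocycle identity $w_{g}\psi_g(w_{g^{-1}hg})\psi_{gg^{-1}h}(\cdots)$ to relate $\tr(w_{g^{-1}hg}v_{g^{-1}hg})$ to $\tr(w_hv_h)$. Then the pairing of $p_w$ (or $p_1$) with a twisted class sum equals the size of the class times the pairing with a single representative $v_h^*u_h$. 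Hence equality against central elements implies equality for each $h$, which is (3).

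If the phases fail to match for some $h$ (the class sum vanishes), the central element is zero. In that case I expect both $\tr(w_hv_h)$ and $\tr(v_h)$ to be forced to vanish by the same invariance computation, which settles those $h$ as well.
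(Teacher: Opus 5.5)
Your proof is correct. The paper gives no argument of its own for this statement; it records it as a Fact and cites Jones [\S3.1 of MR587749]. Your route is the standard one: first show $p_w=t^*p_1t$ exactly when $w$ is a coboundary, then compare $\tau(p_w\,\cdot\,)$ and $\tau(p_1\,\cdot\,)$ on the centre inside $M'\cap(M\rtimes G)=\operatorname{span}\{v_h^*u_h\}$. So you are supplying a self-contained proof rather than a genuinely different one.

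The one step you only sketched is the conjugation covariance in (2)$\Rightarrow$(3), and the formula you wrote for it is garbled, though the claim is true. Applying $\tr\circ\psi_g=\tr$, the cocycle identity for both factorizations $gh=g\cdot h=(ghg^{-1})\cdot g$, and the section relation gives $\tr(w_hv_h)=\lambda(g,ghg^{-1})\,\tr(w_{ghg^{-1}}v_{ghg^{-1}})$.

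It is cleaner to avoid the phases altogether. Set $W_g:=w_gu_g$. Then $W_gp_w=p_w=p_wW_g$. Also $\Ad(W_g)=\Ad(u_g)$ on $M'\cap(M\rtimes G)$, because $u_gxu_g^*$ stays in the relative commutant and so commutes with $w_g\in M$. Hence $x\mapsto\tau(p_wx)$ is $\Ad(u_g)$-invariant there, and therefore equals $\tau\bigl(p_w\,E(x)\bigr)$ with $E=\frac{1}{|G|}\sum_g\Ad(u_g)$ the expectation onto $Z(M\rtimes G)$; likewise for $p_1$. Equality on the centre then gives equality on every $v_h^*u_h$. This also covers automatically the conjugacy classes whose twisted sums vanish, which you handled separately.
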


\begin{rem}
When $\psi$ is outer, then any 1-cocycle is a coboundary. 
This is known as Ocneanu’s cocycle vanishing theorem.
\end{rem}

\begin{lem}\cite{MR454659}\label{lem:NotCT->OuterOnMomega}
For the embedding $\Phi:\Aut(M)\to \Aut(M_\omega)$ given by $\alpha\mapsto \alpha_\omega$,
we have $\Ct(M)= \Phi^{-1}(\Inn(M_\omega))$ .
\end{lem}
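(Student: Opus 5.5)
The inclusion $\Ct(M)\subseteq\Phi^{-1}(\Inn(M_\omega))$ is immediate: if $\alpha\in\Ct(M)$ then $\alpha_\omega=\id_{M_\omega}=\Ad(1)$. So the content of the lemma is the reverse inclusion: if $\alpha_\omega=\Ad(u)$ for some unitary $u\in M_\omega$, then $\alpha_\omega$ is actually the identity. I plan to prove this by a diagonal reindexing argument in the spirit of Connes. The idea is to replace a given central sequence by a ``fast'' subsequence of itself that asymptotically commutes with $u$. On such a subsequence $\Ad(u)$ acts trivially, while $\alpha$ acts termwise, so it sees the same defect as on the original sequence.

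Fix a representative $(u_n)$ of $u$ with each $u_n\in U(M)$; unitaries lift to unitary sequences. Fix also a $\|\cdot\|_2$-dense sequence $(a_j)_j$ in the unit ball of $M$, which exists because $M$ has separable predual. Let $x=(x_n)\in M_\omega$ with $\sup_n\|x_n\|\le 1$. Argue by contradiction: suppose $\|\alpha_\omega(x)-x\|_2>2\varepsilon$. Then the set
$$S=\{n:\|\alpha(x_n)-x_n\|_2\ge\varepsilon\}$$
belongs to $\omega$. Since $x$ is central, for every finite set $F$ of indices and every $\delta>0$, the set of $n\in S$ with $\|[x_n,a_j]\|_2<\delta$ for all $j\in F$ also lies in $\omega$, and in particular is infinite.

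\textbf{Step 1: choosing the fast subsequence.} I will choose indices $k(n)\in S$ inductively so that three conditions hold:
\begin{enumerate}
\item $\|[x_{k(n)},a_j]\|_2<1/n$ for all $j\le n$;
\item $\|[x_{k(n)},u_n]\|_2<1/n$;
\item $\|\alpha(x_{k(n)})-u_nx_{k(n)}u_n^*\|_2<1/n$.
\end{enumerate}
Condition (1) is available by centrality of $x$. Condition (2) holds for each fixed $n$ because $u_n\in M$ is a single element and $x$ is central, so $\lim_{m\to\omega}\|[x_m,u_n]\|_2=0$; hence we may pick $k(n)$ from an $\omega$-large set.

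\textbf{Step 2: securing condition (3), the main obstacle.} The hypothesis $\alpha_\omega=\Ad(u)$ is a statement about $\omega$-limits taken with the same index in the sequence and in $u$. The reindexed sequence pairs $x_{k(n)}$ with $u_n$, not with $u_{k(n)}$, so the hypothesis does not directly give (3). To get around this, I will first establish the inequality
$$\sup_{y}\ \|\alpha(y)-u_nyu_n^*\|_2\to 0\quad(n\to\omega),$$
where $y$ ranges over elements of the unit ball of $M$ that $\delta_n$-commute with $a_1,\dots,a_{m_n}$, for suitable $\delta_n\to 0$ and $m_n\to\infty$. This is a uniform version of the hypothesis. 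It follows from $\alpha_\omega=\Ad(u)$ by a standard contradiction-plus-diagonal argument: if it failed, one could build a central sequence $(y_n)$ with $\|\alpha_\omega(y)-uyu^*\|_2>0$, contradicting the hypothesis. With this uniform bound in hand, condition (3) is obtained by choosing $k(n)$ with $x_{k(n)}$ commuting well enough with $a_1,\dots,a_{m_n}$.

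\textbf{Step 3: the contradiction.} Put $y=(x_{k(n)})_n$. By condition (1), $y\in M_\omega$. By conditions (2) and (3),
$$\alpha_\omega(y)=uyu^*=y.$$
On the other hand, each $k(n)\in S$, so $\|\alpha(x_{k(n)})-x_{k(n)}\|_2\ge\varepsilon$ for every $n$, which gives $\|\alpha_\omega(y)-y\|_2\ge\varepsilon$. This contradiction shows $\alpha_\omega(x)=x$ for every $x\in M_\omega$. Hence $\alpha\in\Ct(M)$, and therefore $\Phi^{-1}(\Inn(M_\omega))\subseteq\Ct(M)$.
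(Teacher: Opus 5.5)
Your argument is correct and is essentially the paper's. You reindex $x$ into $y=(x_{k(n)})_n$ that is still central, asymptotically commutes with $u$, and keeps $\|\alpha(x_{k(n)})-x_{k(n)}\|_2\ge\varepsilon$, so that $\alpha_\omega(y)=uyu^*=y$ gives the contradiction. Your condition (1) even makes explicit the centrality of the reindexed sequence, which the paper's proof asserts without arranging.

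Step 2, however, is an unnecessary detour resting on a misreading of the hypothesis. Once (1) puts $y$ in $M_\omega$, the hypothesis $\alpha_\omega=\Ad(u)$ applies to $y$ itself. Since $uyu^*$ is represented by $(u_nx_{k(n)}u_n^*)_n$, this is exactly the $\omega$-limit form of (3), and that is all Step 3 uses. The per-index bound $<1/n$ is not needed, and it is more than your uniform estimate actually delivers, since that estimate holds only along $\omega$.
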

\begin{proof}
If $\alpha\in \Ct(M)$, then $\alpha_\omega$ is the identity on $M_\omega$, which is inner. Thus, $\Ct(M)\subseteq \Phi^{-1}(\Inn(M_\omega))$.

Suppose there is an $\alpha$ that is not centrally trivial
but the induced action $\alpha_\omega$ is inner, then $\alpha_\omega=\Ad V$ for some unitary $V=(v_n)\in M_\omega$. 
Note that if $\alpha$ is not centrally trivial, then there exists $X=(x_n)\in M_\omega$ such that $\alpha_\omega(X)\ne X$ in $M_\omega$. 
Moreover, there exists $\epsilon>0$ such that we can pick a subsequence (still denoted as $X=(x_n)$) with $\|\alpha(x_n)-x_n\|_2>\epsilon$ for every $n$.
For each fixed $n$, since $X$ is a central sequence, there exists $(k_n)$ such that $\|[v_n,x_{k_n}]\|_2=\|v_nx_{k_n}v_n^* - x_{k_n}\|_2 < \frac{1}{n}$.
This implies that for $Y = (x_{k_n})\in M_\omega$, $\alpha_\omega(Y) - Y = VYV^* - Y = 0$, which is a contradiction.
\end{proof}

\begin{lem}\label{lem:innerPsitildePsi}
For $\tilde{\psi}:G\to \Aut(B(H)\otimes M)$ given by $\tilde{\psi}_g := \id_{B(H)}\otimes \psi_g$, 
we have
$$G\cap \tilde{\psi}^{-1}(\Inn(B(H)\otimes M)) = G\cap \psi^{-1}(\Inn(M)).$$
\end{lem}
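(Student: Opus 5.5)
We prove the two inclusions separately. Here $H$ is finite dimensional, so $B(H)\cong M_d(\bbC)$ and $B(H)\otimes M$ is again a $\rm II_1$ factor. The key tool is the relative commutant $(1\otimes M)'\cap (B(H)\otimes M)=B(H)\otimes \bbC$, which holds because $M$ is a factor.

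For the inclusion $\supseteq$, let $g\in G$ with $\psi_g=\Ad(v)$ for some $v\in U(M)$. Then $1\otimes v$ is a unitary in $B(H)\otimes M$. For $a\in B(H)$ and $m\in M$,
\[
\Ad(1\otimes v)(a\otimes m)=a\otimes vmv^*=\tilde\psi_g(a\otimes m).
\]
Both sides are normal $*$-automorphisms that agree on the algebraic tensor product, so by density they agree everywhere. Hence $\tilde\psi_g$ is inner.

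For the inclusion $\subseteq$, suppose $\tilde\psi_g=\Ad(W)$ for some $W\in U(B(H)\otimes M)$.
\begin{enumerate}
\item Since $\tilde\psi_g$ fixes $a\otimes 1$ for every $a\in B(H)$, $W$ commutes with $B(H)\otimes 1$.
\item We identify $(B(H)\otimes 1)'\cap (B(H)\otimes M)=1\otimes M$. Concretely, write $W=\sum_{i,j}e_{ij}\otimes w_{ij}$ in matrix units. Commuting with every $e_{kl}\otimes 1$ forces $w_{ij}=\delta_{ij}w$ for a single $w\in M$. So $W=1\otimes w$.
\item Since $W$ is unitary, $w$ is a unitary in $M$.
\item Applying the identity to $1\otimes m$ gives $1\otimes wmw^*=1\otimes \psi_g(m)$. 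Hence $\psi_g=\Ad(w)$ is inner.
\end{enumerate}

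The only step needing care is step 2, the commutant computation. It is elementary once we pass to matrix units: $B(H)\otimes M\cong M_d(M)$, and the commutant of the scalar matrices inside $M_d(M)$ is the diagonal copy of $M$. Everything else is routine verification.
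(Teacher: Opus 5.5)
Your proof is correct, but in the $\subseteq$ direction it follows a different route from the paper's.

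\textbf{The paper's route.} It uses only the relation $U(1\otimes m)=(1\otimes\psi_g(m))U$ on $1\otimes M$. Writing $U=(U_{ij})\in M_n(M)$, every entry satisfies $U_{ij}m=\psi_g(m)U_{ij}$. The paper picks a nonzero entry $x$ and checks that $x^*x$ and $xx^*$ commute with $M$. It then uses $Z(M)=\mathbb{C}$ to conclude they are equal positive scalars, and rescales $x$ to a unitary implementing $\psi_g$.

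\textbf{Your route.} You use the other half of the information: $\tilde\psi_g$ fixes $B(H)\otimes 1$. Hence $W\in(B(H)\otimes 1)'\cap(B(H)\otimes M)=1\otimes M$, and $\psi_g=\Ad(w)$ can be read off directly.

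\textbf{Comparison.}
\begin{itemize}
\item Your argument is more elementary, and it never uses that $M$ is a factor: the matrix-unit commutant computation holds for any von Neumann algebra $M$.
\item This matters. In the proof of Theorem~\ref{thm:NotCT->AI} the lemma is applied with $M_\omega$ in place of $M$, and your argument needs no factoriality of $M_\omega$ there.
\item The paper's argument, by contrast, never uses the behaviour of $\tilde\psi_g$ on $B(H)\otimes 1$. It effectively proves a different statement: any single nonzero $x\in M$ with $xm=\psi_g(m)x$ for all $m$ already forces $\psi_g$ to be inner. The price is that it requires $M$ to be a factor.
\end{itemize}

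\textbf{Cosmetic point.} The relative commutant $(1\otimes M)'\cap(B(H)\otimes M)=B(H)\otimes\mathbb{C}$ that you announce as the key tool is never used. Neither is the remark that $B(H)\otimes M$ is a $\rm II_1$ factor. What you actually use is $(B(H)\otimes 1)'\cap(B(H)\otimes M)=1\otimes M$, which needs no factoriality, so you can drop the factor hypothesis from your introduction.
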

\begin{proof}
It is clear that $\supseteq$ holds, since if $\psi_g$ is inner, then $\tilde{\psi}_g= \id\otimes \psi_g$ is inner. 
We shall prove the other inclusion.
Suppose $\tilde{\psi}_g$ is inner, then there exists $U\in B(H)\otimes M$ such that $\tilde{\psi}_g=\Ad(U)$.
In particular, 
$$\tilde{\psi}_g(1\otimes m) = U(1\otimes m)U^*=1\otimes \psi_g(m).$$ 
We can rewrite this as
$$U(1\otimes m) = (1\otimes \psi_g(m))U.$$
Since $H$ is finite-dimensional, we can write $U=(U_{ij})_{i,j}$ where $U_{ij}\in M$ via the identification $B(H)\otimes M\cong M_n(M)$.
Additionally, $1\otimes m = m I_n$ and $1\otimes \psi_g(m) = \psi_g(m) I_n$, where $I_n\in B(H)$ is the identity matrix, then the above equation gives 
$$(U_{ij})(mI_n) = (\psi_g(m)I_n)(U_{ij}).$$
This implies 
$$U_{ij}m=\psi_g(m)U_{ij}$$
holds for all $i,h$ and $m\in M$. Since $U$ is a unitary, there exists a $U_{ij}\ne 0$. Choose one such nonzero $U_{ij}=:x$, then $xm=\psi_g(m)x$ for all $m\in M$, we shall show $\psi_g$ is inner.

First, 
\begin{align*}
    x^*xm &= x^*(xm)= x^* (\psi_g(m)x) = (x^* \psi_g(m))x = (\psi_g(m)^* x)^* x \\
    &= (\psi_g(m^*)x)^*x = (x m^*)^* x = mx^*x
\end{align*} holds for all $m\in M$
this implies $x^*x \in Z(M)\cong \bbC 1$.
Similarly,
\begin{align*}
    \psi_g(m) xx^* &=  (\psi_g(m) x)x^* = xmx^* = x(xm^*)^* \\
    & = x(\psi_g(m^*)x)^* = xx^*\psi_g(m^*)^* = xx^*\psi_g(m)
\end{align*}holds for all $m\in M$,
this implies $xx^*\in Z(M)\cong \bbC 1$.
Combining these two results, let $\mu = \tr(x^*x)^{\frac{1}{2}}=\tr(xx^*)^{\frac{1}{2}}$, we have $u: = \frac{x}{\mu}$ is a unitary.
Then 
$$um = \mu xm=\mu \psi_g(m) x = \psi_g(m) u,$$
which implies $\psi_g$ is inner.
\end{proof}

\begin{nota}\label{nota:approx}
We use the same convention as in \cite[Nota.~4.3]{MR4753059}. For norm bounded nets $(f_n)_n,(g_n)_n\subseteq\Hom(X_M\to Y_M)$, and $f,g\in \Hom(X_M\to Y_M)$, we write
\begin{itemize}
    \item $f_n\approx g_n$ if $\lim\limits_{n\to\omega}\|f_n-g_n\|_2=0$;
    \item $f_n\approx f$ if $\lim\limits_{n\to\omega}\|f_n-f\|_2=0$.
\end{itemize}
Therefore, $f\approx g$ if and only if $f=g$.     
\end{nota}

\begin{thm}\label{thm:NotCT->AI}
$\Rep(G/G\cap \psi^{-1}(\Ct(M)))\hookrightarrow \Bim(M\rtimes G)$ is approximately inner.
\end{thm}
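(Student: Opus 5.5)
The plan is to build an approximately inner basis directly from a unitary in $B(H)\otimes M_\omega$ that trivializes the cocycle $\pi$. Fix $(H,\pi)\in\Rep(G/K)$ with $K=G\cap\psi^{-1}(\Ct(M))$, regarded as a $G$-representation with $\pi_k=\id_H$ for $k\in K$, and set $X=F(H,\pi)={}_{\pi}H\otimes M\rtimes G$. Candidate basis vectors have the form $x=\sum_j \xi_j\otimes w_j$ with $w_j\in M$ and $\{\xi_j\}$ an ONB of $H$. For such $x$ and $m\in M$ we have $mx-xm=\sum_j\xi_j\otimes[m,w_j]$. For the group unitaries,
$$u_gx-xu_g=\sum_j\big(\pi_g\xi_j\otimes\psi_g(w_j)-\xi_j\otimes w_j\big)u_g .$$
So if we arrange a matrix $W=(w_{ij})\in B(H)\otimes M$ whose columns $x_i=\sum_j\xi_j\otimes w_{ji}$ satisfy these conditions, the requirements become: the entries of $W$ asymptotically commute with $M$; $(\pi_g\otimes 1)\tilde\psi_g(W)\approx W$ for all $g$, where $\tilde\psi_g=\id\otimes\psi_g$; and $W$ is (asymptotically) unitary. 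Unitarity gives $\sum_i x_i\langle x_i|x\rangle=WW^*x$, so the approximate basis identity holds.

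Passing to the ultrapower, the task is therefore to find a unitary $W\in B(H)\otimes M_\omega$ with $\pi_g=W\tilde\psi_{g,\omega}(W)^*$, i.e.\ to show that $\pi$ is a coboundary for the action $\tilde\psi_\omega=\id\otimes(\psi)_\omega$ on $B(H)\otimes M_\omega$. By Example~\ref{ex:RepCocyle}, $\pi$ is a cocycle for this action as well. Since $\pi$ and $\psi_\omega$ both factor through $G/K$ (because $(\psi_k)_\omega=\id$ for $k\in K$), we may work with the $G/K$-action. By Lemma~\ref{lem:NotCT->OuterOnMomega}, $(\psi_g)_\omega$ is inner on $M_\omega$ only if $g\in K$, so the $G/K$-action on $M_\omega$ is outer. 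The argument of Lemma~\ref{lem:innerPsitildePsi} then shows that it stays outer after tensoring with $B(H)$; that argument only used the commutation relations and the triviality of the centre, and we would redo it using the relative commutant, or directly with $M_\omega$.

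The cocycle vanishing step comes next: apply Fact~\ref{fact:coboundaryTrEQ}. Here $G/K\cap\tilde\psi_\omega^{-1}(\Inn)$ is trivial, so condition (3) is vacuous (only $h=e$, where $\tr(\pi_e v_e)=\tr(v_e)$). Hence $\pi$ is a coboundary, giving $W$ with $\pi_g=W\tilde\psi_g(W)^*$. Finally, lift $W$ to a sequence of unitaries $W_n\in B(H)\otimes M$, which is possible since unitaries in $M^\omega$ lift to unitaries. Its columns $x_i^{(n)}$ are uniformly bounded and satisfy all three conditions above in $\|\cdot\|_2$ along $\omega$. Both $\{x_i^{(n)}\}$ and the commutation relation are compatible with the general elements $a_gu_g$ of $M\rtimes G$, so $X$ is approximately inner. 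Since the approximately inner bimodules form a full tensor subcategory, the whole image of $\Rep(G/K)$ lies in $\aiBim(M\rtimes G)$.

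The main obstacle is the cocycle vanishing step. $M_\omega$ is a $\rm II_1$ von Neumann algebra, by McDuff, but it is not a factor, and Fact~\ref{fact:coboundaryTrEQ} and Ocneanu's theorem are stated for factors. I would first try to verify condition (2) of Fact~\ref{fact:coboundaryTrEQ} directly in $(B(H)\otimes M_\omega)\rtimes G/K$, comparing centre-valued traces. Failing that, I would use Connes' Rohlin-type lemma for outer actions on $M_\omega$ to build $W$ by the standard averaging $W\approx\sum_g \pi_g\tilde\psi_g(E)$ over a Rohlin partition $E$, and then use a diagonal sequence argument to pass from approximate to exact coboundary in the ultrapower.
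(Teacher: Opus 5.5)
Your proposal follows the paper's proof. The paper likewise regards $\pi$ as a cocycle for $\tilde\psi_\omega$ on $B(H)\otimes M_\omega$. It uses Lemma~\ref{lem:NotCT->OuterOnMomega} and Lemma~\ref{lem:innerPsitildePsi} to identify the inner part with $K$, where $\pi_k=\id_H$, so condition (3) of Fact~\ref{fact:coboundaryTrEQ} holds; your passage to $G/K$, which makes that condition vacuous, amounts to the same step. It then obtains $t$ with $\pi_g=t^*\tilde\psi_g(t)$ and takes $x_i^{(n)}=\sum_j\eta_j\otimes t_{ij;n}^*$, which are exactly the columns of your $W=t^*$.

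Your caution about factoriality concerns a point the paper does not address: it applies Lemma~\ref{lem:innerPsitildePsi} and Fact~\ref{fact:coboundaryTrEQ} directly to $B(H)\otimes M_\omega$. Your Rohlin-tower fallback closes that step regardless of whether $M_\omega$ is a factor. Given a Rohlin projection $e\in M_\omega$ for the $G/K$-action, which exists since $\psi_g\notin\Ct(M)$ for $g\notin K$, set $W=\sum_{gK\in G/K}\pi_g\otimes(\psi_g)_\omega(e)$. This $W$ is exactly unitary and satisfies $\pi_h\tilde\psi_h(W)=W$.
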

\begin{proof}
We denote $K=G\cap \psi^{-1}(\Ct(M))$.
The outer action $\psi:G\to \Aut(M)$ gives an induced action $\psi_\omega=\Phi\circ \psi$ on the central sequence algebra $\psi_\omega:G\to \Aut(M_\omega)$.
For a given $(H,\pi)\in \Rep(G/K)\subseteq \Rep(G)$, by Example \ref{ex:RepCocyle}, $\pi: G\to B(H)\hookrightarrow B(H)\otimes M_\omega$ is a cocycle for $\tilde{\psi}_\omega$.

By Lemma \ref{lem:NotCT->OuterOnMomega}, if the induced action $\psi_h$ is inner, then $\psi_h$ is centrally trivial.
Note that for any section $\{v_h\}$ of $\tilde{\psi}_\omega$, by Lemma \ref{lem:innerPsitildePsi}, since 
$$h\in G\cap\tilde{\psi}_\omega^{-1}(\Inn(B(H)\otimes M_\omega)) = G\cap\psi_\omega^{-1}(\Inn(M_\omega))= G\cap\psi^{-1}(\Ct(M)),$$
we have $\pi_h=\id_H$ so that $\tr(\pi_hv_h) = \tr(v_h)$.
According to Fact \ref{fact:coboundaryTrEQ}, the cocycle $\pi$ is a coboundary, i.e., there exists a unitary $t\in B(H)\otimes M_\omega$ such that $\pi_g = t^*(\tilde{\psi}_\omega)_g(t)$ for all $g\in G$.

Let $\{\eta_i\}$ be an ONB for $H$ and $\{\varepsilon_{ij}\}$ be the corresponding system of matrix units for $B(H)$.
We write $t=\sum_{i,j} \varepsilon_{ij}\otimes t_{ij}$ for $t_{ij} = (t_{ij;n})_{n\in\omega}\in M_\omega$. Then we have
\begin{align*}
t^* & = \sum_{i,j} \varepsilon_{ij}^*\otimes t_{ij}^* = \sum_{i,j} \varepsilon_{ij}\otimes t_{ji}^*, \\
tt^* & = \sum_{i,j}\varepsilon_{ij}\otimes \sum_{k} t_{ik} t_{jk}^*, \\
t^*t & = \sum_{i,j}\varepsilon_{ij}\otimes \sum_{k} t_{ki}^* t_{kj}.
\end{align*}
Note that $t\in U(B(H)\otimes M_\omega)$, we have
$$\sum_{k} t_{ik} t_{jk}^* = \delta_{i=j} = \sum_{k} t_{ki}^* t_{kj}\quad \text{ in }M_\omega.$$
In other words,
$$\lim_{n\to \omega}\sum_{k} t_{ik;n} t_{jk;n}^* = \delta_{i=j} = \lim_{n\to \omega}\sum_{k} t_{ki;n}^* t_{kj;n}.$$
Note that $(\tilde{\psi}_\omega)_g = \id_H\otimes (\psi_\omega)_g$, 
$$\pi_g = t^*(\tilde{\psi}_\omega)_g(t) = \sum_{i,j} \varepsilon_{ij}\otimes \sum_k t_{ki}^*(\psi_\omega)_g(t_{kj}) \in B(H)\otimes M_\omega,$$
then 
$$\pi_g(\eta_j\otimes b_hu_h) = \sum_l \eta_l\otimes \sum_k t_{kl}^*(\psi_\omega)_g(t_{kj})b_hu_h = \lim_{n\to \omega} \sum_l \eta_l\otimes \sum_k t_{kl;n}^*\psi_g(t_{kj;n})b_hu_h.$$

Let $x_i^{(n)}=\sum_j \eta_j\otimes t_{ij;n}^*u_1$ and we shall show $\{x_i^{(n)}\}_i$ is an approximately inner $({}_{\pi}H\otimes M\rtimes G) _{M\rtimes G}$ basis.

First, for $x= \eta_k\otimes a_gu_g \in {}_{\pi}H\otimes M\rtimes G$,
\begin{align*}
\left\| x - \sum_{i} x_i^{(n)}\langle x_i^{(n)}|x\rangle \right\|_2 
& = \left\|\eta_k\otimes a_gu_g - \sum_i \sum_j \eta_j \otimes t_{ij;n}^*u_1\left\langle \sum_j \eta_j \otimes t_{ij;n}^* u_1\middle| \eta_k\otimes a_gu_g\right\rangle \right\|_2 \\
& = \left\|\eta_k\otimes a_gu_g - \sum_i \sum_j \eta_j \otimes t_{ij;n}^*  t_{ik;n}a_gu_g \right\|_2 \\
& = \left\|\eta_k\otimes a_gu_g - \sum_j \eta_j\otimes \sum_i t_{ij;n}^*  t_{ik;n}a_gu_g \right\|_2 \\
& \approx \left\|\eta_k\otimes a_gu_g - \sum_j \eta_j \otimes \delta_{j=k} a_gu_g \right\|_2 \\
& = 0.
\end{align*}
For $a= a_gu_g\in M\rtimes G$,
{\allowdisplaybreaks
\begin{align*}
\left\|a\rhd x_i^{(n)}-x_i^{(n)}\lhd a\right\|_2 
& =  \left\|a_gu_g \rhd \sum_j \eta_j\otimes t_{ij;n}^*u_1-\sum_j \eta_j\otimes t_{ij;n}^*u_1\lhd a_gu_g\right\|_2 \\
& = \left\|\pi_g\sum_j \eta_j\otimes a_g u_g t_{ij;n}^*u_1-\sum_j \eta_j\otimes t_{ij;n}^*u_1a_gu_g\right\|_2 \\ 
& \approx \left\|\sum_j \sum_l \eta_l\otimes \sum_k t_{kl;n}^*\psi_g(t_{kj;n}) a_g u_gt_{ij;n}^*u_1-\sum_j \eta_j\otimes t_{ij;n}^*a_gu_g\right\|_2 \\
& = \left\|\sum_l \eta_l\otimes \sum_k \sum_j t_{kl;n}^*\psi_g(t_{kj;n}) a_g \psi_g(t_{ij;n}^*)u_g-\sum_j \eta_j\otimes t_{ij;n}^*a_gu_g\right\|_2 \\
& \approx \left\|\sum_l \eta_l\otimes \sum_k  t_{kl;n}^*\sum_j\psi_g(t_{kj;n})  \psi_g(t_{ij;n}^*)a_gu_g-\sum_j \eta_j\otimes t_{ij;n}^*a_gu_g\right\|_2 \\
& = \left\|\sum_l \eta_l\otimes \sum_k t_{kl;n}^* \psi_g\left(\sum_j t_{kj;n}  t_{ij;n}^*\right)a_gu_g-\sum_j \eta_j\otimes t_{ij;n}^*a_gu_g\right\|_2 \\
& \approx \left\|\sum_l \eta_l\otimes \sum_k t_{kl;n}^* \delta_{k=i} a_gu_g-\sum_j \eta_j\otimes t_{ij;n}^*a_gu_g\right\|_2 \\
& = 0
\end{align*}
}
In the fifth approximation, we use the fact that $(t_{ij;n}^*)_n$ is a central sequence, so is $(\psi_g(t_{ij;n}^*))_n$.

Therefore, ${}_{\pi}H\otimes M\rtimes G$ is approximately inner in $\Bim(M\rtimes G)$.
\end{proof}

\subsection{$\Rep(G/G\cap \psi^{-1}(\Ai(M)))$ is centrally trivial}
In this section, we show that $\Rep(G/G\cap \psi^{-1}(\Ai(M)))\hookrightarrow \Bim(M\rtimes G)$ is centrally trivial.
By unpacking the embedding $\Rep(G/G\cap \psi^{-1}(\Ai(M)) \hookrightarrow \Bim(M\rtimes G)$, every representation $(H,\pi)\in \Rep(G/G\cap \psi^{-1}(\Ai(M)))$ can be viewed as a $G$-representation, in particular $\pi_g=\id_H$ if $\psi_g\in \Ai(M)$.

The following proposition in Jones' notes is essential. 
We provide the proof for the convenience to the reader.
\begin{prop}\cite{chiNotes} \label{Prop:nonAI}
Suppose $\alpha\in\Aut(M)$ is not approximately inner on a McDuff $\rm II_1$ factor $M$. 
Then if $(x_n)\in M^\omega$ with $\|x_n\alpha(y)-yx_n\|_2\to 0$ for all $y\in M$,
then $\|x_n\|_2\to 0$.
\end{prop}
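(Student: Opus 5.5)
We argue by contraposition: assume $(x_n)\in M^\omega$ satisfies $x_n\alpha(y)-yx_n\to 0$ in $\|\cdot\|_2$ for all $y\in M$, but $\lim_{n\to\omega}\|x_n\|_2=c>0$. From this we will build unitaries $u_n$ with $u_n\alpha(y)u_n^*-y\to 0$ for all $y$. That says $\alpha^{-1}$, hence $\alpha$, is approximately inner, which is a contradiction.

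\emph{Step 1: replace $x_n$ by a partial isometry.} Let $X=(x_n)\in M^\omega$. The hypothesis says $X\alpha(y)=yX$ for all $y\in M$. Taking adjoints gives $\alpha(y)X^*=X^*y$, so $X^*X$ commutes with $\alpha(M)=M$ and $XX^*$ commutes with $M$. Thus $|X|\in M'\cap M^\omega=M_\omega$. Write the polar decomposition $X=V|X|$ in the von Neumann algebra $M^\omega$. Because $|X|$ commutes with $M$ and $X$ intertwines $\alpha$, uniqueness of the polar decomposition shows $V\alpha(y)=yV$ for all $y$. Moreover $V^*V=p$ and $VV^*=q$ are nonzero projections in $M_\omega$, since $X\neq 0$ in $M^\omega$.

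\emph{Step 2: enlarge $V$ to a unitary.} This is the main obstacle. Consider the set of partial isometries $W\in M^\omega$ with $W\alpha(y)=yW$ and source and range projections in $M_\omega$. We want one whose support projections are $1$. Start with a maximal family (Zorn) of such $W$'s with mutually orthogonal sources and ranges, and sum them. If the source $p$ or range $q$ of the sum is not $1$, we repeat the Step 1 construction under $1-p$ and $1-q$. Since $M$ is McDuff, $M_\omega$ is of type $\rm II_1$. Any nonzero intertwiner $V$ gives the equality $(V\alpha(\cdot)V^*)$-equivalence; note $V^*V$ and $VV^*$ have equal trace, since $\tau(V^*V)=\tau(VV^*)$. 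The key fact we need is the following: if $e,f\in M_\omega$ are projections of equal trace, then there is an intertwiner between $e$ and $f$. To get it, multiply $V$ by partial isometries of $M_\omega$. These commute with $M$, so $W_1VW_2$ is still an intertwiner. Using $M_\omega$ type $\rm II_1$, and possibly a diagonal reindexation argument so that $M_\omega$ is a factor-like object (or using comparison in $M_\omega$ via central decomposition), we transport pieces of $V$ from $p$ to $1-p$ and from $q$ to $1-q$. Exhausting by a maximality argument on trace yields a unitary $U\in M^\omega$ with $U\alpha(y)U^*=y$. If comparison in $M_\omega$ is problematic because $M_\omega$ need not be a factor, the fallback is Connes' trick: work with the $2\times 2$ matrix $\begin{pmatrix}0&0\\X&0\end{pmatrix}$ over $M\otimes M_2$ and the automorphism $\id\oplus\alpha$, then apply the standard exhaustion used in Connes' proof that $\alpha$ is approximately inner iff the corresponding bimodule is weakly contained.

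\emph{Step 3: lift.} Represent $U$ by unitaries $u_n\in M$, which is possible because unitaries in $M^\omega$ lift to sequences of unitaries. Then $\lim_{n\to\omega}\|u_n\alpha(y)u_n^*-y\|_2=0$ for all $y$. Applying this with $y=\alpha^{-1}(z)$-type substitutions shows $\alpha=\lim\Ad(u_n^*)$ pointwise in $\|\cdot\|_2$, so $\alpha\in\Ai(M)$, which is the desired contradiction.
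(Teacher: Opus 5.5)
Your Steps 1 and 3 are correct: $X^*X$ and $XX^*$ lie in $M_\omega$, the polar part $V$ is again an intertwiner, and unitaries of $M^\omega$ lift. Your overall plan is also the paper's. The paper puts the intertwiners in the off-diagonal corner of the linking algebra $\mathcal{L}:=\sigma(M)'\cap M_2(M^\omega)$, with $\sigma(y)=\mathrm{diag}(y,\alpha(y))$; both diagonal corners of $\mathcal{L}$ are $M_\omega$. It then reads off a unitary intertwiner from $e_{11}\sim e_{22}$ in $\mathcal{L}$.

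The gap is Step 2, which carries all the content. Your ``key fact'' says that projections of $M_\omega$ with equal trace are joined by an intertwiner. In the case $\alpha=\mathrm{id}$, the intertwiners are exactly the elements of $M_\omega$, so the key fact is literally the statement that $M_\omega$ is a factor. You neither prove nor assume this. The Zorn exhaustion does not help: maximality of your family only yields $(1-q)X(1-p)=0$ for every intertwiner $X$, and excluding that is precisely the missing comparison. The other options you list (reindexation, central decomposition, the $2\times 2$ trick) are named but not carried out. The $2\times 2$ matrix by itself changes nothing, since one still has to prove $e_{11}\sim e_{22}$ in $\mathcal{L}$.

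The paper closes this step by asserting $Z(M_\omega)\cong\mathbb{C}$, so that $\mathcal{L}$ is a $\rm II_1$ factor. If you grant that, your Step 2 becomes a finite argument: cut into pieces of trace at most $\tau(p)$, move them under $p$ by partial isometries of $M_\omega$, apply $V$, and move them out from under $q$. Your proof then coincides with the paper's. The paper does not justify that factoriality, and it is not needed.

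It suffices to show $\tau(z_1)=\tau(z_2)$ for every central projection $z=\mathrm{diag}(z_1,z_2)$ of $\mathcal{L}$. Then $e_{11}$ and $e_{22}$ have the same center-valued trace in the finite algebra $\mathcal{L}$, hence are equivalent. Here is how to get it.
\begin{itemize}
\item Centrality gives $z_1X=Xz_2$, hence $\tau(z_1XX^*)=\tau(z_2X^*X)$, for every intertwiner $X$.
\item You have independence: $\tau(ya)=\tau(y)\tau(a)$ for $y\in M$ and $a\in M_\omega$, because $E_M(a)\in M'\cap M=\mathbb{C}$.
\item Reindex your nonzero $X=(x_k)$. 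Let $s=\tau(X^*X)>0$ and fix a $\|\cdot\|_2$-dense sequence $y_1,y_2,\dots$ in $M$. Choose $k(n)$ so that $x_{k(n)}$ almost intertwines $y_1,\dots,y_n$, and so that $\tau(x_{k(n)}x_{k(n)}^*z_{1,n})\approx s\,\tau(z_{1,n})$ and $\tau(x_{k(n)}^*x_{k(n)}z_{2,n})\approx s\,\tau(z_{2,n})$.
\item The sequence $X'=(x_{k(n)})_n$ is again an intertwiner, and the first bullet applied to $X'$ gives $s\,\tau(z_1)=s\,\tau(z_2)$.
\end{itemize}

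Alternatively, the same independence shows that the vectors $x_n^*/\|x_n\|_2$ are almost central unit vectors of ${}_{\alpha}L^2(M)$ whose vector states tend to $\tau$. Hence $L^2(M)\preceq{}_{\alpha}L^2(M)$, and the Connes--Mingo characterization (Fact~\ref{fact:Aiautospect}) gives $\alpha\in\Ai(M)$. This may be what your last sentence intends, but the independence is the point you must supply. Neither argument uses the McDuff hypothesis.
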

\begin{proof}
We prove the contrapositive. 
Suppose there is a bounded sequence $(x_n)$ with $\|x_n\alpha(y)-yx_n\|_2\to 0$ and $\|x_n\|_2> \delta>0$.
We can embed $M$ in the $M_2(M^\omega)$ by
$$\sigma(y) = \begin{bmatrix}
y & 0 \\
0 & \alpha(y) 
\end{bmatrix}$$
For $x=(x_n)\in M^\omega$, the condition $\|x_n \alpha(y)-yx_n\|_2\to 0$ for all $y\in M$ can be interpreted as
$$\begin{bmatrix}
0 & 0 \\
x & 0 
\end{bmatrix} \in \sigma(M)'\cap M_2(M^\omega).$$

We claim that $\sigma(M)'\cap M_2(M^\omega)$ is a $\rm II_1$-factor in this case. 
Suppose $\begin{bmatrix}
a & b \\
c & d 
\end{bmatrix}\in Z(\sigma(M)'\cap M_2(M^\omega))$,
then $\begin{bmatrix}
a & b \\
c & d 
\end{bmatrix}$ must commute with $\begin{bmatrix}
1 & 0 \\
0 & 0 
\end{bmatrix}$ and $\begin{bmatrix}
0 & 0 \\
0 & 1 
\end{bmatrix}$, 
so $b=c=0$.
Moreover,
since $\begin{bmatrix}
a & 0 \\
0 & d 
\end{bmatrix}$ commutes with $\sigma(M)$, $a$ and $d$ are in $M_\omega=M'\cap M^\omega$.
Since $M_\omega\subseteq \sigma(M)'\cap M_2(M^\omega)$, $a$ and $d$
must be in $Z(M_\omega)\cong \bbC$, then $a=\lambda$, $b=\theta$ for some scalars $\lambda,\theta$.
Since $\begin{bmatrix}
\lambda & 0 \\
0 & \theta 
\end{bmatrix}$ commutes with $\begin{bmatrix}
0 & 0 \\
x & 0 
\end{bmatrix}$, so $\lambda=\theta$.
Therefore, $\sigma(M)'\cap M_2(M^\omega)$ is a $\rm II_1$-factor and hence projections
$\begin{bmatrix}
1 & 0 \\
0 & 0 
\end{bmatrix}$ and $\begin{bmatrix}
0 & 0 \\
0 & 1 
\end{bmatrix}$ are equivalent.
 This gives a unitary $U\in M^\omega$ with $\begin{bmatrix}
0 & 0 \\
U & 0 
\end{bmatrix}\in \sigma(M)'\cap M_2(M^\omega)$ and 
$\lim\limits_{n\to\omega} \Ad u_n = \alpha$ for $U=(u_n)$,
Therefore, $\alpha$ is approximately inner.
\end{proof}

\begin{cor}\label{cor:CSinMrtimesG}
Suppose the outer action $\psi:G\to \Aut(M)$ is not approximately inner, then $$(M\rtimes G)_\omega=(M_\omega)^G.$$
\end{cor}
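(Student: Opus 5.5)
The plan is to prove the two inclusions directly, using the Fourier expansion in the crossed product together with Proposition \ref{Prop:nonAI}. Throughout I read the hypothesis ``$\psi$ is not approximately inner'' as saying that $\psi_g\notin\Ai(M)$ for every $g\neq e$, i.e.\ $L=G\cap\psi^{-1}(\Ai(M))$ is trivial. I also regard $M^\omega\subseteq (M\rtimes G)^\omega$ via the trace-preserving inclusion $M\subseteq M\rtimes G$; this is well defined because $\|\cdot\|_2$ on $M$ is the restriction of $\|\cdot\|_2$ on $M\rtimes G$.

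\emph{The easy inclusion $(M_\omega)^G\subseteq (M\rtimes G)_\omega$.} Let $X=(x_n)\in M_\omega$ with $(\psi_\omega)_g(X)=X$ for all $g$. Then $X$ commutes with $M$. Since $u_g x_n u_g^*=\psi_g(x_n)$ and $\|\psi_g(x_n)-x_n\|_2\to 0$, it also commutes with each $u_g$. Every element of $M\rtimes G$ is a finite sum $\sum_g a_gu_g$, so $X$ commutes with all of $M\rtimes G$.

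\emph{The main inclusion.} Let $X=(x_n)\in (M\rtimes G)_\omega$ with $\sup_n\|x_n\|<\infty$. Expand
$$x_n=\sum_{g\in G} a_{g,n}u_g,\qquad a_{g,n}=E_M(x_nu_g^*).$$
Since $E_M$ is contractive in operator norm, each sequence $(a_{g,n})_n$ is bounded. For $y\in M$,
$$yx_n-x_ny=\sum_g \bigl(ya_{g,n}-a_{g,n}\psi_g(y)\bigr)u_g .$$
The subspaces $Mu_g$ are mutually orthogonal in $L^2(M\rtimes G)$, so
$$\|yx_n-x_ny\|_2^2=\sum_g\|ya_{g,n}-a_{g,n}\psi_g(y)\|_2^2 .$$
Hence every term tends to $0$ along $\omega$. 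For $g\neq e$, apply Proposition \ref{Prop:nonAI} with $\alpha=\psi_g$, which is not approximately inner, to the bounded sequence $(a_{g,n})_n$; this gives $\|a_{g,n}\|_2\to 0$. Therefore $X=(a_{e,n})$ in $(M\rtimes G)^\omega$, so $X\in M^\omega$.

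It remains to locate $X$ precisely. The $g=e$ term shows that $(a_{e,n})$ commutes asymptotically with $M$, so $X\in M_\omega$. Finally, $X$ commutes with each $u_g$, which gives
$$\|\psi_g(a_{e,n})-a_{e,n}\|_2=\|u_ga_{e,n}u_g^*-a_{e,n}\|_2\to 0 .$$
Thus $X\in (M_\omega)^G$.

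The only delicate point is the step that isolates each Fourier coefficient. It needs both the orthogonality of the $Mu_g$ and the uniform operator-norm bound on the $a_{g,n}$, since Proposition \ref{Prop:nonAI} is stated for elements of $M^\omega$. Everything else is routine bookkeeping with $\|\cdot\|_2$-limits.
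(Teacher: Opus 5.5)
Your proposal is correct and takes essentially the same route as the paper: you Fourier-expand $x_n=\sum_g a_{g,n}u_g$, kill the coefficients with $g\neq e$ by applying Proposition~\ref{Prop:nonAI} to $\alpha=\psi_g$, and then read off centrality and $G$-invariance of $(a_{e,n})$ from commutation with $M$ and with the $u_g$. You also justify two points the paper leaves implicit, namely the orthogonality of the subspaces $Mu_g$ and the operator-norm bound on the coefficients, while the paper's final averaging with $E_{M^G}$ is not needed for the statement as you read it.
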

\begin{proof}
Suppose $(a_n)\in (M_\omega)^G$, for $m u_g\in M\rtimes G$, 
\begin{align*}
\|a_n mu_g-mu_g a_n\|_2 & = \| (a_n m -m \psi_g(a_n))u_g \|_2 = \| a_n m -m \psi_g(a_n) \|_2 \\
& \le \|a_n m -m a_n\|_2 + \|m (a_n-\psi_g(a_n)\|_2 \to 0
\end{align*}
the first term goes to 0 since $(a_n)\in M_\omega$, the second term goes to 0 since $(a_n)\in (M_\omega)^G$.

For the other direction, suppose $(b_n)\in (M\rtimes G)_\omega$, $b_n=\sum\limits_g b_{n,g}u_g$ and $(b_{n,g})_n\in M^\omega$. 
For $m\in M$, 
\begin{align*}
\|b_n m -m b_n\|_2 = \left\|\sum_g b_{n,g}u_g m-m b_{n,g}u_g \right\|_2  = \left\|\sum_g (b_{n,g}\psi_g(m)-m b_{n,g})u_g\right\|_2 \to 0
\end{align*}
This implies $\|b_{n,g}\psi_g(m)-m b_{n,g}\|_2\to 0$ for each $g\in G$ and any $m\in M$. By Proposition \ref{Prop:nonAI}, since $\psi_g$ is not approximately inner for all $g\ne e$, the bounded sequence $\|b_{n,g}\|_2\to 0$. 
For $g\in G$, 
\begin{align*}
\|b_{n,e} u_g - u_g b_{n,e}\|_2 & = \|(b_{n,e}-\psi_g(b_{n,e}))u_g\|_2 \to 0,
\end{align*}
which follows that $\|b_{n,e}-\psi_g(b_{n,e})\|_2\to 0$,
then 
\begin{align*}
\|(b_{n,e}-E_{M^G}(b_{n,e})\|_2 = \left\|b_{n,e}-\frac{1}{|G|}\sum_g\psi_g(b_{n,e})\right\|_2  
 \le \frac{1}{|G|}\sum_g \left\|b_{n,e}-\psi_g(b_{n,e})\right\|_2 \to 0
\end{align*}
Therefore $(b_n)=(b_{n,e}) = (E_{M^G}(b_{n,e})) = E_{(M_\omega)^G}((b_{n,e}))\in (M_\omega)^G$.
\end{proof}

\begin{thm}\label{thm:NotAI->CT}
$\Rep(G/G\cap \psi^{-1}(\Ai(M)))\hookrightarrow \Bim(M\rtimes G)$ is centrally trivial.
\end{thm}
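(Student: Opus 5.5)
The plan is to verify the definition of central triviality directly on the bimodule $X={}_\pi H\otimes M\rtimes G$ for a fixed $(H,\pi)\in\Rep(G/L)$, where $L=G\cap\psi^{-1}(\Ai(M))$. Viewed as a $G$-representation, $\pi$ satisfies $\pi_h=\id_H$ for every $h\in L$. The argument does not need Corollary \ref{cor:CSinMrtimesG}, which assumes that no nontrivial $\psi_g$ is approximately inner. Instead we work with an arbitrary central sequence of $M\rtimes G$ and apply Proposition \ref{Prop:nonAI} only to those group elements $h\notin L$.

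\textbf{Step 1 (reduction).} Every element of $X$ is a finite sum of vectors $\eta\otimes a$ with $\eta\in H$ and $a\in M\rtimes G$. It therefore suffices to show
\[
\lim_{n\to\omega}\|b_n(\eta\otimes a)-(\eta\otimes a)b_n\|_2=0
\qquad\text{for all } (b_n)\in (M\rtimes G)_\omega .
\]
We also reduce to $a=1$. Write
\[
b_n(\eta\otimes a)-(\eta\otimes a)b_n=\bigl(b_n(\eta\otimes 1)-(\eta\otimes 1)b_n\bigr)a+(\eta\otimes 1)(b_na-ab_n).
\]
The second term tends to $0$ in $\|\cdot\|_2$, because $(b_n)$ is central and $\eta\otimes 1$ acts as a bounded right-module map. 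For the first term, note that $\|\xi a\|_2\le\|\xi\|_2\|a\|$ for $\xi\in X$, since $\langle \xi a|\xi a\rangle=a^*\langle\xi|\xi\rangle a$. Hence it suffices to handle $x=\eta\otimes 1$.

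\textbf{Step 2 (Fourier decomposition).} Write $b_n=\sum_{h\in G}b_{n,h}u_h$ with $(b_{n,h})_n$ uniformly bounded. From the definition of the left action,
\[
b_n(\eta\otimes 1)-(\eta\otimes1)b_n=\sum_{h\in G}(\pi_h\eta-\eta)\otimes b_{n,h}u_h .
\]
For $h\in L$ we have $\pi_h\eta=\eta$, so these terms vanish identically. Since $\|\xi\otimes c\|_2\le\|\xi\|\,\|c\|_2$, it remains to show that $\|b_{n,h}\|_2\to0$ for every $h\notin L$.

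\textbf{Step 3 (killing the non-approximately-inner coefficients).} Centrality of $(b_n)$ against $m\in M$ gives
\[
\Bigl\|\sum_h (b_{n,h}\psi_h(m)-mb_{n,h})u_h\Bigr\|_2\to0 .
\]
The $u_h$-components are orthogonal in $L^2(M\rtimes G)$, so for each $h$ separately we get $\|b_{n,h}\psi_h(m)-mb_{n,h}\|_2\to0$ for all $m\in M$. If $h\notin L$, then $\psi_h$ is not approximately inner. Because $M$ is McDuff, Proposition \ref{Prop:nonAI} then yields $\|b_{n,h}\|_2\to0$. Combining Steps 1–3 shows that $X$ is centrally trivial.

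The only substantive input is Proposition \ref{Prop:nonAI}; everything else is norm bookkeeping. I expect the main care to be needed in Step 1. There one has to justify the inequalities $\|\xi a\|_2\le\|\xi\|_2\|a\|$ and $\|\xi\otimes c\|_2\le \|\xi\|\|c\|_2$ for the $\|\cdot\|_2$-norm on $X$ defined via $\|\langle\cdot|\cdot\rangle\|_2$. Both follow from the explicit inner product $\langle \xi\otimes a|\eta\otimes b\rangle=\langle\xi|\eta\rangle a^*b$ and an orthonormal-basis expansion of $H$.
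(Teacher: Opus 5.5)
Your proposal is correct and is essentially the paper's proof. Both arguments expand the central sequence as $\sum_h b_{n,h}u_h$, use orthogonality of the $u_h$-components together with Proposition~\ref{Prop:nonAI} to get $\|b_{n,h}\|_2\to 0$ for $h\notin L$, and use $\pi_h=\id_H$ for $h\in L$. The only difference is cosmetic: you reduce to $\eta\otimes 1$ via centrality first, whereas the paper treats $\eta\otimes a_ku_k$ directly by dropping the negligible $h\notin L$ terms, replacing $\pi_h$ by the identity, re-adding those terms, and then invoking centrality against $a_ku_k$.
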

\begin{proof}
Let $(H,\pi)\in \Rep(G/G\cap \psi^{-1}(\Ai(M)))$ and ${}_\pi H\otimes M\rtimes G$ is the corresponding bimodule in $\Bim(M\rtimes G)$. 
We shall show that for each central sequence $(x_n)\subseteq M\rtimes G$ and $\xi\in {}_\pi H\otimes M\rtimes G$, $\|x_n\rhd \xi-\xi\lhd x_n\|\to 0$.

We write $x_n=\sum_g x_{n,g} u_g$. For each $g\in G$, $(x_{n,g})$ is a bounded sequence and since
$$
\left\|\left(\sum_{g}x_{n,g} u_g\right)a - a\left(\sum_{g}x_{n,g} u_g\right)\right\|_2 \to 0
$$
for every $a\in M$, 
we have $\|x_{n,g} \psi_g(a) - ax_{n,g}\|_2\to 0$.
According to Proposition \ref{Prop:nonAI}, $\|x_{n,g}\|_2\to 0$ if $\psi_g\notin \Ai(M)$.

For $\xi=\eta\otimes a_ku_k\in {}_\pi H\otimes M\rtimes G$, we have
{\allowdisplaybreaks
\begin{align*}
\|x_n\rhd \xi -\xi \lhd x_n\|_2 & =
\left\|\sum_{g\in G} x_{n,g}u_g\rhd (\eta\otimes a_ku_k) - (\eta\otimes a_ku_k)\lhd \sum_{g\in G} x_{n,g}u_g\right\|_2 \\
& =  \left\|\sum_{g\in G} \pi_g(\eta)\otimes x_{n,g}u_ga_ku_k - \sum_{g\in G}\eta\otimes a_ku_kx_{n,g}u_g\right\|_2 \\
& \approx \left\|\sum_{h\in G\cap \psi^{-1}(\Ai(M))} \pi_h(\eta)\otimes x_{n,h}u_ha_ku_k - \sum_{h\in G\cap \psi^{-1}(\Ai(M))}\eta\otimes a_ku_kx_{n,h}u_h\right\|_2 \\
& = \left\|\sum_{h\in G\cap \psi^{-1}(\Ai(M))} \eta\otimes x_{n,h}u_ha_ku_k - \sum_{h\in G\cap \psi^{-1}(\Ai(M))}\eta\otimes a_ku_kx_{n,h}u_h\right\|_2 \\
& = \left\|\eta\otimes \sum_{h\in G\cap \psi^{-1}(\Ai(M))}(x_{n,h}u_ha_ku_k - a_ku_kx_{n,h}u_h)\right\|_2 \\
& \approx \left\|\eta\otimes \sum_{g\in G}(x_{n,g}u_ga_ku_k - a_ku_kx_{n,g}u_g)\right\|_2 \\
& = \left\|\eta\otimes (x_na_ku_k - a_ku_kx_n)\right\|_2 \\
& \approx 0
\end{align*}
}
In the third and sixth approximation, we use the fact that $\|x_{n,g}\|_2\to 0$ for $\psi_g\notin \Ai(M)$.
The fourth equality holds since $\pi_h=\id_H$ for $h\in G\cap \psi^{-1}(\Ai(M))$.
The last approximation holds since $(x_n)_n\subseteq M\rtimes G$ is a central sequence.
\end{proof}

\subsection{$\Rep(G/KL)\subseteq \Chi(M\rtimes G)$ is a unitary braided tensor subcategory}

Let $K=G\cap \psi^{-1}(\Ct(M))$ and $L=G\cap \psi^{-1}(\Ai(M))$.
Note that $[\Ct(M),\Ai(M)]\subseteq \Inn(M)$, and $\psi:G\to\Aut(M)$ is outer action, $K$ and $L$ commute in $G$.
Let $KL$ be the smallest group generated by $K$ and $L$. Since $K$ and $L$ are normal subgroups in $G$, $KL$ is also a normal subgroup.
In Theorem \ref{thm:NotCT->AI} and Theorem \ref{thm:NotAI->CT}, we showed that $\Rep(G/K)\subseteq \aiBim(M\rtimes G)$ and $\Rep(G/L)\subseteq \ctBim(M\rtimes G)$ are full tensor subcategories.
Then $\Rep(G/KL)$, as a full tensor subcategory of both $\Rep(G/K)$ and $\Rep(G/L)$, is a full tensor subcategory of $ \aiBim(M\rtimes G)\cap \ctBim(M\rtimes G)=\Chi(M\rtimes G)$.

In this subsection, we moreover show that $\Rep(G/KL)$ is a unitary braided subcategory of $\Chi(M\rtimes G)$.

Denote $N=M\rtimes G$. For $(H,\pi),(V,\sigma)\in \Rep(G)$, we will show 
the following diagram commutes
\[
\begin{tikzcd}[column sep=6em]
({}_{\pi} H\otimes N)\boxtimes_N ({}_{\sigma} V\otimes N)
\arrow{r}{u_{{}_{\pi} H\otimes N, {}_{\sigma} V\otimes N}}
\arrow[swap]{d}{F^2_{(H,\pi),(V,\sigma)}} 
& ({}_{\sigma} V\otimes N)\boxtimes_N ({}_{\pi} H\otimes N)
\\
{}_{\pi\otimes \sigma} (H\otimes V)\otimes N
\arrow{r}{F(\beta_{(H,\pi),(V,\sigma)})}
& {}_{\sigma\otimes \pi} (V\otimes H)\otimes N
\arrow[swap]{u}{\left(F^2_{(V,\sigma),(H,\pi)}\right)^\dag} 
\end{tikzcd}
\]

Let $\{\eta_i\}$ and $\{\xi_j\}$ be ONB for $H$ and $V$ respectively.
From Theorem \ref{thm:NotCT->AI}, we can take $\{x_i^{(n)}\}_i$ as approximately inner $({}_{\pi} H\otimes N)_N$-basis with $x_i^{(n)} = \sum_j \eta_j\otimes t_{ij;n}^*$ where $t\in U(B(H)\otimes M_\omega)$ and $\pi_g = t^*\tilde{\psi}_g(t)$. 
We take $\{\xi_j\otimes u_1\}_j$ as the $({}_{\sigma} V\otimes N)_N$-basis.

For $(\eta_k\otimes a_gu_g)\boxtimes (\xi_l\otimes b_hu_h) \in ({}_{\pi} H\otimes N)\boxtimes_N ({}_{\sigma} V\otimes N)$, on one hand,

{\allowdisplaybreaks
\begin{align*}
u_{{}_{\pi} H\otimes N, {}_{\sigma} K\otimes N} & [(\eta_k\otimes a_gu_g)\boxtimes (\xi_l\otimes b_hu_h) ] \\
& \approx \sum_{i,j}  (\xi_j\otimes u_1) \boxtimes x_i^{(n)} \left\langle  x_i^{(n)}\boxtimes (\xi_j\otimes u_1) \middle| (\eta_k\otimes a_gu_g)\boxtimes (\xi_l\otimes b_hu_h)\right\rangle \\
& = \sum_{i,j}  (\xi_j\otimes u_1) \boxtimes x_i^{(n)} \left\langle  \xi_j\otimes u_1 \middle| \left\langle x_i^{(n)} \middle| (\eta_k\otimes a_gu_g)\right\rangle\rhd (\xi_l\otimes b_hu_h)\right\rangle \\
& = \sum_{i,j}  (\xi_j\otimes u_1) \boxtimes x_i^{(n)} \left\langle  \xi_j\otimes u_1 \middle| t_{ik;n}a_gu_g\rhd (\xi_l\otimes b_hu_h)\right\rangle \\
& = \sum_{i,j}  (\xi_j\otimes u_1) \boxtimes x_i^{(n)} \left\langle  \xi_j\otimes u_1 \middle| \sigma_g(\xi_l)\otimes t_{ik;n}a_gu_g b_hu_h\right\rangle \\
& = \sum_{i,j}  (\xi_j\otimes u_1) \boxtimes x_i^{(n)}  \langle \xi_j | \sigma_g(\xi_l)\rangle t_{ik;n}a_gu_g b_hu_h \\
& = \sum_{i,j} (\xi_j\langle \xi_j | \sigma_g(\xi_l)\rangle \otimes u_1) \boxtimes x_i^{(n)} t_{ik;n}a_gu_g b_hu_h \\
& = \sum_{i} (\sigma_g(\xi_l)\otimes u_1) \boxtimes x_i^{(n)} t_{ik;n}a_gu_g b_hu_h \\
& \approx \sum_{i} (\sigma_g(\xi_l)\otimes u_1) \boxtimes \left(\sum_q \eta_q\otimes t_{iq;n}^*\right) t_{ik;n}a_gu_g b_hu_h \\
& = (\sigma_g(\xi_l)\otimes u_1) \boxtimes \sum_q \eta_q\otimes \left(\sum_{i} t_{iq;n}^* t_{ik;n}\right)a_gu_g b_hu_h \\
& \approx (\sigma_g(\xi_l)\otimes u_1) \boxtimes \sum_q \eta_q\otimes \delta_{q=k} a_gu_g b_hu_h \\
& = (\sigma_g(\xi_l)\otimes u_1) \boxtimes \eta_k\otimes a_gu_gb_hu_h.
\end{align*}
On the other hand,
\begin{align*}
\left(F^2_{(K,\sigma),(H,\pi)}\right)^\dag & \circ F(\beta_{(H,\pi),(K,\sigma)}) \circ F^2_{(H,\pi),(K,\sigma)} [(\eta_k\otimes a_gu_g)\boxtimes (\xi_l\otimes b_hu_h)] \\
& = \left(F^2_{(K,\sigma),(H,\pi)}\right)^\dag \circ F(\beta_{(H,\pi),(K,\sigma)}) [(\eta_k\otimes \sigma_g(\xi_l))\otimes a_gu_gb_hu_h] \\
& = \left(F^2_{(K,\sigma),(H,\pi)}\right)^\dag [(\sigma_g(\xi_l)\otimes \eta_k)\otimes a_gu_gb_hu_h] \\
& = (\sigma_g(\xi_l)\otimes u_1) \boxtimes \eta_k\otimes a_gu_gb_hu_h.
\end{align*}
The results agree.
Therefore, $\Rep(G/KL)$ is a unitary braided subcategory of $\Chi(M\rtimes G)$.

However, $\Rep(G/KL)$ is not a central subcategory of $\Chi(M\rtimes G)$ in general. 
We will give the argument in the appendix.

\begin{thm}\label{Thm:Rep(G)subcatChi}
Suppose $\psi:G\to\Aut(M)$ is a finite group outer action on a McDuff $\rm II_1$ factor $M$.
Then $\Rep(G/KL)$ is a unitary braided full subcategory of $\Chi(M\rtimes G)$, where $K=G\cap \psi^{-1}(\Ct(M))$ and $L=G\cap \psi^{-1}(\Ai(M))$.
\end{thm}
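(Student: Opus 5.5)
The plan is to assemble the results of this section. First, $KL$ is a normal subgroup of $G$. Both $K=G\cap\psi^{-1}(\Ct(M))$ and $L=G\cap\psi^{-1}(\Ai(M))$ are normal, because $\Ct(M)$ and $\Ai(M)$ are normal in $\Aut(M)$. Moreover they commute: for $k\in K$ and $l\in L$, Connes' relation $[\Ct(M),\Ai(M)]\subseteq\Inn(M)$ puts $\psi_{klk^{-1}l^{-1}}$ in $\Inn(M)$, and outerness of $\psi$ then forces $klk^{-1}l^{-1}=e$. Hence $\Rep(G/KL)$ is a full braided tensor subcategory of both $\Rep(G/K)$ and $\Rep(G/L)$, obtained by inflation along the quotient maps. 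Composing with the canonical embedding $F:\Rep(G)\to\Bim(M\rtimes G)$ and its tensorator $F^2$ realizes every object of $\Rep(G/KL)$ as a bimodule ${}_\pi H\otimes M\rtimes G$.

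Next I apply Theorem \ref{thm:NotCT->AI} to $\Rep(G/K)$. This shows each such bimodule is approximately inner, with the explicit approximately inner basis $x_i^{(n)}=\sum_j\eta_j\otimes t_{ij;n}^*u_1$ built from the coboundary $t\in U(B(H)\otimes M_\omega)$. I then apply Theorem \ref{thm:NotAI->CT} to $\Rep(G/L)$, which shows each such bimodule is centrally trivial. Since $\aiBim$ and $\ctBim$ are full tensor subcategories, $F$ restricted to $\Rep(G/KL)$ lands in $\aiBim(M\rtimes G)\cap\ctBim(M\rtimes G)=\Chi(M\rtimes G)$ as a unitary tensor functor. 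It is fully faithful because the embedding of $\Rep(G)$ is fully faithful. I would check this on intertwiners: a bimodule map ${}_\pi H\otimes N\to{}_\sigma V\otimes N$ is determined by the image of $H\otimes u_1$, and the $M$-bimodularity together with outerness of $\psi$ forces it to be of the form $T\otimes\id$ with $T$ a $G$-intertwiner.

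The remaining, and main, step is braiding compatibility: $F$ must carry the symmetric braiding $\beta$ of $\Rep(G)$ to the braiding $u$ of $\Chi(M\rtimes G)$. That is, the square with $u_{{}_\pi H\otimes N,{}_\sigma V\otimes N}$, $F^2$ and $F(\beta)$ should commute. To prove it I compute $u$ on elementary tensors $(\eta_k\otimes a_gu_g)\boxtimes(\xi_l\otimes b_hu_h)$, using the approximately inner basis $\{x_i^{(n)}\}$ above for the first factor and the Pimsner--Popa basis $\{\xi_j\otimes u_1\}$ for the second. Moving $\langle x_i^{(n)}|\eta_k\otimes a_gu_g\rangle=t_{ik;n}a_gu_g$ across produces $\sigma_g(\xi_l)$. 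Then $\sum_i t_{iq;n}^*t_{ik;n}\to\delta_{q=k}$, by unitarity of $t$ in $M_\omega$, collapses the expression to $(\sigma_g(\xi_l)\otimes u_1)\boxtimes(\eta_k\otimes a_gu_gb_hu_h)$. This coincides with $(F^2)^\dagger\circ F(\beta)\circ F^2$ applied to the same vector.

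The delicate point in this computation is that the limit is taken in $\|\cdot\|_2$ while elements $x_i^{(n)}$ are moved across the relative tensor product. To control this I would use that $t_{ij;n}$ are uniformly bounded and that $u$ is independent of the choice of bases. Elementary tensors span a dense subspace and both sides are bounded, so agreement on them gives equality. This yields the theorem.
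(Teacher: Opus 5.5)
Your proposal is correct and follows essentially the same route as the paper. Normality and commutation of $K$ and $L$ come from Connes' relation $[\Ct(M),\Ai(M)]\subseteq\Inn(M)$ together with outerness; Theorems \ref{thm:NotCT->AI} and \ref{thm:NotAI->CT} then place $\Rep(G/KL)$ in $\aiBim(M\rtimes G)\cap\ctBim(M\rtimes G)$; and braiding compatibility is the same elementary-tensor computation of $u$, using the basis $x_i^{(n)}=\sum_j\eta_j\otimes t_{ij;n}^*$ and $\{\xi_j\otimes u_1\}_j$, matched against $(F^2)^\dagger\circ F(\beta)\circ F^2$ (your explicit full-faithfulness check, with matrix entries forced into $M'\cap(M\rtimes G)=\bbC$ by outerness, is a small addition the paper leaves implicit).
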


\section{The generalization of Connes' short exact sequence}
\label{section:ConnesSES}
In this section, we first give a computable formula for the deequivariantization on $\Chi(M\rtimes G)$. 
Then we prove that the gauging construction is a honest categorical generalization of the Connes' short exact sequence for the ordinary $\chi(M\rtimes G)$.

\subsection{The gauging construction} 
The gauging construction for braided tensor categories containing $\Rep(G)$ is well-established in the literature. We refer to \cite{MR2183964} for the formulation in terms of $G$-crossed braided tensor categories arising from orbifold conformal field theories, and to \cite{MR2183279,MR2609644} for the categorical theory of equivariantization and de-equivariantization of fusion categories.

Suppose $G$ is a finite group, a \textit{$G$-crossed braided tensor category} is a $G$-graded tensor category $\cC=\bigoplus_{g\in G} \cC_g$ equipped with an action $T:G\to\Aut_\otimes(\cC)$, $g\mapsto T_g$ satisfying $T_g(\cC_h)\subseteq \cC_{ghg^{-1}}$, together with a $G$-crossed braiding
$$c_{x,y}:x\otimes y\xrightarrow{\sim} T_g(y)\otimes x,\qquad \text{ for }x\in \cC_g,$$
satisfying the coherences.

Now let $\cB$ be a unitary braided tensor category containing $\Rep(G)$ as a braided full tensor subcategory, and let $\Fun(G)\in \Rep(G)$ be a commutative Q-system (unital unitary separable Frobenius algebra), and $A$ be the image of $\Fun(G)$ in $\cB$. 
The ($G$-)deequivariantization $\cB_G$ is the category $\cB_A$ of right $A$-modules in $\cB$. 
This category carries a canonical structure of a $G$-crossed braided tensor category.
Concretely, if $X\in\cB_G$ is indecomposable with right $A$-action $\rho_X:X\otimes A\to X$, then the braiding in $\cB$ induces a left $A$-action
\[\lambda_X:=\rho_X\circ \beta^{-1}_{A,X}:A\otimes X\to X.\label{eq:leftaction} \tag{$*$}\]
For indecomposable $X$, there exists a unique $\partial_X\in \Aut_{\alge}(A)\cong \Aut_{\alge}(\Fun(G))\cong G$ such that 
\[\lambda_X\circ (\partial_X\otimes \id_X)=\rho_X\circ \beta_{A,X}\label{eq:leftGaction} \tag{$**$}\]
The assignment $X\mapsto \partial_X$ defines the homogeneous degree of $X$, hence a grading 
$$\cB_G=\bigoplus_{g\in G}(\cB_G)_g.$$
The tensor product over $A$ is compatible with this grading $\partial_{X\otimes_A Y}=\partial_X \partial_Y$, and together with the induced $G$-action and crossed braiding. 
For $Y\in \cB_A$ with right action $\rho_Y:Y\otimes A\to Y$ and $g\in \Aut_\alge(A)$,
$$T_g(Y):=(Y,\rho_Y^g),\qquad\text{where } \rho_Y^g=\rho_Y\circ(\id_Y\otimes g).$$
Then for $X\in \cB_A$ with grading $\partial_X$,
the $G$-crossed braiding $c_{X,Y}:X\otimes_A Y\to T_{\partial_X}(Y)\otimes_A X$ is given by
\[c_{X,Y}:= \left(p_{X,T_g(Y)}^A\right)^\dag\circ \beta_{X,Y}\circ p_{X,Y}^A,\label{eq:GcrossedBraid} \tag{$\dag$}\]
where $p_{X,Y}:X\otimes Y\to X\otimes_A Y$ is the projection given by the coequalizer $X\otimes A\otimes Y \rightrightarrows X\otimes Y$.

Conversely, if $\cC$ is a $G$-crossed braided tensor category, one may form its $G$-equivariantization $\cC^G$, whose objects are pairs $(x,\{u_g\}_{g\in G})$ consisting of an object $x\in \cC$ together with coherent isomorphisms $u_g:T_g(x)\xrightarrow{\sim}x$.
The $G$-equivariantization $\cC^G$ is again a braided tensor category, and it contains $\Rep(G)$ as a braided full tensor subcategory.
Deequivariantization and equivariantization establish a fundamental correspondence:
If $\cB$ is a braided fusion category containing $\Rep(G)$ as a braided fusion subcategory, then its deequivariantization $\cB_G$ is a $G$-crossed braided fusion category.
Conversely, if $\cC$ is a $G$-crossed braided fusion category, then its equivariantization $\cC^G$ is a braided fusion category containing $\Rep(G)$.
These processes are inverses up to equivalence: $(\cB_G)^G \cong \cB$ and $(\cC^G)_G \cong \cC$.

This correspondence provides an equivalence between:
\[
\begin{tikzpicture}[node distance=5cm, every node/.style={align=center}]
\node (A) {$\left\{ \parbox{4.5cm}{\rm \centering Braided tensor categories \\ containing $\Rep(G)$ \\ as braided subcategory} \right\}$};
\node (B) [right=of A] {$\left\{ \parbox{3.5cm}{\centering $G$-crossed braided \\ tensor categories} \right\}$};
\draw[->] ([yshift=3pt] A.east) -- ([yshift=3pt] B.west) 
    node[midway, above] {deequivariantization};
\draw[<-] ([yshift=-3pt] A.east) -- ([yshift=-3pt] B.west) 
    node[midway, below] {$G$-equivariantization};
\end{tikzpicture}
\]

From the viewpoint of physics, this pair of constructions is the categorical form of gauging a finite global symmetry in a $2+1$-dimensional topological phase. 
A braided fusion category describes the anyons of the ungauged phase, while a $G$-crossed braided extension incorporates extrinsic symmetry defects labeled by group elements $g\in G$.
Passing from the defect theory to the equivariantized category corresponds to promoting the global symmetry $G$ to a local gauge symmetry: the formerly extrinsic $G$-defects become dynamical excitations of the gauged theory. Conversely, deequivariantization may be viewed as “ungauging,” in which one identifies a Tannakian $\Rep(G)$ sector and passes back to the defect-enriched theory. This gauging paradigm plays a central role in the modern description of symmetry-enriched topological phases, symmetry defects, see e.g. \cite{10.1103}.

\subsection{Graphical calculus and realization}
In order to compute the deequivariantization on $\Chi(M\rtimes G)$, we use the graphical calculus. 
We refer readers the graphical calculus and realization to \cite{MR4419534}.
For the part of the graphical calculus we need here, we refer the section 4 and section 5 from \cite{MR4753059}.

We use the 2D graphical calculus on $\Bim(M)$.
We denote ${}_{\bbC}M_M$ by a dashed line with shading on the right hand side of the line by $M$.
$$
\tikzmath{\filldraw[\rColor, rounded corners=5, very thin, baseline=1cm] (0,0) rectangle (.6,.6);}=M
\qquad
\qquad
\tikzmath{
\filldraw[white, rounded corners=5, very thin, baseline=1cm] (0,0) rectangle (.6,.6);
\draw[dotted, rounded corners=5pt] (0,0) rectangle (.6,.6);
}=\bbC
\qquad
\qquad
\tikzmath{
\begin{scope}
\clip[rounded corners = 5] (0,0) rectangle (.6,.6);
\filldraw[\rColor] (.3,0) rectangle (.6,.6);
\end{scope}
\draw[dashed] (.3,0) -- (.3,.6);
\draw[dotted, rounded corners=5pt] (.3,0) -- (0,0) -- (0,.6) -- (.3,.6);
}={}_\bbC M_M.
\qquad\qquad
\tikzmath{
\filldraw[\rColor, rounded corners=5, very thin, baseline=1cm] (0,0) rectangle (.6,.6);
\draw[\XColor,thick] (.3,0) -- (.3,.6);
}=X
$$

For $X\in \Bim(M)$, the realization 
$|X|:=\Hom_{\bbC-M}({}_\bbC M_M \to {}_\bbC M \boxtimes_M X_M)$,
whose elements are denoted by
\[
\tikzmath{
\begin{scope}
\clip[rounded corners = 5] (-.3,-.7) rectangle (.7,.7);
\filldraw[\rColor] (0,-.7) -- (0,0) -- (-.2,0) -- (-.2,.7) -- (.7,.7) -- (.7,-.7);
\end{scope}
\draw[dashed] (0,-.7) -- (0,0);
\draw[dashed] (-.2,0) -- (-.2,.7);
\draw[\XColor,thick] (.2,0) -- (.2,.7);
\roundNbox{unshaded}{(0,0)}{.3}{.1}{.1}{\scriptsize{$x$}};
}
\in |X| 
:=
\Hom_{\bbC - M}({}_{\bbC}M_M \to {}_{\bbC}M\boxtimes_M X_M).
\]

We say $\{x_i\}\subseteq |X|$ is an $|X|_M$ basis, if
\[
\sum_i |x_i\rangle\langle x_i| 
=\sum_i 
\tikzmath{
\begin{scope}
\clip[rounded corners = 5] (-.4,-1.2) rectangle (.7,1.2);
\filldraw[\rColor] (-.2,-1.2) -- (-.2,-.5) -- (0,-.5) -- (0,.5) -- (-.2,.5) -- (-.2,1.2) -- (.7,1.2) -- (.7,-1.2);
\end{scope}
\draw[dashed] (-.2,.5) -- (-.2,1.2);
\draw[dashed] (0,-.5) -- (0,.5);
\draw[dashed] (-.2,-1.2) -- (-.2,-.5);

\draw[\XColor,thick] (.2,.5) -- (.2,1.2);
\draw[\XColor,thick] (.2,-1.2) -- (.2,-.5);
\roundNbox{unshaded}{(0,.5)}{.3}{.1}{.1}{\scriptsize{$x_i$}};
\roundNbox{unshaded}{(0,-.5)}{.3}{.1}{.1}{\scriptsize{$x_i^\dag$}};
}
=
\tikzmath{
\begin{scope}
\clip[rounded corners = 5] (-.55,-.5) rectangle (.3,.5);
\filldraw[\rColor] (-.4,-.5) -- (-.4,.5) -- (.7,.5) -- (.7,-.5);
\end{scope}
\draw[dashed] (-.4,-.5) -- (-.4,.5);
\draw[\XColor,thick] (0,-.5) -- (0,.5);
}
=\id_{|X|}.
\]
Similarly, if $X\in\aiBim(M)$, we say $\{x_i^{(n)}\}$ is an approximately inner $|X|_M$ basis if
\[
\sum_i 
\tikzmath{
\begin{scope}
\clip[rounded corners = 5] (-.4,-1.2) rectangle (.7,1.2);
\filldraw[\rColor] (-.2,-1.2) -- (-.2,-.5) -- (0,-.5) -- (0,.5) -- (-.2,.5) -- (-.2,1.2) -- (.7,1.2) -- (.7,-1.2);
\end{scope}
\draw[dashed] (-.2,.5) -- (-.2,1.2);
\draw[dashed] (0,-.5) -- (0,.5);
\draw[dashed] (-.2,-1.2) -- (-.2,-.5);

\draw[\XColor,thick] (.2,.5) -- (.2,1.2);
\draw[\XColor,thick] (.2,-1.2) -- (.2,-.5);
\roundNbox{unshaded}{(0,.5)}{.3}{.1}{.1}{\scriptsize{$x_i^{(n)}$}};
\roundNbox{unshaded}{(0,-.5)}{.3}{.1}{.1}{\scriptsize{$(x_i^{(n)})^\dag$}};
}
\approx
\tikzmath{
\begin{scope}
\clip[rounded corners = 5] (-.55,-.5) rectangle (.3,.5);
\filldraw[\rColor] (-.4,-.5) -- (-.4,.5) -- (.7,.5) -- (.7,-.5);
\end{scope}
\draw[dashed] (-.4,-.5) -- (-.4,.5);
\draw[\XColor,thick] (0,-.5) -- (0,.5);
}
=\id_X
\qquad\text{and}
\qquad
a\rhd x_i^{(n)} =
\tikzmath{
\begin{scope}
\clip[rounded corners = 5] (-.3,-.7) rectangle (.7,1.7);
\filldraw[\rColor] (0,-.7) -- (0,0) -- (-.2,0) -- (-.2,1.7) -- (.7,1.7) -- (.7,-.7);
\end{scope}
\draw[dashed] (0,-.7) -- (0,0);
\draw[dashed] (-.2,0) -- (-.2,1.7);
\draw[\XColor,thick] (.2,0) -- (.2,1.7);
\roundNbox{unshaded}{(-.2,1)}{.25}{0}{0}{\scriptsize{$a$}};
\roundNbox{unshaded}{(0,0)}{.3}{.1}{.1}{\scriptsize{$x_i^{(n)}$}};
}
\approx
\tikzmath{
\begin{scope}
\clip[rounded corners = 5] (-.3,-1.7) rectangle (.7,.7);
\filldraw[\rColor] (0,-1.7) -- (0,0) -- (-.2,0) -- (-.2,.7) -- (.7,.7) -- (.7,-1.7);
\end{scope}
\draw[dashed] (0,-1.7) -- (0,0);
\draw[dashed] (-.2,0) -- (-.2,.7);
\draw[\XColor,thick] (.2,0) -- (.2,.7);
\roundNbox{unshaded}{(0,-1)}{.25}{0}{0}{\scriptsize{$a$}};
\roundNbox{unshaded}{(0,0)}{.3}{.1}{.1}{\scriptsize{$x_i^{(n)}$}};
}
=
x_i^{(n)}\lhd a,
\quad \forall a\in M,
\]
where $\approx$ uses the convention in Notation \ref{nota:approx}.

By using the regular basis and approximately inner basis, the braiding in $\Chi(M)$ can be defined as the $\|\cdot\|_2$-limits of $M$-finite rank operators, namely: 
Suppose $\{x_i^{(n)}\}$ and $\{y_j^{(n)}\}$ are approximately inner $X_M$ and $Y_M$ basis, $\{x_i\}$ and $\{y_j\}$ are regular $X_M$ and $Y_M$ basis respectively, then the unitary braiding in $\Chi(M)$ are given by
\[
u_{X,Y} = 
\tikzmath{
\begin{scope}
\clip[rounded corners = 5] (-.55,-.5) rectangle (.7,.5);
\filldraw[\rColor] (-.4,-.5) -- (-.4,.5) -- (.7,.5) -- (.7,-.5);
\end{scope}
\draw[dashed] (-.4,-.5) -- (-.4,.5);
\draw[\YColor,thick] (.4,-.5) node[below]{$\scriptstyle Y$} .. controls ++(90:.45cm) and ++(270:.45cm) .. (0,.5);
\filldraw[\rColor] (.2,0) circle (.05cm);
\draw[\XColor,thick] (0,-.5) node[below]{$\scriptstyle X$} .. controls ++(90:.45cm) and ++(270:.45cm) .. (.4,.5);
}
\approx
\sum_{i,j}
\tikzmath{
\begin{scope}
\clip[rounded corners = 5] (-.35,-2.2) rectangle (1,2.2);
\filldraw[\rColor] (-.2,-2.2) -- (-.2,-1.5) -- (0,-1.5) -- (0,-.5) -- (.3,-.5) -- (.3,.5) -- (0,.5) -- (0,1.5) -- (-.2,1.5) -- (-.2,2.2) -- (1.05,2.2) -- (1.05,-2.2);
\end{scope}
\draw[dashed] (-.2,-2.2) -- (-.2,-1.5);
\draw[\XColor,thick] (.2,-2.2) -- (.2,-1.5);
\draw[dashed] (0,-1.5) -- (0,-.5);
\draw[\YColor,thick] (.6,-2.2) -- (.6,-.5);
\draw[dashed] (.3,-.5) -- (.3,.5);
\draw[dashed] (0,.5) -- (0,1.5);
\draw[\XColor,thick] (.6,.5) -- (.6,2.2);
\draw[dashed] (-.2,1.5) -- (-.2,2.2);
\draw[\YColor,thick] (.2,1.5) -- (.2,2.2);
\roundNbox{unshaded}{(0,1.5)}{.3}{.1}{.1}{\scriptsize{$y_j$}};
\roundNbox{unshaded}{(.3,.5)}{.3}{.15}{.15}{\scriptsize{$x_i^{(n)}$}};
\roundNbox{unshaded}{(.3,-.5)}{.3}{.15}{.15}{\scriptsize{$y_j^\dag$}};
\roundNbox{unshaded}{(0,-1.5)}{.3}{.1}{.1}{\scriptsize{$(x_i^{(\!n\!)}\!)^\dag$}};
}
\qquad\qquad
u_{Y,X}^\dag =
\tikzmath{
\begin{scope}
\clip[rounded corners = 5] (-.55,-.5) rectangle (.7,.5);
\filldraw[\rColor] (-.4,-.5) -- (-.4,.5) -- (.7,.5) -- (.7,-.5);
\end{scope}
\draw[dashed] (-.4,-.5) -- (-.4,.5);
\draw[\XColor,thick] (0,-.5) .. controls ++(90:.45cm) and ++(270:.45cm) .. (.4,.5);
\filldraw[\rColor] (.2,0) circle (.05cm);
\draw[\YColor,thick] (.4,-.5) .. controls ++(90:.45cm) and ++(270:.45cm) .. (0,.5);
}
\approx
\sum_{i,j}
\tikzmath{
\begin{scope}
\clip[rounded corners = 5] (-.35,-2.2) rectangle (1,2.2);
\filldraw[\rColor] (-.2,-2.2) -- (-.2,-1.5) -- (0,-1.5) -- (0,-.5) -- (.3,-.5) -- (.3,.5) -- (0,.5) -- (0,1.5) -- (-.2,1.5) -- (-.2,2.2) -- (1.05,2.2) -- (1.05,-2.2);
\end{scope}
\draw[dashed] (-.2,-2.2) -- (-.2,-1.5);
\draw[\XColor,thick] (.2,-2.2) -- (.2,-1.5);
\draw[dashed] (0,-1.5) -- (0,-.5);
\draw[\YColor,thick] (.6,-2.2) -- (.6,-.5);
\draw[dashed] (.3,-.5) -- (.3,.5);
\draw[dashed] (0,.5) -- (0,1.5);
\draw[\XColor,thick] (.6,.5) -- (.6,2.2);
\draw[dashed] (-.2,1.5) -- (-.2,2.2);
\draw[\YColor,thick] (.2,1.5) -- (.2,2.2);
\roundNbox{unshaded}{(0,1.5)}{.3}{.1}{.1}{\scriptsize{$y_j^{(n)}$}};
\roundNbox{unshaded}{(.3,.5)}{.3}{.15}{.15}{\scriptsize{$x_i$}};
\roundNbox{unshaded}{(.3,-.5)}{.3}{.15}{.15}{\scriptsize{$(y_j^{(\!n\!)}\!)^\dag$}};
\roundNbox{unshaded}{(0,-1.5)}{.3}{.1}{.1}{\scriptsize{$x_i^\dag$}};
}
\]

Suppose $M$ is a $\rm II_1$ factor.
A Q-system in $\Bim(M)$ is a triple $(Q,M,i)$, where $Q\in \Bim(M)$, the multiplication $m:Q\boxtimes_M Q\to Q$ and the unit $i:M\to Q$ are bounded bimodule intertwiners such that $Q$ is a unitary separable Frobenius algebra.
Its realization 
$$|Q| := \Hom_{\bbC - M}({}_{\bbC}M_M \to {}_{\bbC}M\boxtimes_M Q_M),$$
is a von Neumann algebra containing $M$ with a conditional expectation $E_M:|Q|\to M$. 
When $Q$ is connected, i.e., $\dim(\Hom_{M-M}(M\to Q))=1$, then $|Q|$ is also a $\rm II_1$ factor.

\begin{defn}[{\cite[\S4.2]{MR4419534}}]
\label{defn:RealizationOfBimodules}
Suppose $Q\in\Bim(M)$ is a Q-system and $X$ is a $Q$-$Q$ bimodule.
The realization $|X|:=\Hom_{\bbC-M}({}_\bbC M_M \to {}_\bbC M \boxtimes_M X_M)$ of $X$ is a $|Q|$-$|Q|$ bimodule 
with $|Q|$-$|Q|$ bimodule action and right $|Q|$-valued inner product respectively by
\[
p\rhd x\lhd q :=
\tikzmath{
\begin{scope}
\clip[rounded corners = 5] (-1,-1.7) rectangle (.9,2);
\filldraw[\rColor] (.2,-1.7) -- (.2,-1) -- (0,-.7) .. controls ++(90:.2cm) and ++(270:.2cm) .. (-.2,-.3) -- (-.4,.3) .. controls ++(90:.2cm) and ++(270:.2cm) .. (-.6,.7) -- (-.8,1) -- (-.8,2.8) -- (.9,2.8) -- (.9,-1.7);
\end{scope}
\draw[dashed] (.2,-1.7) -- (.2,-1);
\draw[dashed] (0,-.7) .. controls ++(90:.2cm) and ++(270:.2cm) .. (-.2,-.3);
\draw[dashed] (-.4,.3) .. controls ++(90:.2cm) and ++(270:.2cm) .. (-.6,.7);
\draw[dashed] (-.8,1) -- (-.8,2);

\draw[\QsColor,thick] (-.4,1.3) arc (180:90:.4cm);
\draw[\XColor,thick] (0,0) -- (0,2); 
\draw[\QsColor,thick] (.4,-.7) -- (.4,.3) arc (0:90:.4cm);
\filldraw[\XColor] (0,1.7) circle (.05cm);
\filldraw[\XColor] (0,.7) circle (.05cm);
\roundNbox{unshaded}{(-.6,1)}{.3}{.1}{.1}{\scriptsize{$p$}};
\roundNbox{unshaded}{(-.2,0)}{.3}{.1}{.1}{\scriptsize{$x$}};
\roundNbox{unshaded}{(.2,-1)}{.3}{.1}{.1}{\scriptsize{$q$}};
}
\qquad\text{ and }\qquad
\langle x_1|x_2\rangle_{|Q|}^{|X|}
:=
\tikzmath{
\begin{scope}
\clip[rounded corners = 5] (-.4,-1.4) rectangle (.9,1.4);
\filldraw[\rColor] (0,-1.4) -- (0,-.5) -- (-.2,-.5) -- (-.2,.5) -- (0,.5) -- (0,1.4) -- (1.1,1.4) -- (1.1,-1.4);
\end{scope}
\draw[dashed] (0,.5) -- (0,1.4);
\draw[dashed] (0,-1.4) -- (0,-.5);
\draw[dashed] (-.2,-.5) -- (-.2,.5);
\draw[\XColor,thick] (.2,-.5) -- (.2,.5);
\draw[\QsColor,thick] (.2,0) arc (-90:0:.4cm) -- (.6,1.4);
\filldraw[\XColor] (.2,0) circle (.05cm);
\roundNbox{unshaded}{(0,.7)}{.3}{.1}{.1}{\scriptsize{$x_1^\dag$}};
\roundNbox{unshaded}{(0,-.7)}{.3}{.1}{.1}{\scriptsize{$x_2$}};
}
\]
Hence $|X|\in \Bim(|Q|)$ is a $\rm W^*$-correspondence. 
\end{defn}

\begin{rem}
Given a group action $\psi:G\to \Aut(M)$, this induces a fully faithful tensor functor $F:\Hilb(G)\to \Bim(M)$ by $g\mapsto {}_{\psi_g}L^2(M)$. 
Let $Q=\bbC[G]\in \Hilb(G)$. 
We know that the category of $Q$-$Q$ bimodules in $\Hilb(G)$ is tensor equivalent to $\Rep(G)$. 
Therefore, the realization $|\cdot|_F$ induces a tensor embedding $\Rep(G)\cong\Bim(Q)\to \Bim(|Q|)=\Bim(M\rtimes G)$. This functor coincides with the one described at the beginning of section \ref{section:Rep(G)subcat}.
\end{rem}

\begin{facts}\label{facts:CommQsysInChi}
Suppose $Q\in \Chi(M)$ is a commutative Q-system and $X\in\aiBim(|Q|)$.
The following facts based on \cite[Prop.~5.10 \& 5.13]{MR4753059} hold:
\begin{enumerate}
\item If $\{q_j^{(n)}\}$ is an approximately inner $|Q|_M$ basis, then $(q_j^{(n)})_n$ is a central sequence in $|Q|$.
\item Suppose $\{x_i^{(n)}\}$ is an approximately inner $|X|_{|Q|}$ basis, then 
$\{c_{i,j}^{(n)}\}=\{x_i^{(n)}\lhd q_j^{(n)}\}$ is an approximately inner $|X|_M$ basis.
\item Suppose $\{x_i\}$ is an $|X|_{|Q|}$ basis, and $\{q_j\}$ is a $|Q|_M$ basis, then $\{c_{i,j}\}=\{x_i\lhd q_j\}$ is a $|X|_M$ basis.
\item If $\{x_i\}$ is an $X_M$ basis, then $\{x_i\}$ is also an $|X|_{|Q|}$ basis; if $\{x_i\}$ is an approximate $X_M$ basis, then $\{x_i\}$ is also an approximate $|X|_{|Q|}$ basis.
\end{enumerate}
\end{facts}

\subsection{Deequivariantization on $\Chi(M\rtimes G)$}

Suppose $\psi:G\to \Aut(M)$ is outer and $\psi$ is not approximately inner.
According to Theorem \ref{Thm:Rep(G)subcatChi}, $\Rep(G/K)$ is a unitary braided fully subcategory of $\Chi(M\rtimes G)$.

Let $A$ be the image of $\Fun(G/K)$ in $\Chi(M\rtimes G)$, i.e., $A=\Fun(G/K)\otimes L^2(M\rtimes G)$.

Suppose $X\in \Chi(M\rtimes G)_A$, by (\ref{eq:leftaction}), the left $A$-module structure is given by
\[
\lambda_X=
\tikzmath{
\begin{scope}
\clip[rounded corners = 5] (-.7,0) rectangle (.3,1);
\filldraw[\rColor] (-.7,-.5) -- (-.7,1.3) -- (.7,1.3) -- (.7,-.5);
\end{scope}
\draw[\XColor,thick] (0,0) -- (0,1);
\draw[\QsColor,thick] (-.4,0) -- (-.4,.2) arc (180:90:.4cm);
\filldraw[\XColor] (0,.6) circle (.05cm);
}
: = 
\tikzmath{
\begin{scope}
\clip[rounded corners = 5] (-.3,-.5) rectangle (.7,1.3);
\filldraw[\rColor] (-.4,-.5) -- (-.4,1.3) -- (.7,1.3) -- (.7,-.5);
\end{scope}
\draw[\QsColor,thick] (0,-.5) .. controls ++(90:.45cm) and ++(270:.45cm) .. (.4,.5) arc (0:90:.4cm);
\filldraw[\rColor] (.2,0) circle (.05cm);
\draw[\XColor,thick] (.4,-.5) .. controls ++(90:.45cm) and ++(270:.45cm) .. (0,.5) -- (0,1.3);
\filldraw[\XColor] (0,.9) circle (.05cm);
}
=\rho_X\circ u_{A,X}^\dag
\]
By (\ref{eq:leftGaction}), for indecomposable right $A$-module $X\in \Chi(M\rtimes G)_A$, there exists $\partial_X\in \Aut(A)$ such that 
\[
\rho_X\circ u_{A,X}^\dag=
\tikzmath{
\begin{scope}
\clip[rounded corners = 5] (-.7,2.3) rectangle (.3,4);
\filldraw[\rColor] (-.7,2.3) rectangle (.3,4);
\end{scope}
\draw[\XColor,thick] (0,2.3) .. controls ++(90:.45cm) and ++(270:.45cm) .. (-.4,3.3) -- (-.4,4);
\draw[\QsColor,thick,knotrColor] (-.4,2.3) .. controls ++(90:.45cm) and ++(270:.45cm) .. (0,3.3) arc (0:90:.4cm);
\filldraw[\XColor] (-.4,3.7) circle (.05cm);
}
=
\tikzmath{
\begin{scope}
\clip[rounded corners = 5] (-.9,1.4) rectangle (.3,4);
\filldraw[\rColor] (-.9,1.4) rectangle (.3,4);
\end{scope}
\draw[\QsColor,thick] (-.4,1.4) -- (-.4,2.3) .. controls ++(90:.45cm) and ++(270:.45cm) .. (0,3.3) arc (0:90:.4cm);
\draw[\XColor,thick,knotrColor] (0,1.4) -- (0,2.3) .. controls ++(90:.45cm) and ++(270:.45cm) .. (-.4,3.3) -- (-.4,4);
\filldraw[\XColor] (-.4,3.7) circle (.05cm);
\roundNbox{unshaded}{(-.4,2)}{.25}{0}{0}{\scriptsize{$\partial_X$}};
}
=
\rho_X\circ u_{X,A}\circ (\partial_X\boxtimes \id_X)
\]

We first show that the realization gives a fully faithful embedding from the deequivariantization of $\Chi(M\rtimes G)$ to $\bigoplus_{g\in G/K}g\ctBim(|A|)\cap \aiBim(|A|)$.
\begin{prop}\label{prop:AiAmod->Ai|A|}
If the bimodule $X$ a right $A$-module in $\aiBim(M\rtimes G)$,
then $|X|\in\aiBim(|A|)$.
\end{prop}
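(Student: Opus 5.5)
Write $N=M\rtimes G$. The plan is to take an approximately inner basis of $X$ over $N$ and check directly that it remains an approximately inner basis of $|X|$ over $|A|$. Concretely, let $\{x_i^{(n)}\}_i$ be an approximately inner $X_N$-basis, which exists since $X\in\aiBim(N)$. By Facts~\ref{facts:CommQsysInChi}(4), an approximate $X_N$-basis is also an approximate $|X|_{|A|}$-basis. So $\{x_i^{(n)}\}$ already satisfies the reconstruction condition $\sum_i x_i^{(n)}\langle x_i^{(n)}|x\rangle_{|A|}\approx x$ for $x\in|X|$, with uniformly bounded norms. It then only remains to verify the approximate commutation $p\rhd x_i^{(n)}\approx x_i^{(n)}\lhd p$ for every $p\in|A|$, where the actions are those of Definition~\ref{defn:RealizationOfBimodules}.

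To check commutation, I will first reduce to a generating set. The algebra $|A|$ is spanned as a right $N$-module by a finite Pimsner--Popa basis $\{q_j\}$ of $|A|_N$, so every $p$ has the form $\sum_j q_j a_j$ with $a_j\in N$. For $a\in N$ the commutation $a\rhd x_i^{(n)}\approx x_i^{(n)}\lhd a$ is exactly the approximate innerness over $N$. For general $p$ I will therefore commute the $N$-part and the $q_j$-part separately, and I need bounded-norm estimates so that $\approx$ is preserved under composition with bounded maps. The right $|A|$-action on $|X|$ is $\rho_X$ composed with the $A$-string entering from below. The left $|A|$-action uses $\lambda_X=\rho_X\circ u_{A,X}^\dag$, i.e. the $A$-string crosses $X$ via the braiding of $\Chi(N)$ before being absorbed by $\rho_X$.

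The key computation, and the step I expect to be the main obstacle, is to show that for $q\in|A|$ we have $\lambda_X(q\boxtimes x_i^{(n)})\approx\rho_X(x_i^{(n)}\boxtimes q)$. To handle it, I will expand $u_{A,X}^\dag$ using the formula for the braiding given above, with the approximately inner basis $\{x_k^{(n)}\}$ of $X$ and an ordinary basis of $A$. Diagrammatically, $u^\dag_{A,X}$ applied to $q\boxtimes x_i^{(n)}$ is then approximately $\sum_k x_k^{(n)}\boxtimes\langle x_k^{(n)}|\,q\rhd x_i^{(n)}\rangle$. The essential point is that for the element $x_i^{(n)}$, which itself approximately commutes with $N$, the crossing should collapse to $x_i^{(n)}\boxtimes q$. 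I would try to prove this by replacing the sum over $k$ with the same index $n$ and using $\sum_k x_k^{(n)}\langle x_k^{(n)}|\cdot\rangle\approx\id$ together with centrality. Here Facts~\ref{facts:CommQsysInChi}(1) should help: the approximately inner basis of $|A|_N$ consists of central sequences in $|A|$. A diagonal-subsequence argument over $\omega$ may be needed to make the two limits (the one in the braiding and the one in $x_i^{(n)}$) compatible. If this direct route stalls, I would fall back on a simpler observation: since $A\in\Rep(G/K)$ comes from Theorem~\ref{thm:NotCT->AI}, $|A|$ is generated over $N$ by explicit elements built from $u_1$ and $\Fun(G/K)$. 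I would then verify the commutation on those generators by hand.

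Once commutation holds on $N$ and on the generators $q_j$, it extends to all of $|A|$ by linearity. The finiteness of the basis and the uniform bound $\sup\|x_i^{(n)}\|<\infty$ control the error terms, and this yields that $|X|\in\aiBim(|A|)$.
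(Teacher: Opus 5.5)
Your overall plan matches the paper's. You keep an approximately inner $X_N$-basis $\{x_i^{(n)}\}$ with $N=M\rtimes G$, compute $q\rhd x_i^{(n)}=\rho_X\bigl(u^\dag(q\boxtimes x_i^{(n)})\bigr)$ by expanding the braiding with the same-index basis $\{x_j^{(n)}\}$ of $X$ and an ordinary basis $\{q_k\}$ of $A$, and collapse. Citing Facts~\ref{facts:CommQsysInChi}(4) for the reconstruction property is a detail the paper leaves implicit. The reduction to generators is harmless but unnecessary, since the computation works for every $q\in|A|$.

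The collapse, which you rightly call the crux, is never carried out, and your displayed formula for it is not meaningful. $q$ cannot act on $x_i^{(n)}$ before the crossing, because that action is exactly what is being defined. The correct expansion is
\[
u^\dag(q\boxtimes x_i^{(n)})\approx\sum_{j,k}x_j^{(n)}\boxtimes q_k\,\bigl\langle x_j^{(n)}\big|\,a_k\rhd x_i^{(n)}\bigr\rangle_N,\qquad a_k:=\langle q_k|q\rangle_N\in N.
\]
Two moves finish it. First, approximate innerness of $x_i^{(n)}$ replaces $a_k\rhd x_i^{(n)}$ by $x_i^{(n)}\lhd a_k$. This gives $\sum_{j,k}x_j^{(n)}\boxtimes q_k\,c_j^{(n)}a_k$ with $c_j^{(n)}:=\langle x_j^{(n)}|x_i^{(n)}\rangle_N$. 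Second, you must pass $c_j^{(n)}$ through $q_k$. After that, $\sum_k q_ka_k=q$ and $\sum_j x_j^{(n)}c_j^{(n)}\approx x_i^{(n)}$ yield $x_i^{(n)}\boxtimes q$, and $\rho_X$ gives $x_i^{(n)}\lhd q$.

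The second move is the missing idea. The $c_j^{(n)}$ form a central sequence in $N$, again by approximate innerness of the basis. What you need is that central sequences of $N$ asymptotically commute with elements of $|A|$, i.e.\ that $A$ is \emph{centrally trivial} over $N$. This holds here because $\psi$ is not approximately inner, so $L$ is trivial and Theorem~\ref{thm:NotAI->CT} places $\Rep(G)$, hence $A$, in $\ctBim(N)$. Your proposal never names this input.

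Facts~\ref{facts:CommQsysInChi}(1) is not a substitute. It concerns approximately inner basis vectors of $A$, which do not appear in your expansion, and it does not say that $N_\omega$ commutes with $|A|$. Your fallback of checking explicit generators of $|A|$ by hand runs into the same obstruction.

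Your worry about two competing limits, by contrast, needs no diagonal argument. The approximants of the braiding are uniformly bounded right $N$-linear maps between bifinite modules, i.e.\ elements of some $M_m(N)$. Pointwise $\|\cdot\|_2$-convergence of such maps is therefore $\|\cdot\|_2$-convergence for the trace $\mathrm{Tr}$ on $M_m(N)$. By $\tr(\xi^*D\xi)\le\|\xi\|^2\,\mathrm{Tr}(D)$, this convergence is uniform on norm-bounded vectors. That justifies using the same index $n$, as the paper does, and the same remark gives $\sum_j x_j^{(n)}c_j^{(n)}\approx x_i^{(n)}$.
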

\begin{proof}
Since $X$ is a right $A$-module and $X$ can be equipped with a left $A$-module structure, $X$ is an $A$-$A$ bimodule. 
The the realization $|X|$ is an $|A|$-$|A|$ bimodule.

Suppose $\{x_i^{(n)}\}$ is an approximate inner $X_{M\rtimes G}$-basis, $\{q_j\}$ is an $A_{M\rtimes G}$-basis. 
For every $q\in |A|$,
\[
q x_i^{(n)}
=
\tikzmath{
\begin{scope}
\clip[rounded corners = 5] (-.45,-1.2) rectangle (1.05,1.6);
\filldraw[\rColor] (.3,-1.2) -- (.3,-.5) -- (0,-.5) -- (0,.5) -- (-.2,.5) -- (-.2,2) -- (1.05,2) -- (1.05,-1.2);
\end{scope}
\draw[dashed] (-.2,.5) -- (-.2,1.6);
\draw[dashed] (0,-.5) -- (0,.5);
\draw[dashed] (.3,-1.2) -- (.3,-.5);
\draw[\QsColor,thick] (.2,.8) arc (180:90:.4cm);
\draw[\XColor,thick] (.6,-.5) -- (.6,1.6);
\filldraw[\XColor] (.6,1.2) circle (.05cm);
\roundNbox{unshaded}{(0,.5)}{.3}{.1}{.1}{\scriptsize{$q$}};
\roundNbox{unshaded}{(.3,-.5)}{.3}{.15}{.15}{\scriptsize{$x_i^{(n)}$}};
}
=
\tikzmath{
\begin{scope}
\clip[rounded corners = 5] (-1,-.7) rectangle (.5,3.1);
\filldraw[\rColor] (-.2,-.7) -- (-.2,0) -- (-.4,.3) .. controls ++(90:.2cm) and ++(270:.2cm) .. (-.6,.7) -- (-.8,1) -- (-.8,3.1) -- (.9,3.1) -- (.9,-.7);
\end{scope}
\draw[dashed] (-.2,-.7) -- (-.2,0);
\draw[dashed] (-.4,.3) .. controls ++(90:.2cm) and ++(270:.2cm) .. (-.6,.7);
\draw[dashed] (-.8,1) -- (-.8,3.1);
\draw[\QsColor,thick] (-.4,1.3) .. controls ++(90:.45cm) and ++(270:.45cm) .. (0,2.3) arc (0:90:.4cm);
\filldraw[\rColor] (-.2,1.8) circle (.05cm);
\filldraw[\XColor] (-.4,2.7) circle (.05cm);
\draw[\XColor,thick] (0,0) -- (0,1.3) .. controls ++(90:.45cm) and ++(270:.45cm) .. (-.4,2.3) -- (-.4,3.1);
\roundNbox{unshaded}{(-.6,1)}{.3}{.1}{.1}{\scriptsize{$q$}};
\roundNbox{unshaded}{(-.2,0)}{.3}{.1}{.1}{\scriptsize{$x_i^{(n)}$}};
}
\approx
\sum_{j,k}
\tikzmath{
\begin{scope}
\clip[rounded corners = 5] (-.45,-4.2) rectangle (1,2.6);
\filldraw[\rColor] (.3,-4.2) -- (.3,-3.5) -- (0,-3.5) -- (0,-2.5) -- (-.2,-2.5) -- (-.2,-1.5) -- (0,-1.5) -- (0,-.5) -- (.3,-.5) -- (.3,.5) -- (0,.5) -- (0,1.5) -- (-.2,1.5) -- (-.2,2.6) -- (1,2.6) -- (1,-4.2);
\end{scope}
\draw[dashed] (.3,-4.2) -- (.3,-3.5);
\draw[dashed] (0,-3.5) -- (0,-2.5);
\draw[dashed] (-.2,-2.5) -- (-.2,-1.5);
\draw[\QsColor,thick] (.2,-2.5) -- (.2,-1.5);
\draw[dashed] (0,-1.5) -- (0,-.5);
\draw[\XColor,thick] (.6,-3.5) -- (.6,-.5);
\draw[dashed] (.3,-.5) -- (.3,.5);
\draw[dashed] (0,.5) -- (0,1.5);
\draw[\QsColor,thick] (.6,.5) -- (.6,1.8) arc (0:90:.4cm);
\draw[dashed] (-.2,1.5) -- (-.2,2.2);
\draw[\XColor,thick] (.2,1.5) -- (.2,2.6);
\filldraw[\XColor] (.2,2.2) circle (.05cm);
\roundNbox{unshaded}{(0,1.5)}{.3}{.1}{.1}{\scriptsize{$x_j^{(n)}$}};
\roundNbox{unshaded}{(.3,.5)}{.3}{.15}{.15}{\scriptsize{$q_k$}};
\roundNbox{unshaded}{(.3,-.5)}{.3}{.15}{.15}{\scriptsize{$(x_j^{(n)})^\dag$}};
\roundNbox{unshaded}{(0,-1.5)}{.3}{.1}{.1}{\scriptsize{$q_k^\dag$}};
\roundNbox{unshaded}{(0,-2.5)}{.3}{.1}{.1}{\scriptsize{$q$}};
\roundNbox{unshaded}{(.3,-3.5)}{.3}{.15}{.15}{\scriptsize{$x_i^{(n)}$}};
}
\approx
\sum_{j,k}
\tikzmath{
\begin{scope}
\clip[rounded corners = 5] (-.45,-4.2) rectangle (1,2.6);
\filldraw[\rColor] (.3,-4.2) -- (.3,-3.5) -- (0,-3.5) -- (0,-2.5) -- (.3,-2.5) -- (.3,-1.5) -- (0,-1.5) -- (0,-.5) -- (.3,-.5) -- (.3,.5) -- (0,.5) -- (0,1.5) -- (-.2,1.5) -- (-.2,2.6) -- (1,2.6) -- (1,-4.2);
\end{scope}
\draw[dashed] (.3,-4.2) -- (.3,-3.5);
\draw[dashed] (0,-3.5) -- (0,-2.5);
\draw[dashed] (.3,-2.5) -- (.3,-1.5);
\draw[\QsColor,thick] (.6,-3.5) -- (.6,-2.5);
\draw[dashed] (0,-1.5) -- (0,-.5);
\draw[\XColor,thick] (.6,-1.5) -- (.6,-.5);
\draw[dashed] (.3,-.5) -- (.3,.5);
\draw[dashed] (0,.5) -- (0,1.5);
\draw[\QsColor,thick] (.6,.5) -- (.6,1.8) arc (0:90:.4cm);
\draw[dashed] (-.2,1.5) -- (-.2,2.6);
\draw[\XColor,thick] (.2,1.5) -- (.2,2.6);
\filldraw[\XColor] (.2,2.2) circle (.05cm);
\roundNbox{unshaded}{(0,1.5)}{.3}{.1}{.1}{\scriptsize{$x_j^{(n)}$}};
\roundNbox{unshaded}{(.3,.5)}{.3}{.15}{.15}{\scriptsize{$q_k$}};
\roundNbox{unshaded}{(.3,-.5)}{.3}{.15}{.15}{\scriptsize{$(x_j^{(n)})^\dag$}};
\roundNbox{unshaded}{(.3,-1.5)}{.3}{.15}{.15}{\scriptsize{$x_i^{(n)}$}};
\roundNbox{unshaded}{(.3,-2.5)}{.3}{.15}{.15}{\scriptsize{$q_k^\dag$}};
\roundNbox{unshaded}{(.3,-3.5)}{.3}{.15}{.15}{\scriptsize{$q$}};
}
\approx
\sum_{j,k}
\tikzmath{
\begin{scope}
\clip[rounded corners = 5] (-.45,-4.2) rectangle (1,2.6);
\filldraw[\rColor] (.3,-4.2) -- (.3,-3.5) -- (0,-3.5) -- (0,-2.5) -- (.3,-2.5) -- (.3,-1.5) -- (0,-1.5) -- (0,-.5) -- (-.2,-.5) -- (-.2,.5) -- (0,.5) -- (0,1.5) -- (-.2,1.5) -- (-.2,2.6) -- (1,2.6) -- (1,-4.2);
\end{scope}
\draw[dashed] (.3,-4.2) -- (.3,-3.5);
\draw[dashed] (0,-3.5) -- (0,-2.5);
\draw[dashed] (.3,-2.5) -- (.3,-1.5);
\draw[\QsColor,thick] (.6,-3.5) -- (.6,-2.5);
\draw[dashed] (0,-1.5) -- (0,-.5);
\draw[\XColor,thick] (.2,-.5) -- (.2,.5);
\draw[dashed] (-.2,-.5) -- (-.2,.5);
\draw[dashed] (0,.5) -- (0,1.5);
\draw[\QsColor,thick] (.6,-1.5) -- (.6,1.8) arc (0:90:.4cm);
\draw[dashed] (-.2,1.5) -- (-.2,2.6);
\draw[\XColor,thick] (.2,1.5) -- (.2,2.6);
\filldraw[\XColor] (.2,2.2) circle (.05cm);
\roundNbox{unshaded}{(0,1.5)}{.3}{.1}{.1}{\scriptsize{$x_j^{(n)}$}};
\roundNbox{unshaded}{(0,.5)}{.3}{.1}{.1}{\scriptsize{$(x_j^{(\!n\!)})^\dag$}};
\roundNbox{unshaded}{(0,-.5)}{.3}{.1}{.1}{\scriptsize{$x_i^{(n)}$}};
\roundNbox{unshaded}{(.3,-1.5)}{.3}{.15}{.15}{\scriptsize{$q_k$}};
\roundNbox{unshaded}{(.3,-2.5)}{.3}{.15}{.15}{\scriptsize{$q_k^\dag$}};
\roundNbox{unshaded}{(.3,-3.5)}{.3}{.15}{.15}{\scriptsize{$q$}};
}
\approx
\tikzmath{
\begin{scope}
\clip[rounded corners = 5] (-.45,-1.2) rectangle (1.05,1.6);
\filldraw[\rColor] (.3,-1.2) -- (.3,-.5) -- (0,-.5) -- (0,.5) -- (-.2,.5) -- (-.2,1.6) -- (1.05,1.6) -- (1.05,-1.2);
\end{scope}
\draw[dashed] (-.2,.5) -- (-.2,1.6);
\draw[dashed] (0,-.5) -- (0,.5);
\draw[dashed] (.3,-1.2) -- (.3,-.5);
\draw[\XColor,thick] (.2,.5) -- (.2,1.6);
\draw[\QsColor,thick] (.6,-.5) -- (.6,.8) arc (0:90:.4cm);
\filldraw[\XColor] (.2,1.2) circle (.05cm);
\roundNbox{unshaded}{(0,.5)}{.3}{.1}{.1}{\scriptsize{$x_i^{(n)}$}};
\roundNbox{unshaded}{(.3,-.5)}{.3}{.15}{.15}{\scriptsize{$q$}};
}
= 
x_i^{(n)} q
\]
The second equality uses the definition of left action (\ref{eq:leftaction});
the third approximation is from the definition of the under braiding $u_{X,A}^\dag$;
the forth approximation is from the fact that $\{x_i^{(n)}\}$ is approximately inner $X_{M\rtimes G}$ basis and $\langle q_k\mid q\rangle\in M\rtimes G$;
the fifth approximation is because $\left(\langle x_j^{(n)}\mid x_i^{(n)}\rangle\right)$ is a central sequence in $(M\rtimes G)_\omega$ and $|Q|$ is centrally trivial.

Therefore $\{x_i^{(n)}\}$ is an approximately inner $|X|_{|A|}$ basis.
\end{proof}

For $g\in \Aut_\alge(A)$, we denote $g\ctBim(|A|)$ to the category of $|A|$-$|A|$ bimodules $(X,\lambda^g_X)$, where $X\in \ctBim(|A|)$ with the same right $|A|$-action and a new left $|A|$-action $\lambda^g_X:=\lambda_X\circ (g\otimes \id_X):|A|\boxtimes_{|A|} X\to X$.

\begin{prop}\label{prop:CtAmod->GCt|A|}
If bimodule $X$ is an indecomposable right $A$-module in $\ctBim(M\rtimes G)$,
then $|X|\in {}_{\partial X} \ctBim(|A|)$.
\end{prop}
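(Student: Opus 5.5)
We need to show that for every central sequence $(c_n)$ in $|A|$ and every $x\in|X|$, $c_n\rhd x \approx x\lhd \partial_X(c_n)$, with the left action being the twisted one $\lambda_X$ of (\ref{eq:leftaction}). Here $|A| = M\rtimes K$, viewed inside the realization with $M\rtimes G\subseteq |A|$. The plan mirrors the proof of Proposition \ref{prop:AiAmod->Ai|A|}, with the roles of approximate innerness and central triviality exchanged.

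\textbf{Step 1: reduce central sequences of $|A|$ to central sequences of $M\rtimes G$.} Fix a $|A|_{M\rtimes G}$-basis $\{q_k\}$. Given $(c_n)\in |A|_\omega$, I would expand
\[
c_n=\sum_k q_k\,\langle q_k\mid c_n\rangle,
\]
so that each coefficient $a_{k,n}:=\langle q_k\mid c_n\rangle$ lies in $M\rtimes G$. I then want to show that each $(a_{k,n})_n$ is central in $M\rtimes G$ up to $\approx$, possibly after replacing $\{q_k\}$ by a basis adapted to the $G/K$-grading of $A=\Fun(G/K)\otimes L^2(M\rtimes G)$. This uses that $A$ lies in $\Chi(M\rtimes G)$, hence is centrally trivial, together with Facts \ref{facts:CommQsysInChi}.

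In concrete terms, $|A|=(M\rtimes G)\otimes \Fun(G/K)$-type algebra, and its central sequences can be written through components. Corollary \ref{cor:CSinMrtimesG} describes $(M\rtimes G)_\omega=(M_\omega)^G$. By the same Proposition \ref{Prop:nonAI} argument applied to $M\rtimes K$, I expect $(M\rtimes K)_\omega=(M_\omega)^K$. So a central sequence in $|A|$ is a $K$-invariant central sequence of $M$. Such a sequence is moreover $\psi_g$-invariant for $g\in K$, because $K$ acts centrally trivially.

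\textbf{Step 2: move $c_n$ across $X$ using the central triviality of $X$ over $M\rtimes G$.} Write the left $|A|$-action on $x$ diagrammatically, as in the previous proof, as
\[
c_n\rhd x=\lambda_X\circ(c_n\boxtimes x)=\rho_X\circ u_{A,X}^\dag(c_n\boxtimes x).
\]
Then use (\ref{eq:leftGaction}) to rewrite
\[
\lambda_X=\rho_X\circ u_{X,A}\circ(\partial_X\boxtimes\id_X).
\]
Expanding $u_{X,A}$ via the approximately inner basis of $A$ (available by Theorem \ref{thm:NotCT->AI}) and a regular basis of $X$ gives $c_n\rhd x$ as a sum of terms of the form $x_j\,\langle x_j\mid \cdot\rangle$. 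In these terms the $M\rtimes G$-valued coefficients of $\partial_X(c_n)$ appear to the left of the $X$-strand. Central triviality of $X$ lets those coefficients slide across $X$, and the approximate-inner basis elements of $A$ are central sequences by Facts \ref{facts:CommQsysInChi}(1). Reassembling with $\rho_X$ should yield $x\lhd\partial_X(c_n)$.

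\textbf{Main obstacle.} Step 1 is the hard part: establishing that the coefficients $\langle q_k\mid c_n\rangle$ (equivalently the $\partial_X$-twisted ones) are asymptotically central in $M\rtimes G$. A priori a central sequence of $M\rtimes K$ only commutes with $M\rtimes K$, not with the $u_g$ for $g\notin K$. My first attempt is the explicit route: identify $(M\rtimes K)_\omega=(M_\omega)^K$ using that $\psi$ is not approximately inner. Such sequences commute with $M$, and $\psi_g$ for $g\notin K$ moves them only through the permutation of $G/K$. That permutation is exactly what the twist $\partial_X$ absorbs, and this is why the target category is ${}_{\partial X}\ctBim(|A|)$ rather than $\ctBim(|A|)$.
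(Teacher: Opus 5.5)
Your Step 2 follows the paper's argument. You rewrite the twisted left action via ($*$) and ($**$) as $\rho_X$ composed with a braiding. You expand that braiding with an approximately inner $A_{M\rtimes G}$-basis $\{q_k^{(n)}\}$ and a regular basis $\{x_j\}$ of $X$. You slide an $M\rtimes G$-valued coefficient across $X$ by central triviality, and reassemble using approximate innerness of $q_k^{(n)}$.

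The gap is Step 1. You make it responsible for the centrality of the coefficients being slid, and it fails on two counts.

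First, $|A|$ is not $M\rtimes K$; that is incompatible with $M\rtimes K\subsetneq M\rtimes G\subseteq|A|$. In fact $|A|=(\Fun(G/K)\otimes M)\rtimes G$ is the basic construction of $M\rtimes K\subseteq M\rtimes G$. So $|A|\cong M_{[G:K]}(\bbC)\otimes(M\rtimes K)$ is only Morita equivalent to $M\rtimes K$. Its central sequences are not sequences in $M$. For any $(m_n)\in M_\omega$, the sequence $c_n=\sum_{\bar h\in G/K}\delta_{\bar h}\otimes\psi_h(m_n)$ is central in $|A|$; it is well defined because $K$ acts trivially on $M_\omega$. Being central in $|A|\supseteq M\rtimes G$, it commutes with every $u_g$, so the obstruction you describe is misplaced.

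Second, the claim Step 1 aims at is false. For a regular basis proportional to $\{\delta_{\bar k}\}$, the coefficient $\langle q_{\bar k}\mid c_n\rangle$ is proportional to $\psi_k(m_n)$, while $u_g\psi_k(m_n)u_g^*=\psi_{gk}(m_n)$. So it is central in $M\rtimes G$ only if $(m_n)$ is asymptotically $G$-fixed, and this fails for suitable $(m_n)$ precisely because $G\neq K$. That basis is already adapted to the $G/K$-grading, and the single translation $\partial_X$ merely reindexes these coefficients. Neither of your suggested remedies repairs this.

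The missing observation is that no regular basis of $A$ is needed. After expanding the braiding, the coefficients that must cross $X$ are $\langle q_k^{(n)}\mid c_n\rangle=E_{M\rtimes G}\bigl((q_k^{(n)})^*c_n\bigr)$, taken against the approximately inner basis. Here $c_n$ may be replaced by its $\partial_X$-twist, which is still central in $|A|$.

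By Facts \ref{facts:CommQsysInChi}(1), $(q_k^{(n)})_n$ is central in $|A|$, so the uniformly bounded product $\bigl((q_k^{(n)})^*c_n\bigr)_n$ is central in $|A|$. Since $E_{M\rtimes G}$ is $M\rtimes G$-bimodular and $\|\cdot\|_2$-contractive, for all $a\in M\rtimes G$ you get $\|[a,E_{M\rtimes G}(y_n)]\|_2=\|E_{M\rtimes G}([a,y_n])\|_2\le\|[a,y_n]\|_2\to 0$. Hence these coefficients are central in $M\rtimes G$, and central triviality of $X$ applies directly. No description of $|A|_\omega$ or appeal to Corollary \ref{cor:CSinMrtimesG} is required; this is exactly how the paper's proof runs.

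In the concrete model, it is the cocycle relation $\pi_g=t^*\tilde\psi_g(t)$ behind the basis of Theorem \ref{thm:NotCT->AI} that makes $\langle q_k^{(n)}\mid c_n\rangle$ asymptotically $G$-invariant. The coefficients against $\delta_{\bar k}$ are not. With this observation in place of Step 1, your Step 2 goes through.
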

\begin{proof}
Suppose $\{x_j\}$ is an $X_{M\rtimes G}$-basis and $\{q_k^{(n)}\}$ is an approximately inner $A_{M\rtimes G}$-basis.
For every central sequence $(q_n)\in |Q|_\omega$ and $x\in |X|$.
\[
q_n\rhd_{\partial_X} x
=
\tikzmath{
\begin{scope}
\clip[rounded corners = 5] (-1,-.7) rectangle (.5,4);
\filldraw[\rColor] (-.2,-.7) -- (-.2,0) -- (-.4,.3) .. controls ++(90:.2cm) and ++(270:.2cm) .. (-.6,.7) -- (-.8,1) -- (-.8,4) -- (.9,4) -- (.9,-.7);
\end{scope}
\draw[dashed] (-.2,-.7) -- (-.2,0);
\draw[dashed] (-.4,.3) .. controls ++(90:.2cm) and ++(270:.2cm) .. (-.6,.7);
\draw[dashed] (-.8,1) -- (-.8,4);
\draw[\QsColor,thick] (-.4,1.3) -- (-.4,2.3) .. controls ++(90:.45cm) and ++(270:.45cm) .. (0,3.3) arc (0:90:.4cm);
\draw[\XColor,thick,knotrColor] (0,0) -- (0,2.3) .. controls ++(90:.45cm) and ++(270:.45cm) .. (-.4,3.3) -- (-.4,4);
\filldraw[\XColor] (-.4,3.7) circle (.05cm);
\roundNbox{unshaded}{(-.4,2)}{.25}{0}{0}{\scriptsize{$\partial_X$}};
\roundNbox{unshaded}{(-.6,1)}{.3}{.1}{.1}{\scriptsize{$q_n$}};
\roundNbox{unshaded}{(-.2,0)}{.3}{.1}{.1}{\scriptsize{$x$}};
}
=
\tikzmath{
\begin{scope}
\clip[rounded corners = 5] (-1,-.7) rectangle (.5,3);
\filldraw[\rColor] (-.2,-.7) -- (-.2,0) -- (-.4,.3) .. controls ++(90:.2cm) and ++(270:.2cm) .. (-.6,.7) -- (-.8,1) -- (-.8,3) -- (.9,3) -- (.9,-.7);
\end{scope}
\draw[dashed] (-.2,-.7) -- (-.2,0);
\draw[dashed] (-.4,.3) .. controls ++(90:.2cm) and ++(270:.2cm) .. (-.6,.7);
\draw[dashed] (-.8,1) -- (-.8,3);
\draw[\XColor,thick] (0,0) -- (0,1.3) .. controls ++(90:.45cm) and ++(270:.45cm) .. (-.4,2.3) -- (-.4,3);
\filldraw[\rColor] (-.2,1.8) circle (.05cm);
\draw[\QsColor,thick] (-.4,1.3) .. controls ++(90:.45cm) and ++(270:.45cm) .. (0,2.3) arc (0:90:.4cm);
\filldraw[\XColor] (-.4,2.7) circle (.05cm);
\roundNbox{unshaded}{(-.6,1)}{.3}{.1}{.1}{\scriptsize{$q_n$}};
\roundNbox{unshaded}{(-.2,0)}{.3}{.1}{.1}{\scriptsize{$x$}};
}
\approx
\sum_{j,k}
\tikzmath{
\begin{scope}
\clip[rounded corners = 5] (-.45,-4.2) rectangle (1,2.5);
\filldraw[\rColor] (.3,-4.2) -- (.3,-3.5) -- (0,-3.5) -- (0,-2.5) -- (-.2,-2.5) -- (-.2,-1.5) -- (0,-1.5) -- (0,-.5) -- (.3,-.5) -- (.3,.5) -- (0,.5) -- (0,1.5) -- (-.2,1.5) -- (-.2,2.5) -- (1,2.5) -- (1,-4.2);
\end{scope}
\draw[dashed] (.3,-4.2) -- (.3,-3.5);
\draw[dashed] (0,-3.5) -- (0,-2.5);
\draw[dashed] (-.2,-2.2) -- (-.2,-1.5);
\draw[\QsColor,thick] (.2,-2.2) -- (.2,-1.5);
\draw[dashed] (0,-1.5) -- (0,-.5);
\draw[\XColor,thick] (.6,-3.5) -- (.6,-.5);
\draw[dashed] (.3,-.5) -- (.3,.5);
\draw[dashed] (0,.5) -- (0,1.5);
\draw[\QsColor,thick] (.6,.5) -- (.6,1.8) arc (0:90:.4cm);
\draw[dashed] (-.2,1.5) -- (-.2,2.5);
\draw[\XColor,thick] (.2,1.5) -- (.2,2.5);
\filldraw[\XColor] (.2,2.2) circle (.05cm);
\roundNbox{unshaded}{(0,1.5)}{.3}{.1}{.1}{\scriptsize{$x_j$}};
\roundNbox{unshaded}{(.3,.5)}{.3}{.15}{.15}{\scriptsize{$q_k^{(n)}$}};
\roundNbox{unshaded}{(.3,-.5)}{.3}{.15}{.15}{\scriptsize{$x_j^\dag$}};
\roundNbox{unshaded}{(0,-1.5)}{.3}{.1}{.1}{\scriptsize{$(q_k^{(\!n\!)})^\dag$}};
\roundNbox{unshaded}{(0,-2.5)}{.3}{.1}{.1}{\scriptsize{$q_n$}};
\roundNbox{unshaded}{(.3,-3.5)}{.3}{.15}{.15}{\scriptsize{$x$}};
}
\approx
\sum_{j,k}
\tikzmath{
\begin{scope}
\clip[rounded corners = 5] (-.45,-4.2) rectangle (1,2.5);
\filldraw[\rColor] (.3,-4.2) -- (.3,-3.5) -- (0,-3.5) -- (0,-2.5) -- (.3,-2.5) -- (.3,-1.5) -- (0,-1.5) -- (0,-.5) -- (.3,-.5) -- (.3,.5) -- (0,.5) -- (0,1.5) -- (-.2,1.5) -- (-.2,2.5) -- (1,2.5) -- (1,-4.2);
\end{scope}
\draw[dashed] (.3,-4.2) -- (.3,-3.5);
\draw[dashed] (0,-3.5) -- (0,-2.5);
\draw[dashed] (.3,-2.5) -- (.3,-1.5);
\draw[\QsColor,thick] (.6,-3.5) -- (.6,-2.5);
\draw[dashed] (0,-1.5) -- (0,-.5);
\draw[\XColor,thick] (.6,-1.5) -- (.6,-.5);
\draw[dashed] (.3,-.5) -- (.3,.5);
\draw[dashed] (0,.5) -- (0,1.5);
\draw[\QsColor,thick] (.6,.5) -- (.6,1.8) arc (0:90:.4cm);
\draw[dashed] (-.2,1.5) -- (-.2,2.5);
\draw[\XColor,thick] (.2,1.5) -- (.2,2.5);
\filldraw[\XColor] (.2,2.2) circle (.05cm);
\roundNbox{unshaded}{(0,1.5)}{.3}{.1}{.1}{\scriptsize{$x_j$}};
\roundNbox{unshaded}{(.3,.5)}{.3}{.15}{.15}{\scriptsize{$q_k^{(n)}$}};
\roundNbox{unshaded}{(.3,-.5)}{.3}{.15}{.15}{\scriptsize{$x_j^\dag$}};
\roundNbox{unshaded}{(.3,-1.5)}{.3}{.15}{.15}{\scriptsize{$x$}};
\roundNbox{unshaded}{(.3,-2.5)}{.3}{.15}{.15}{\scriptsize{$(q_k^{(\!n\!)})^\dag$}};
\roundNbox{unshaded}{(.3,-3.5)}{.3}{.15}{.15}{\scriptsize{$q_n$}};
}
\approx
\sum_{j,k}
\tikzmath{
\begin{scope}
\clip[rounded corners = 5] (-.45,-4.2) rectangle (1,2.5);
\filldraw[\rColor] (.3,-4.2) -- (.3,-3.5) -- (0,-3.5) -- (0,-2.5) -- (.3,-2.5) -- (.3,-1.5) -- (0,-1.5) -- (0,-.5) -- (-.2,-.5) -- (-.2,.5) -- (0,.5) -- (0,1.5) -- (-.2,1.5) -- (-.2,2.5) -- (1,2.5) -- (1,-4.2);
\end{scope}
\draw[dashed] (.3,-4.2) -- (.3,-3.5);
\draw[dashed] (0,-3.5) -- (0,-2.5);
\draw[dashed] (.3,-2.5) -- (.3,-1.5);
\draw[\QsColor,thick] (.6,-3.5) -- (.6,-2.5);
\draw[dashed] (0,-1.5) -- (0,-.5);
\draw[\XColor,thick] (.2,-.5) -- (.2,.5);
\draw[dashed] (-.2,-.5) -- (-.2,.5);
\draw[dashed] (0,.5) -- (0,1.5);
\draw[\QsColor,thick] (.6,-1.5) -- (.6,1.8) arc (0:90:.4cm);
\draw[dashed] (-.2,1.5) -- (-.2,2.5);
\draw[\XColor,thick] (.2,1.5) -- (.2,2.5);
\filldraw[\XColor] (.2,2.2) circle (.05cm);
\roundNbox{unshaded}{(0,1.5)}{.3}{.1}{.1}{\scriptsize{$x_j$}};
\roundNbox{unshaded}{(0,.5)}{.3}{.1}{.1}{\scriptsize{$x_j^\dag$}};
\roundNbox{unshaded}{(0,-.5)}{.3}{.1}{.1}{\scriptsize{$x$}};
\roundNbox{unshaded}{(.3,-1.5)}{.3}{.15}{.15}{\scriptsize{$q_k^{(n)}$}};
\roundNbox{unshaded}{(.3,-2.5)}{.3}{.15}{.15}{\scriptsize{$(q_k^{(\!n\!)})^\dag$}};
\roundNbox{unshaded}{(.3,-3.5)}{.3}{.15}{.15}{\scriptsize{$q_n$}};
}
\approx
\tikzmath{
\begin{scope}
\clip[rounded corners = 5] (-.45,-1.2) rectangle (1.05,1.6);
\filldraw[\rColor] (.3,-1.2) -- (.3,-.5) -- (0,-.5) -- (0,.5) -- (-.2,.5) -- (-.2,1.6) -- (1.05,1.6) -- (1.05,-1.2);
\end{scope}
\draw[dashed] (-.2,.5) -- (-.2,1.6);
\draw[dashed] (0,-.5) -- (0,.5);
\draw[dashed] (.3,-1.2) -- (.3,-.5);
\draw[\XColor,thick] (.2,.5) -- (.2,1.6);
\draw[\QsColor,thick] (.6,-.5) -- (.6,.8) arc (0:90:.4cm);
\filldraw[\XColor] (.2,1.2) circle (.05cm);
\roundNbox{unshaded}{(0,.5)}{.3}{.1}{.1}{\scriptsize{$x$}};
\roundNbox{unshaded}{(.3,-.5)}{.3}{.15}{.15}{\scriptsize{$q_n$}};
}
=
x q_n
\]
The first equality uses the definition of left action (\ref{eq:leftaction});
the second equality is by (\ref{eq:leftGaction});
the third approximation is from the definition of the over braiding $u_{A,X}$;
the forth approximation is from the fact that $(\langle q_k^{(n)} \mid q_n\rangle)$ is a central sequence and $X$ is centrally trivial;
the fifth approximation is because $\{q_k^{(n)}\}$ is approximately inner $|Q|_{M\rtimes G}$ basis and $\langle x_j\mid x\rangle\in M\rtimes G$.
\end{proof}

\begin{prop}
Realization gives a $G/K$-crossed braided unitary equivalence
$$| \cdot |:\Chi(M\rtimes G)_A\to \bigoplus_{g\in G/K}g \ctBim(|A|)\cap \aiBim(|A|).$$
\end{prop}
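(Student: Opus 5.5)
The plan is to deduce the statement from the general realization equivalence between $A$-$A$ bimodules in $\Bim(M\rtimes G)$ and $\Bim(|A|)$ \cite[\S4.2]{MR4419534}, and then check that the approximately inner and centrally trivial conditions match on the two sides. By (\ref{eq:leftaction}) every right $A$-module $X$ in $\Chi(M\rtimes G)$ becomes an $A$-$A$ bimodule. The realization $|\cdot|$ is then a fully faithful unitary tensor functor into $\Bim(|A|)$, with $X\boxtimes_A Y\mapsto |X|\boxtimes_{|A|}|Y|$ and hom spaces preserved. So it only remains to identify the image and the extra structure.

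\emph{Well-definedness and grading.} Decompose $X$ into indecomposables. For each indecomposable summand, Proposition \ref{prop:AiAmod->Ai|A|} shows that $|X|$ is approximately inner over $|A|$. Proposition \ref{prop:CtAmod->GCt|A|} shows that $|X|\in {}_{\partial_X}\ctBim(|A|)$. Since $\partial_X\in\Aut_{\alge}(A)\cong G/K$, the functor sends $(\Chi(M\rtimes G)_A)_g$ into $g\ctBim(|A|)\cap\aiBim(|A|)$. The $G/K$-action $T_g$ only twists the right $A$-action by $g$. Under realization this corresponds to twisting the $|A|=M\rtimes K$-actions by the automorphism $g$ of $|A|$ dual to translation on $\Fun(G/K)$, so compatibility with the action is a direct check.

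\emph{Essential surjectivity.} This is the step I expect to be the main obstacle. Let $Y\in g\ctBim(|A|)\cap\aiBim(|A|)$. Full realization gives an $A$-$A$ bimodule $X$ in $\Bim(M\rtimes G)$ with $|X|\cong Y$, and we must show that $X$ restricted to $M\rtimes G$ lies in $\Chi(M\rtimes G)$.
\begin{itemize}
\item For approximate innerness, I would combine an approximately inner $|X|_{|A|}$ basis with an approximately inner $|A|_{M\rtimes G}$ basis, which exists because $A\in\Chi(M\rtimes G)$. Facts \ref{facts:CommQsysInChi}(2) then gives an approximately inner $X_{M\rtimes G}$ basis.
\item For central triviality, Corollary \ref{cor:CSinMrtimesG} gives $(M\rtimes G)_\omega=(M_\omega)^G$. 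Since $K$ acts centrally trivially on $M$, these sequences are also central in $|A|=M\rtimes K$, by the same computation as in that corollary. Moreover they are fixed by the automorphism $g$ of $|A|$.
\item Hence the $g$-twisted central triviality of $Y$ yields $a_n y\approx y a_n$ for every $(a_n)\in (M\rtimes G)_\omega$. So $X\in\ctBim(M\rtimes G)$, and its degree is $g$ by reading (\ref{eq:leftGaction}) backwards.
\end{itemize}

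\emph{Crossed braiding.} Finally I would compare the $G/K$-crossed braiding (\ref{eq:GcrossedBraid}) with the braiding carried by the right-hand side. On $\bigoplus_g g\ctBim(|A|)\cap\aiBim(|A|)$ this braiding is given by the same basis formula as $u_{X,Y}$, using an approximately inner basis of the first factor and an ordinary basis of the second, with the twist $g$ inserted.
\begin{itemize}
\item Using Facts \ref{facts:CommQsysInChi}(2)--(4), choose $|X|_{|A|}$ bases that are simultaneously (approximately inner) $X_{M\rtimes G}$ bases.
\item With these bases, the realized formula $\sum|y_j\boxtimes x_i^{(n)}\rangle\langle x_i^{(n)}\boxtimes y_j|$ over $|A|$ equals $p^\dag\circ u_{X,Y}\circ p$. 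This is the projection $p$ onto the relative tensor product over $A$ composed with the $\Chi(M\rtimes G)$ braiding, as in (\ref{eq:GcrossedBraid}).
\item This is a graphical manipulation paralleling the proofs of Propositions \ref{prop:AiAmod->Ai|A|} and \ref{prop:CtAmod->GCt|A|}: centrality of the approximately inner $A$-basis lets the $A$-strands be moved across.
\end{itemize}
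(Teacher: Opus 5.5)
Your overall architecture is the paper's: realization of $A$-$A$ bimodules, the two preceding propositions for the forward direction, Facts~\ref{facts:CommQsysInChi}(2) for approximate innerness of the preimage, transfer of central sequences for central triviality, and a deferral of the crossed braiding to the local case. However, the essential surjectivity step has a genuine gap.

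\textbf{The gap.} Given $Y\in g\ctBim(|A|)\cap\aiBim(|A|)$ and the $A$-$A$ bimodule $X$ with $|X|\cong Y$, you only show that the underlying $M\rtimes G$-bimodule of $X$ lies in $\Chi(M\rtimes G)$. This makes $X$, with its right $A$-action, an object of $\Chi(M\rtimes G)_A$. But the functor realizes this right module with the left action $\rho_X\circ u_{A,X}^\dag$ from (\ref{eq:leftaction}), not with the left action $\lambda^Y$ that $Y$ actually carries, and nothing you check relates the two. The conditions you verify are in fact blind to the left $A$-action. Every $h\in\Aut_{\alge}(A)$ satisfies $h\circ i=i$, so twisting the left action by $h$ leaves the underlying $M\rtimes G$-bimodule unchanged, while in general it changes the isomorphism class of the $|A|$-bimodule. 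You use approximate innerness over $|A|$ only to descend to $M\rtimes G$, and that loses exactly the information needed here.

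\textbf{What is missing.} The paper proves two identities graphically.
\begin{enumerate}
\item Test an approximately inner $|X|_{|A|}$-basis against $q_k\in|A|$, and use that the approximately inner $|A|_{M\rtimes G}$-basis is central in $|A|$ (Facts~\ref{facts:CommQsysInChi}(1)). This gives $\lambda^Y=\rho_X\circ u_{A,X}^\dag$. It is the step where approximate innerness over $|A|$, not merely over $M\rtimes G$, is used.
\item Apply the $g$-twisted central triviality of $Y$ to the central sequences $(q_k^{(n)})_n$ of $|A|$. This gives $\rho_X\circ u_{A,X}=\lambda^Y\circ(g\otimes\id_X)$ in the paper's conventions. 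These sequences are not in $M\rtimes G$, so your bullet about $(M\rtimes G)_\omega$ does not reach them.
\end{enumerate}
Only with both identities does (\ref{eq:leftGaction}) yield $\partial_X=g$ and $|X|\cong Y$ in the graded target. ``Reading (\ref{eq:leftGaction}) backwards'' presupposes them.

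\textbf{A smaller point.} $|A|$ is not $M\rtimes K$. It is the basic construction $\langle M\rtimes G,e_{M\rtimes K}\rangle$, which is an amplification of, hence only Morita equivalent to, $M\rtimes K$. Since $M\rtimes G\not\subseteq M\rtimes K$, the phrase ``central in $|A|=M\rtimes K$'' does not make sense as stated. Likewise, the automorphism $g$ lives on the basic construction and fixes $M\rtimes G$ pointwise. The conclusion you want is still true, by either of two arguments:
\begin{itemize}
\item as in the paper, $A$ is a centrally trivial $M\rtimes G$-bimodule;
\item by Corollary~\ref{cor:CSinMrtimesG}, central sequences of $M\rtimes G$ can be represented in $M\subseteq M\rtimes K$, so they commute exactly with $e_{M\rtimes K}$ and asymptotically with $M\rtimes G$, hence asymptotically with $|A|$.
\end{itemize}
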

\begin{proof}
By proposition \ref{prop:AiAmod->Ai|A|} and \ref{prop:CtAmod->GCt|A|}, the realization functor is fully faithful. 
Especially, $|X|\in g \ctBim(|A|)\cap \aiBim(|A|)$ when $\partial_X=g$. 

We now show that the realization functor is essentially surjective, i.e, every bimodule in $\bigoplus_{g\in G/K}g \ctBim(|A|)\cap \aiBim(|A|)$ is unitary isomorphic to a realization of a bimodule in $\Chi(M\rtimes G)_A$. 
It is suffices to consider an indecomposable $A$-$A$ bimodule $X\in \Bim(M)$ such that $|X|\in g \ctBim(|A|)\cap \aiBim(|A|)$ for some $g\in G/K$. 

By Facts \ref{facts:CommQsysInChi}(2), suppose $\{x_i^{(n)}\}$ is an approximately inner $|X|_{|A|}$-basis and $\{q_j^{(n)}\}$ is an approximately inner $|A|_{M\rtimes G}$-basis, then $\{x_i^{(n)}\lhd q_j^{(n)}\}$ is an approximately inner $|X|_M$-basis. 

\[
\tikzmath{
\begin{scope}
\clip[rounded corners = 5] (-1,1.3) rectangle (.4,3);
\filldraw[\rColor] (-.2,-.7) -- (-.2,0) -- (-.4,.3) .. controls ++(90:.2cm) and ++(270:.2cm) .. (-.6,.7) -- (-.8,1) -- (-.8,3) -- (.9,3) -- (.9,-.7);
\end{scope}
\draw[dashed] (-.8,1.3) -- (-.8,3);
\draw[\QsColor,thick] (-.4,1.3) .. controls ++(90:.45cm) and ++(270:.45cm) .. (0,2.3) arc (0:90:.4cm);
\draw[\XColor,thick,knotrColor] (0,1.3) .. controls ++(90:.45cm) and ++(270:.45cm) .. (-.4,2.3) -- (-.4,3);
\filldraw[\XColor] (-.4,2.7) circle (.05cm);
}
\approx \sum_{i,j,k}
\tikzmath{
\begin{scope}
\clip[rounded corners = 5] (-.75,-4.2) rectangle (1.1,3);
\filldraw[\rColor] (-.55,-4.2) -- (-.55,-3.5) -- (-.35,-3.2) .. controls ++(90:.2cm) and ++(270:.2cm) .. (-.25,-2.8) -- (0,-2.2) .. controls ++(90:.2cm) and ++(270:.2cm) .. (.2,-1.8) -- (.4,-1.5) -- (.4,-.5) -- (.2,-.2) .. controls ++(90:.2cm) and ++(270:.2cm) .. (0,.2) -- (-.2,.8) .. controls ++(90:.2cm) and ++(270:.2cm) .. (-.35,1.2) -- (-.55,1.5) -- (-.55,3) -- (1.1,3) -- (1.1,-4.2);
\end{scope}
\draw[dashed] (-.55,1.5) -- (-.55,3);
\draw[dashed] (-.2,.8) .. controls ++(90:.2cm) and ++(270:.2cm) .. (-.35,1.2); 
\draw[dashed] (.2,-.2) .. controls ++(90:.2cm) and ++(270:.2cm) .. (0,.2);
\draw[dashed] (.4,-1.5) -- (.4,-.5);
\draw[dashed] (0,-2.2) .. controls ++(90:.2cm) and ++(270:.2cm) .. (.2,-1.8);
\draw[dashed] (-.35,-3.2) .. controls ++(90:.2cm) and ++(270:.2cm) .. (-.25,-2.8);
\draw[dashed] (-.55,-4.2) -- (-.55,-3.5);

\draw[\QsColor,thick] (.65,-.5) -- (.65,1.8) arc (0:90:.8cm);
\draw[\QsColor,thick] (.25,.5) -- (.25,1.8) arc (0:90:.4cm);
\draw[\XColor,thick] (-.15,1.5) -- (-.15,3);
\draw[\QsColor,thick] (.65,-1.8) -- (.65,-2.8) arc (0:-90:.4cm);
\draw[\XColor,thick] (.25,-4.2) -- (.25,-2.5);
\draw[\QsColor,thick] (-.15,-4.2) -- (-.15,-3.5);

\filldraw[\XColor] (-.15,2.6) circle (.05cm);
\filldraw[\XColor] (-.15,2.2) circle (.05cm);
\filldraw[\XColor] (.25,-3.2) circle (.05cm);
\roundNbox{unshaded}{(-.35,1.5)}{.3}{.1}{.1}{\scriptsize{$x_i^{(n)}$}};
\roundNbox{unshaded}{(0,.5)}{.3}{.1}{.1}{\scriptsize{$q_j^{(n)}$}};
\roundNbox{unshaded}{(.4,-.5)}{.3}{.1}{.1}{\scriptsize{$q_k$}};
\roundNbox{unshaded}{(.4,-1.5)}{.3}{.1}{.1}{\scriptsize{$(q_j^{(\!n\!)})^\dag$}};
\roundNbox{unshaded}{(0,-2.5)}{.3}{.1}{.1}{\scriptsize{$(x_i^{(\!n\!)})^\dag$}};
\roundNbox{unshaded}{(-.35,-3.5)}{.3}{.1}{.1}{\scriptsize{$q_k^\dag$}};
}
\approx \sum_{i,j,k}
\tikzmath{
\begin{scope}
\clip[rounded corners = 5] (-.75,-4.2) rectangle (1.1,3);
\filldraw[\rColor] (-.55,-4.2) -- (-.55,-3.5) -- (-.35,-3.2) .. controls ++(90:.2cm) and ++(270:.2cm) .. (-.25,-2.8) -- (0,-2.2) .. controls ++(90:.2cm) and ++(270:.2cm) .. (.2,-1.8) -- (.4,-1.5) -- (.4,-.5) -- (.2,-.2) .. controls ++(90:.2cm) and ++(270:.2cm) .. (0,.2) -- (-.2,.8) .. controls ++(90:.2cm) and ++(270:.2cm) .. (-.35,1.2) -- (-.55,1.5) -- (-.55,3) -- (1.1,3) -- (1.1,-4.2);
\end{scope}
\draw[dashed] (-.55,1.5) -- (-.55,3);
\draw[dashed] (-.2,.8) .. controls ++(90:.2cm) and ++(270:.2cm) .. (-.35,1.2); 
\draw[dashed] (.2,-.2) .. controls ++(90:.2cm) and ++(270:.2cm) .. (0,.2);
\draw[dashed] (.4,-1.5) -- (.4,-.5);
\draw[dashed] (0,-2.2) .. controls ++(90:.2cm) and ++(270:.2cm) .. (.2,-1.8);
\draw[dashed] (-.35,-3.2) .. controls ++(90:.2cm) and ++(270:.2cm) .. (-.25,-2.8);
\draw[dashed] (-.55,-4.2) -- (-.55,-3.5);

\draw[\QsColor,thick] (.65,-.5) -- (.65,1.8) arc (0:90:.8cm);
\draw[\QsColor,thick] (.25,.5) -- (.25,1.8) arc (0:90:.4cm);
\draw[\XColor,thick] (-.15,1.5) -- (-.15,3);
\draw[\QsColor,thick] (.65,-1.8) -- (.65,-2.8) arc (0:-90:.4cm);
\draw[\XColor,thick] (.25,-4.2) -- (.25,-2.5);
\draw[\QsColor,thick] (-.15,-4.2) -- (-.15,-3.5);

\filldraw[\XColor] (-.15,2.6) circle (.05cm);
\filldraw[\XColor] (-.15,2.2) circle (.05cm);
\filldraw[\XColor] (.25,-3.2) circle (.05cm);
\roundNbox{unshaded}{(-.35,1.5)}{.3}{.1}{.1}{\scriptsize{$x_i^{(n)}$}};
\roundNbox{unshaded}{(0,.5)}{.3}{.1}{.1}{\scriptsize{$q_k$}};
\roundNbox{unshaded}{(.4,-.5)}{.3}{.1}{.1}{\scriptsize{$q_j^{(n)}$}};
\roundNbox{unshaded}{(.4,-1.5)}{.3}{.1}{.1}{\scriptsize{$(q_j^{(\!n\!)})^\dag$}};
\roundNbox{unshaded}{(0,-2.5)}{.3}{.1}{.1}{\scriptsize{$(x_i^{(\!n\!)})^\dag$}};
\roundNbox{unshaded}{(-.35,-3.5)}{.3}{.1}{.1}{\scriptsize{$q_k^\dag$}};
}
\approx \sum_{i,j,k}
\tikzmath{
\begin{scope}
\clip[rounded corners = 5] (-.75,-4.2) rectangle (1.1,3);
\filldraw[\rColor] (-.55,-4.2) -- (-.55,-3.5) -- (-.35,-3.2) .. controls ++(90:.2cm) and ++(270:.2cm) .. (-.25,-2.8) -- (0,-2.2) .. controls ++(90:.2cm) and ++(270:.2cm) .. (.2,-1.8) -- (.4,-1.5) -- (.4,-.5) -- (.2,-.2) .. controls ++(90:.2cm) and ++(270:.2cm) .. (0,.2) -- (-.2,.8) .. controls ++(90:.2cm) and ++(270:.2cm) .. (-.35,1.2) -- (-.55,1.5) -- (-.55,3) -- (1.1,3) -- (1.1,-4.2);
\end{scope}
\draw[dashed] (-.55,1.5) -- (-.55,3);
\draw[dashed] (-.2,.8) .. controls ++(90:.2cm) and ++(270:.2cm) .. (-.35,1.2); 
\draw[dashed] (.2,-.2) .. controls ++(90:.2cm) and ++(270:.2cm) .. (0,.2);
\draw[dashed] (.4,-1.5) -- (.4,-.5);
\draw[dashed] (0,-2.2) .. controls ++(90:.2cm) and ++(270:.2cm) .. (.2,-1.8);
\draw[dashed] (-.35,-3.2) .. controls ++(90:.2cm) and ++(270:.2cm) .. (-.25,-2.8);
\draw[dashed] (-.55,-4.2) -- (-.55,-3.5);

\draw[\QsColor,thick] (.65,-.5) -- (.65,1.8) arc (0:90:.4cm);
\draw[\XColor,thick] (.25,.5) -- (.25,3);
\draw[\QsColor,thick] (-.15,1.5) -- (-.15,2.2)  arc (180:90:.4cm);
\draw[\QsColor,thick] (.65,-1.8) -- (.65,-2.8) arc (0:-90:.4cm);
\draw[\XColor,thick] (.25,-4.2) -- (.25,-2.5);
\draw[\QsColor,thick] (-.15,-4.2) -- (-.15,-3.5);

\filldraw[\XColor] (.25,2.6) circle (.05cm);
\filldraw[\XColor] (.25,2.2) circle (.05cm);
\filldraw[\XColor] (.25,-3.2) circle (.05cm);
\roundNbox{unshaded}{(-.35,1.5)}{.3}{.1}{.1}{\scriptsize{$q_k$}};
\roundNbox{unshaded}{(0,.5)}{.3}{.1}{.1}{\scriptsize{$x_i^{(n)}$}};
\roundNbox{unshaded}{(.4,-.5)}{.3}{.1}{.1}{\scriptsize{$q_j^{(n)}$}};
\roundNbox{unshaded}{(.4,-1.5)}{.3}{.1}{.1}{\scriptsize{$(q_j^{(\!n\!)})^\dag$}};
\roundNbox{unshaded}{(0,-2.5)}{.3}{.1}{.1}{\scriptsize{$(x_i^{(\!n\!)})^\dag$}};
\roundNbox{unshaded}{(-.35,-3.5)}{.3}{.1}{.1}{\scriptsize{$q_k^\dag$}};
}
\approx \sum_k
\tikzmath{
\begin{scope}
\clip[rounded corners = 5] (-.6,-.7) rectangle (.8,2.2);
\filldraw[\rColor] (-.4,-.7) -- (-.4,0) -- (-.2,0) -- (-.2,1) -- (-.4,1) -- (-.4,2.2) -- (.8,2.2) -- (.8,-.7);
\end{scope}
\draw[dashed] (-.4,1) -- (-.4,2.2);
\draw[dashed] (-.2,0) -- (-.2,1);
\draw[dashed] (-.4,-.7) -- (-.4,0);

\draw[\XColor,thick] (.4,-.7) -- (.4,2.2);
\draw[\QsColor,thick] (0,1.3) arc (180:90:.4cm);
\draw[\QsColor,thick] (0,-.7) -- (0,0);
\filldraw[\XColor] (.4,1.7) circle (.05cm);
\roundNbox{unshaded}{(-.2,1)}{.3}{.1}{.1}{\scriptsize{$q_k$}};
\roundNbox{unshaded}{(-.2,0)}{.3}{.1}{.1}{\scriptsize{$q_k^\dag$}};
}
=
\tikzmath{
\begin{scope}
\clip[rounded corners = 5] (-.6,1) rectangle (.8,2.2);
\filldraw[\rColor] (-.4,1) -- (-.4,2.2) -- (.8,2.2) -- (.8,1);
\end{scope}
\draw[dashed] (-.4,1) -- (-.4,2.2);

\draw[\XColor,thick] (.4,1) -- (.4,2.2);
\draw[\QsColor,thick] (0,1) -- (0,1.3) arc (180:90:.4cm);
\filldraw[\XColor] (.4,1.7) circle (.05cm);
}
\]
The first approximation is from the definition of the under braiding $u_{X,A}^{\dag}$;
the second approximation is from Fact \ref{facts:CommQsysInChi}(1) that $(q_j^{(n)})$ is a central sequence in $|A|$;
the third approximation is because $\{x_i^{(n)}\}$ is an approximately inner $|X|_{|A|}$ basis so that $\|q_k x_i^{(n)}-x_i^{(n)}q_k\|_2\to 0$.

Similarly, suppose $\{x_i\}$ is an $|X|_{|A|}$-basis and $\{q_j\}$ is an $|A|_{M\rtimes G}$-basis, then by Facts \ref{facts:CommQsysInChi}(3) $\{x_i\lhd q_j\}$ is an $|X|_M$-basis. 
\[
\tikzmath{
\begin{scope}
\clip[rounded corners = 5] (-1,1.3) rectangle (.4,3);
\filldraw[\rColor] (-.2,-.7) -- (-.2,0) -- (-.4,.3) .. controls ++(90:.2cm) and ++(270:.2cm) .. (-.6,.7) -- (-.8,1) -- (-.8,3) -- (.9,3) -- (.9,-.7);
\end{scope}
\draw[dashed] (-.8,1.3) -- (-.8,3);
\draw[\XColor,thick] (0,1.3) .. controls ++(90:.45cm) and ++(270:.45cm) .. (-.4,2.3) -- (-.4,3);
\draw[\QsColor,thick,knotrColor] (-.4,1.3) .. controls ++(90:.45cm) and ++(270:.45cm) .. (0,2.3) arc (0:90:.4cm);
\filldraw[\XColor] (-.4,2.7) circle (.05cm);
}
\approx \sum_{i,j,k}
\tikzmath{
\begin{scope}
\clip[rounded corners = 5] (-.75,-4.2) rectangle (1.1,3);
\filldraw[\rColor] (-.55,-4.2) -- (-.55,-3.5) -- (-.35,-3.2) .. controls ++(90:.2cm) and ++(270:.2cm) .. (-.25,-2.8) -- (0,-2.2) .. controls ++(90:.2cm) and ++(270:.2cm) .. (.2,-1.8) -- (.4,-1.5) -- (.4,-.5) -- (.2,-.2) .. controls ++(90:.2cm) and ++(270:.2cm) .. (0,.2) -- (-.2,.8) .. controls ++(90:.2cm) and ++(270:.2cm) .. (-.35,1.2) -- (-.55,1.5) -- (-.55,3) -- (1.1,3) -- (1.1,-4.2);
\end{scope}
\draw[dashed] (-.55,1.5) -- (-.55,3);
\draw[dashed] (-.2,.8) .. controls ++(90:.2cm) and ++(270:.2cm) .. (-.35,1.2); 
\draw[dashed] (.2,-.2) .. controls ++(90:.2cm) and ++(270:.2cm) .. (0,.2);
\draw[dashed] (.4,-1.5) -- (.4,-.5);
\draw[dashed] (0,-2.2) .. controls ++(90:.2cm) and ++(270:.2cm) .. (.2,-1.8);
\draw[dashed] (-.35,-3.2) .. controls ++(90:.2cm) and ++(270:.2cm) .. (-.25,-2.8);
\draw[dashed] (-.55,-4.2) -- (-.55,-3.5);

\draw[\QsColor,thick] (.65,-.5) -- (.65,1.8) arc (0:90:.8cm);
\draw[\QsColor,thick] (.25,.5) -- (.25,1.8) arc (0:90:.4cm);
\draw[\XColor,thick] (-.15,1.5) -- (-.15,3);
\draw[\QsColor,thick] (.65,-1.8) -- (.65,-2.8) arc (0:-90:.4cm);
\draw[\XColor,thick] (.25,-4.2) -- (.25,-2.5);
\draw[\QsColor,thick] (-.15,-4.2) -- (-.15,-3.5);

\filldraw[\XColor] (-.15,2.6) circle (.05cm);
\filldraw[\XColor] (-.15,2.2) circle (.05cm);
\filldraw[\XColor] (.25,-3.2) circle (.05cm);
\roundNbox{unshaded}{(-.35,1.5)}{.3}{.1}{.1}{\scriptsize{$x_i$}};
\roundNbox{unshaded}{(0,.5)}{.3}{.1}{.1}{\scriptsize{$q_j$}};
\roundNbox{unshaded}{(.4,-.5)}{.3}{.1}{.1}{\scriptsize{$q_k^{(n)}$}};
\roundNbox{unshaded}{(.4,-1.5)}{.3}{.1}{.1}{\scriptsize{$q_j^\dag$}};
\roundNbox{unshaded}{(0,-2.5)}{.3}{.1}{.1}{\scriptsize{$x_i^\dag$}};
\roundNbox{unshaded}{(-.35,-3.5)}{.3}{.1}{.1}{\scriptsize{$(q_k^{(\!n\!)})^\dag$}};
}
\approx \sum_{i,j,k}
\tikzmath{
\begin{scope}
\clip[rounded corners = 5] (-.75,-4.2) rectangle (1.1,3);
\filldraw[\rColor] (-.55,-4.2) -- (-.55,-3.5) -- (-.35,-3.2) .. controls ++(90:.2cm) and ++(270:.2cm) .. (-.25,-2.8) -- (0,-2.2) .. controls ++(90:.2cm) and ++(270:.2cm) .. (.2,-1.8) -- (.4,-1.5) -- (.4,-.5) -- (.2,-.2) .. controls ++(90:.2cm) and ++(270:.2cm) .. (0,.2) -- (-.2,.8) .. controls ++(90:.2cm) and ++(270:.2cm) .. (-.35,1.2) -- (-.55,1.5) -- (-.55,3) -- (1.1,3) -- (1.1,-4.2);
\end{scope}
\draw[dashed] (-.55,1.5) -- (-.55,3);
\draw[dashed] (-.2,.8) .. controls ++(90:.2cm) and ++(270:.2cm) .. (-.35,1.2); 
\draw[dashed] (.2,-.2) .. controls ++(90:.2cm) and ++(270:.2cm) .. (0,.2);
\draw[dashed] (.4,-1.5) -- (.4,-.5);
\draw[dashed] (0,-2.2) .. controls ++(90:.2cm) and ++(270:.2cm) .. (.2,-1.8);
\draw[dashed] (-.35,-3.2) .. controls ++(90:.2cm) and ++(270:.2cm) .. (-.25,-2.8);
\draw[dashed] (-.55,-4.2) -- (-.55,-3.5);

\draw[\QsColor,thick] (.65,-.5) -- (.65,1.8) arc (0:90:.8cm);
\draw[\QsColor,thick] (.25,.5) -- (.25,1.8) arc (0:90:.4cm);
\draw[\XColor,thick] (-.15,1.5) -- (-.15,3);
\draw[\QsColor,thick] (.65,-1.8) -- (.65,-2.8) arc (0:-90:.4cm);
\draw[\XColor,thick] (.25,-4.2) -- (.25,-2.5);
\draw[\QsColor,thick] (-.15,-4.2) -- (-.15,-3.5);

\filldraw[\XColor] (-.15,2.6) circle (.05cm);
\filldraw[\XColor] (-.15,2.2) circle (.05cm);
\filldraw[\XColor] (.25,-3.2) circle (.05cm);
\roundNbox{unshaded}{(-.35,1.5)}{.3}{.1}{.1}{\scriptsize{$x_i$}};
\roundNbox{unshaded}{(0,.5)}{.3}{.1}{.1}{\scriptsize{$q_k^{(n)}$}};
\roundNbox{unshaded}{(.4,-.5)}{.3}{.1}{.1}{\scriptsize{$q_j$}};
\roundNbox{unshaded}{(.4,-1.5)}{.3}{.1}{.1}{\scriptsize{$q_j^\dag$}};
\roundNbox{unshaded}{(0,-2.5)}{.3}{.1}{.1}{\scriptsize{$x_i^\dag$}};
\roundNbox{unshaded}{(-.35,-3.5)}{.3}{.1}{.1}{\scriptsize{$(q_k^{(\!n\!)})^\dag$}};
}
\approx \sum_{i,j,k}
\tikzmath{
\begin{scope}
\clip[rounded corners = 5] (-.75,-4.2) rectangle (1.1,3.6);
\filldraw[\rColor] (-.55,-4.2) -- (-.55,-3.5) -- (-.35,-3.2) .. controls ++(90:.2cm) and ++(270:.2cm) .. (-.25,-2.8) -- (0,-2.2) .. controls ++(90:.2cm) and ++(270:.2cm) .. (.2,-1.8) -- (.4,-1.5) -- (.4,-.5) -- (.2,-.2) .. controls ++(90:.2cm) and ++(270:.2cm) .. (0,.2) -- (-.2,.8) .. controls ++(90:.2cm) and ++(270:.2cm) .. (-.35,1.2) -- (-.55,1.5) -- (-.55,3.6) -- (1.1,3.6) -- (1.1,-4.2);
\end{scope}
\draw[dashed] (-.55,1.5) -- (-.55,3.6);
\draw[dashed] (-.2,.8) .. controls ++(90:.2cm) and ++(270:.2cm) .. (-.35,1.2); 
\draw[dashed] (.2,-.2) .. controls ++(90:.2cm) and ++(270:.2cm) .. (0,.2);
\draw[dashed] (.4,-1.5) -- (.4,-.5);
\draw[dashed] (0,-2.2) .. controls ++(90:.2cm) and ++(270:.2cm) .. (.2,-1.8);
\draw[dashed] (-.35,-3.2) .. controls ++(90:.2cm) and ++(270:.2cm) .. (-.25,-2.8);
\draw[dashed] (-.55,-4.2) -- (-.55,-3.5);

\draw[\QsColor,thick] (.65,-.5) -- (.65,1.8) arc (0:90:.4cm);
\draw[\XColor,thick] (.25,.5) -- (.25,3.6);
\draw[\QsColor,thick] (-.15,1.5) -- (-.15,2.8)  arc (180:90:.4cm);
\draw[\QsColor,thick] (.65,-1.8) -- (.65,-2.8) arc (0:-90:.4cm);
\draw[\XColor,thick] (.25,-4.2) -- (.25,-2.5);
\draw[\QsColor,thick] (-.15,-4.2) -- (-.15,-3.5);

\filldraw[\XColor] (.25,3.2) circle (.05cm);
\filldraw[\XColor] (.25,2.2) circle (.05cm);
\filldraw[\XColor] (.25,-3.2) circle (.05cm);
\roundNbox{unshaded}{(-.15,2.5)}{.25}{0}{0}{\scriptsize{$g$}};
\roundNbox{unshaded}{(-.35,1.5)}{.3}{.1}{.1}{\scriptsize{$q_k^{(n)}$}};
\roundNbox{unshaded}{(0,.5)}{.3}{.1}{.1}{\scriptsize{$x_i$}};
\roundNbox{unshaded}{(.4,-.5)}{.3}{.1}{.1}{\scriptsize{$q_j$}};
\roundNbox{unshaded}{(.4,-1.5)}{.3}{.1}{.1}{\scriptsize{$q_j^\dag$}};
\roundNbox{unshaded}{(0,-2.5)}{.3}{.1}{.1}{\scriptsize{$x_i^\dag$}};
\roundNbox{unshaded}{(-.35,-3.5)}{.3}{.1}{.1}{\scriptsize{$(q_k^{(\!n\!)})^\dag$}};
}
\approx \sum_k
\tikzmath{
\begin{scope}
\clip[rounded corners = 5] (-.6,-.7) rectangle (.8,3.2);
\filldraw[\rColor] (-.4,-.7) -- (-.4,0) -- (-.2,0) -- (-.2,1) -- (-.4,1) -- (-.4,3.2) -- (.8,3.2) -- (.8,-.7);
\end{scope}
\draw[dashed] (-.4,1) -- (-.4,3.2);
\draw[dashed] (-.2,0) -- (-.2,1);
\draw[dashed] (-.4,-.7) -- (-.4,0);

\draw[\XColor,thick] (.4,-.7) -- (.4,3.2);
\draw[\QsColor,thick] (0,1.3) -- (0,2.3) arc (180:90:.4cm);
\draw[\QsColor,thick] (0,-.7) -- (0,0);
\filldraw[\XColor] (.4,2.7) circle (.05cm);
\roundNbox{unshaded}{(0,2)}{.25}{0}{0}{\scriptsize{$g$}};
\roundNbox{unshaded}{(-.2,1)}{.3}{.1}{.1}{\scriptsize{$q_k^{(n)}$}};
\roundNbox{unshaded}{(-.2,0)}{.3}{.1}{.1}{\scriptsize{$(q_k^{(\!n\!)})^\dag$}};
}
\approx
\tikzmath{
\begin{scope}
\clip[rounded corners = 5] (-.6,.4) rectangle (.8,2.2);
\filldraw[\rColor] (-.4,0) -- (-.4,2.2) -- (.8,2.2) -- (.8,0);
\end{scope}
\draw[dashed] (-.4,.4) -- (-.4,2.2);

\draw[\XColor,thick] (.4,.4) -- (.4,2.2);
\draw[\QsColor,thick] (0,.4) -- (0,1.3) arc (180:90:.4cm);
\filldraw[\XColor] (.4,1.7) circle (.05cm);
\roundNbox{unshaded}{(0,1)}{.25}{0}{0}{\scriptsize{$g$}};
}
\]
The first approximation is from the definition of the over braiding $u_{A,X}$;
the second approximation is from Facts \ref{facts:CommQsysInChi}(1) that $(q_k^{(n)})$ is a central sequence in $|A|$;
the third approximation is because $X$ is $g$-centrally trivial over $|A|$ and $(q_k^{(n)})$ is a central sequence in $|A|$ so that $\|q_k^{(n)}\rhd_g x_i-x_i\lhd q_k^{(n)}\|_2\to 0$.

Because $g\in \Aut(A)$ is an algebra automorphism, we have $g\circ i=i$, where $i:1\to A$ is the unit. 
For central sequence $(a_n)\in (M\rtimes G)_\omega$, since $A$ is centrally trivial, for all $q\in A$, $\|a_n q-q a_n\|_2\to 0$, so $(a_n)$ is also a central sequence on $|A|$. 
Then for $x\in |X|$, 
$$\|a_n\rhd x-x\lhd a_n\|_2 = \|a_n\rhd_g x-x\lhd a_n\|_2 \to 0,$$
so $X\in \ctBim(M\rtimes G)$.

Moreover, combining above equations, if $|X|\in g \ctBim(|A|)\cap \aiBim(|A|)$, $X$ is a right $A$-module in $\Chi(M\rtimes G)$ and and the realization preserve the grading $\partial_X=g$.

Similar to the proof of \cite[Thm.~5.15]{MR4753059}, the realization gives a $G/K$-crossed braided equivalence $\Chi(M\rtimes G)_A\to \bigoplus_{g\in G/K}g \ctBim(|A|)\cap \aiBim(|A|)$.
\end{proof}

\begin{rem}
In \cite[Thm.~5.15]{MR4753059}, they prove that for commutative Q-system $Q\in\Chi(M)$, the local extension $\Chi(M)_Q^{\loc}$ is braided equivalent to $\Chi(|Q|)$. 
In our case, $A$ is a commutative Q-system in $\Chi(M\rtimes G)$, then the local extension $\Chi(M\rtimes G)_A^{\loc}$ is braided equivalent to $\Chi(|A|)=\ctBim(|A|)\cap \aiBim(|Q|)$. 
Note that in the group case, the local extension is the $e$-grading part of the deequivariantization. 
By comparing the formulas, we can see $\bigoplus_{g\in G/K}g \ctBim(|A|)\cap \aiBim(|A|)$, which is equivalent to the deequivariantization of $\Chi(M\rtimes G)$, is a complete $G/K$-crossed braided extension of $\Chi(|A|)$.
\end{rem}

In the following, we use the Morita equivalence of the basic construction $M\rtimes K\subseteq M\rtimes G\subseteq |A|$ to give a more computable formula.

For $g\in \Aut_\alge(A)$, we define an $|A|$-$|A|$ bimodule $g L^2(|A|)$ by twisting the left action by $\lambda^g:=\lambda\circ (g\otimes\id_{L^2(|A|)})$. 
Since $\Aut_\alge(A)\cong G/K$, this gives an action 
$$\Psi:G/K\to \Bim(|A|)\qquad \text{by}\qquad g\mapsto gL^2(|A|).$$

Note that $M\rtimes K$ and $|A|$ are Morita equivalent by the bimodule $Z={}_{M\rtimes G}L^2(M\rtimes G)_{|A|}$ such that 
$$Z\boxtimes_{|A|}\overline{Z}\cong \id_{L^2(M\rtimes K)}\in\Bim(M\rtimes K), \qquad \qquad \overline{Z}\boxtimes_{M\rtimes K} Z\cong \id_{L^2(|A|)}\in \Bim(|A|).$$
Then 
$$\Ad(Z):\Bim(|A|)\to \Bim(M\rtimes K)\qquad\text{by}\qquad X\mapsto Z\boxtimes_{|A|}X\boxtimes_{|A|}\overline{Z}$$
is a tensor equivalence.

Then using the Morita equivalence, we can define an induced action, without loss of generality, still denoted as $\psi:G/K\to \Bim(M\rtimes K)$ by
$$\psi:=\Ad(Z)\circ \Psi.$$
Then by the construction, for $g\in G/K$, we have
$$Z\boxtimes_{|A|}g L^2(P)\boxtimes_{|A|}\overline{Z}\cong {}_{\psi_g}L^2(M\rtimes K)\in \Bim(M\rtimes K).$$

\begin{thm}\label{thm:G/KdeequivofChi(MG)}
Suppose $G$ is a finite group, $M$ is a McDuff $\rm II_1$ factor. 
Suppose $\psi:G\to \Aut(M)$ is an outer action and $\psi$ is not approximately inner. 
Then the $G/K$-deequivariantization of $\Chi(M\rtimes G)$
$$\Chi(M\rtimes G)_{G/K}\cong \bigoplus_{g\in G/K} ({}_{\psi_g}L^2(M\rtimes K)\boxtimes_{M\rtimes K} \ctBim(M\rtimes K))\cap \aiBim(M\rtimes K)$$
as unitary $G/K$-crossed braided tensor categories, where $K=\psi^{-1}(\Ct(M))$.
\end{thm}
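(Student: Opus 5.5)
The plan is to combine the previous proposition, which identifies $\Chi(M\rtimes G)_A$ with $\bigoplus_{g\in G/K} g\ctBim(|A|)\cap\aiBim(|A|)$ as $G/K$-crossed braided categories, with the Morita equivalence $\Ad(Z)$ for $Z={}_{M\rtimes K}L^2(M\rtimes G)_{|A|}$. The first step is to identify $|A|$. Since $A$ is the image of $\Fun(G/K)$, the realization $|A|$ is the basic construction of $M\rtimes K\subseteq M\rtimes G$. Concretely, $|A|\cong \langle M\rtimes G, e_{M\rtimes K}\rangle$, and $\Fun(G/K)$ is realized by the minimal projections $\mathbbm{1}_{gK}\mapsto u_g e_{M\rtimes K}u_g^*$. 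This makes $|A|$ and $M\rtimes K$ Morita equivalent via $Z$, and $\Ad(Z)$ is a unitary tensor equivalence $\Bim(|A|)\to\Bim(M\rtimes K)$.

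Next I would check that $\Ad(Z)$ preserves each property separately, using Facts~\ref{Facts:categoricalChi}(3) and its proof in \cite{MR4753059}:
\begin{itemize}
\item approximately inner bimodules go to approximately inner bimodules;
\item centrally trivial bimodules go to centrally trivial bimodules, since the central sequence algebras of Morita equivalent factors are identified compatibly with the bimodule actions;
\item the centralizing braiding $u$ is transported to the centralizing braiding.
\end{itemize}
In particular $\Ad(Z)(\ctBim(|A|))=\ctBim(M\rtimes K)$ and $\Ad(Z)(\aiBim(|A|))=\aiBim(M\rtimes K)$. For the twisted pieces, $g\ctBim(|A|)$ consists exactly of the bimodules $gL^2(|A|)\boxtimes_{|A|}X$ with $X\in\ctBim(|A|)$. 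The twist acts only on the left action, and $gL^2(|A|)$ is invertible with inverse $g^{-1}L^2(|A|)$, so $Y\in g\ctBim(|A|)$ if and only if $g^{-1}L^2(|A|)\boxtimes_{|A|}Y\in\ctBim(|A|)$. Applying $\Ad(Z)$ and the defining identity $Z\boxtimes_{|A|}gL^2(|A|)\boxtimes_{|A|}\overline Z\cong{}_{\psi_g}L^2(M\rtimes K)$ carries $g\ctBim(|A|)$ onto ${}_{\psi_g}L^2(M\rtimes K)\boxtimes_{M\rtimes K}\ctBim(M\rtimes K)$. Intersecting with the approximately inner part, which is preserved, gives the graded components on the right-hand side.

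The last step is to transport the $G/K$-crossed structure. I would define the $G/K$-action on the right-hand side as conjugation by ${}_{\psi_g}L^2(M\rtimes K)$, that is, $T_g=\Ad(Z)\circ T_g^{|A|}\circ\Ad(Z)^{-1}$. The grading is by the $g$-component, and the crossed braiding is $\Ad(Z)$ applied to the crossed braiding ($\dag$). Since $\Ad(Z)$ is a unitary tensor equivalence, all coherences are inherited. It remains to confirm that $\psi:G/K\to\Bim(M\rtimes K)$ really is given by automorphisms. I would try to show that ${}_{\psi_g}L^2(M\rtimes K)$ is the bimodule of the automorphism $\Ad(u_g)$ of $M\rtimes K$, which is well defined since $K$ is normal, and this matches the notation.

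I expect the main obstacle to be this identification together with the claim that $\Ad(Z)$ preserves the twisted centrally trivial condition. The twisting is defined through $\Aut_\alge(A)\cong G/K$ acting on $|A|$, so one must compute that $Z\boxtimes gL^2(|A|)\boxtimes\overline Z$ is the invertible bimodule of $\Ad u_g|_{M\rtimes K}$. I would do this by tracking the minimal projections $u_he_{M\rtimes K}u_h^*$ permuted by $g$ and using the explicit basis $\{u_h e_{M\rtimes K}\}_{h\in G/K}$ of $Z$.
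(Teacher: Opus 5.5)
Your proposal is correct and follows the paper's own route. You compose the realization equivalence $\Chi(M\rtimes G)_A\simeq\bigoplus_{g\in G/K} g\ctBim(|A|)\cap\aiBim(|A|)$ with the Morita equivalence $\Ad(Z)$, using that $\Ad(Z)$ preserves $\ctBim$, $\aiBim$ and the braiding, and that twisting the left action by $g$ amounts to tensoring with $gL^2(|A|)$.

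The step you flag as the main obstacle is not needed for the theorem as stated, because the paper simply defines the $G/K$-action $\psi$ on $\Bim(M\rtimes K)$ to be $\Ad(Z)\circ\Psi$. Your identification of this action with $\Ad(u_g)|_{M\rtimes K}$, via $|A|\cong\langle M\rtimes G,e_{M\rtimes K}\rangle$, is a correct consistency check that the notation agrees with the natural extension of $\psi_g$. Your $Z={}_{M\rtimes K}L^2(M\rtimes G)_{|A|}$ also corrects a typo in the paper.
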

\begin{proof}
According to \cite[Thm.~A]{MR4753059}, $\Chi(M)\cong \Chi(N)$ if $M$ and $N$ are Morita equivalent. 
In our case, $\Ad(Z):\ctBim(|A|)\to \ctBim(M\rtimes K)$ and $\Ad(Z):\aiBim(|A|)\to \aiBim(M\rtimes K)$ are tensor equivalences, and $\Ad(Z)$ is the braided equivalence on $\Chi$.
Therefore, recall the construction of $g\ctBim(|A|)$, a centrally trivial bimodule twists the left action by $g\in \Aut_\alge(A)$, the Morita equivalence gives an equivalence 
$$\Ad(Z):g\ctBim(|A|)\to {}_{\psi_g}L^2(M\rtimes K)\boxtimes_{M\rtimes K}\ctBim(M\rtimes K).$$
Recall the $G/K$-crossed braiding construction (\ref{eq:GcrossedBraid}),
the Morita equivalence gives a $G/K$-crossed braided equivalence
\[\Ad(Z):\bigoplus_{g\in G/K}g \ctBim(|A|)\cap \aiBim(|A|)\to \bigoplus_{g\in G/K} ({}_{\psi_g}L^2(M\rtimes K)\boxtimes_{M\rtimes K} \ctBim(M\rtimes K))\cap \aiBim(M\rtimes K)\]
\end{proof}

\subsection{Gauging and short exact sequence}
In this section, we will show that the equivariantization/deequivariantization procedure induces a short exact sequence with the first entry being $\widehat{G}$.

Suppose $G$ is a finite group. 
Let $\cC$ be a rigid $\rm C^*$ tensor category with simple unit $1_\cC$ and $\rho$ is a (strict) $G$-action on $\cC$ by tensor autoequivalences $\rho:G\to \uAo(\cC)$.
A \textit{$G$-equivariant structure} on object $x\in \cC$ is a family of unitary isomorphisms $\{u_g\in \cC(\rho_g(x)\to x)\}_{g\in G}$ such that 
$$u_{gh}=u_g\circ \rho_g(u_h).$$

\begin{lem}\label{lem:GequivonUnit}
The group of $G$-equivariant structures on the tensor unit $1_\cC$ is isomorphic to $\widehat{G}=\Hom(G\to U(1))$.
\end{lem}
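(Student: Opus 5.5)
The plan is to identify $G$-equivariant structures on $1_\cC$ with characters directly, using that $1_\cC$ is simple. Since $\rho_g$ is a strict tensor autoequivalence, $\rho_g(1_\cC)=1_\cC$. Hence each $u_g$ is a unitary in $\cC(1_\cC\to 1_\cC)$. Because $1_\cC$ is simple, $\End(1_\cC)\cong\bbC$, so $u_g=\chi(g)\id_{1_\cC}$ for a unique scalar $\chi(g)$, and unitarity forces $\chi(g)\in U(1)$.

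Next I translate the coherence condition into a condition on $\chi$. Any tensor functor acts as the identity on $\End(1_\cC)$ once we identify $\rho_g(1_\cC)=1_\cC$ by strictness. By $\bbC$-linearity, $\rho_g(\lambda\,\id_{1_\cC})=\lambda\,\id_{1_\cC}$. The relation $u_{gh}=u_g\circ\rho_g(u_h)$ therefore becomes $\chi(gh)=\chi(g)\chi(h)$. So $\chi\in\Hom(G\to U(1))=\widehat{G}$. Conversely, any character $\chi$ gives an equivariant structure $u_g:=\chi(g)\id_{1_\cC}$, by the same computation read backwards. Thus $\{u_g\}\mapsto\chi$ is a bijection.

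It remains to check that this bijection is a group isomorphism. The group structure on equivariant structures of $1_\cC$ comes from the tensor product, $(u\otimes v)_g:=u_g\otimes v_g$ under $1_\cC\otimes 1_\cC\cong 1_\cC$. Equivalently it is composition, since both agree on scalar endomorphisms of the unit. This product sends $(\chi,\chi')$ to the pointwise product $\chi\chi'$, so the bijection is a homomorphism. Inverses correspond to $\overline{\chi}$ via $u_g^{*}$.

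The only point that needs care is the claim that $\rho_g$ acts trivially on scalars of $\End(1_\cC)$ and fixes $1_\cC$ strictly. If the action were not strict, I would conjugate by the unit isomorphisms $\rho_g^0:1_\cC\to\rho_g(1_\cC)$. The compatibility of $\rho^0$ with the composition constraints of the action would then give the same cocycle identity. Everything else is routine.
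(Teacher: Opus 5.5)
Your proposal is correct and follows the same route as the paper. The paper's proof is a one-line remark that $\rho_g(1_\cC)\cong 1_\cC$ and $\End_\cC(1_\cC)\cong\bbC$, so each $u_g$ is a unitary scalar and the condition $u_{gh}=u_g\circ\rho_g(u_h)$ becomes multiplicativity. You have supplied the details it leaves implicit, including the check via the unit constraints $\rho_g^0$ when $\rho_g$ is not strictly unital.
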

The proof of this lemma follows from the facts that $\rho_g(1_\cC)\cong 1_{\cC}$ and $\End_\cC(1_\cC)\cong \bbC$ directly.

\begin{prop} Suppose $G$ is a finite group, $\cB$ is a braided $\rm C^*$ tensor category and $\Rep(G)\hookrightarrow \cB$ is a unitary braided fully faithful tensor subcategory. Then
$$0\to \widehat{G}\to \Inv(\cB)\to \Inv(\cB_G)_{\rm{lift}}\to 0$$ is a short exact sequence, where 
$\Inv(-)$ is group of isomorphism classes of invertible objects and 
$$\Inv(\cB_G)_{\rm{lift}}=\{[y]\in \Inv(\cB_G)\mid y\ \mathrm{admits\ a\ unitary\ } G\mathrm{-equivariant\ structure}\}$$
\end{prop}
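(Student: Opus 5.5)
We work through the equivalence $\cB\simeq(\cB_G)^G$ from the gauging correspondence. Under it, an object of $\cB$ is a pair $(y,\{u_g\})$ with $y\in\cB_G$ and $\{u_g\}$ a unitary $G$-equivariant structure. The free module functor $\cB\to\cB_G=\cB_A$, $X\mapsto X\otimes A$, corresponds to the forgetful functor $(y,\{u_g\})\mapsto y$. We therefore define $\Pi:\Inv(\cB)\to\Inv(\cB_G)$ by $[X]\mapsto[X\otimes A]$ and $\delta:\widehat G\to\Inv(\cB)$ by $\chi\mapsto[(1_{\cB_G},\chi)]$, using Lemma~\ref{lem:GequivonUnit}. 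Concretely, $\delta(\chi)$ is the one-dimensional representation $\chi\in\Rep(G)\subseteq\cB$. Both maps are group homomorphisms because the free module functor and the equivalence $(\cB_G)^G\simeq\cB$ are monoidal: invertible objects go to invertible objects, and tensor products are preserved up to isomorphism. The image of $\Pi$ consists of classes admitting an equivariant structure, so $\Pi$ lands in $\Inv(\cB_G)_{\rm lift}$.

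\textbf{Injectivity of $\delta$.} Two equivariant structures on $1_{\cC}$ give isomorphic objects of $(\cB_G)^G$ exactly when they differ by conjugation by an automorphism of $1$. Since $\End(1)\cong\bbC$ and the action is strict, conjugation by a scalar does nothing, so distinct characters give non-isomorphic objects. Equivalently, distinct characters are non-isomorphic in $\Rep(G)$, and $\Rep(G)\hookrightarrow\cB$ is fully faithful.

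\textbf{Surjectivity onto the liftable part.} If $y$ is invertible in $\cB_G$ and admits a unitary equivariant structure $\{u_g\}$, then $(y,\{u_g\})\in(\cB_G)^G$. It is invertible, with inverse $(\bar y,\{\overline{u_g}\})$, using that invertible objects are rigid and the equivariant structure transports to duals. Its image in $\cB$ is an invertible $X$ with $\Pi[X]=[y]$.

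\textbf{Exactness in the middle.} This is the main point. Clearly $\Pi\delta(\chi)=[1]$, since $\chi\otimes A\cong A$ as $A$-modules ($\Fun(G)\otimes\chi\cong\Fun(G)$ in $\Rep(G)$). Conversely, suppose $X\otimes A\cong A$. Then $X$ corresponds to $(1_{\cB_G},\{u_g\})$ for some equivariant structure on the unit. By Lemma~\ref{lem:GequivonUnit}, $\{u_g\}$ is a character $\chi$, so $[X]=\delta(\chi)$.

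The delicate part I expect is justifying the identification of $\Pi$ with the forgetful functor in the unitary, possibly non-fusion, $\rm C^*$ setting. The equivalence $(\cB_G)^G\cong\cB$ was stated for fusion categories. My first approach is to verify it directly for the relevant objects: $\Hom_{\cB_A}(X\otimes A,Y\otimes A)\cong\Hom_\cB(X,Y\otimes A)\cong\Hom_\cB(X,Y)\otimes\Fun(G)$ on the level of $\Rep(G)$-components. The $G$-invariants then recover $\Hom_\cB(X,Y)$. This argument uses only rigidity of $A$ and the Frobenius reciprocity of the free module functor, so it should go through in the $\rm C^*$ setting. Unitarity of the equivariant structures comes from choosing unitary module isomorphisms, via polar decomposition.
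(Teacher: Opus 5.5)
Your proposal is correct and follows the paper's proof essentially step for step. $\Pi$ comes from the free-module functor $X\mapsto X\otimes A$, read as the forgetful functor under $\cB\simeq(\cB_G)^G$, and $\delta$ sends a character to its one-dimensional representation. The four checks are exactly the paper's: injectivity of $\delta$ by fullness of $\Rep(G)\subseteq\cB$, $\bbC_\chi\otimes A\cong A$, $\ker\Pi\subseteq\im\delta$ via Lemma~\ref{lem:GequivonUnit}, and surjectivity by lifting a unitary equivariant pair $(y,\{u_g\})$.

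One caution concerns your side attempt to justify $\cB\simeq(\cB_G)^G$ in the $\rm C^*$ setting, which the paper simply takes for granted. $\Hom_\cB(X,Y\otimes A)$ is not $\Hom_\cB(X,Y)\otimes\Fun(G)$ in general: for $X=Y=1$ it is $\Hom_{\Rep(G)}(\bbC,\Fun(G))\cong\bbC$. The correct statement is $\Hom_\cB(X,Y\otimes A)^G\cong\Hom_\cB(X,Y)$, and that only gives full faithfulness. Your surjectivity step also needs essential surjectivity, e.g.\ $y\cong y^G\otimes A$, where $y^G$ is the image of the projection $\frac{1}{|G|}\sum_g u_g$.
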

\begin{proof}

There is a canonical tensor functor to the deequivariantization $F:\cB\to \cB_G$ by $X\mapsto X\otimes A$ such that 
$$F(x)\otimes_A F(y) = (x\otimes A)\otimes_A (y\otimes A)\cong (x\otimes y)\otimes A = F(x\otimes y).$$
If $x\in \Inv(\cB)$ with inverse $x^*$, i.e., $x\otimes x^*\cong 1_{\cB}\cong x^*\otimes x$, then $F(x)=x\otimes A$ is invertible in $\cB_G$ with inverse $x^*\otimes A$.
Therefore, $F$ induces a group homomorphism 
$$\Pi:\Inv(\cB)\to \Inv(\cB_G),\qquad\quad [x]\mapsto [x\otimes A].$$

By the equivariantization/deequivariantization correspondence, $\cB\cong (\cB_G)^G$, the $G$-equivariant objects in $\cB_G$. 
Under this equivalence, $F$ is the forgetful functor that drops the $G$-equivariant structure (only remembers the underlying $A$-modules).
Therefore, the image of $\Pi$ consists exactly of those invertible objects of $\cB_G$ that admits a $G$-equivariant structure.
Define 
$$\Inv(\cB_G)_{\rm{lift}}=\{[y]\in \Inv(\cB_G)\mid y\ \mathrm{admits\ a\ unitary\ } G\mathrm{-equivariant\ structure}\}$$
Then the image of $\Pi$ is in $\Inv(\cB_G)_{\rm{lift}}$.

Every character $\gamma\in \widehat{G}$ gives a 1-dimensional $G$-representation $\bbC_\gamma$, which is an invertible object in $\Rep(G)$.
Then we define 
$$\delta:\widehat{G}\to \Inv(\cB),\qquad\quad \gamma\mapsto [\bbC_\gamma].$$
It is clearly a group homomorphism since $\Rep(G)\subseteq \cB$ is monoidal.

Now we show exactness.
\begin{itemize}[leftmargin=*]
\item $\delta$ is injective. 

Since $\Rep(G)\subseteq \cB$ is a full tensor subcategory, $\delta$ is injective.
\item $\im(\delta)\subseteq \ker(\Pi)$.

For $\gamma\in \widehat{G}$, we know that $\bbC_\gamma\otimes A\cong A$ as $G$-representations. Moreover, $\bbC_\gamma\otimes A\cong A$ as right $A$-modules. Therefore, $\Pi(\delta(\gamma))=[A]$.
\item $\ker(\Pi)\subseteq \im(\delta)$. 

Suppose $[x]\in \Inv(\cB)$ with $\Pi([x])=[A]$, which means $x\otimes A\cong A$ as right $A$-modules in $\cB_G$. 
Now use the equivalence $\cB\cong (\cB_G)^G$. 
Under this equivalence, $x$ corresponds to a $G$-equivariant object whose underlying $A$-module is $x\otimes A\cong A$.
According to Lemma \ref{lem:GequivonUnit}, when $\cC=\cB_G$, its tensor unit is $A$ as right $A$-module, then the group of $G$-equivariant structure on $A$ is isomorphic to $\widehat{G}$. This gives $\ker(\Pi)\subseteq \im(\delta)$.
\item $\Pi$ is surjective.

Let $[y]\in \Inv(\cB_G)_{\rm{lift}}$. 
We choose a unitary $G$-equivariant structure $u$ on $y$, then $(y,u)\in (\cB_G)^G$.
Since $y$ is invertible in $\cB$, $(y,u)$ is invertible in $(\cB_G)^G$.
Under the equivalence $\cB\cong (\cB_G)^G$, there is an invertible object $x\in \cC$ corresponds to $(y,u)$. 
By forgetting the $G$-equivariant structure, we have $\Pi([x])=[y]$. 
In other words, every liftable $y$ lands in the image of $\Pi$, so $\Pi$ is surjective. \qedhere
\end{itemize}
\end{proof}

\subsection{Invertible liftable objects}
To complete the story, we shall show the invertible liftable objects in the $G/K$-deequivariantization of $\Chi(M\rtimes G)$ is actually equivalent to $(\GCt(M)\cap \GInn(M))/\Inn(M)$, the third term from the Connes' short exact sequence. 

\begin{lem}\label{lem:uApproxbyv}
Suppose $M$ is a $\rm II_1$ factor, $A\subseteq M$ is a finite von Neumann subalgebra with a trace-preserving conditional expectation $E_A:M\to A$. 
Let $u\in U(M)$, $\varepsilon >0$ is small, and $a\in (A)_1$.
If $\|u-a\|_2<\varepsilon$, then there exists a unitary $v\in A$ such that $\|u-v\|_2<\varepsilon+\sqrt{2\varepsilon}$.
In particular, if $(a_n)\subseteq (A)_1$ such that $\|u-a_n\|_2\to 0$, then one can choose $v_n\in U(A)$ such that $\|u-v_n\|_2\to 0$.
\end{lem}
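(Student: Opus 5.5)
The idea is to take the unitary part of the polar decomposition of a suitable element of $A$, namely the conditional expectation of $u$. Put $b:=E_A(u)\in (A)_1$. Since $E_A$ is the trace-preserving projection onto $L^2(A)$, it is the $\|\cdot\|_2$-closest point of $A$ to $u$. Hence
$$\|u-b\|_2\le \|u-a\|_2<\varepsilon.$$

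Because $A$ is a finite von Neumann algebra, $b$ has a polar decomposition $b=w|b|$ in which $w$ can be extended to a unitary $v\in U(A)$ with $b=v|b|$. (In a finite algebra, $1-w^*w$ and $1-ww^*$ are equivalent, so the partial isometry extends.) The triangle inequality then gives
$$\|u-v\|_2\le \|u-b\|_2+\|b-v\|_2=\|u-b\|_2+\|v(|b|-1)\|_2=\|u-b\|_2+\||b|-1\|_2 ,$$
so it remains to show $\||b|-1\|_2\le\sqrt{2\varepsilon}$.

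Since $0\le |b|\le 1$, we have $(1-|b|)^2\le 1-|b|$, and therefore
$$\||b|-1\|_2^2\le \tr(1-|b|).$$
Next we estimate $\tr|b|$ from below:
$$\tr|b|\ge |\tr(v^*b)|=|\tr(b)|=|\tr(u)|\cdots$$
This naive route does not work directly, because comparing with $u$ requires a better lower bound. We use instead that $\tr(|b|)=\tr(v^*b)$ and $b=E_A(u)$, so that
$$\tr(v^*b)=\tr(E_A(v^*u))=\tr(v^*u).$$
Then
$$1-\tr|b|=\mathrm{Re}\,\tr(1-v^*u)=\tfrac12\|u-v\|_2^2,$$
which is circular. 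We therefore go back and use the original $a$ directly. Take $v$ to be the polar unitary of $b$. Then
$$1-\tr|b|\le 1-\mathrm{Re}\,\tr(x^*b)$$
for every unitary $x\in A$, and in particular for any unitary. Comparing with $u$ itself: $\mathrm{Re}\,\tr(u^*b)=\mathrm{Re}\,\tr(u^*u)-\mathrm{Re}\,\tr(u^*(u-b))\ge 1-\varepsilon$ by Cauchy–Schwarz. The inequality $\tr|b|\ge \mathrm{Re}\,\tr(u^*b)$ still holds even though $u\notin A$, because $\|u\|\le1$ and $\tr(y^*b)\le\tr|b|$ for every contraction $y$ in $M$ (polar decomposition and the trace inequality $|\tr(y^*w|b|)|\le \tr|b|$). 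Hence
$$1-\tr|b|\le\varepsilon,\qquad \||b|-1\|_2\le\sqrt{\varepsilon}\le\sqrt{2\varepsilon},$$
and so $\|u-v\|_2<\varepsilon+\sqrt{2\varepsilon}$.

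The main point needing care is the unitary extension of the polar part in $A$, which requires $A$ to be finite, and the trace inequality $|\tr(yc)|\le\|y\|\,\tr|c|$. For the sequential statement, apply the bound with $\varepsilon_n:=\|u-a_n\|_2+1/n\to0$ to obtain $v_n\in U(A)$ with $\|u-v_n\|_2<\varepsilon_n+\sqrt{2\varepsilon_n}\to0$.
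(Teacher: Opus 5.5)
Your final argument is correct and is essentially the paper's: take the polar part of an element of $(A)_1$ (you use $b=E_A(u)$, the paper uses $a$ itself), extend it to a unitary in $A$ using finiteness, and control $\||b|-1\|_2$ by functional calculus. You bound it via $\||b|-1\|_2^2\le 1-\tr|b|\le 1-\mathrm{Re}\,\tr(u^*b)<\varepsilon$ where the paper uses $\||a|-1\|_2^2\le 1-\|a\|_2^2<2\varepsilon$, so you even get $\varepsilon+\sqrt{\varepsilon}$. Do strip the abandoned attempts from the write-up: the line $|\tr(v^*b)|=|\tr(b)|$ is false in general, and the identity $1-\tr|b|=\tfrac12\|u-v\|_2^2$ that you discarded as circular is not circular at all --- combined with $1-\tr|b|<\varepsilon$ it gives $\|u-v\|_2<\sqrt{2\varepsilon}$ directly.
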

\begin{proof}
Write the polar decomposition of $a$ in $A$ as $a=q|a|$, where $q\in A$ is a partial isometry.
Since $A$ is a finite von Neumann algebra, the partial isometry $q$ can extend to a unitary $v$ and $a=v|a|$. 

We now estimate $\|a-v\|_2$.
First we have
$$\|a-v\|_2^2 = \|v|a|-v\|_2^2 = \| |a|-1\|_2 = \tr((1-|a|)^2)$$
Since $0\le |a|\le 1$ and $(1-t)^2\le 1-t^2$ for $t\in [0,1]$, by functional calculus, we have
$$\|a-v\|_2^2=\tr((1-|a|)^2)\le \tr(1-|a|^2)=1-\|a\|_2^2$$
From $\|u-a\|_2<\varepsilon$, $\|a\|_2> 1-\varepsilon$, then $\|a\|_2^2>(1-\varepsilon)^2>1-2\varepsilon$. This implies
$$\|a-v\|_2^2\le 1-\|a\|_2^2 <2\varepsilon.$$
Therefore, 
\[\|u-v\|_2\le \|u-a\|_2+\|a-v\|_2 <\varepsilon+\sqrt{2\varepsilon}.\qedhere
\]
\end{proof}

The following result is appeared in Jones' $\chi(M)$ notes with a sketch proof. 
Here we provide a complete proof.
\begin{prop}\cite{chiNotes} \label{prop:AinM,aiM->aiA}
Suppose $M$ is a separable $\rm II_1$ factor, $A\subseteq M$ is a subalgebra with conditional expectation $E_A:M\to A$ such that for any central sequence $(a_n)_n$ in $M$, $\lim\limits_{n\to\omega}\|E_A(a_n)-a_n\|_2=0$.
Then if $\alpha\in \Ai(M)$, there exist a unitary $w\in M$ and a sequence of unitaries $(z_n)_n$ in $A$ such that $\alpha=\Ad(w)\circ \lim\limits_{n\to\omega}\Ad(z_n)$.
\end{prop}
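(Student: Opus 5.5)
The plan is to work in the ultrapower and reduce everything to a uniform, local version of the hypothesis on central sequences. Fix a dense sequence $\{x_j\}$ in $(M)_1$ (possible since $M$ is separable), and write $\alpha=\lim_{n\to\omega}\Ad(u_n)$ with $u_n\in U(M)$. The first step is to prove the following.

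\emph{Local lemma.} For every $\delta>0$ there are $J\in\bbN$ and $\eta>0$ such that every $z\in(M)_1$ with $\|zx_j-x_jz\|_2<\eta$ for all $j\le J$ satisfies $\|z-E_A(z)\|_2<\delta$.

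I would prove this by contradiction. If it fails for some $\delta$, take $J=n$ and $\eta=1/n$ and pick a counterexample $z_n$ for each $n$. Then $(z_n)$ is a bounded sequence, and it is a central sequence because the $x_j$ are dense. But $\|z_n-E_A(z_n)\|_2\ge\delta$ for every $n$, which contradicts the standing hypothesis on $A$.

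The second step is to show that consecutive implementing unitaries differ by elements close to $A$. For $n,m$ large along $\omega$, the unitary $u_n^*u_m$ almost commutes with $x_1,\dots,x_J$, since $\Ad(u_n)x_j\approx\alpha(x_j)\approx\Ad(u_m)x_j$. The local lemma then gives $\|u_n^*u_m-E_A(u_n^*u_m)\|_2<\delta$. Applying Lemma \ref{lem:uApproxbyv} to $a=E_A(u_n^*u_m)\in(A)_1$, we get $t\in U(A)$ with $\|u_m-u_nt\|_2<\delta+\sqrt{2\delta}$. Now fix $w:=u_N$ for one suitable $N$ and set $z_m:=t$, chosen as above for each $m$. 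This gives $\Ad(wz_m)x\approx\alpha(x)$, but only up to an error of order $\sqrt\delta\,\|x\|$ that does not tend to $0$.

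The main obstacle is to make this error vanish while keeping $w$ \emph{fixed}. A norm approximation $u_m\approx wz_m$ with a fixed tolerance is not enough, and composing successive corrections $w_1t_1t_2\cdots$ accumulates errors in $\|\cdot\|_2$.

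To overcome this I would run the argument above at the level of automorphisms rather than unitaries. Choose $\delta_k\downarrow0$, increasing $J_k$, and indices $N_1<N_2<\cdots$, and obtain $t_k\in U(A)$ with $\Ad(u_{N_{k+1}})\approx\Ad(u_{N_k}t_k)$ on $\{x_j\}_{j\le J_k}$ to within $\sum_{i\ge k}\delta_i$. Then define $w:=u_{N_1}$ and $z_k:=t_1t_2\cdots t_{k-1}\in U(A)$. The point is that the errors now telescope in the pointwise-$\Ad$ topology on a fixed finite set. This works because each $t_i$ almost commutes with the relevant $x_j$: $t_i$ is close to $u_{N_i}^*u_{N_{i+1}}$, which almost commutes with $x_1,\dots,x_{J_i}$.

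If this telescoping bound still fails to close, I would fall back on an ultraproduct diagonal argument. Let $U=(u_n)\in M^\omega$. Show that the "difference" $U^*\cdot(\text{reindexed }U)$ lies in $M_\omega$. Then by hypothesis it lies in $A^\omega$, i.e.\ $E_{A^\omega}$ fixes it. Lifting to unitaries of $A$ by Lemma \ref{lem:uApproxbyv} then yields $\alpha=\Ad(w)\circ\lim_{n\to\omega}\Ad(z_n)$ after a reindexing of $\omega$.
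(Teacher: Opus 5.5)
Your local lemma and the one-step estimate (a $t\in U(A)$ with $\|u_m-u_nt\|_2<\delta+\sqrt{2\delta}$, obtained via Lemma \ref{lem:uApproxbyv}) are correct and match the paper's first step. The gap is in the final assembly. With $w:=u_{N_1}$ and $z_k:=t_1\cdots t_{k-1}$ you do \emph{not} get $\alpha=\Ad(w)\circ\lim_k\Ad(z_k)$.

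Write $p_k:=u_{N_k}z_k^*$. Then you have exactly $\Ad(u_{N_k})=\Ad(p_k)\circ\Ad(z_k)$, whereas your candidate is $\Ad(p_1)\circ\Ad(z_k)$. The difference between $p_k$ and $p_1$ does not shrink as $k\to\infty$. The almost-commutation of $t_i$ with $x_1,\dots,x_{J_i}$ only controls the tail indices $i\ge i_0(j)$. The early mismatches $\|u_{N_{i+1}}-u_{N_i}t_i\|_2$ for $i<i_0(j)$ are frozen and survive in the limit. So your telescoping only yields an error bounded by $2\|x\|\sum_i\|u_{N_{i+1}}-u_{N_i}t_i\|_2$, which is small but does not tend to $0$.

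Here is a concrete failure. Take $M=N\ootimes R$ with $N$ non-Gamma, $A=1\otimes R$, $\alpha=\Ad(v)\otimes\id$, and $u_n=ve^{ih/n}\otimes 1$ with $h=h^*\in N$ non-scalar. Then $E_A(u_n^*u_m)\in\bbC1$, so your procedure produces scalar $t_k$. Hence $\Ad(wz_k)=\Ad(ve^{ih/N_1})\otimes\id\neq\alpha$ for $N_1$ large.

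The missing idea is that $w$ must be a \emph{limit}, not the first term. Since $\|\cdot\|_2$ is invariant under right multiplication by unitaries, $\|p_{k+1}-p_k\|_2=\|u_{N_{k+1}}-u_{N_k}t_k\|_2$. Choosing these quantities summable makes $(p_k)$ Cauchy in $\|\cdot\|_2$, and its limit $w$ is a unitary in $M$. Then $\|u_{N_k}-wz_k\|_2=\|p_k-w\|_2\to0$, so $\Ad(wz_k)\to\alpha$ and $\lim_k\Ad(z_k)=\Ad(w^*)\circ\alpha$.

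Your diagnosis that composing corrections \emph{accumulates errors} is therefore the wrong one. The errors add up to a finite sum, and that convergence is exactly what produces $w$. This is the paper's argument: it forms $\prod_{i\le n}u_iv_iu_{i+1}^*=u_1z_nu_{n+1}^*$, takes its $\|\cdot\|_2$-limit $t$, and sets $w=t^*u_1$.

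Your ultrapower fallback does not repair the gap. Comparing $(u_n)$ with a reindexing of itself only relates tails, giving $u_{f(n)}\approx u_na_n$ with $a_n\in U(A)$. It never singles out an $n$-independent $w$. The fixed unitary has to come from completeness of $U(M)$ in $\|\cdot\|_2$, i.e.\ from a Cauchy-sequence argument like the one above.
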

\begin{proof}
Recall that the pointwise topology on $\Aut(M)$ is given by $\beta_n\to \beta$ if $\|\beta_n(x)-\beta(x)\|_2\to 0$ for all $x\in M$. 
In other words, let $\delta>0$ and a finite set $F\subseteq (M)_1$, the neighborhood $V_{F,\delta}(\beta)$ of $\beta\in \Aut(M)$ is given by 
$$V_{F,\delta}(\beta) = \{\gamma\in \Aut(M)\mid \max_{x\in F}\|\gamma(x)-\beta(x)\|_2<\delta\}.$$

First we prove the following claim.

\medskip\noindent\underline{Lemma.} For every $\varepsilon>0$, there exist a finite set $F\subseteq (M)_1$ and $\delta>0$ such that whenever $u\in U(M)$ and $\Ad(u)\in V_{F,\delta}(\id)$, there exists $a\in (A)_1$ such that $\|u-a\|_2<\varepsilon$.

\smallskip\noindent\underline{Proof of Lemma}: By contradiction, suppose there is some $\varepsilon>0$, for every finite subset $F\subseteq (M)_1$ and $\delta>0$, there is a unitary $u_{F,\delta}\in M$ such that $\Ad(u_{F,\delta})\in V_{F,\delta}(\id)$ and $\|u_{F,\delta}-a\|_2\ge \varepsilon$ for all $a\in (A)_1$.
Since $M$ is separable, there is a countable dense subset $\{x_m\}\subseteq (M)_1$ in $\|\cdot\|_2$-norm topology.
Let $F_n=\{x_1,x_2,\dots,x_n\}$ and $\delta_n=\frac{1}{n}$, we obtain a sequence $(u_n)$ such that
\begin{itemize}
\item  $\Ad(u_n)\in V_{F_n,\delta_n}(\id)$, i.e., $\|\Ad(u_n)(x_i)-x_i\|_2 = \|[u_n,x_i]\|_2\le \frac{1}{n}$ for all $1\le i\le n$;
\item  $\|u_n-a\|_2\ge \varepsilon$ for all $a\in (A)_1$.
\end{itemize}
Then $(u_n)$ is a central sequence in $M$: Indeed, for $x\in (M)_1$, 
\begin{align*}
\|[u_n,x]\|_2 & = \|[u_n,x_i]+ [u_n,x-x_i] \|_2 \\
& \le \|[u_n,x_i]\|_2 + 2\|u_n\| \|x-x_i\|_2 \\
& = \|[u_n,x_i]\|_2 + 2\|x-x_i\|_2 
\end{align*}
then for any $\varepsilon>0$, we pick $i$ such that $\|x-x_i\|_2< \frac{\varepsilon}{4}$ and pick $N\ge i$ such that $\frac{1}{N}< \frac{\varepsilon}{2}$, which follows that $\|[u_n,x]\|_2<\varepsilon$ for all $n\ge N$.
However, since $E_A(u_n)\in (A)_1$, we have 
$$\varepsilon\le \liminf_{a\in (A)_1}\|u_n -a\|_2 \le \lim\|u_n - E_A(u_n)\|_2 = 0,$$
which is a contradiction. This proves the lemma. 
\vspace{.2cm}

We can additionally refine the above \underline{Lemma} to require $a\in (A)_1$ to be unitary by Lemma \ref{lem:uApproxbyv}.

Now since $\alpha\in\Ai(M)$, there exists a sequence of unitaries in $M$ such that $\Ad(u_n)\to \alpha$ in $\|\cdot\|_2$.
Note that $\Ad(u_n^*u_{n+1})=\Ad(u_n^*)\circ\Ad(u_{n+1})\to \alpha^{-1}\circ \alpha=\id$ pointwise.
For all $n\in\bbN$, there exists a finite subset $F_n\subseteq(M)_1$ and $\delta_n>0$ such that $\Ad(u_n^*u_{n+1})\in V_{F_n,\delta_n}(\id)$. 
Then by the refined claim, there exists a unitary $v_n\in A$ such that $\|u_n^* u_{n+1} - v_n\|_2<2^{-n}$, equivalently speaking, $\|u_nv_n u_{n+1}^*-1\|_2<2^{-n}$. 
Then $\prod\limits_{i=1}^n u_nv_n u_{n+1}^*$ is a Cauchy sequence and we denote its limit as $t$.

Note that $\prod\limits_{i=1}^n v_i = u_1^*\left(\prod\limits_{i=1}^n u_nv_n u_{n+1}^*\right) u_{n+1}$. Then $\lim\limits_{n\to\infty}\Ad\left(\prod\limits_{i=1}^n v_i\right)$ exists and equals $\Ad(u_1^*t)\circ \alpha$. 
Since $v_i\in U(A)$, we define the unitary $z_n=\prod\limits_{i=1}^n v_i\in A$ and $w=t^* u_1$, we have the desired result.
\end{proof}

\begin{prop}\label{prop:InvLiftAI<->GInt}
Suppose outer action $\psi:G\to \Aut(M)$ is not approximately inner, $K=\psi^{-1}(\Ct(M))\cap G$.
The group of invertible $G/K$-liftable bimodules in $\aiBim(M\rtimes K)$ is equivalent to the group of $G$-approximately inner automorphisms $\alpha\in \GInn(M)$ modulo inner automorphisms, i.e.,
$$\Inv(\aiBim(M\rtimes K))_{\mathsf{lift}}\cong \GInn(M)/\Inn(M).$$    
\end{prop}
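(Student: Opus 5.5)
Our plan is to pass through the standard correspondence between invertible bimodules and outer automorphisms, and to identify both sides as subgroups of $\Out(M\rtimes K)$. Every invertible object of $\Bim(M\rtimes K)$ is of the form ${}_{\beta}L^2(M\rtimes K)$ for some $\beta\in\Aut(M\rtimes K)$, unique modulo $\Inn(M\rtimes K)$. Such a bimodule is approximately inner exactly when $\beta\in\Ai(M\rtimes K)$. Indeed, an approximately inner basis of an invertible bimodule can be cut down, via Lemma \ref{lem:uApproxbyv}, to a single approximately $\beta$-intertwining unitary sequence. The $G/K$-liftability condition says the class is fixed by the induced $G/K$-action $\psi$ on $\Bim(M\rtimes K)$ up to coherent unitaries. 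We would first show that, after adjusting $\beta$ by an inner automorphism, it can be taken to commute with the $G/K$-action. Concretely, we intend to use the equivariant structure to build an automorphism of $|A|$, or equivalently of $M\rtimes G$, that preserves $M\rtimes K$.

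The map we would construct goes from $\GInn(M)$ to liftable invertibles. Given $\alpha\in\GInn(M)$ with $\Ad(u_n)\to\alpha$ and $u_n\in U(M^G)$, each $u_n$ commutes with all $u_g$. Hence $\Ad(u_n)$ converges pointwise on $M\rtimes G$ to $\tilde\alpha$, defined by $\tilde\alpha(xu_g)=\alpha(x)u_g$. This $\tilde\alpha$ preserves $M\rtimes K$ and commutes with the dual data, so ${}_{\tilde\alpha}L^2(M\rtimes K)$ is approximately inner. The equivariant structure comes from the fact that $\tilde\alpha$ fixes every $u_g$. If $\alpha=\Ad(w)$ is inner with $w\in M$, the same construction gives an inner class; the subtle point is that $w$ need not lie in $M^G$. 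We expect to handle this by noting that triviality in $\Inv$ with the chosen equivariant structure forces $\psi_g(w)w^*$ to be scalar, and this ambiguity is absorbed into the choice of equivariant structure. This step gives a well-defined homomorphism $\GInn(M)/\Inn(M)\to\Inv(\aiBim(M\rtimes K))_{\mathsf{lift}}$.

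For injectivity and surjectivity we would reverse the construction. Given a liftable invertible approximately inner bimodule, we obtain an automorphism $\beta$ of $M\rtimes K$ that extends $G$-equivariantly to $M\rtimes G$ and is approximately inner on $M\rtimes K$. Since $\psi$ is not approximately inner, Corollary \ref{cor:CSinMrtimesG} gives $(M\rtimes G)_\omega=(M_\omega)^G$. Applied with $G$ replaced by $K$, it gives that central sequences of $M\rtimes K$ lie in $M^K$, up to vanishing error. This is precisely the hypothesis of Proposition \ref{prop:AinM,aiM->aiA}, with the ambient factor $M\rtimes K$ and the subalgebra $M^K$ (or $M^G$, after using $G$-equivariance). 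That proposition then writes $\beta=\Ad(w)\circ\lim\Ad(z_n)$ with $z_n\in U(M^G)$. Modulo inner automorphisms, $\beta$ therefore restricts on $M$ to an element of $\GInn(M)$. It remains to check that the restriction maps $M$ to $M$. We would prove this using the fact that $\beta$ commutes with the dual action, whose fixed points form the relevant copy of $M$.

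We expect the main obstacle to be this last step: turning the abstract $G/K$-equivariant structure into a genuine automorphism preserving $M$ and fixing the $u_g$. A secondary obstacle is verifying that the adjustment $\Ad(w)$ in Proposition \ref{prop:AinM,aiM->aiA} can be chosen compatibly, and that the kernel is exactly $\Inn(M)$. Our first attempt would be a Connes-type averaging: average the intertwining unitaries over $G/K$ and apply Lemma \ref{lem:uApproxbyv} to land in $U(M^G)$, with the central-sequence control again supplied by Corollary \ref{cor:CSinMrtimesG}.
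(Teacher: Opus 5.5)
Your forward direction matches the paper's, but the converse has a genuine gap. You cannot apply Proposition \ref{prop:AinM,aiM->aiA} to $M\rtimes K$ with the subalgebra $M^G$. Its hypothesis requires every central sequence of $M\rtimes K$ to be asymptotically in the subalgebra. By Corollary \ref{cor:CSinMrtimesG}, these sequences make up $(M_\omega)^K=M_\omega$, and $\psi_g$ acts nontrivially on $M_\omega$ for every $g\notin K$, by the very definition of $K$. So $G$-equivariance of $\beta$ does not help here, and you only get $z_n\in U(M^K)$, exactly as in the paper. This already shows that $\alpha=\lim\Ad(z_n)$ preserves $M$ and fixes each $u_k$, so your dual-action step is unnecessary.

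The real content is passing from $M^K$ to $M^G$, and this is where liftability is used. Averaging over $G/K$ alone fails. The $z_n$ are only determined up to multiplication by central unitary sequences, which do not change $\alpha$ but can destroy approximate $G$-invariance, so $E_{M^G}(z_n)$ need not be close to a unitary. The missing idea is a double cocycle untwisting, as follows.
\begin{itemize}
\item The equivariant structure gives unitaries $t_{\bar g}$ with $\psi_{\bar g}\alpha\psi_{\bar g}^{-1}=\Ad(t_{\bar g})\circ\alpha$. After normalizing scalars, they satisfy $t_{\bar g\bar h}=\psi_{\bar g}(t_{\bar h})t_{\bar g}$.
\item The sequences $c_{\bar g;n}=z_n^*t_{\bar g}^*\psi_{\bar g}(z_n)$ are central in $M\rtimes K$, hence asymptotically in $M^K$, which forces $t_{\bar g}\in M^K$.
\item $(c_{\bar g})$ is a $1$-cocycle for the $G/K$-action on $M_\omega$, which is outer by Lemma \ref{lem:NotCT->OuterOnMomega}. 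Untwisting $c_{\bar g}=s^*\psi_{\bar g}(s)$ with $s\in U(M_\omega)$, as in Theorem \ref{thm:NotCT->AI}, gives $w_n=z_ns_n^*$ with $\psi_{\bar g}(w_n)\approx t_{\bar g}w_n$.
\item Cocycle vanishing for the outer $G/K$-action on $M^K$ gives $t_{\bar g}=\psi_{\bar g}(v^*)v$ with $v\in U(M^K)$. Then $vw_n$ is asymptotically $G$-fixed and still implements $\Ad(v)\circ\alpha$.
\end{itemize}
Only at this point do averaging and Lemma \ref{lem:uApproxbyv} give unitaries in $M^G$.

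On well-definedness, a character twist cannot be ``absorbed into the equivariant structure''. The group $\Inv(\cdot)_{\mathsf{lift}}$ only records isomorphism classes. Moreover, if $\psi_k(v)=\lambda_k v$ with $\lambda|_K\neq 1$, then $\widetilde{\Ad(v)\circ\alpha}$ differs from $\Ad(v)\circ\widetilde\alpha$ by a nontrivial dual automorphism, which is outer. What actually holds is that $\lambda|_K=1$ automatically. Suppose $\alpha$ and $\alpha'=\Ad(v)\circ\alpha$ both lie in $\GInn(M)$, implemented by $z_n,z_n'\in U(M^G)$. Then $\psi_g(v)=\lambda_g v$ for a character $\lambda$ of $G$, and the unitaries $z_n'^*vz_n$ form a central sequence with $\psi_k(z_n'^*vz_n)=\lambda_k\, z_n'^*vz_n$. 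Central triviality of $\psi_k$ for $k\in K$ forces $\lambda_k=1$. Hence $v\in M^K$ commutes with the $u_k$, and $\widetilde{\alpha'}=\Ad(v)\circ\widetilde\alpha$, which is the identity the paper uses.
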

\begin{proof}
Let $N=M\rtimes K$.
Suppose $\alpha\in \GInn(M)$, there exists unitaries $(z_n)\subseteq M^G$ such that $\alpha=\lim \Ad(z_n)$. 
Define $\widetilde{\alpha}\in\Aut(N)$ by $\widetilde{\alpha}(au_k)=\alpha(a)u_k$ for $a\in M$, $k\in K$.
Since $z_n\in M^G\subseteq M^K$, $z_n$ commutes with every $u_k$, we have $\widetilde{\alpha}=\lim \Ad(z_n)$ and the bimodule ${}_{\widetilde{\alpha}}L^2(N)\in \Inv(\aiBim(N))$. 
Because $\alpha$ is implemented by $G$-fixed unitaries, 
$$\psi_g\alpha \psi_g^{-1}(a) = \psi_g \lim z_n \psi_g^{-1}(a)z_n^* =\psi_g \lim  \psi_g^{-1}(z_naz_n^*) = \lim z_naz_n^* = \alpha(a),$$
for all $a\in M$, $g\in G$, so $\psi_g\alpha \psi_g^{-1}=\alpha$ and consequently $\psi_g\widetilde{\alpha}\psi_g^{-1}=\widetilde{\alpha}$.
Thus the identity maps give a unitary $G/K$-equivariant structure on ${}_{\widetilde{\alpha}}L^2(N)$ showing it is liftable.

If $\alpha'=\Ad(v)\circ \alpha$ with $v\in U(M)$, then
$\widetilde\alpha'=\Ad(v)\circ \widetilde\alpha\in\Aut(N)$, hence ${}_{\widetilde\alpha'}L^2(N)\cong {}_{\widetilde\alpha}L^2(N)$.
So the construction depends only on the class of $\alpha$ in $\GInn(M)/\Inn(M)$.

Now we prove the converse. Suppose $Y\in \Inv(\aiBim(N))_{\mathsf{lift}}$, we shall show $Y\cong {}_{\widetilde{\alpha}}L^2(N)$ for some $\alpha\in\GInn(M)$.

Because $Y$ is invertible and approximately inner, $Y\cong {}_{\beta}L^2(N)$ for some $\beta\in \Ai(N)$. 
Since $\psi:G\to \Aut(M)$ is not approximately inner, so is its restriction on $K=\psi^{-1}(\Ct(M))\cap G$, then by Corollary \ref{cor:CSinMrtimesG}, $(M\rtimes K)_\omega \cong (M_\omega)^K$.
In other words, for every central sequence $(a_n)\in N_\omega$, $\|a_n-E_{M^K}(a_n)\|_2\to 0$.
By Proposition \ref{prop:AinM,aiM->aiA}, there exist unitaries $w\in N$ and $z_n\in M^K$ such that $\beta=\Ad(w)\circ \lim \Ad(z_n)$. 
Since we work with equivalence class of bimodules, we may replace $\beta$ by $\alpha=\lim \Ad(z_n)$, and $Y\cong {}_{\alpha}L^2(N)$ as well.
Moreover, since $\alpha=\lim \Ad(z_n)$, $z_n\in M^K$, $\alpha\in \Ai(M)$.

In this case, since $z_n\in M^K$, we have $[z_n,u_k]=0$ for all $k\in K$, then
$\alpha(u_k)=\lim z_n u_k z_n^* = \lim u_k z_nz_n^*  = u_k$.

The $G/K$-action on $\aiBim(N)$ is given by
\[
\rho_{\bar{g}}(X):={}_{\psi_{\bar{g}}}X_{\psi_{\bar{g}}}
=
({}_{\psi_{\bar{g}}}L^2(N))\boxtimes_{N} X\boxtimes_{N}({}_{\psi_{\bar{g}}^{-1}}L^2(N)).
\]
Since $Y\cong {}_{\alpha}L^2(N)$ is liftable, there exists a unitary $G/K$-equivariant structure
\[
v_{\bar{g}}\in \Bim(N)(\rho_{\bar{g}}(Y)\to Y),\qquad \bar g\in G/K,
\]
such that $v_{\bar g\bar h}=v_{\bar{g}}\circ \rho_{\bar{g}}(v_{\bar h}).$
Because $\rho_{\bar{g}}(Y)\cong {}_{\psi_{\bar{g}}\alpha\psi_{\bar{g}}^{-1}}L^2(N)$,
each $v_{\bar{g}}$ is given by a unitary $t_{\bar{g}}\in U(N)$ such that
\[
\psi_{\bar{g}}\alpha\psi_{\bar{g}}^{-1}=\Ad(t_{\bar{g}})\circ \alpha.
\tag{$1$}
\]
From (1), we have
\begin{align*}
\psi_{\bar{g}\bar{h}}\alpha\psi_{\bar{g}\bar{h}}^{-1}
& = \psi_{\bar{g}} (\psi_{\bar{h}} \alpha \psi_{\bar{h}}^{-1} )\psi_{\bar{h}}^{-1}   \\
& = \psi_{\bar{g}} (\Ad(t_{\bar{h}})\circ\alpha) \psi_{\bar{g}}^{-1}\\
& = \Ad(\psi_{\bar{g}}(t_{\bar{h}}))\circ \psi_{\bar{g}} \alpha \psi_{\bar{g}}^{-1}\\
& = \Ad(\psi_{\bar{g}}(t_{\bar{h}}))\circ \Ad(t_{\bar{g}})\circ\alpha\\
& = \Ad(\psi_{\bar{g}}(t_{\bar{h}})t_{\bar{g}})\circ\alpha
\end{align*}
On the other hand, $$\psi_{\bar{g}\bar{h}}\alpha\psi_{\bar{g}\bar{h}}^{-1} = \Ad(t_{\bar{g}t_{\bar{h}}})\circ \alpha.$$
Comparing the two implementations, after normalizing scalars, we may assume
\[
t_{\bar g\bar h}=\psi_{\bar{g}}(t_{\bar h})t_{\bar{g}},
\qquad \forall\bar g,\bar h\in G/K.
\tag{2}
\]
Applying (1) to $a\in N$. Since
\[
\psi_{\bar{g}}\beta\psi_{\bar{g}}^{-1}(a)
=
\psi_{\bar{g}}\!\left(\lim z_n\psi_{\bar g^{-1}}(a)z_n^*\right)
=
\lim \psi_{\bar{g}}(z_n)a\psi_{\bar{g}}(z_n)^*,
\]
and
\[
\Ad(t_{\bar{g}})\circ\beta(a)
=
\lim t_{\bar{g}}z_naz_n^*t_{\bar{g}}^*,
\]
we get
\[
\|\Ad(\psi_{\bar{g}}(z_n))(a)-\Ad(t_{\bar{g}}z_n)(a)\|_2\to 0
\qquad\forall a\in N.
\]
so for $c_{\bar{g};n}:=z_n^*t_{\bar{g}}^*\psi_{\bar{g}}(z_n)\in N$,
we have $(c_{\bar{g};n})\in N_\omega = (M_\omega)^K$.
Then 
$$\|t_{\bar{g}}^* -z_n c_{\bar{g};n} \psi_{\bar{g}}(z_n^*)\|_2= \|z_n^*t_{\bar{g}}^*\psi_{\bar{g}}(z_n)-c_{\bar{g};n}\|_2\to 0.$$
Note that $\psi_{\bar{g}}$ preserves $M^K$, then $z_n^*,\ \psi_{\bar{g}}(z_n)\in M^K$.
Since $c_{\bar{g}}=(c_{\bar{g};n})\in (M_\omega)^K$, and $M^K$ is closed in $\|\cdot\|_2$-norm,
we have $t_{\bar{g}}\in M^K$.

Moreover, $\{c_{\bar{g}}\}_{\bar g\in G/K}$ is a $1$-cocycle for the induced action $\psi_\omega:G/K\to \Aut(M_\omega)$.
Indeed, writing $z=(z_n)_n\in U(M^\omega)$, using $(2)$, we compute
\begin{align*}
c_{\bar{g}}\psi_{\bar g}(c_{\bar h})
&= z^* t_{\bar{g}}^*\psi_{\bar{g}}(z)\,\psi_{\bar{g}}(z^*t_{\bar h}^*\psi_{\bar h}(z)) \\
&= z^*t_{\bar{g}}^*\psi_{\bar{g}}(t_{\bar h}^*)\psi_{\bar g\bar h}(z) \\
&= z^*(t_{\bar{g}}\psi_{\bar{g}}(t_{\bar h}))^*\psi_{\bar g\bar h}(z) \\
&= z^*t_{\bar g\bar h}^*\psi_{\bar g\bar h}(z) \\
&= c_{\bar g\bar h}. 
\end{align*}

By the same cocycle-untwisting argument as in Theorem \ref{thm:NotCT->AI}
(equivalently, Fact \ref{fact:coboundaryTrEQ}
applied to the induced action on $M_\omega$), there exists $s\in U(M_\omega)$ such that
$$c_{\bar{g}}=s^*\psi_{\bar g}(s),\qquad \forall\bar g\in G/K.$$

Since $M_\omega=(M_\omega)^K$, we may choose a representing sequence
$s=(s_n)$ with $s_n\in U(M^K)$. Set $w_n:=z_n s_n^*\in U(M^K)$.
Because $s\in M_\omega$ is central, we still have $\alpha=\lim\Ad(w_n)$.
Also, from the defining equation for $c_{\bar{g}}$,
$\psi_{\bar{g}}(w)=t_{\bar{g}}w$ in $M^\omega$, i.e.,
\[
\|\psi_{\bar{g}}(w_n)-t_{\bar{g}}w_n\|_2\to 0,
\qquad \forall\bar g\in G/K.
\tag{3}
\]

The action $\psi:G/K\to \Aut(M\rtimes K)$ restricts to a well-defined induced action $\bar{\psi}:G/K\to \Aut(M^K)$ given by $\bar{\psi}_{\bar{g}}(x)=\psi_{\bar{g}}(x)$ for all $x\in M^K$. 
Since $\psi$ is outer, $\bar{\psi}$ is also an outer action.
By (2), since $t_{\bar{g}}\in M^K$, 
$$t_{\bar{g}\bar{h}}=\psi_{\bar{g}}(t_{\bar{h}})t_{\bar{g}}=\bar{\psi}_{\bar{g}}(t_{\bar{h}})t_{\bar{g}},$$
so $t:G/K\to U(M^K)$ is a 1-cocycle of the outer action $\bar{\psi}:G/K\to \Aut(M^K)$. 
(Warning: this is not the usual order for 1-cocycle, but can be done by taking the adjoint)

By Fact \ref{fact:coboundaryTrEQ},
there exists a unitary $v\in U(M^K)$ such that 
$$t_{\bar{g}} = \bar{\psi}_{\bar{g}}(v^*)v = \psi_{\bar{g}}(v^*)v, \quad \forall \bar{g} \in G/K$$
Substitute this coboundary expression into (3), we have
\[\|\psi_{\bar{g}}(w_n)-\psi_{\bar{g}}(v^*)vw_n\|_2 = \|\psi_{\bar{g}}(v w_n)-v w_n\|_2 \to 0.\tag{4}\]

Let $\alpha'=\Ad(v)\circ\alpha=\Ad(v)\circ \lim \Ad(w_n) = \lim\Ad(vw_n)\in \Ai(M)$. 
Since $\alpha'$ and $\alpha$ are equivalent modulo $\Int(M)$, this means $[\alpha']=[\alpha]$.

Since also $vw_n\in M^K$, the sequence is asymptotically $G$-fixed by (4). Therefore
\[
u_n:=E_{M^G}(vw_n)= \frac{1}{|G/K|}\sum_{\bar{g}\in G/K} \psi_{\bar{g}}(vw_n)\in M^G
\]
satisfies
\[
\|u_n-vw_n\|_2 \le \frac{1}{|G/K|}\|\psi_{\bar{g}}(vw_n)-vw_n\|_2\to 0.
\]
After taking polar parts as in Lemma \ref{lem:uApproxbyv}, we may assume $u_n\in U(M^G)$ and still
\[
\|u_n-vw_n\|_2\to 0.
\]
Because $vw_n$ implements $\alpha'$ up to $\omega$-limit, the same is true for $u_n$, so $\alpha'\in \GInn(M)$.
Hence the isomorphism class of $Y$ determines a class
\[
[\alpha]\in \GInn(M)/\Inn(M).
\]

Therefore, we establish the equivalence
\[
\Inv(\aiBim(M\rtimes K))_{\mathsf{lift}}\cong \GInn(M)/\Inn(M).\qedhere
\]
\end{proof}

\begin{thm}
Suppose outer action $\psi:G\to \Aut(M)$ is not approximately inner, $K=\psi^{-1}(\Ct(M))\cap G$.
The group of invertible $G/K$-liftable bimodules in the $G/K$-deequivariantization $\Chi(M\rtimes G)_{G/K}$ is equivalent to the group of the intersection of $G$-centrally trivial automorphisms and $G$-approximately inner automorphisms $\GCt(M)\cap \GInn(M)$ modulo inner automorphisms, i.e.,
$$\Scale[0.91]{\Inv\left(\bigoplus\limits_{g\in G/K} ({}_{\psi_g}L^2(M\rtimes K)\boxtimes_{M\rtimes K}\ctBim(M\rtimes K))\cap \aiBim(M\rtimes K)\right)_{\mathsf{lift}}\cong (\GCt(M)\cap\GInn(M))/\Inn(M).}$$   
\end{thm}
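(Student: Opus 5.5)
Write $N=M\rtimes K$ and $\cD:=\bigoplus_{g\in G/K}({}_{\psi_g}L^2(N)\boxtimes_N\ctBim(N))\cap\aiBim(N)$. The plan is to reduce to Proposition \ref{prop:InvLiftAI<->GInt}, which already identifies $\Inv(\aiBim(N))_{\mathsf{lift}}$ with $\GInn(M)/\Inn(M)$ via $[\alpha]\mapsto[{}_{\widetilde\alpha}L^2(N)]$. What remains is to show that, under this identification, the extra condition ``${}_{\widetilde\alpha}L^2(N)$ lies in the $\bar g$-graded piece ${}_{\psi_g}L^2(N)\boxtimes_N\ctBim(N)$ for some $\bar g$'' is equivalent to $\alpha\in\GCt(M)$.

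First note that every invertible object of $\cD$ is homogeneous. Also, $\Inv(\cD)_{\mathsf{lift}}\subseteq\Inv(\aiBim(N))_{\mathsf{lift}}$, since the $G/K$-action on $\cD$ is the restriction of the one on $\aiBim(N)$. So a class in $\Inv(\cD)_{\mathsf{lift}}$ is represented by ${}_{\widetilde\alpha}L^2(N)$ with $\alpha\in\GInn(M)$, which we may normalize as in the proof of Proposition \ref{prop:InvLiftAI<->GInt}: $\alpha=\lim\Ad(u_n)$ with $u_n\in U(M^G)$ and $\widetilde\alpha(u_k)=u_k$. Membership in the $\bar g$-component means ${}_{\psi_g^{-1}\widetilde\alpha}L^2(N)\in\ctBim(N)$, i.e.\ $\psi_g^{-1}\widetilde\alpha\in\Ct(N)$.

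The key step is to compare central triviality on $N$ with central triviality on $M$ twisted by $\psi_g$. By Corollary \ref{cor:CSinMrtimesG}, applied to the restriction of $\psi$ to $K$ (which is still not approximately inner), $N_\omega=(M_\omega)^K$. Moreover, $K$ acts trivially on $M_\omega$ by the definition of $K$, so $N_\omega=M_\omega$. Hence $\psi_g^{-1}\widetilde\alpha\in\Ct(N)$ if and only if $\alpha_\omega=(\psi_g)_\omega$ on $M_\omega$. This is exactly the condition $\alpha\in\GCt(M)$ witnessed by $g$, and the witness is well defined modulo $K$ because $K$ is centrally trivial.

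Conversely, given $\alpha\in\GCt(M)\cap\GInn(M)$ with witness $g$, the bimodule ${}_{\widetilde\alpha}L^2(N)$ is approximately inner and liftable by Proposition \ref{prop:InvLiftAI<->GInt}. By the previous paragraph it is centrally trivial after twisting by $\psi_g^{-1}$, so it lies in the $\bar g$-component of $\cD$. Well-definedness modulo $\Inn(M)$ and the group homomorphism property are inherited from Proposition \ref{prop:InvLiftAI<->GInt}, because the tensor product in $\cD$ is the restriction of the one in $\Bim(N)$ and ${}_{\widetilde\alpha}L^2\boxtimes{}_{\widetilde\beta}L^2\cong{}_{\widetilde{\alpha\beta}}L^2$. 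Injectivity is also inherited.

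The main obstacle is the identification $\Ct$ on $N$ versus $(\psi_g)_\omega$ on $M$. One must check two things. First, the central sequences of $N$ are genuinely those of $M$; this is where the standing hypothesis that $\psi$ is not approximately inner enters, through Proposition \ref{Prop:nonAI}. Second, the extension $\widetilde\alpha$ to $N$ agrees with $\alpha$ on these sequences. A further subtle point is that replacing $\alpha$ by $\Ad(v)\circ\alpha$ in the normalization of Proposition \ref{prop:InvLiftAI<->GInt} does not change $\alpha_\omega$, because inner automorphisms act trivially on $M_\omega$. This is what guarantees that the grading condition survives the normalization.
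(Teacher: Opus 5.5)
Your proposal is correct and follows essentially the same route as the paper. Both arguments reduce to Proposition \ref{prop:InvLiftAI<->GInt}, rewrite membership in the $\bar g$-component as $\psi_g^{-1}\widetilde\alpha\in\Ct(N)$, and use Corollary \ref{cor:CSinMrtimesG} together with the central triviality of $K$ to identify $N_\omega=M_\omega$, on which $\widetilde\alpha_\omega=\alpha_\omega$. Your additional checks are details the paper leaves implicit and do not change the argument: that invertible objects are homogeneous, that the witness $g$ is independent modulo $K$, and that $\alpha_\omega$ is unchanged by the inner normalizations.
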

\begin{proof}
Let $N:=M\rtimes K$. 
By Proposition \ref{prop:InvLiftAI<->GInt}, $\Inv(\aiBim(N))_{\mathsf{lift}}\cong \GInn(M)/\Inn(M)$,
every liftable invertible approximately inner $N$-$N$ bimodule is of the form ${}_{\widetilde{\alpha}}L^2(N)$
for some $\alpha\in \GInn(M)$, where $\widetilde{\alpha}(au_k)=\alpha(a)u_k$, $a\in M$, $k\in K$.
Moreover, the isomorphism class of ${}_{\widetilde{\alpha}}L^2(N)$ depends only on the class of
$\alpha$ in $\GInn(M)/\Inn(M)$.

We claim that for $\alpha\in \GInn(M)$ and $g\in G/K$,
\[
{}_{\widetilde{\alpha}}L^2(N)\in
\bigl({}_{\psi_g}L^2(N)\boxtimes_N \ctBim(N)\bigr)\cap \aiBim(N) 
\qquad \Longleftrightarrow\qquad \alpha_\omega=(\psi_g)_\omega \qquad \text{on } M_\omega
\]

Indeed, since \({}_{\psi_g}L^2(N)\) is invertible, the above condition is equivalent to ${}_{\psi_g^{-1}\widetilde{\alpha}}L^2(N)\in \ctBim(N)$.
For an invertible bimodule ${}_{\theta}L^2(N)$, being centrally trivial is equivalent to $\theta\in \Ct(N)$, i.e., $\theta_\omega=\id_{N_\omega}$.
Hence
\[
{}_{\psi_g^{-1}\widetilde{\alpha}}L^2(N)\in \ctBim(N)
\quad\Longleftrightarrow\quad
(\psi_g^{-1}\widetilde{\alpha})_\omega=\id_{N_\omega}
\]

By Corollary \ref{cor:CSinMrtimesG},
$$N_\omega=(M\rtimes K)_\omega\cong (M_\omega)^K=M_\omega.$$
Under this identification, $\widetilde{\alpha}_\omega$ acts as $\alpha_\omega$, since $\widetilde{\alpha}$
 fixes every \(u_k\), and the induced action of \(g\in G/K\) on \(N_\omega\)
is exactly \((\psi_g)_\omega\). Therefore
\[
(\psi_g^{-1}\widetilde{\alpha})_\omega=\id_{N_\omega}
\quad\Longleftrightarrow\quad
(\psi_g^{-1}\alpha)_\omega=\id_{M_\omega}
\quad\Longleftrightarrow\quad
\alpha_\omega=(\psi_g)_\omega.
\]

By definition, the condition \(\alpha_\omega=(\psi_g)_\omega\) for some \(g\in G/K\) is exactly
the condition \(\alpha\in \GCt(M)\). Hence a liftable invertible approximately inner bimodule
\[
{}_{\widetilde{\alpha}}L^2(N)\in \bigoplus_{g\in G/K}
\bigl({}_{\psi_g}L^2(N)\boxtimes_N \ctBim(N)\bigr)\cap \aiBim(N) \quad \Longleftarrow \alpha\in \GCt(M)\cap \GInn(M).
\]

Thus the correspondence $[{}_{\widetilde{\alpha}}L^2(N)]\mapsto [\alpha]$
induces a bijection
\[
\Inv\left(\bigoplus_{g\in G/K}
\left({}_{\psi_g}L^2(N)\boxtimes_N \ctBim(N)\right)\cap \aiBim(N)\right)_{\mathsf{lift}}
\cong
(\GCt(M)\cap \GInn(M))/\Inn(M). \qedhere
\]
\end{proof}

\section{Example}\label{section:Example}
\subsection{Infinite tensor product of non-Gamma $\rm II_1$ factors}
Suppose $N_i$ are non-Gamma $\rm II_1$ factors, $M=\overline{\bigotimes}_{i=1}^\infty N_i$ is the countable infinite tensor product of $N_i$, which is a McDuff $\rm II_1$ factor.
We denote $M_n:=\overline{\bigotimes}_{i=1}^n N_i$ and $P_n=\overline{\bigotimes}_{i>n} N_i$ for $n\in\bbN$, then $M=M_n\overline{\otimes} P_n$ and $M=\left(\bigcup_{n=1}^\infty M_n\right)''$.

Since $M_n$ is a finite tensor product of non-Gamma $\rm II_1$ factors, $M_n$ is also non-Gamma. 
By \cite[Prop.~3.2.1]{MR2661553} $M_n$ has spectral gap in $M$, i.e.,
for every $\varepsilon>0$, there exist $u_1,\dots,u_n\in U(M_n)$ and $\delta>0$ such that if $a\in M$ satisfies $\|[a,u_i]\|_2\le \delta \|a\|_2$ for every $i=1,\dots,n$, then $\|a-E_{M_n'\cap M}(a)\|_2\le \varepsilon \|a\|_2$ \cite[Def.~2.1]{MR2661553}.

\begin{lem}\label{lem:CsinM}
Let $E_{P_n}=\tr_{M_n}\otimes \id_{P_n}:M\to P_n$ be conditional expectation. 
For $(a_n)\in M^\omega$, $(a_n)$ is a central sequence if and only if $\lim\limits_{n\to\omega} \|E_{P_k}(a_n)-a_n\|_2=0$ for each $k$.
\end{lem}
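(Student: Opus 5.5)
We prove the two implications separately; the key tool for ($\Rightarrow$) is Popa's spectral gap property of $M_k\subseteq M$ recalled just above.

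\emph{Easy direction ($\Leftarrow$).} Fix $x\in M_k$. Since $E_{P_k}(a_n)\in P_k=M_k'\cap M$, it commutes with $x$. Hence
\[
\|[a_n,x]\|_2\le 2\|x\|\,\|a_n-E_{P_k}(a_n)\|_2\to 0 .
\]
So $(a_n)$ asymptotically commutes with $\bigcup_k M_k$. This union is $\|\cdot\|_2$-dense in $M$, and $(a_n)$ is uniformly bounded. A standard $\varepsilon/3$ argument (as in the proof of Proposition \ref{prop:AinM,aiM->aiA}) then gives $\|[a_n,x]\|_2\to 0$ for all $x\in M$, i.e. $(a_n)\in M_\omega$.

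\emph{Main direction ($\Rightarrow$).} Let $(a_n)\in M_\omega$ and fix $k$. We may assume $\limsup_\omega\|a_n\|_2>0$; otherwise the claim is trivial.

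First we identify the relative commutant. Since $M=M_k\ootimes P_k$ and $M_k$ is a factor, $M_k'\cap M=1\otimes P_k$. Moreover $E_{M_k'\cap M}=\tr_{M_k}\otimes\id_{P_k}=E_{P_k}$, which follows by checking on elementary tensors.

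Next we apply the spectral gap property with tolerance $\varepsilon>0$. This gives unitaries $u_1,\dots,u_m\in U(M_k)$ and $\delta>0$ such that, for $a\in M$,
\[
\max_i\|[a,u_i]\|_2\le\delta\|a\|_2 \quad\Longrightarrow\quad \|a-E_{P_k}(a)\|_2\le\varepsilon\|a\|_2 .
\]

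Now compare this with the central sequence. Centrality gives $\|[a_n,u_i]\|_2\to0$ along $\omega$ for each of the finitely many $u_i$. There are two cases for each $n$:
\begin{itemize}
\item if $\|a_n\|_2\ge\eta$, then for $\omega$-almost all such $n$ we get $\|[a_n,u_i]\|_2\le\delta\eta\le\delta\|a_n\|_2$, and so $\|a_n-E_{P_k}(a_n)\|_2\le\varepsilon\|a_n\|_2\le\varepsilon C$, where $C=\sup_n\|a_n\|$;
\item if $\|a_n\|_2<\eta$, then trivially $\|a_n-E_{P_k}(a_n)\|_2\le 2\eta$.
\end{itemize}
In either case $\lim_\omega\|a_n-E_{P_k}(a_n)\|_2\le \varepsilon C+2\eta$. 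Since $\varepsilon$ and $\eta$ are arbitrary, the limit is $0$.

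\emph{Expected obstacle.} The only delicate point is that spectral gap is a relative (scale-invariant) inequality, while centrality gives absolute smallness of commutators. The case split on $\|a_n\|_2$ relative to $\eta$ handles this. One also has to confirm that the cited \cite[Prop.~3.2.1]{MR2661553} applies to $M_k\subseteq M$ with $M_k$ non-Gamma: this holds because $M=M_k\ootimes P_k$ and $M_k$ is a finite tensor product of non-Gamma factors, hence non-Gamma, as noted above.
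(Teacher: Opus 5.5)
Your proof is correct and takes essentially the same approach as the paper. The paper simply cites Popa's Lemma 2.2, namely $M_\omega=\bigcap_k (M_k'\cap M)^\omega$ when each $M_k$ has spectral gap in $M$, and your two directions unpack exactly that lemma: spectral gap plus the case split on $\|a_n\|_2$ for ($\Rightarrow$), and $E_{P_k}(a_n)\in P_k$ commuting with $M_k$ plus $\|\cdot\|_2$-density of $\bigcup_k M_k$ for ($\Leftarrow$), together with the identifications $M_k'\cap M=P_k$ and $E_{M_k'\cap M}=E_{P_k}$.
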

\begin{proof}
Since $M$ is an inductive limit of subfactors $M_n$, which has spectral gap in $M$. Then by \cite[Lem.~2.2]{MR2661553}, $M_\omega=M'\cap M^\omega=\bigcap_n (M_n'\cap M)^\omega$.
\end{proof}

\begin{lem}\label{lem:homPopa2.4}
Suppose $Q\subseteq P$ is a $\rm II_1$ subfactor. Let $r\ge 1$, $A:=M_r(\bbC)\otimes P$ with normalized trace $\tr_A:=\tr_r\otimes \tr_P$.
Suppose $p\in M_r(\bbC)\otimes (Q'\cap P)$ is a nonzero projection and 
$\alpha:P\to pAp$ is a normal unital $*$-homomorphism.     
Define $\rho:Q\to pAp$ by $\rho(x):=p(1_r\otimes x)$.
It is a well-defined unital $*$-homomorphism since $p$ commutes with $1_r\otimes Q$.

Suppose there is some $0<\delta<1$ such that 
$$\|\alpha(v)-\rho(v)\|_{2,p}\le \delta,\qquad\quad\forall v\in U(Q),$$
where $\|\cdot\|_{2,p}$ is the $L^2$-norm on $pAp$ given by 
$$\tr_{p}(x):=\tr_{pAp}(x)=\tr_A(x)/\tr_A(p).$$
Then there exists a nonzero partial isometry $w\in pAp$ such that 
$$\alpha(x)w=w\rho(x)=wp(1_r\otimes x),\qquad\quad \forall x\in Q.$$
\end{lem}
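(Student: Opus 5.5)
This is the standard "almost intertwining implies intertwining" argument (Popa's Lemma 2.4 in the cited work), adapted to the matrix-amplified setting. I would build $w$ from the minimal-norm element of a closed convex hull, exactly as in the intertwining-by-bimodules technique.

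\textbf{Step 1 (convex set).} Inside the Hilbert space $L^2(pAp,\tr_p)$, form the $\|\cdot\|_{2,p}$-closed convex hull
$$\mathcal K:=\overline{\mathrm{co}}\{\alpha(v)\rho(v)^*: v\in U(Q)\}.$$
Since $\alpha$ and $\rho$ are unital into $pAp$, every element of $\mathcal K$ has operator norm at most $1$ and lies in $pAp$. The operators are bounded and $\|\cdot\|_2$-closed balls are weakly closed, so $\mathcal K$ indeed sits in $pAp$.

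\textbf{Step 2 (invariance and unique minimizer).} For $u\in U(Q)$, the map $\xi\mapsto \alpha(u)\xi\rho(u)^*$ is isometric for $\|\cdot\|_{2,p}$. For the left factor this is clear since $\alpha(u)$ is a unitary of $pAp$. For the right factor, $\rho(u)$ is a unitary of $pAp$ (as $p$ commutes with $1_r\otimes u$) and $\tr_p$ is a trace. The map preserves $\mathcal K$, because $\alpha(u)\alpha(v)\rho(v)^*\rho(u)^*=\alpha(uv)\rho(uv)^*$. Let $w_0$ be the unique element of $\mathcal K$ of minimal $\|\cdot\|_{2,p}$-norm. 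Uniqueness of the minimizer forces $\alpha(u)w_0\rho(u)^*=w_0$, so $\alpha(u)w_0=w_0\rho(u)$ for all $u\in U(Q)$. Since unitaries span $Q$, this gives $\alpha(x)w_0=w_0\rho(x)$ for all $x\in Q$.

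\textbf{Step 3 (nonzero).} This is the key step and the only place $\delta<1$ is used. For a unitary $v$,
$$\|\alpha(v)\rho(v)^*-p\|_{2,p}=\|\alpha(v)-\rho(v)\|_{2,p}\le\delta.$$
Every element of $\mathcal K$ is therefore within $\delta$ of $p$, and hence $\|w_0-p\|_{2,p}\le\delta<1=\|p\|_{2,p}$. This forces $w_0\neq0$.

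\textbf{Step 4 (polar decomposition).} Write $w_0=w|w_0|$ in $pAp$. From the intertwining relation, $w_0^*w_0$ commutes with $\rho(Q)$ and $w_0w_0^*$ commutes with $\alpha(Q)$, using the adjoint relation $w_0^*\alpha(x)=\rho(x)w_0^*$. Consequently $|w_0|$, and every function of it, commutes with $\rho(Q)$. Now compute
$$\alpha(x)w|w_0|=\alpha(x)w_0=w_0\rho(x)=w|w_0|\rho(x)=w\rho(x)|w_0|.$$
Both $\alpha(x)w$ and $w\rho(x)$ vanish on $\ker|w_0|$, because $w\rho(x)=w\rho(x)s(|w_0|)$ once $s(|w_0|)$ commutes with $\rho(x)$. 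So both sides agree on the range of $|w_0|$ and on its orthogonal complement, which gives $\alpha(x)w=w\rho(x)=wp(1_r\otimes x)$.

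The partial isometry $w$ is nonzero since $w_0\neq0$, which completes the argument.
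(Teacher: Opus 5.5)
Your proposal is correct and follows essentially the same route as the paper's proof. In both, the minimal-norm element of the $\|\cdot\|_{2,p}$-closed convex hull of $\{\alpha(v)\rho(v)^*: v\in U(Q)\}$ is fixed by the isometries $\eta\mapsto\alpha(u)\eta\rho(u)^*$, it is nonzero because it lies within $\delta<1$ of $p$, and its polar part is the required intertwiner; your Steps 1 and 4 also spell out two points the paper leaves implicit, namely that the minimizer lies in $pAp$ and the support-projection argument behind the paper's appeal to ``uniqueness of polar decomposition''.
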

\begin{proof}
For each $v\in U(Q)$, let $y_v:=\alpha(v)\rho(v)^*$.
Since $\alpha(v)\in pAp$ and $\rho(v)\in pAp$ is unitary, we have $y_v\in pAp$ and
$$\|y_v-p\|_{2,p}=\|\alpha(v)\rho(v)^*-p\|_{2.p} = \|\alpha(v)-\rho(v)\|_{2,p}\le \delta.$$
Let 
$$K_0:=\{y_v\mid v\in U(Q)\}\subseteq pAp,$$
and $K$ be the $\|\cdot\|_{2,p}$-closed convex hull of $K_0$ in the Hilbert space $L^2(pAp,\tr_p)$.
Every element of $K_0$ is in the closed ball of radius $\delta$ around $p$, so is for every element of $K$:
$$\|\eta-p\|_{2,p}\le \delta,\qquad\quad\forall\eta\in K.$$
Since $K$ is a nonempty closed convex subset of a Hilbert space, there exists a unique element $\xi\in K$ with minimal $L^2$-norm.

For $u\in U(Q)$, define $T_u:L^2(pAp,\tr_p)\to L^2(pAp,\tr_p)$ by
$$T_u(\eta):=\alpha(u)\eta\rho(u)^*.$$
This is an isometry since $\alpha(u)$ and $\rho(u)$ are unitaries in $pAp$.
Moreover, $T_u(K_0)=K_0$ since for $v\in U(Q)$,
$$T_u(y_v)=\alpha(u)\alpha(v)\rho(v)^*\rho(u)^*=\alpha(uv)\rho(uv)^*=y_{uv},$$
and hence $T_u(K)=K$.
By the uniqueness of the minimal norm element in $K$, we have 
$$T_u(\xi)=\xi,\qquad\quad\forall u\in U(Q).$$
In other words,
$$\alpha(u)\xi=\xi\rho(u),\qquad\quad \forall u\in U(Q).$$
By linearity and normality, 
$$\alpha(x)\xi=\xi\rho(x),\qquad\quad \forall x\in Q.$$
This $\xi$ is nonzero, since 
$$\|\xi-p\|_{2,p}\ \le \delta<1=\|p\|_{2,p}.$$

Take the polar decomposition of $\xi=w|\xi|$ in $pAp$, where $w\in pAp$ is a nonzero partial isometry. We shall show $w$ also intertwines $\alpha$ and $\rho$.
Note that $\alpha(x)\xi=\xi\rho(x)$ for all $ x\in Q$. 
Take adjoints, we have $\xi^*\alpha(x^*)=\rho(x^*)\xi^*$. 
Replacing $x$ by $x^*$, we have 
$$\xi^*\alpha(x)=\rho(x)\xi^*,\qquad\quad \forall x\in Q.$$
Then 
$$\xi^*\xi\rho(x)=\xi^*\alpha(x)\xi=\rho(x)\xi^*\xi,\qquad\quad \forall x\in Q.$$
Thus $\xi^*\xi\in \rho(Q)'\cap pAp$, which implies 
$|\xi|\in \rho(Q)'\cap pAp$.
Then by the uniqueness of polar decomposition, we have 
\[
\alpha(x) w=w\rho(x),\qquad\quad \forall x\in Q. \qedhere    
\]

\end{proof}

\begin{prop}\label{prop:ctBimX=YPn}
A bifinite bimodule $X\in\ctBim(M)$ if and only if there exist a finite $n$ and $Y\in \Bim(M_n)$ such that $X\cong Y\overline{\otimes} L^2(P_n)$.
\end{prop}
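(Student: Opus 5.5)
The easy direction comes first. Suppose $X\cong Y\ootimes L^2(P_n)$ with $Y\in\Bim(M_n)$. Take a central sequence $(a_m)\in M_\omega$. By Lemma \ref{lem:CsinM} we have $\|a_m-E_{P_n}(a_m)\|_2\to 0$, so we may replace $a_m$ by $b_m:=E_{P_n}(a_m)\in P_n$. For vectors $y\otimes p$ with $y\in Y$ and $p\in L^2(P_n)$:
\begin{itemize}
\item $b_m$ acts only on the second tensor factor;
\item hence $b_m(y\otimes p)-(y\otimes p)b_m=y\otimes(b_mp-pb_m)$;
\item this tends to $0$ in $\|\cdot\|_2$ because $(b_m)$ is central in $P_n$.
\end{itemize}
This gives the convergence on a $\|\cdot\|_2$-dense spanning set of bounded vectors. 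Since $\sup\|a_m\|<\infty$, it extends to all $x\in X$, so $X\in\ctBim(M)$.

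For the converse, fix $X\in\ctBim(M)$ bifinite. I would realize $X$ as ${}_{\alpha}(p(\bbC^r\otimes L^2(M)))_M$, where $\alpha:M\to pM_r(M)p$ is a normal unital $*$-homomorphism and $p\in M_r(\bbC)\otimes M$ is a projection. The first goal is a uniform estimate. Suppose that for every $\delta>0$ there is $n$ such that
\[
\|\alpha(v)-p(1_r\otimes v)\|_{2,p}\le\delta\qquad\text{for all } v\in U(P_n).
\]
If this failed for every $n$, there would be unitaries $v_n\in U(P_n)$ with the displacement bounded below. Then $(v_n)$ is a central sequence in $M$: it asymptotically commutes with every $M_k$, hence with $M$ by density, which is the criterion of Lemma \ref{lem:CsinM}. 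Central triviality of $X$ then gives $\|\alpha(v_n)\xi-\xi v_n\|_2\to0$ for all $\xi$. To make this contradict the lower bound, I would first conjugate $p$ so that it lies in $M_r(\bbC)\otimes M_{k}$ for some $k$ (possible up to unitary equivalence by approximation, since $\bigcup_k M_k$ is dense). Then $p$ commutes with $1\otimes P_n$ for $n\ge k$, and applying central triviality to $\xi=p$ gives exactly $\|\alpha(v_n)-p(1\otimes v_n)\|_{2,p}\to0$, a contradiction.

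Next I apply Lemma \ref{lem:homPopa2.4} with $Q=P_n\subseteq P=M$, with $p\in M_r(\bbC)\otimes M_k\subseteq M_r(\bbC)\otimes(P_n'\cap M)$, and with $\delta<1$. This produces a nonzero partial isometry $w$ intertwining $\alpha|_{P_n}$ and $\rho$. The subbimodule $\overline{wX}$, or rather the range projection of $w^*$, gives a nonzero $M$-$M$ subbimodule of $X$ on which the left $P_n$-action agrees with the right $P_n$-action. This leads to the structural step: a bifinite $M$-$M$ bimodule on which $P_n$ acts "diagonally" should be of the form $Y\ootimes L^2(P_n)$, with $Y$ the space of $P_n$-central vectors, which is naturally an $M_n$-$M_n$ bimodule because $M_n=P_n'\cap M$. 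I would verify that $Y$ is bifinite over $M_n$ and that $Y\ootimes L^2(P_n)\to X$, $y\otimes p\mapsto yp$, is a unitary onto the generated subbimodule. Finally I decompose $X$ into finitely many irreducibles; each irreducible summand is centrally trivial, so the argument applies to each one. Taking the maximal $n$ over the finitely many summands, using $M_n\ootimes N_{n+1}\ootimes\cdots$ to enlarge $Y$, finishes the proof.

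The main obstacle is the passage from a single intertwiner $w$ to an actual tensor splitting. One has to show that the $P_n$-central vectors span $X$ for irreducible $X$, and that $Y$ is bifinite. Bifiniteness should come from the index of the inclusion $M_n\subseteq M$ being compatible with $M=M_n\ootimes P_n$. For the spanning, I would argue by irreducibility: the closed span of $M Y M$ is a nonzero subbimodule, hence all of $X$.
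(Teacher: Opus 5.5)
Your proposal follows essentially the same route as the paper. You realize $X$ as ${}_{\alpha}[p(\bbC^r\otimes L^2(M))]$ with $p$ moved into $M_r(\bbC)\otimes M_k$, and get the uniform bound on $U(P_n)$ by contradiction with central unitaries $v_n\in U(P_n)$. You then apply Lemma~\ref{lem:homPopa2.4} to obtain $w$, and show that the $P_n$-central vectors $Y$ give an isometric bimodule map $Y\ootimes L^2(P_n)\to X$ which is onto for indecomposable $X$, before reassembling the summands at a common $n$.

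Two local points need fixing. First, $w$ does not give a nonzero $M$-$M$ subbimodule on which the left and right $P_n$-actions agree; any such subbimodule is $0$. What $w$ gives is that its nonzero columns are $P_n$-central vectors, i.e.\ $Y\neq 0$, and that is exactly what your irreducibility argument needs. Second, bifiniteness of $Y$ cannot come from the index of $M_n\subseteq M$, which is infinite. It follows instead from $\dim_{M_n\ootimes P_n}(Y\ootimes L^2(P_n))=\dim_{M_n}(Y)$, applied to both the left and the right dimensions.
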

\begin{proof}
By Lemma \ref{lem:CsinM}, it is clear that $Y\overline{\otimes} L^2(P_n)$ is centrally trivial.

Now suppose $X\in \ctBim(M)$. 
The category of bifinite bimodule is semisimple. Suppose $X\cong \bigoplus_{i=1}^t X_i$, $X_i$ are indecomposable $M$-$M$ bimodules. 
We know that $X_i\in \ctBim(M)$, since central triviality passes to direct summand. 
Suppose we know that $X_i\cong Y_i\ootimes L^2(P_{n_i})$, $Y_i\in \Bim(M_{n_i})$ for all $i=1,\dots,t$. 
Let $m=\max_{i=1,\dots,t}\{n_i\}$. 
Since 
$$M=M_{n_i}\overline{\otimes} \left(\ootimes_{i=n_j+1}^m N_i\right)\overline{\otimes} P_m $$
we have $L^2(P_{n_i})\cong L^2(\bigootimes_{i=n_j+1}^m N_i)\ootimes L^2(P_m)$, which follows that 
\begin{align*}
X & \cong \bigoplus_{i=1}^t X_i\cong \bigoplus_{i=1}^t Y_i\ootimes L^2(P_{n_i})\cong \bigoplus_{i=1}^t (Y_i\ootimes L^2(\ootimes_{i=n_j+1}^m N_i))\ootimes L^2(P_m)\\
& \cong \left(\bigoplus_{i=1}^t (Y_i\ootimes L^2(\ootimes_{i=n_j+1}^m N_i))\right)\ootimes L^2(P_m).   
\end{align*}
Therefore, it suffices to prove the proposition for indecomposable bimodules.

Since $X$ is bifinite, $X\cong {}_{\alpha}[p(\bbC^r\otimes L^2(M))]$ for some projection $p\in M_r(\bbC)\otimes M$ and a normal unital $*$-homomorphism $\alpha:M\to p(M_r(\bbC)\otimes M)p$.
This $p$ is not necessarily to commute with any $P_n$. 
To fix this, since $M$ is inductive limit of $M_n$, one can find a sufficiently large $n$ and a projection $q\in M_s(\bbC)\otimes M_n\subseteq M_s(\bbC)\otimes M$ with
$$(\tr_s\otimes \tr_M)(q)=(\tr_r\otimes \tr_M)(p).$$
Then $p$ and $q$ are Murray-von Neumann equivalent in a matrix amplification of $M$.
This equivalence yields to a unitary isomorphisms of the right $M$-modules between $p(\bbC^r\otimes L^2(M))$ and $q(\bbC^s\otimes L^2(M))$.
Conjugating the left action by this right module isomorphism, we can replace the realization $(p,\alpha)$ by $(q,\alpha')$.
Thus,
$$X\cong {}_{\alpha'}[q(\bbC^s\otimes L^2(M))].$$
Since $M_n=P_n'\cap M$, we have 
$$q\in M_s(\bbC)\otimes M_n=M_s(\bbC)\otimes (P_n'\cap M).$$

Since $X$ is centrally trivial, for every central sequence $(a_k)\in M_\omega$,
$$\|a_k\rhd \xi-\xi\lhd a_k\|_2\to 0,\qquad\quad\forall \xi\in X$$
In the ${}_{\alpha'}[q(\bbC^s\otimes L^2(M))]$ model, this is equivalent to 
\begin{align*}
\|\alpha'(a_k)-q(1_s\otimes a_k)\|_{2,q}\to 0,\tag{1}  \label{eq:CTalphamodel}  
\end{align*}
where $\|\cdot\|_{2,q}$ is the $L^2$-norm on $qAq$ and $A=M_s(\bbC)\otimes M$.

By Lemma \ref{lem:CsinM}, a bounded sequence is central if and only if it is asymptotically contained in every $P_m$. 
In particular, any sequence $(v_k)$ is central if $v_k\in U(P_k)$ for each $k$.
Therefore, for a fixed small $\delta>0$, there exists $n$ such that 
$$\|\alpha'(v)-q(1_s\otimes v)\|_{2,q}\le \delta,\qquad\quad\forall v\in U(P_n).$$
Otherwise, for every $k$, one could choose $v_k\in U(P_k)$ with 
$$\|\alpha'(v_k)-q(1_s\otimes v_k)\|_{2,q}> \delta,$$
but $(v_k)$ is central, which contradicts to \eqref{eq:CTalphamodel}.

Now we are able to apply Lemma \ref{lem:homPopa2.4}.
Let $Q=P_n$, $P=M$ and $q\in M_s(\bbC)\otimes (Q'\cap P)$.
Because $q$ commutes with $1_s\otimes P_n$, we have the standard homomorphism 
$$\rho_n:P_n\to q(M_s(\bbC)\otimes M)q, \qquad \rho_n(x)=q(1_s\otimes x)$$
Since $X$ is centrally trivial, for a fixed small $\delta>0$, there exists $n$ such that 
$$\|\alpha'(v)-\rho_n(v)\|_{2,q}\le \delta, \qquad\quad\forall v\in U(P_n).$$
According to Lemma \ref{lem:homPopa2.4}, there exists a nonzero partial isometry $w\in qAq=q(M_s(\bbC)\otimes M)q$ such that 
$$\alpha'(x)w=wq(1_s\otimes x),\qquad\quad\forall x\in Q=P_n$$
Then we have
$$\alpha'(x)w=w\rho_n(x),\qquad\quad\forall x\in P_n$$

We define the algebraic subspace of $P_n$-central vectors in $X$:
$$X_0=\{\xi\in X\mid x\rhd \xi=\xi\lhd x,\ \forall x\in P_n\}.$$
There are several properties for $X_0$:
\begin{itemize}[leftmargin=*]
\item $X_0$ is non-zero: Consider above nonzero partial isometry $w$, for any $x\in P_n$,
$$x\rhd w = \alpha'(x)w=w\rho_n(x) = wq(1_s\otimes x) = w\lhd x,$$
so $w\in X_0$.
\item $X_0\in \Bim(M_n)$: Since $M_n$ strictly commutes with $P_n$ in $M$, the left and right $M_n$-actions preserve the centralizer condition.
\item $X_0$ has a $M_n$-valued inner product: 
For any $\xi,\eta\in X_0$ and $x\in P_n$, the usual $M$-valued inner from $X$ satisfies
$$x\langle \xi\mid\eta\rangle = \langle \xi\lhd x^*\mid\eta\rangle = \langle x^*\rhd\xi\mid\eta\rangle=\langle \xi\mid x\rhd\eta\rangle = \langle \xi\mid\eta\lhd x\rangle =\langle \xi\mid\eta\rangle x.$$
Thus $\langle \xi\mid\eta\rangle\in P_n'\cap M=M_n$.
\end{itemize}
We define a map from the algebraic tensor product $X_0 \odot P_n$ into $X$ by:
$$\Phi(\xi \otimes y) = \xi \lhd y.$$
This map is an isometry because
for $\xi_1,\xi_2\in X$ and $y_1,y_2\in P_n$,
$$\langle \Phi(\xi_1 \otimes y_1), \Phi(\xi_2 \otimes y_2) \rangle_M = \langle \xi_1 \lhd y_1, \xi_2 \lhd y_2 \rangle_M = y_1^* \langle \xi_1, \xi_2 \rangle_M y_2 = \langle \xi_1, \xi_2 \rangle_{M_n} \otimes y_1^* y_2$$
since $\langle \xi_1, \xi_2 \rangle_{M_n}\in M_n$ commutes with $y_1^*\in P_n$.
This $\Phi$ extends uniquely to an isometry $X_0\ootimes L^2(P_n)\to X$.

Now we show that $\Phi$ is an $M$-$M$ bimodule intertwiner.
Let $a\in M_n$ and $b\in P_n$.
\begin{itemize}[leftmargin=*]
\item Right action: 
In the spatial tensor product, the right action of $ab$ is given by $(\xi \otimes y) \lhd (ab) = (\xi \lhd a) \otimes (yb)$.
Applying $\Phi$ to this yields:$$\Phi((\xi \lhd a) \otimes (yb)) = (\xi \lhd a) \lhd (yb) = \xi \lhd (ayb),$$ 
while applying the right action of $ab$ directly to the image in $X$:
$$\Phi(\xi \otimes y) \lhd (ab) = (\xi \lhd y) \lhd (ab) = \xi \lhd (yab).$$
Since $a\in M_n$ and $y\in P_n$ commute, $ayb=yab$. The right actions match.
\item Left action: 
In the spatial tensor product, the left action of $ab$ is given by $(ab) \rhd (\xi \otimes y) = (a \rhd \xi) \otimes (by)$.
Applying $\Phi$ yields:
$$\Phi((a \rhd \xi) \otimes (by)) = (a \rhd \xi) \lhd (by)$$
Now, apply the left action of $ab$ directly to the image in $X$:$$(ab) \rhd \Phi(\xi \otimes y) = a \rhd (b \rhd (\xi \lhd y))$$
Because $X$ is a true bimodule, left and right actions commute, so $b \rhd (\xi \lhd y) = (b \rhd \xi) \lhd y$.
Because $\xi \in X_0$, the left action of $b \in P_n$ equals its right action: $b \rhd \xi = \xi \lhd b$.
Substituting this back in gives:
$$a \rhd ( (\xi \lhd b) \lhd y ) = a \rhd (\xi \lhd (by)) = (a \rhd \xi) \lhd (by).$$
Thus, the left actions also match.
\end{itemize}
Since $\Phi$ is an isometric $M$-$M$ bimodule intertwiner, the image of $X_0\ootimes L^2(P_n)$ is an $M$-$M$ sub-bimodule of $X$.
Because $X$ is assumed to be indecomposable, and $X_0$ is non-empty, we have 
$$X\cong X_0\ootimes L^2(P_n).$$
Since $X$ is bifinite, $X_0$ is also a bifinite bimodule over $M_n$.
\end{proof}

\begin{lem}\label{lem:YQ AI->Y=P}
Suppose $P, Q$ are $\rm II_1$ factors and $P$ is non-Gamma. 
Suppose $Y\in\Bim(P)$ is an indecomposable bifinite bimodule. If $Y\ootimes L^2(Q)$ and its dual are approximately inner in $\Bim(P\ootimes Q)$, then $Y\cong L^2(P)$.
\end{lem}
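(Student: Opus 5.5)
Write $X:=Y\ootimes L^2(Q)$ and take an approximately inner $X_{P\ootimes Q}$-basis $\{x_i^{(n)}\}_{i=1}^K$. The plan is to use spectral gap of the non-Gamma factor $P$ to pull this basis, asymptotically, into the part of $X$ that commutes with $1\otimes Q$. That part is $Y\otimes 1$ tensored with $Q$-valued coefficients, and from there we extract a nonzero $P$-central vector of $Y$.

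\textbf{Step 1: spectral gap.} Because $P$ is non-Gamma, $P\otimes 1$ has spectral gap in $P\ootimes Q$ in the sense recalled before Lemma~\ref{lem:CsinM}; its relative commutant is $1\otimes Q$. Applying the same estimate to the bimodule $X$, I expect the following: any bounded sequence $\xi_n\in X$ with $\|a\xi_n-\xi_n a\|_2\to0$ for all $a\in P\otimes 1$ is asymptotically close to the space of $(P\otimes1)$-central vectors of $X$. These central vectors are $\mathcal{Z}_P(Y)\ootimes L^2(Q)$, where $\mathcal{Z}_P(Y)=\{\eta\in Y: a\eta=\eta a\ \forall a\in P\}$.

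To justify the bimodule version, realize $X$ as ${}_\alpha p(\bbC^r\otimes L^2(P\ootimes Q))$, as in the proof of Proposition~\ref{prop:ctBimX=YPn}. Then write $X=Y\ootimes L^2(Q)$ and decompose over an orthonormal basis of $L^2(Q)$, which reduces the question to $Y$ itself. For $Y$, spectral gap of $P$ acting on $Y\otimes\overline{Y}$ or on $L^2$ should follow from non-Gamma together with bifiniteness. This transfer of spectral gap from $L^2(P)$ to the coarse-like bimodule $Y$ is the main obstacle.

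Assuming that, I first try to argue directly. If $Y\not\cong L^2(P)$, then since $Y$ is indecomposable it has no nonzero $P$-central vectors, because any such vector generates a copy of $L^2(P)$ inside $Y$. In that case $\mathcal{Z}_P(Y)=0$.

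\textbf{Step 2: contradiction with the basis property.} Suppose $\mathcal{Z}_P(Y)=0$. By Step~1, each approximately central $x_i^{(n)}$ then satisfies $\|x_i^{(n)}\|_2\to0$. This contradicts $\sum_i x_i^{(n)}\langle x_i^{(n)}|x\rangle\to x$ for $x\neq0$, since $K$ is finite and the basis is uniformly bounded. Hence $\mathcal{Z}_P(Y)\neq0$, so $L^2(P)$ embeds into $Y$, and indecomposability gives $Y\cong L^2(P)$.

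\textbf{Fallback if the bimodule spectral gap fails.} I would pass to $\overline{Y}\boxtimes_P Y$; here the hypothesis that the dual is also approximately inner is used. Since $\aiBim$ is a tensor category, $(\overline{Y}\boxtimes_P Y)\ootimes L^2(Q)$ is approximately inner. Choose an approximately inner basis $\{z_i^{(n)}\}$ for it, and consider the vectors $\langle z_i^{(n)}|z_j^{(n)}\rangle$ projected via $E_{P}$. Together with non-Gamma, which says every bounded sequence in $P$ that is asymptotically central in $P$ is asymptotically scalar, these should produce a nonzero $P$-central vector. The argument is Popa's: averaging over $U(P)$ gives a convex-hull minimal element, exactly as in Lemma~\ref{lem:homPopa2.4}, with $Q$ replaced by $P$. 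A nonzero $P$-central vector in $\overline{Y}\boxtimes_PY$ gives $\dim\Hom(Y,Y)$ contributions. The aim is instead to obtain a central vector in $Y$ itself, again via Lemma~\ref{lem:homPopa2.4} applied to $\alpha:P\to pAp$ versus the standard $\rho$, using the $\delta<1$ closeness that the approximately inner basis provides.
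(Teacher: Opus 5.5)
\paragraph{The gap is Step 1.} Your reduction from $Y\ootimes L^2(Q)$ to $Y$ is fine. Step 2 is also fine: once you have an inequality $\|\xi\|_2^2\le C\sum_{a\in F}\|a\xi-\xi a\|_2^2$ on $Y$, tensoring with $\id_{L^2(Q)}$ and applying it to the basis vectors works exactly as in Proposition~\ref{prop:aiXY=aiXaiY}.

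Step 1 is what you flag as ``the main obstacle'' and then assume. It says that a bifinite $P$-bimodule $Y$ with $\mathcal{Z}_P(Y)=0$ has no almost $P$-central unit vectors, i.e.\ that $L^2(P)\prec Y$ forces $L^2(P)\subseteq Y$. This is not a routine transfer; it is essentially the whole content of the lemma.
\begin{itemize}
\item Non-Gamma directly gives spectral gap only for the adjoint bimodule $L^2(P)\ominus\bbC$.
\item Spectral gap of $P\otimes 1$ in $P\ootimes Q$ is a statement about $L^2(P\ootimes Q)$, not about $Y\ootimes L^2(Q)$.
\item The model ${}_\alpha p(\bbC^r\otimes L^2)$ does not help, because the left action is twisted by $\alpha$.
\item Already for invertible $Y={}_\theta L^2(P)$, your claim amounts to Connes' theorem that $\Ai(P)=\Inn(P)$ for non-Gamma $P$.
\end{itemize}

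\paragraph{The fallback does not supply Step 1.} First, $\overline{Y}\boxtimes_P Y$ always contains $L^2(P)$, so a central vector there says nothing about $Y$. Second, Lemma~\ref{lem:homPopa2.4} needs one realization in which $\|\alpha(v)-\rho(v)\|_{2,p}\le\delta$ holds uniformly over all $v\in U(P)$. An approximately inner basis only gives vectors that become central on each fixed $a$ as $n\to\omega$. In Proposition~\ref{prop:ctBimX=YPn} the uniformity came from the tail subalgebras of the infinite tensor product: unitaries taken from deeper and deeper tails always form a central sequence. Nothing like that exists for $P$ itself. Upgrading pointwise closeness to uniform closeness is exactly a spectral gap statement for $Y$, so the fallback is circular.

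\paragraph{How the paper proceeds.} The paper never proves spectral gap for $Y$ in isolation.
\begin{enumerate}
\item It puts $X':=Y\oplus L^2(P)$ (the paper's $X$) and $M=|X'\boxtimes_P\overline{X'}|$. This is a finite index extension of $P$ with $Y\subseteq L^2(M)$ as $P$-$P$ bimodules.
\item It uses both hypotheses to make $P\ootimes Q\subseteq M\ootimes Q$ approximately inner; this is where the dual enters.
\item By \cite{MR1317367}, this gives a finite Pimsner--Popa basis of $(P\ootimes Q)^\omega\subseteq(M\ootimes Q)^\omega$ lying in the relative commutant.
\item It uses spectral gap of $P\ootimes 1$ in the finite index extension, $(P\ootimes 1)'\cap(M\ootimes Q)^\omega=((P'\cap M)\ootimes Q)^\omega$ \cite[Prop.~3.2.1]{MR2661553}.
\item Slicing with $\id\otimes\tr_Q$ gives $M=P\vee(P'\cap M)$, so $L^2(M)$ is a multiple of $L^2(P)$ and $Y\cong L^2(P)$.
\end{enumerate}

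\paragraph{Closing your gap.} The same spectral gap identity is precisely what your Step 1 is missing. Realize $Y\ootimes L^2(Q)\subseteq L^2(M\ootimes Q)$. Finite index makes your basis vectors uniformly bounded elements of $M\ootimes Q$. The identity then puts them $\|\cdot\|_2$-close to $(P'\cap M)\ootimes Q$. But $L^2((P'\cap M)\ootimes Q)\perp Y\ootimes L^2(Q)$ when $\mathcal{Z}_P(Y)=0$, so their norms go to $0$.

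You can also prove Step 1 by hand:
\begin{itemize}
\item Powers--St\o rmer together with non-Gamma lets you replace almost central unit vectors by partial isometries.
\item Spectral gap of $P$ then makes the inner products $\langle\xi_n|\xi_m\rangle_P$ asymptotically scalar.
\item A zero weak limit would produce more than $\dim(Y_P)$ almost orthonormal almost isometries in $Y_P$, which is impossible.
\item A nonzero weak limit is a genuine central vector.
\end{itemize}
Completed either way, your route never uses the hypothesis on the dual.
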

\begin{proof} 
Suppose $X:=Y\oplus L^2(P)$.
Let $M:=|X\boxtimes_P \overline{X}|$, $P\subseteq M$ is a finite index $\rm II_1$ subfactor. Since $P$ is non-Gamma, $M$ is also non-Gamma.
Since $X\ootimes L^2(Q)\cong (Y\ootimes L^2(Q)) \oplus L^2(P\ootimes Q)$ and its dual are approximately inner in $\Bim(P\ootimes Q)$, so is the tensor product $(X\ootimes L^2(Q))\boxtimes_{P\ootimes Q}(\overline{X\ootimes L^2(Q)})\cong (X\boxtimes_P\overline{X})\ootimes L^2(Q)$, then by \cite[Prop.~4.8]{MR4753059}, $P\ootimes Q\subseteq |(X\boxtimes_P\overline{X})\ootimes L^2(Q)|\cong M\ootimes Q$ is an approximately inner subfactor. 

By \cite[Def.~1.4 \& Prop.~1.2]{MR1317367}, the inclusion $(P\ootimes Q)'\cap (P\ootimes Q)^\omega \subseteq (P\ootimes Q)'\cap (M\ootimes Q)^\omega$ is $[M:P]^{-1}$-Markov, and has finite Pimsner-Popa basis of $(P\ootimes Q)^\omega\subseteq (M\ootimes Q)^\omega$.

Since $P$ is non-Gamma, and $P\subseteq M$ has finite index, by \cite[Prop.~3.2.1]{MR2661553}, $P\ootimes 1\subseteq M\ootimes Q$ has spectral gap, then
$$(P\ootimes 1)'\cap (M\ootimes Q)^\omega = ((P\ootimes 1)'\cap (M\ootimes Q))^\omega = ((P'\cap M)\ootimes Q)^\omega.$$

Furthermore, since $P\ootimes Q=(P\ootimes 1)\vee (1\ootimes Q)$, we have 
\begin{align*}
(P\ootimes Q)'\cap (M\ootimes Q)^\omega & = ((P\ootimes 1)'\cap (M\ootimes Q)^\omega)\cap ((1\ootimes Q)'\cap (M\ootimes Q)^\omega) \\
& = ((P'\cap M)\ootimes Q)^\omega \cap ((1\ootimes Q)'\cap (M\ootimes Q)^\omega) \\
& = (1\ootimes Q)'\cap ((P'\cap M)\ootimes Q)^\omega,
\end{align*}
since $((P'\cap M)\ootimes Q)^\omega\subseteq (M\ootimes Q)^\omega$.
Thus, there is a finite Pimsner-Popa basis $\{m_i\}$ for $(P\ootimes Q)^\omega\subseteq (M\ootimes Q)^\omega$ with $m_i=\{b_i^{(n)}\}$, where each $b_i^{(n)}\in (P'\cap M)\ootimes Q$.

Then for each $x\in M$, $x\otimes 1\in M\ootimes Q\subseteq (M\ootimes Q)^\omega$, since $m_i=\{b_i^{(n)}\}$ is a Pimsner-Popa basis, we have
$$x\otimes 1=\sum_i m_i E_{(P\ootimes Q)^\omega}(m_i^*(x\otimes 1)),$$
In other words,
$$\left\|x\otimes 1-\sum_i b_i^{(n)} E_{P\ootimes Q}\left((b_i^{(n)})^*(x\ootimes 1)\right)\right\|_2\to 0$$
Note that $E_{P\ootimes Q}=E_P\otimes \id_Q$, because the inclusion $P\ootimes Q\subseteq M\ootimes Q$ is given by $(P\subseteq M)\ootimes Q$.
Apply the contractive map $\id_P\otimes \tr_Q:L^2(M\ootimes Q)\to L^2(M)$, we have
$$\left\|x-\sum_i(\id_P\otimes \tr_Q)\left(b_i^{(n)} (E_P\otimes \id_Q)\left((b_i^{(n)})^*(x\ootimes 1)\right)\right) \right\|_2\to 0.$$
Since $P\subseteq M$ is of finite index, $P'\cap M$ is finite-dimensional. Choose a basis $v_1,\dots, v_r$ of $P'\cap M$, we can write 
$$b_i^{(n)}=\sum_{j=1}^r v_j\otimes q_j,$$
for some $q_j\in Q$.
Hence $(b_i^{(n)})^*(x\ootimes 1) = \sum_{j=1}^r v_r^*x\otimes q_j^*$ and 
$$b_i^{(n)} (E_P\otimes \id_Q)\left((b_i^{(n)})^*(x\ootimes 1)\right) = \sum_{k=1}^r v_k\otimes q_k\sum_{j=1}^r E_P(v_j^* x)\otimes q_j^* = \sum_{j,k=1}^r v_k E_P(v_j^* x) \otimes q_kq_j^*  $$
Applying $\id_P\otimes \tr_Q$, we obtain
$$(\id_P\otimes \tr_Q)\left(b_i^{(n)} (E_P\otimes \id_Q)\left((b_i^{(n)})^*(x\ootimes 1)\right)\right) = \sum_{j,k=1}^r \tr_Q(q_kq_j^*) v_kE_P(v_j^* x)$$
Each $v_k\in P'\cap M$ and each $E_P(v_j^* x)\in P$, so this belongs to $P\vee (P'\cap M)$.
Therefore, $x$ is in the $L^2$-closure of $P\vee (P'\cap M)$, which is $P\vee (P'\cap M)$.
This proves that $M=P\vee (P'\cap M)$.

According to \cite[Prop.~4.14]{MR4753059}, this implies that $L^2(M)\cong \bigoplus_i L^2(P)$ as $P$-$P$ bimodules.
On the other hand, $L^2(M)\cong X\boxtimes_P \overline{X}\cong L^2(P)\oplus Y\oplus \overline{Y}\oplus (Y\boxtimes_P \overline{Y})$.
Since $Y$ is indecomposable and appeared in the direct summand of $L^2(M)$, $Y\cong L^2(P)$.
\end{proof}

\begin{prop}\label{prop:ecomponentTrivial}
$\Chi(M)\cong \Hilb$.
\end{prop}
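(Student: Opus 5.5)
We show that every simple object of $\Chi(M)$ is isomorphic to $L^2(M)$. Since $\Chi(M)$ is a semisimple full subcategory of $\Bim(M)$ containing the unit, this gives $\Chi(M)\cong\Hilb$.

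Let $X\in\Chi(M)$ be indecomposable. Because $X$ is centrally trivial, Proposition \ref{prop:ctBimX=YPn} gives a finite $n$ and $Y\in\Bim(M_n)$ with $X\cong Y\ootimes L^2(P_n)$.

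Next we check that $Y$ is indecomposable. Any nontrivial decomposition $Y\cong Y_1\oplus Y_2$ would give $X\cong (Y_1\ootimes L^2(P_n))\oplus(Y_2\ootimes L^2(P_n))$, contradicting indecomposability of $X$. More precisely, $\End_{M\text{-}M}(Y\ootimes L^2(P_n))\supseteq \End_{M_n\text{-}M_n}(Y)\otimes 1$, so a nontrivial projection in $\End(Y)$ yields one in $\End(X)$. Hence $Y$ is indecomposable.

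Now apply Lemma \ref{lem:YQ AI->Y=P} with $P=M_n$ and $Q=P_n$. Here $M_n$ is a finite tensor product of non-Gamma factors, hence non-Gamma, and $M=M_n\ootimes P_n$. The lemma needs both $Y\ootimes L^2(P_n)$ and its dual to be approximately inner in $\Bim(M)$. The first is $X$ itself. The dual is $\overline{X}\cong \overline{Y}\ootimes L^2(P_n)$, and $\overline{X}\in\Chi(M)$ because $\Chi(M)$ is a rigid tensor subcategory, i.e.\ $\aiBim(M)$ is closed under duals as in \cite{MR4753059}.

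If that closure under duals is not available directly, we use a fallback. Given an approximately inner $X_M$-basis $\{x_i^{(n)}\}$, we build one for $\overline{X}$ using the conjugate vectors $\overline{x_i^{(n)}}$, suitably renormalized via the left inner product. Equivalently, we invoke the result of \cite{MR4753059} that approximate innerness of a bifinite bimodule is equivalent to approximate innerness of the associated subfactor. We expect this to be the only point needing care.

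The lemma then gives $Y\cong L^2(M_n)$, hence $X\cong L^2(M_n)\ootimes L^2(P_n)=L^2(M)$, the tensor unit. So $\Chi(M)\cong\Hilb$, and since the braiding restricted to the unit is trivial, the equivalence is braided.
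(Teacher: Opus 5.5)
Your proposal is correct and follows the paper's proof essentially step for step. You write $X\cong Y\ootimes L^2(P_n)$ via Proposition~\ref{prop:ctBimX=YPn}, note that $Y$ is indecomposable, get approximate innerness of $X$ and its dual because $\Chi(M)$ is the dualizable part of $\aiBim(M)\cap\ctBim(M)$, and then apply Lemma~\ref{lem:YQ AI->Y=P} with $P=M_n$, $Q=P_n$. The paper simply asserts the closure under duals, so your fallback argument is not needed.
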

\begin{proof}
Suppose $X\in \Chi(M)$ is indecomposable. 
Since $X$ is centrally trivial, by Proposition \ref{prop:ctBimX=YPn}, $X\cong Y\ootimes L^2(P_n)$, where $Y\cong \Bim(M_n)$. Since $X$ is indecomposable, $Y$ is also indecomposable.
By the definition of $\Chi(M)$, it is the dualizable part of $\aiBim(M)\cap \ctBim(M)$, so $X$ and its dual are both approximately inner. 
Note that $M_n$ is non-Gamma, by Lemma \ref{lem:YQ AI->Y=P}, $Y\cong L^2(M_n)$. 
Therefore, $X\cong L^2(M_n)\ootimes L^2(P_n)\cong L^2(M)$, and hence $\Chi(M)\cong \Hilb$. 
\end{proof}

\begin{rem}
Recall a $\rm II_1$ factor $M$ is s-McDuff if $M\cong N\ootimes R$ for some non-Gamma $\rm II_1$ factor $N$.
Connes asked when $M$ is McDuff, whether it is true that $M$ is s-McDuff if and only if $\chi(M)$ is trivial.
Popa gives a negative answer to Connes' question by constructing the example of countable infinite spatial tensor product of non-Gamma $\rm II_1$ factors $M=\ootimes_i N_i$ which is McDuff but not s-McDuff \cite{MR2661553}. 

In \cite[Ex.~4.16]{MR4753059}, authors prove that the categorical $\Chi$ of s-McDuff $\rm II_1$ factor is always trivial, so they asked whether Connes' conjecture holds for the categorical $\Chi(M)$. 
Here we show that $M$ from Popa's construction also has trivial  $\Chi(M)$.
Therefore, this also gives an negative answer their question.
\end{rem}

\subsection{$\Chi(M\rtimes G)$ for entry-wise action}

\begin{fact}\label{fact:Aiautospect}
Let $M$ be a separable finite factor and $\alpha\in \Aut(M)$.
In the injective case, Connes showed that approximate innerness can be characterized by the presence of almost central vectors in the associated bimodule \cite[Thm.~3.1]{MR454659}.
Mingo later removes the injectivity assumption: he proves that, for arbitrary separable finite von Neumann algebras, an automorphism is approximately inner if and only if it is approximately inner as a completely positive map, and that this is equivalent to weak containment of the identity correspondence in the correspondence associated with $\alpha$ \cite{MR1040957}.
Therefore,
$$ \alpha\not\in\Ai(M) \qquad\Longleftrightarrow\qquad L^2(M)\npreceq {}_{\alpha}L^2(M).$$
Since weak containment of $L^2(M)$ in a correspondence is equivalent to the existence of almost central unit vectors, its failure is equivalent, by a standard contrapositive argument, to the existence of a finite set $F\subseteq M$ and a constant $C>0$ such that 
$$\|x\|_2^2\le C\sum_{a\in F}\|\beta(a)x-xa\|_2^2.\qquad\forall x\in L^2(M).$$
\end{fact}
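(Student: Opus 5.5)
The statement combines three equivalences, and I would prove them in this order: first approximate innerness $\Leftrightarrow$ weak containment $L^2(M)\preceq {}_{\alpha}L^2(M)$ (the almost central vector formulation), then the quantitative spectral gap reformulation by a compactness/contrapositive argument. Throughout I read the $\beta$ in the displayed inequality as $\alpha$, and I interpret ${}_{\alpha}L^2(M)$ as $L^2(M)$ with left action twisted by $\alpha$. Since $\alpha(a)x - xa$ is measured there, a vector $x$ is ``almost central'' if $\|\alpha(a)x-xa\|_2$ is small for all $a$ in a finite set.

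\emph{Easy direction.} If $\alpha\in\Ai(M)$ with $\alpha=\lim\Ad(u_n)$, then the vectors $\hat u_n\in L^2(M)$ satisfy
\[
\|\alpha(a)u_n-u_na\|_2=\|(\alpha(a)-u_nau_n^*)u_n\|_2\to 0 .
\]
So $(\hat u_n)$ are almost central unit vectors, and $L^2(M)\preceq{}_\alpha L^2(M)$.

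\emph{Hard direction (the main obstacle).} Suppose there are unit vectors $\xi_n\in L^2(M)$ with $\|\alpha(a)\xi_n-\xi_na\|_2\to0$ for all $a\in M$. These vectors are only square-integrable operators affiliated with $M$, not unitaries. I would follow Connes' argument, as extended by Mingo \cite{MR1040957}, in four steps.
\begin{enumerate}
\item Use the Powers--St\o rmer inequality to pass to the positive $L^1$ elements $\xi_n\xi_n^*$ and $\xi_n^*\xi_n$. These satisfy $\|\alpha(u)\xi_n\xi_n^*\alpha(u)^*-\xi_n\xi_n^*\|_1\to0$ and the analogous relation without the twist, and $\|\xi_n\xi_n^*\|_1=\|\xi_n^*\xi_n\|_1=1$.
\item Apply Connes' ``slicing'' trick: write the positive elements as integrals of spectral projections $\int_0^\infty e_{(t,\infty)}\,dt$. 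Integrating the $L^1$ estimates over $t$, find for each $n$ levels $t$ at which the spectral projections $e=e_{(t,\infty)}(\xi_n\xi_n^*)$ and $f=e_{(t,\infty)}(\xi_n^*\xi_n)$ are approximately twisted-invariant. The polar part $v$ of $\xi_n$ is a partial isometry between them, and $\alpha(a)v\approx va$ holds relative to $\|e\|_2$.
\item Use a maximality (exhaustion) argument to add up finitely many such partial isometries with orthogonal supports. Since $M$ is a finite factor, this produces a unitary $u$ with $\|\alpha(a)u-ua\|_2<\varepsilon$ on a prescribed finite set.
\item A diagonal sequence then gives $\alpha=\lim\Ad(u_n)$.
\end{enumerate}
I expect the estimates in step 2 (choosing the level $t$ uniformly over finitely many $a$) to be the delicate part. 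This is exactly the content of \cite[Thm.~3.1]{MR454659} in the injective case and of Mingo's theorem in general, so I would cite them rather than reprove them in full.

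\emph{Quantitative form.} Assume $\alpha\notin\Ai(M)$ but suppose no pair $(F,C)$ works. Fix a $\|\cdot\|_2$-dense sequence $(a_k)$ in the unit ball of $M$, and put $F_n=\{a_1,\dots,a_n\}$, $C=n$. Failure of the inequality gives a unit vector $x_n$ with
\[
\sum_{a\in F_n}\|\alpha(a)x_n-x_na\|_2^2<1/n .
\]
A $3\varepsilon$-argument, using $\|\alpha(a)x-xa\|_2\le 2\|a\|\,\|x\|_2$ and density, shows $\|\alpha(a)x_n-x_na\|_2\to0$ for every $a\in M$. Thus $(x_n)$ is a sequence of almost central unit vectors, contradicting the equivalence above. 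Conversely, if such $F$ and $C$ exist, no almost central unit vectors can exist, so $L^2(M)\npreceq{}_\alpha L^2(M)$ and hence $\alpha\notin\Ai(M)$.
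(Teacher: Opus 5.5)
Your overall route matches the paper's. You take $\alpha\in\Ai(M)\Leftrightarrow L^2(M)\preceq{}_{\alpha}L^2(M)$ from Connes \cite[Thm.~3.1]{MR454659} and Mingo \cite{MR1040957}: your easy direction is correct, and for the slicing argument you also end up citing them. You then get the spectral-gap inequality by a contrapositive argument, reading $\beta$ as $\alpha$, which is right. The problem is in the one step you spell out in detail: passing from your dense sequence $(a_k)$ back to all of $M$.

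The estimate $\|\alpha(a)x-xa\|_2\le 2\|a\|\,\|x\|_2$ bounds the error by the \emph{operator} norm of $a-a_k$. Your sequence is only $\|\cdot\|_2$-dense, and no sequence is operator-norm dense in $M$. Unit vectors $x_n\in L^2(M)$ need not be uniformly bounded operators, so $\|\alpha(b)x_n\|_2$ and $\|x_nb\|_2$ are not controlled by $\|b\|_2$ uniformly in $n$. The $3\varepsilon$-argument therefore fails, and the implication itself is false.

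Here is a counterexample. Take $R=\bigotimes_{i\ge1}M_2(\bbC)$ with $\alpha=\id$. Let $p_n$ be $e_{11}$ in each of the positions $n+1,\dots,2n$, and set $x_n=2^{n/2}p_n$. Each $x_n$ is a unit vector, and for $n$ large it commutes with any given element of the $\|\cdot\|_2$-dense subalgebra $\bigcup_k M_{2^k}(\bbC)$. Now let $w_n$ be the self-adjoint unitary that flips position $n+1$ on the range of $p_n$ and is the identity elsewhere. Then $\|w_n-1\|_2=2^{1-n/2}$ and $\|[w_n,p_n]\|_2=2^{(1-n)/2}$. Because the blocks of positions $2^j+1,\dots,2^{j+1}$ are disjoint, the product $W=\prod_j w_{2^j}$ converges to a unitary in $R$. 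It satisfies $\|Wx_n-x_nW\|_2=\sqrt2$ for every $n=2^j$. So vectors chosen as in your argument need not be almost central.

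The repair is to not pass to a sequence at all, which is all the paper's ``standard contrapositive argument'' requires. Suppose no pair $(F,C)$ works. Then for every finite $F\subseteq M$ and every $\varepsilon>0$ there is a unit vector $x_{F,\varepsilon}$ with $\sum_{a\in F}\|\alpha(a)x_{F,\varepsilon}-x_{F,\varepsilon}a\|_2^2<\varepsilon$. This net is exactly the almost-central-vector characterization of $L^2(M)\preceq{}_{\alpha}L^2(M)$, and no density is needed.

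If you want sequences, for instance to feed into Connes' slicing argument, use density only after the unitaries $w_n$ have been produced. Since $\|\alpha(b)w_n\|_2=\|w_nb\|_2=\|b\|_2$, almost intertwining on a countable $\|\cdot\|_2$-dense set of unitaries does extend to all of $M$.
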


\begin{prop}\label{prop:aiXY=aiXaiY}
Suppose $P\overline{\otimes} Q$ is tensor product of two $\rm II_1$ factors. 
Suppose $X\in \Bim(P)$ and $\alpha\in \Aut(Q)$. Then $X\ootimes {}_{\alpha}L^2(Q)\in \aiBim(P\overline{\otimes}Q)$ implies $\alpha\in \Ai(Q)$.
\end{prop}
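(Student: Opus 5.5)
We prove the contrapositive: assume $\alpha\notin\Ai(Q)$, and show that $X\ootimes {}_{\alpha}L^2(Q)$ has no approximately inner basis over $P\ootimes Q$. By Fact \ref{fact:Aiautospect}, there are a finite set $F\subseteq Q$ and $C>0$ such that
$$\|\eta\|_2^2\le C\sum_{a\in F}\|\alpha(a)\eta-\eta a\|_2^2\qquad \forall \eta\in L^2(Q).$$
The first step is to upgrade this spectral gap inequality to vectors of $X\ootimes {}_\alpha L^2(Q)$, viewed as an $\bbC\ootimes Q$-bimodule. Under the identification $X\ootimes {}_\alpha L^2(Q)\cong X\otimes L^2(Q)$ (Hilbert tensor product), the action of $1\otimes a$ is $\id_X\otimes$ (the action on ${}_\alpha L^2(Q)$). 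Expanding a vector $\zeta=\sum_k e_k\otimes\eta_k$ in an orthonormal basis $\{e_k\}$ of the Hilbert space $X$ and applying the inequality to each $\eta_k$ gives
$$\|\zeta\|^2\le C\sum_{a\in F}\|(1\otimes a)\rhd\zeta-\zeta\lhd(1\otimes a)\|^2 .$$
Here $\|\cdot\|$ is the Hilbert space norm of $L^2(X\ootimes {}_\alpha L^2(Q))$.

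Next, suppose for contradiction that $\{x_i^{(n)}\}_{i=1}^K$ is an approximately inner basis. Approximate innerness applied to $1\otimes a$, $a\in F$, gives $\|(1\otimes a)x_i^{(n)}-x_i^{(n)}(1\otimes a)\|_2\to0$. We need this in the Hilbert norm. On a finite factor, $\|x\|_2^2=\|\langle x|x\rangle\|_2\ge \tr(\langle x|x\rangle)=\|x\|^2_{L^2}$, so the $\|\cdot\|_2$ of the paper dominates the Hilbert norm. Hence, by the spectral gap bound, $\|x_i^{(n)}\|_{L^2}^2=\tr\langle x_i^{(n)}|x_i^{(n)}\rangle\to0$ for each $i$.

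The third step derives the contradiction from the basis property. Since $K$ is finite, $\sum_i\tr\langle x_i^{(n)}|x_i^{(n)}\rangle\to0$. We then test the basis property against a fixed vector $x$ (a bounded vector such as $\xi\otimes \hat 1$, with $\xi\in X$ bounded) and pair with $x$:
$$\tr\langle x|x\rangle=\lim\sum_i\tr\big(\langle x|x_i^{(n)}\rangle\langle x_i^{(n)}|x\rangle\big)\le \lim\|x\|^2_{\infty}\sum_i\|x_i^{(n)}\|_{L^2}^2=0,$$
where $\|x\|_\infty$ is the operator norm of left creation by $x$. This forces $x=0$, which is a contradiction. The estimate uses $\tr(\langle x|y\rangle\langle y|x\rangle)=\|L_x^*y\|^2\le\|L_x\|^2\|y\|^2$.

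The main obstacle is the bookkeeping between norms. The paper's $\|\cdot\|_2$ is $\|\langle\cdot|\cdot\rangle\|_2^{1/2}$, not the Hilbert norm, so each transfer above needs a check. We also need the identification of the bimodule with $X\otimes L^2(Q)$ to make the $Q$-actions act only on the second leg, so that the inequality from Fact \ref{fact:Aiautospect} tensors with $\id_X$. The tensoring itself is clear, because the defect operator is $\id\otimes T$ with $T^*T\ge C^{-1}$. Finally, the convergence in the basis definition is in $\|\cdot\|_2$, which dominates the trace pairing, so taking the limit in the displayed inequality is legitimate.
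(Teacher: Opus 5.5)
Your proof is correct and follows essentially the same route as the paper. Both argue by contrapositive: the spectral-gap inequality of Fact \ref{fact:Aiautospect} is tensored with $\id_X$, which forces the approximately inner basis vectors to vanish and contradicts the basis property. Your write-up is slightly more careful than the paper's in two places: you check explicitly that the module norm $\|\cdot\|_2$ dominates the Hilbert norm, and you close by pairing with a fixed bounded vector rather than asserting strong convergence of $\sum_i|y_i^{(n)}\rangle\langle y_i^{(n)}|$. Like the paper, you tacitly need $X\neq 0$.
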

\begin{proof}
We prove the contrapositive. 
Denote $H={}_{\alpha}L^2(Q)$. 
For each $a\in F$, define the bounded linear map $T_a:H\to H$ by 
$$T_a(\xi)=a\rhd \xi-\xi\lhd a=\alpha(a)\xi-\xi a.$$
Suppose $\alpha\in \Aut(Q)$ is not approximately inner, by Fact \ref{fact:Aiautospect}, then there exist $C>0$ and finite subset $F\subseteq Q$ such that 
$$\|\xi\|_2^2\le C\sum_{a\in F}\|\alpha(a)\xi-\xi a\|_2^2 = C\sum_{a\in F}\|T_a(\xi)\|_2^2$$
for all $\xi\in H$.
In terms of $\bbC$-valued inner products, this means
$$\langle \xi\mid\xi\rangle\le C\sum_{a\in F}\langle T_a\xi\mid T_a\xi\rangle=\left\langle\xi\, \middle|\, C\sum_{a\in F}T_a^*T_a \xi\right\rangle,$$
which is equivalent to say
$$\id_{H}\le C\sum_{a\in F}T_a^*T_a$$
as positive operators on $H$.

Now tensor this inequality with $\id_X$. On the Hilbert space $X\ootimes H$, we have
$$\id_X\otimes \id_H\le \sum_{a\in F}(\id_X\otimes T_a)^*(\id_X\otimes T_a),$$
therefore for every $\eta\in X\ootimes H$,
$$\|\eta\|_2^2\le C\sum_{a\in F}\|(\id_X\otimes T_a)\eta\|_2^2 = C\sum_{a\in F}\|(1\otimes\alpha(a))\eta-\eta(1\otimes a)\|_2^2.$$

Suppose $X\ootimes H\in\Bim(P\overline{\otimes}Q)$ is approximately inner, then there exists an approximately inner $(X\ootimes H)_{P\overline{\otimes}Q}$ basis $\{y_i^{(n)}\}$.
For all $1\otimes a\in P\overline{\otimes}Q$, 
$$\|(1\otimes a)\rhd y_i^{(n)}-y_i^{(n)}\lhd (1\otimes a)\|_2\to 0.$$
Then we have 
$$\|y_i^{(n)}\|_2^2\le C\sum_{a\in F}\|(1\otimes\alpha(a))\eta-\eta(1\otimes a)\|_2^2 \to 0.$$
This implies that 
$$\sum_{i}|y_i^{(n)}\rangle \langle y_i^{(n)}| \to 0\qquad\mathrm{strongly},$$
since $\{y_i^{(n)}\}$ is uniformly bounded in the module norm.
However, as an approximate basis,
$$\sum_{i}|y_i^{(n)}\rangle \langle y_i^{(n)}|\to \id_{X}\otimes \id_{H}\qquad\mathrm{strongly},$$
which is a contradiction.
\end{proof}

\begin{construction}\label{const:M=otimes N,psi=otimes alpha}
Suppose $G$ is a finite group.
Popa and Shlyakhtenko
showed that every subfactor standard invariant can be realized as an $LF_\infty$ subfactor \cite{MR2051399}.
Guionnet-Jones-Shlyakhtenko provide an alternative
realization of finite depth standard invariants as inclusions of interpolated free group factors \cite{MR2732052}.
In other words, there exists a non-Gamma $\rm II_1$ factor $N$ and an outer action $\alpha:G\to \Aut(N)$.

Let $M=\bigootimes_{i=1}^\infty N$ to be the countable infinite spatial tensor product of $N$.
We define $\psi:G\to \Aut(M)$ by $\psi_g:=\bigotimes_{i=1}^\infty \alpha_g$.    
\end{construction}

\begin{prop}\label{prop:psigonMnotCT}
The group action $\psi$ in Construction \ref{const:M=otimes N,psi=otimes alpha} is not centrally trivial.
\end{prop}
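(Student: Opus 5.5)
The plan is to exhibit, for each $g\neq e$, an explicit central sequence of $M$ that $\psi_g$ moves by a fixed amount in $\|\cdot\|_2$. This shows $\psi_g\notin\Ct(M)$ for every $g\neq e$, i.e.\ $K=G\cap\psi^{-1}(\Ct(M))$ is trivial. The sequence will consist of elements of $N$ placed in the $k$-th tensor slot with $k\to\infty$, so that it lies eventually in every tail algebra $P_n$.

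First, fix $g\neq e$. Since $\alpha$ is an outer action, $\alpha_g\neq\id_N$, so there is an element $x\in N$ with $\alpha_g(x)\neq x$. Because $N$ is the norm-closed linear span of its unitaries, we may take $x$ to be a unitary. Set $c:=\|\alpha_g(x)-x\|_2>0$.

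Next, for $k\in\bbN$, let $a_k\in M$ be the elementary tensor with $x$ in the $k$-th slot and $1$ in every other slot. Then $\sup_k\|a_k\|=1$, and $a_k\in P_{k-1}$, so $E_{P_m}(a_k)=a_k$ whenever $k>m$. By Lemma \ref{lem:CsinM}, $(a_k)$ is a central sequence, i.e.\ $(a_k)\in M_\omega$. This can also be seen directly, without Lemma \ref{lem:CsinM}. The sequence $(a_k)$ commutes exactly with $M_n$ for $k>n$, and $\bigcup_n M_n$ is $\|\cdot\|_2$-dense in $M$. A uniformly bounded sequence that asymptotically commutes with a dense subset asymptotically commutes with all of $M$, by the estimate $\|[a_k,y]\|_2\le\|[a_k,y_0]\|_2+2\|y-y_0\|_2$ for $y_0\in\bigcup_n M_n$.

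Then compute the effect of $\psi_g=\bigotimes_i\alpha_g$. Since $\alpha_g(1)=1$ in every other slot, $\psi_g(a_k)-a_k$ is the elementary tensor with $\alpha_g(x)-x$ in slot $k$ and $1$ elsewhere. The trace on $M$ is the product trace, so
$$\|\psi_g(a_k)-a_k\|_2=\|\alpha_g(x)-x\|_2=c>0\qquad\text{for every }k.$$
Hence $\lim_{k\to\omega}\|\psi_g(a_k)-a_k\|_2=c\neq 0$, which means $(\psi_g)_\omega\neq\id_{M_\omega}$ and $\psi_g\notin\Ct(M)$.

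No step here is a real obstacle. The only points needing care are the exact meaning of the statement, which we read as "$\psi_g\notin\Ct(M)$ for all $g\neq e$" (the strongest version, and the one needed to apply Theorem \ref{thm:G/KdeequivofChi(MG)} with $K=\{e\}$), and the verification that $(a_k)$ is central. The latter is handled either by Lemma \ref{lem:CsinM} or by the density argument above.
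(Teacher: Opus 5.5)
Your proposal is correct and is essentially the paper's proof: you place an element $x$ with $\alpha_g(x)\neq x$ in the $k$-th tensor slot, note that the resulting sequence is central (the paper simply cites Lemma~\ref{lem:CsinM}), and observe that $\|\psi_g(a_k)-a_k\|_2=\|\alpha_g(x)-x\|_2>0$ for every $k$. Your direct density argument for centrality is a nice touch, since it uses only the elementary direction of Lemma~\ref{lem:CsinM} and needs no spectral gap.
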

\begin{proof}
Fix $g\ne e$. Since 
$\alpha_g$ is outer, we choose $a\in N$ such that $\|\alpha_g(a)-a\|_2=\epsilon>0$
For each $n$, let 
$$a_n=1\otimes \cdots \otimes 1\otimes a\otimes 1\otimes\cdots\in P_{n-1}\subseteq M$$
with $a$ at the $n$-th tensor.
Then $(a_n)$ is a central sequence by Lemma \ref{lem:CsinM}. 
Note that
$$\psi_g(a_n)=1\otimes \cdots \otimes 1\otimes \alpha_g(a)\otimes 1\otimes\cdots,$$
so 
$$\|\psi_g(a_n)-a_n\|_2=\|\alpha_g(a)-a\|_2 = \epsilon\ne 0.$$
Therefore, $\psi$ is not centrally trivial.
\end{proof}

\begin{prop}\label{prop:psigonMnotAI}
The group action $\psi$ in Construction \ref{const:M=otimes N,psi=otimes alpha} is not approximately inner.
\end{prop}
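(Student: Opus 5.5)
We need to show that for $g\ne e$, $\psi_g=\bigotimes_i\alpha_g$ is not approximately inner on $M=\bigootimes_{i=1}^\infty N$. The plan is to use the spectral gap of the non-Gamma factor $N$ in the first tensor slot. Write $M=N\ootimes P_1$, where $P_1=\bigootimes_{i\ge 2}N$, so that $\psi_g=\alpha_g\otimes\psi'_g$ with $\psi'_g=\bigotimes_{i\ge2}\alpha_g\in\Aut(P_1)$.

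Suppose for contradiction that unitaries $u_n\in M$ satisfy $\|\psi_g(x)-u_nxu_n^*\|_2\to0$ for all $x\in M$. Applying this to $x=y\otimes1$ with $y\in N$ gives $\|(\alpha_g(y)\otimes 1)u_n-u_n(y\otimes1)\|_2\to0$, i.e. the unit vectors $u_n\in L^2(N)\ootimes L^2(P_1)$ are almost central for the $N$-$N$ correspondence ${}_{\alpha_g}L^2(N)\ootimes L^2(P_1)$. As an $N$-$N$ bimodule (with $P_1$ a pure multiplicity space on the right) this is a multiple of ${}_{\alpha_g}L^2(N)$.

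Since $\alpha_g$ is outer on the non-Gamma factor $N$, I would invoke Fact \ref{fact:Aiautospect}. Note that $\alpha_g$ is not approximately inner: on a non-Gamma factor $\Ai(N)=\Inn(N)$, by the standard spectral gap argument as in Proposition \ref{Prop:nonAI} (approximating unitaries would be asymptotically central up to a fixed unitary, hence trivial). Fact \ref{fact:Aiautospect} then yields a finite $F\subseteq N$ and $C>0$ with $\|\xi\|_2^2\le C\sum_{a\in F}\|\alpha_g(a)\xi-\xi a\|_2^2$ on ${}_{\alpha_g}L^2(N)$. Exactly as in the proof of Proposition \ref{prop:aiXY=aiXaiY}, this operator inequality $\id\le C\sum T_a^*T_a$ tensors with $\id_{L^2(P_1)}$ and persists on ${}_{\alpha_g}L^2(N)\ootimes L^2(P_1)$. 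Evaluating at $\eta=u_n$ gives $1=\|u_n\|_2^2\le C\sum_{a\in F}\|(\alpha_g(a)\otimes1)u_n-u_n(a\otimes1)\|_2^2\to0$, a contradiction. In fact, this is just Proposition \ref{prop:aiXY=aiXaiY} with the roles of the factors swapped, applied to the invertible bimodule ${}_{\psi_g}L^2(M)\cong {}_{\alpha_g}L^2(N)\ootimes{}_{\psi'_g}L^2(P_1)$, which would be approximately inner if $\psi_g\in\Ai(M)$. The argument above avoids needing the basis formulation.

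The main obstacle is justifying that $\alpha_g\notin\Ai(N)$, since the construction only gives outerness. I would argue it directly. If $\Ad(v_n)\to\alpha_g$, then $v_n^*v_{m}$ is asymptotically central in $N$, and because $N$ is non-Gamma (so $N_\omega=\bbC$) the sequence $v_n$ is asymptotically a scalar multiple of a fixed unitary $v$. Then $\alpha_g=\Ad(v)$, contradicting outerness. This closes the argument.
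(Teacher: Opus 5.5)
Your argument is correct and is essentially the paper's proof. The paper writes $M=N\ootimes P_1$ and $\psi_g=\alpha_g\otimes\beta_g$, applies Proposition~\ref{prop:aiXY=aiXaiY} (with the factors swapped) to ${}_{\psi_g}L^2(M)$ to get $\alpha_g\in\Ai(N)$, and then uses $\Ai(N)=\Inn(N)$ for non-Gamma $N$; you instead inline that proposition's spectral-gap estimate, using the implementing unitaries $u_n$ directly as the almost central vectors.

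One small correction: Proposition~\ref{Prop:nonAI} is a statement about McDuff factors and does not give $\Ai(N)=\Inn(N)$. Your final paragraph, which is Connes' argument that $\Inn$ is closed for full factors, does supply the justification that the paper only asserts.
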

\begin{proof}
We know that $M=N\ootimes P_1$ and $\psi_g=\alpha_g\otimes \beta_g$, where $\beta_g\in \Aut(P_1)$.
Suppose $\psi_g$ is approximately inner, then ${}_{\psi_g}L^2(M)={}_{\alpha_g}L^2(N)\otimes {}_{\beta_g}L^2(P_1)$ is approximately inner. 
By Proposition \ref{prop:aiXY=aiXaiY}, this implies $\alpha_g\in \Aut(N)$ is approximately inner. However, since $N$ is non-Gamma so that every approximately inner automorphism must be inner, which contradicts to the construction that $\alpha_g$ is outer when $g\ne e$.
\end{proof}

\begin{prop}\label{prop:gcomponentEmpty}
Suppose $X\in \ctBim(M)$, ${}_{\psi_g}L^2(M)\boxtimes_M X$ is not approximately inner when $g\ne e$.
\end{prop}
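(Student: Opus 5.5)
We use Proposition \ref{prop:ctBimX=YPn} to split off a tail factor, and then Proposition \ref{prop:aiXY=aiXaiY} to reduce approximate innerness of the whole bimodule to approximate innerness of an automorphism on that tail.

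\textbf{Step 1: split off the tail.} Let $X\in\ctBim(M)$ and fix $g\ne e$. By Proposition \ref{prop:ctBimX=YPn} there are a finite $n$ and $Y\in\Bim(M_n)$ with $X\cong Y\ootimes L^2(P_n)$. Since $\psi_g=\bigotimes_i\alpha_g$, it splits along $M=M_n\ootimes P_n$ as
\[
\psi_g=\psi_g^{(n)}\otimes\beta^{(n)}_g,
\]
where $\psi_g^{(n)}=\alpha_g^{\otimes n}\in\Aut(M_n)$ and $\beta^{(n)}_g=\bigotimes_{i>n}\alpha_g\in\Aut(P_n)$. Hence
\[
{}_{\psi_g}L^2(M)\cong {}_{\psi^{(n)}_g}L^2(M_n)\ootimes {}_{\beta^{(n)}_g}L^2(P_n).
\]
Fusing over $M=M_n\ootimes P_n$ tensor-factorwise, and using $L^2(P_n)$ as the unit on the second factor, we get
\[
{}_{\psi_g}L^2(M)\boxtimes_M X\cong \bigl({}_{\psi^{(n)}_g}L^2(M_n)\boxtimes_{M_n}Y\bigr)\ootimes {}_{\beta^{(n)}_g}L^2(P_n).
\]
One should check that this identification of Connes fusion of spatial tensor products of bimodules is compatible with the left twist, but that check is routine.

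\textbf{Step 2: apply the splitting criterion.} Suppose ${}_{\psi_g}L^2(M)\boxtimes_M X$ were approximately inner. Apply Proposition \ref{prop:aiXY=aiXaiY} with $P=M_n$, $Q=P_n$, the $M_n$-bimodule ${}_{\psi^{(n)}_g}L^2(M_n)\boxtimes_{M_n}Y$ in the first slot, and the automorphism $\beta^{(n)}_g$. This gives $\beta^{(n)}_g\in\Ai(P_n)$.

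\textbf{Step 3: contradiction.} Now $P_n\cong\bigootimes_{i>n}N\cong M$, and under this identification $\beta^{(n)}_g$ becomes $\psi_g$. Proposition \ref{prop:psigonMnotAI} says $\psi_g$ is not approximately inner. Alternatively, repeat its argument directly: write $P_n=N\ootimes P_{n+1}$ and apply Proposition \ref{prop:aiXY=aiXaiY} once more to conclude $\alpha_g\in\Ai(N)$. Since $N$ is non-Gamma, every approximately inner automorphism of $N$ is inner, contradicting the outerness of $\alpha_g$.

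The main obstacle is making the fusion identification in Step 1 precise, in particular checking the orientation of the tensor factors so that Proposition \ref{prop:aiXY=aiXaiY} applies with the automorphism on the second factor. Proposition \ref{prop:aiXY=aiXaiY} itself needs no bifiniteness hypothesis on the first factor, since it only uses the spectral-gap inequality on ${}_\alpha L^2(Q)$ tensored with the identity.
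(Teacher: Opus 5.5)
Your proposal is correct and is essentially the paper's own proof: write $X\cong Y\ootimes L^2(P_n)$ by Proposition~\ref{prop:ctBimX=YPn}, split $\psi_g=\sigma_g\otimes\theta_g$ along $M=M_n\ootimes P_n$ so that ${}_{\psi_g}L^2(M)\boxtimes_M X\cong({}_{\sigma_g}L^2(M_n)\boxtimes_{M_n}Y)\ootimes{}_{\theta_g}L^2(P_n)$, use Proposition~\ref{prop:aiXY=aiXaiY} to force $\theta_g\in\Ai(P_n)$, and contradict Proposition~\ref{prop:psigonMnotAI}, since $\theta_g$ on $P_n\cong M$ is just $\psi_g$. (Like the paper, you tacitly need $X\neq 0$, because the zero bimodule is vacuously approximately inner.)
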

\begin{proof}
By Proposition \ref{prop:ctBimX=YPn}, any centrally trivial bimodule $X\cong Y\ootimes L^2(P_n)$ for some $Y\in \Bim(M_n)$. 
Note that $\psi_g=\sigma_g\otimes \theta_g$, where $\sigma_g=\ootimes_{i=1}^n \alpha_g\in \Aut(M_n)$ and $\theta_g=\ootimes_{i>n}\alpha_g\in \Aut(P_n)$, then
$${}_{\psi_g}L^2(M)\boxtimes_M X\cong ({}_{\sigma_g}L^2(M_n)\boxtimes_{M_n} Y)\ootimes ({}_{\theta_g}L^2(P_n))$$
Suppose this is approximately inner, by Proposition \ref{prop:aiXY=aiXaiY}, it implies that $\theta_g\in \Aut(P_n)$ is approximately inner. 
However, $\theta_g$ on $P_n$ is given by the same way as $\psi_g$ on $M$, by Proposition \ref{prop:psigonMnotAI}, $\theta_g$ is not approximately inner for all $g\ne e$, which is a contradiction.
\end{proof}

For the given action $\psi:G\to \Aut(M)$, according to Proposition \ref{prop:psigonMnotCT} and \ref{prop:psigonMnotAI}, $K=\psi^{-1}(\Ct(M))\cap G$ and $L=\psi^{-1}(\Ai(M))\cap G$ are trivial. 
By Theorem \ref{thm:G/KdeequivofChi(MG)},
we have $G$-equivariantization of $\Chi(M\rtimes G)$,
$$\Chi(M\rtimes G)_{G}\cong \bigoplus_{g\in G} ({}_{\psi_g}L^2(M)\boxtimes_{M} \ctBim(M))\cap \aiBim(M)$$
as unitary $G$-crossed braided tensor categories.
By Proposition \ref{prop:ecomponentTrivial}, we know the $g=e$ component is equivalent to $\Hilb$; and by Proposition \ref{prop:gcomponentEmpty}, the $g\ne e$ component is empty. 
Therefore,
$$\Chi(M\rtimes G)_{G}\cong \Hilb$$
as unitary $G$-crossed braided tensor categories.

Applying $G$-equivariantization on $\Chi(M\rtimes G)_G$, we have
$$\Chi(M\rtimes G)\cong (\Chi(M\rtimes G)_G)^G\cong \Hilb^G\cong \Rep(G).$$

We can summarize above results into the following theorem.
\begin{thm}
Suppose $G$ is a finite group, $N$ is a non-Gamma $\rm II_1$ factor and $\alpha:G\to \Aut(N)$ is an outer action. Let $M=\bigootimes_{i=1}^\infty N$ be the countable infinite spatial tensor product of $N$ and $\psi:G\to \Aut(M)$ is given by $\psi_g:=\bigotimes_{i=1}^\infty \alpha_g$. 
Then $\Chi(M\rtimes_{\psi} G)\cong \Rep(G)$ as unitary braided tensor categories.
\end{thm}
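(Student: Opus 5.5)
The plan is to assemble the results of Sections 3 and 4 in the order already sketched just before the statement. First I check the hypotheses of Theorem \ref{thm:G/KdeequivofChi(MG)}. By Construction \ref{const:M=otimes N,psi=otimes alpha}, $M=\bigootimes_{i=1}^\infty N$ is McDuff. Since $\alpha$ is outer and $N$ is non-Gamma, $\psi$ is outer: an inner $\psi_g$ would be approximately inner, which is excluded below. Proposition \ref{prop:psigonMnotCT} shows that no $\psi_g$ with $g\ne e$ is centrally trivial, so $K=\psi^{-1}(\Ct(M))\cap G$ is trivial. Proposition \ref{prop:psigonMnotAI} shows that $L=\psi^{-1}(\Ai(M))\cap G$ is trivial. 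Hence $KL=\{e\}$, and Theorem \ref{Thm:Rep(G)subcatChi} embeds $\Rep(G)$ as a unitary braided full subcategory of $\Chi(M\rtimes G)$. The de-equivariantization along $A=\Fun(G)$ is therefore defined.

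Next I apply Theorem \ref{thm:G/KdeequivofChi(MG)} with $K$ trivial. This gives $\Chi(M\rtimes G)_G\cong\bigoplus_{g\in G}({}_{\psi_g}L^2(M)\boxtimes_M\ctBim(M))\cap\aiBim(M)$ as $G$-crossed braided categories. The $e$-component is $\Chi(M)$, which Proposition \ref{prop:ecomponentTrivial} identifies with $\Hilb$. For $g\ne e$, any object in the $g$-component has the form ${}_{\psi_g}L^2(M)\boxtimes_M X$ with $X\in\ctBim(M)$ and must be approximately inner. Proposition \ref{prop:gcomponentEmpty} forbids this, so these components vanish. Thus $\Chi(M\rtimes G)_G\cong\Hilb$, concentrated in degree $e$.

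Finally I invoke the equivariantization/de-equivariantization correspondence, $\Chi(M\rtimes G)\cong(\Chi(M\rtimes G)_G)^G$, and compute $\Hilb^G$. The $G$-action on $\Hilb$ by tensor autoequivalences is determined up to equivalence by data in $H^2(G,U(1))$, together with the trivial crossed braiding on the unit component. Here, though, the action comes from the de-equivariantization of an honest embedded $\Rep(G)$, and $A$ itself is the unit. So the action is the canonical one, equivariant objects are just unitary representations, and $\Hilb^G\cong\Rep(G)$ with its symmetric braiding. Lemma \ref{lem:GequivonUnit} already identifies the equivariant structures on the unit with $\widehat G$, which is consistent with this.

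I expect the main obstacle to be this last step. The correspondence $(\cB_G)^G\cong\cB$ is stated for braided fusion categories, so I must confirm that $\Chi(M\rtimes G)$ is semisimple and rigid with simple unit. Since all its objects are bifinite and $M\rtimes G$ is a factor, $\Chi(M\rtimes G)$ is a unitary (hence semisimple) rigid category. With $\cB_G\cong\Hilb$ having finitely many simples, $\cB$ is then a fusion category. Dimension counting gives $\mathrm{FPdim}\,\cB=|G|\cdot\mathrm{FPdim}\,\cB_G=|G|=\mathrm{FPdim}\,\Rep(G)$. So the full inclusion $\Rep(G)\subseteq\Chi(M\rtimes G)$ from Theorem \ref{Thm:Rep(G)subcatChi} is an equivalence, and it is braided by the computation in that subsection. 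This route bypasses any need to analyze the $G$-action on $\Hilb$ explicitly.
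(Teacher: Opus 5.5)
Your proposal is correct. Through the identification $\Chi(M\rtimes G)_G\cong\Hilb$ it is exactly the paper's argument: $K=L=\{e\}$ by Propositions~\ref{prop:psigonMnotCT} and \ref{prop:psigonMnotAI}, then Theorem~\ref{thm:G/KdeequivofChi(MG)} together with Propositions~\ref{prop:ecomponentTrivial} and \ref{prop:gcomponentEmpty}.

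You differ only in the last step. The paper simply writes $\Chi(M\rtimes G)\cong(\Chi(M\rtimes G)_G)^G\cong\Hilb^G\cong\Rep(G)$. It invokes the equivariantization/de-equivariantization correspondence, which it states for braided fusion categories, without checking that $\Chi(M\rtimes G)$ is fusion and without discussing the $G$-crossed structure on $\Hilb$. You instead establish finiteness, use $\mathrm{FPdim}\,\cB=|G|\cdot\mathrm{FPdim}\,\cB_G$, and conclude that the full braided embedding $\Rep(G)\hookrightarrow\Chi(M\rtimes G)$ of Theorem~\ref{Thm:Rep(G)subcatChi} is already essentially surjective. This buys an explicit equivalence whose braided compatibility is the computation already done in Section~\ref{section:Rep(G)subcat}, and it addresses the fusion hypothesis the paper glosses over. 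The cost is importing the FPdim formula for de-equivariantization.

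You can shortcut even that. For simple $X\in\cB$, the free module $X\otimes A$ is a nonzero multiple of $A$ in $\cB_G\cong\Hilb$. By the free/forgetful adjunction, $\Hom_{\cB}(X,A)\neq 0$, so $X$ is a summand of $A$, the image of $\Fun(G)$, and therefore lies in the image of $\Rep(G)$. This gives finiteness and essential surjectivity at once.

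Your aside about $H^2(G,U(1))$ is inaccurate, though harmless since you bypass it. The identity functor of $\Hilb$ has no nontrivial monoidal automorphisms, so every action of $G$ on $\Hilb$ by \emph{tensor} autoequivalences is equivalent to the trivial one. The $H^2$ classification concerns actions on $\Hilb$ as a plain linear category. So $\Hilb^G\cong\Rep(G)$ in the paper's route is automatic.
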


\begin{cor}
For any finite group $G$, its unitary representation category $\Rep(G)$ can be realized as $\Chi(M)$ for some McDuff $\rm II_1$ factor $M$.
\end{cor}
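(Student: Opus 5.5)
The corollary should follow directly from the preceding theorem, once we supply the input data it requires. Fix a finite group $G$. Following Construction \ref{const:M=otimes N,psi=otimes alpha}, I would first produce a non-Gamma $\rm II_1$ factor $N$ carrying an outer action $\alpha:G\to\Aut(N)$. Concretely, take $N=L(F_\infty)$ (or any free group factor) and let $G$ act by free Bernoulli shifts, i.e.\ permute free generators indexed by $G\times\bbN$. Free shifts are outer, and $L(F_\infty)$ is non-Gamma. Alternatively, one can appeal to the Popa--Shlyakhtenko or Guionnet--Jones--Shlyakhtenko realizations cited there, applied to the standard invariant of $R^G\subseteq R$ (equivalently $\Rep(G)$ or $\Hilb(G)$), which yields a non-Gamma factor with an outer $G$-action.

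Next, set $P:=\bigootimes_{i=1}^\infty N$ with the diagonal action $\psi_g=\bigotimes_i\alpha_g$, and define $M:=P\rtimes_\psi G$. The preceding theorem then gives $\Chi(M)\cong\Rep(G)$ as unitary braided tensor categories. It remains to check that $M$ is a McDuff $\rm II_1$ factor, as the corollary asserts:
\begin{itemize}
\item $P$ is a $\rm II_1$ factor, and $\psi$ is outer. Indeed, if $\psi_g=\Ad u$, then $\psi_g$ would be approximately inner, which Proposition \ref{prop:psigonMnotAI} rules out. Hence $M=P\rtimes G$ is a $\rm II_1$ factor.
\item For McDuffness, I would use Corollary \ref{cor:CSinMrtimesG}: $M_\omega=(P_\omega)^G$. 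So it suffices to see that $(P_\omega)^G$ is noncommutative.
\end{itemize}

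The one step needing any thought is exhibiting noncommuting $G$-fixed central sequences. I would do this by hand. Pick $x,y\in N^{\otimes|G|}$, or more simply in the fixed-point algebra $(N\ootimes N)^{G}$ under the diagonal action, such that $xy\neq yx$. The fixed-point algebra of a finite group acting outerly on a $\rm II_1$ factor is again a $\rm II_1$ factor, so such $x,y$ exist. Now place copies of $x$ and $y$ in tensor slots $[2n,2n+1]$, moving off to infinity. By Lemma \ref{lem:CsinM}, the resulting sequences $(x_n)$ and $(y_n)$ are central. They are $\psi$-fixed, and $\|x_ny_n-y_nx_n\|_2=\|xy-yx\|_2\neq0$. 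Hence $M_\omega$ is not abelian, and $M$ is McDuff by McDuff's theorem. With this verified, the corollary follows.
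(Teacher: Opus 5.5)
Your proposal is correct and follows the paper's route: the corollary is the preceding theorem applied to $(\bigootimes_i N)\rtimes_\psi G$, and your free Bernoulli shift on $L(F_\infty)$ is a valid substitute for the cited Popa--Shlyakhtenko / Guionnet--Jones--Shlyakhtenko input. Your check that this crossed product is actually McDuff, using $G$-fixed noncommuting central sequences and the elementary inclusion $(P_\omega)^G\subseteq (P\rtimes G)_\omega$ together with McDuff's theorem, supplies a point the paper leaves implicit.
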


Note that $\Rep(G)$ is a symmetric braided category, which is not modular.
In particular, it is not the Drinfeld center of a unitary fusion category.

\begin{cor}
There exist McDuff $\rm II_1$ factors $M$ whose $\Chi(M)$ is not modular.
\end{cor}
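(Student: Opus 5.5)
The statement is the corollary that non-modular $\Chi(M)$ occurs. The plan is to take the preceding theorem's example with a nontrivial finite group, say $G=\bbZ/2$.

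\textbf{Step 1: the example.} By Construction \ref{const:M=otimes N,psi=otimes alpha}, there is a non-Gamma $\rm II_1$ factor $N$ with an outer action $\alpha$ of $G$. Set $M=\bigootimes_{i=1}^\infty N$ with the diagonal action $\psi$.

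\textbf{Step 2: the factor is McDuff.} We must check that $M\rtimes_\psi G$ is a McDuff $\rm II_1$ factor.
\begin{itemize}
\item It is a $\rm II_1$ factor because $\psi$ is outer.
\item For the McDuff property, use Corollary \ref{cor:CSinMrtimesG}. By Proposition \ref{prop:psigonMnotAI}, $\psi$ is not approximately inner, so $(M\rtimes G)_\omega=(M_\omega)^G$.
\item It then suffices to show $(M_\omega)^G$ is non-commutative. Pick two non-commuting elements $a,b\in N^G$. Such elements exist because $N^G\subseteq N$ has finite index, so $N^G$ is a $\rm II_1$ factor.
\item Place $a$ and $b$ in the $n$-th tensor slot to get sequences $(a_n)$ and $(b_n)$. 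By Lemma \ref{lem:CsinM} these are central sequences, and they are $G$-fixed.
\item Their commutator has constant nonzero $2$-norm, so $(M_\omega)^G$ is non-commutative.
\item By McDuff's theorem, $M\rtimes G$ is McDuff.
\end{itemize}

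\textbf{Step 3: identify $\Chi$.} By the preceding theorem, $\Chi(M\rtimes G)\cong\Rep(G)$ as unitary braided tensor categories.

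\textbf{Step 4: non-modularity.} $\Rep(G)$ is symmetric, so its double braiding $\beta_{Y,X}\circ\beta_{X,Y}$ is the identity for all $X,Y$. Hence every object lies in the Müger center. Since $G\neq 1$, a nontrivial irreducible representation (e.g.\ the sign representation of $\bbZ/2$) is a non-unit transparent simple object. Therefore the S-matrix is degenerate and $\Rep(G)$ is not modular.

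The only step needing real care is Step 2. Everything else is immediate from earlier results.
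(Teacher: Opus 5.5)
Your proposal is correct and follows the paper's route: realize $\Rep(G)\cong\Chi(M\rtimes G)$ via the preceding theorem for a nontrivial $G$, then observe that a symmetric category with a non-unit simple object has nontrivial M\"uger center, hence is not modular. Your Step 2 usefully supplies the McDuff property of $M\rtimes G$, which the paper leaves implicit; note that only the easy inclusion $(M_\omega)^G\subseteq(M\rtimes G)_\omega$ is needed there, and that non-commutativity of $N^G$ already follows from finite index, while its factoriality comes from outerness rather than from finite index.
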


\begin{fact}\label{fact:A5A6}
Let $A_n$ be the alternating group on $n$ letters.   
The groups of characters $\widehat{A_5}\cong \widehat{A_6}$ are trivial while the categories of unitary representations $\Rep(A_5)\ncong \Rep(A_6)$. 
\end{fact}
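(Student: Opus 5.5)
The plan is to handle the two assertions separately. Both reduce to standard facts about finite groups together with elementary invariants of semisimple categories.

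For the first assertion, I will use that $\widehat{G}=\Hom(G\to U(1))$ factors through the abelianization $G/[G,G]$, because $U(1)$ is abelian. For $n\ge 5$ the alternating group $A_n$ is simple and nonabelian. Its commutator subgroup $[A_n,A_n]$ is a nontrivial normal subgroup, so it equals $A_n$. Hence $A_n$ is perfect, $A_n/[A_n,A_n]$ is trivial, and $\widehat{A_5}\cong\widehat{A_6}$ is the trivial group. Equivalently, the only invertible object of $\Rep(A_n)$ is the trivial representation, since $\Inv(\Rep(G))\cong\widehat{G}$ via one-dimensional representations.

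For the second assertion, I will compare invariants that any unitary (even merely linear) equivalence must preserve. The first invariant is the number of isomorphism classes of simple objects. In $\Rep(G)$ this equals the number of irreducible representations, which equals the number of conjugacy classes of $G$. I will count these classes from cycle types, splitting a class when the centralizer lies in $A_n$:
\begin{itemize}
\item $A_5$ has $5$ classes: $1$, $(12)(34)$, $(123)$, and two classes of $5$-cycles.
\item $A_6$ has $7$ classes: $1$, $(12)(34)$, $(123)$, $(123)(456)$, $(1234)(56)$, and two classes of $5$-cycles.
\end{itemize}
Since $5\neq 7$, the categories $\Rep(A_5)$ and $\Rep(A_6)$ are not equivalent. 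As a cross-check, the global dimension $\sum_i \dim(V_i)^2=|G|$ is also a tensor-equivalence invariant, and it gives $60\neq 360$.

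The only step needing care is the conjugacy class count, in particular recognizing which $S_n$-classes split in $A_n$. This is a classical computation, and the global-dimension argument gives a count-free alternative if needed. I will close by remarking on the purpose of the statement: Connes' group $\chi$, which only sees invertible objects, cannot distinguish $M\rtimes A_5$ from $M\rtimes A_6$ in the construction of \S\ref{section:Example}, whereas the categorical $\Chi$ distinguishes them.
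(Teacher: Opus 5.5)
The paper states this as a standard Fact without proof. Your argument is correct and supplies exactly the standard justification. For the first claim, $A_5$ and $A_6$ are simple and nonabelian, hence perfect, so $\widehat{A_5}$ and $\widehat{A_6}$ are trivial. For the second, $\Rep(A_5)$ and $\Rep(A_6)$ have $5\neq 7$ simple objects, so they are inequivalent already as linear categories; your conjugacy-class counts are right, including that only the $5$-cycle class splits in each group. Your global-dimension cross-check, $60\neq 360$, also correctly distinguishes them as tensor categories.
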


\begin{prop}
For any finite abelian group $A$, there exist finite groups $G$ and $H$ such that $\widehat{G}\cong \widehat{H}\cong A$ but $\Rep(G)\ncong \Rep(H)$.
\end{prop}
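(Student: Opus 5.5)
The plan is a direct product construction. Given a finite abelian group $A$, set
$$G:=A\times A_5,\qquad H:=A\times A_6 .$$
The character group of a direct product is the product of the character groups, since $\Hom(G_1\times G_2\to U(1))\cong \Hom(G_1\to U(1))\times\Hom(G_2\to U(1))$. By Fact \ref{fact:A5A6}, $\widehat{A_5}$ and $\widehat{A_6}$ are trivial, so
$$\widehat{G}\cong \widehat{A}\times \widehat{A_5}\cong \widehat{A}, \qquad \widehat{H}\cong\widehat{A}.$$
Pontryagin duality for finite abelian groups gives $\widehat{A}\cong A$, which settles the character-group part.

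It remains to show $\Rep(G)\ncong\Rep(H)$. The quickest invariant is the global dimension $\sum_{\pi}\dim(\pi)^2=|G|$ of $\Rep(G)$, which is preserved by any tensor equivalence. Its values here are
$$|G|=60|A|\ne 360|A|=|H|,$$
so the two categories cannot be equivalent. An even cruder invariant also works: the number of simple objects equals the number of conjugacy classes, which is multiplicative under direct products. This gives $5|A|$ for $G$ and $7|A|$ for $H$, which again differ.

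The only point needing care is the choice of "$\cong$". If it means tensor equivalence, the dimension count above suffices. If it means braided equivalence, that is a stronger notion, so inequivalence follows a fortiori. Neither reading poses a real obstacle, and the argument never uses anything about $\Rep(A_5)\ncong\Rep(A_6)$ beyond these numerical invariants.

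This proposition shows that the first term $\widehat{G}$ in the short exact sequence does not determine the categorical invariant $\Rep(G)\subseteq\Chi(M\rtimes G)$. Combined with the theorem realizing $\Chi(M\rtimes G)\cong\Rep(G)$, the McDuff factors built from $G$ and from $H$ share the same abelian data $\widehat{G}\cong\widehat{H}\cong A$ while having inequivalent $\Chi$.
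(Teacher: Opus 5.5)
Your proof is correct. It uses the same groups $G=A\times A_5$ and $H=A\times A_6$ as the paper, but separates $\Rep(G)$ from $\Rep(H)$ by a different mechanism.

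The paper writes $\Rep(G)\cong\Rep(A)\boxtimes\Rep(A_5)$ and $\Rep(H)\cong\Rep(A)\boxtimes\Rep(A_6)$. It then infers inequivalence from $\Rep(A_5)\ncong\Rep(A_6)$ in Fact~\ref{fact:A5A6}. That inference tacitly cancels the common Deligne factor $\Rep(A)$, which is not automatic for fusion categories and is not justified there.

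You instead compare invariants that any equivalence must preserve and that are multiplicative under direct products of groups. These are the rank, $5|A|$ versus $7|A|$, and the global dimension, $60|A|$ versus $360|A|$. Tensor inequivalence then gives braided inequivalence for free. This makes the argument self-contained, and it needs only the triviality of $\widehat{A_5}$ and $\widehat{A_6}$ from Fact~\ref{fact:A5A6}. In effect it supplies exactly the step the paper's cancellation leaves implicit. You also make explicit the identification $\widehat{A}\cong A$, which the paper uses silently.

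The paper's route offers in exchange only the structural description of the two categories as Deligne products, which the statement does not need.
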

\begin{proof}
Let $A$ be an arbitrary finite abelian group. Let $G:=A\times A_5$ and $H:=A\times A_6$.
Note that $\Hom(-\to U(1))$ preserves finite direct product, we have 
$$\widehat{G} = \widehat{A \times A_5} \cong \widehat{A} \times \widehat{A_5}\cong A\times \{e\}\cong A\cong \widehat{H}.$$
It is known that for finite groups $G_1$ and $G_2$,
$$\Rep(G_1\times G_2)\cong \Rep(G_1)\boxtimes \Rep(G_2),$$
as unitary braided tensor categories, where $\boxtimes$ is the Deligne tensor product.
Combine this with Fact \ref{fact:A5A6},
\[\Rep(G)\cong \Rep(A_5)\boxtimes \Rep(A)\ncong \Rep(A_6)\boxtimes \Rep(A)\cong \Rep(H). \qedhere
\]
\end{proof}

\begin{cor}
For any finite abelian group $A$,
there exist McDuff $\rm II_1$ factors $M$ and $N$ such that $\chi(M)\cong \chi(N)\cong A$, but $\Chi(M)\ncong \Chi(N)$.
\end{cor}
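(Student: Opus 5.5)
The plan is to combine the theorem $\Chi(M\rtimes_\psi G)\cong\Rep(G)$ for the entry-wise action with Connes' original invariant, read off as the invertible objects of $\Chi$. For the abelian group $A$, the preceding proposition supplies finite groups $G=A\times A_5$ and $H=A\times A_6$ with $\widehat G\cong\widehat H\cong A$ but $\Rep(G)\ncong\Rep(H)$. Construction \ref{const:M=otimes N,psi=otimes alpha} gives a non-Gamma $\rm II_1$ factor $N_1$ with an outer $G$-action and a non-Gamma factor $N_2$ with an outer $H$-action; these exist for any finite group via the Popa--Shlyakhtenko / Guionnet--Jones--Shlyakhtenko realizations. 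Form the infinite tensor powers $M_1=\bigootimes N_1$ and $M_2=\bigootimes N_2$ with the diagonal actions, and set $M:=M_1\rtimes G$ and $N:=M_2\rtimes H$. Both are McDuff, since $M_i$ is McDuff and the crossed product of a McDuff factor by a finite group action of this product type stays McDuff. To check this, split off a tail $P_n\cong M_i$ on which the action is again diagonal. Alternatively, $\Chi$ is defined for these factors by the earlier results anyway. The last theorem then gives $\Chi(M)\cong\Rep(G)$ and $\Chi(N)\cong\Rep(H)$ as unitary braided tensor categories. Hence $\Chi(M)\ncong\Chi(N)$, even merely as tensor categories, because $\Rep(G)\ncong\Rep(H)$.

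Next identify $\chi$. The invertible objects of $\Chi(M)$ are exactly the bimodules ${}_\theta L^2(M)$ with $\theta$ both approximately inner and centrally trivial. Indeed, an invertible bifinite bimodule has the form ${}_\theta L^2(M)$; it is approximately inner iff $\theta\in\Ai(M)$ and centrally trivial iff $\theta\in\Ct(M)$. Isomorphism classes correspond to classes modulo $\Inn(M)$. So $\Inv(\Chi(M))\cong\chi(M)$ as groups, and similarly for $N$.

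Under the braided equivalence $\Chi(M)\cong\Rep(G)$, the invertible objects of $\Rep(G)$ are the one-dimensional representations, so $\Inv(\Rep(G))\cong\widehat G\cong A$. Therefore $\chi(M)\cong A\cong\chi(N)$.

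The main obstacle is the identification $\Inv(\Chi(M))\cong\chi(M)$. Two points need verification. First, for an automorphism $\theta$, approximate innerness of ${}_\theta L^2(M)$ in the sense of Definition \ref{defn:ApproximatePPBasis} must be equivalent to $\theta\in\Ai(M)$. A single approximately inner basis vector $x^{(n)}$ with $\langle x^{(n)}|x^{(n)}\rangle\to 1$ gives almost-unitaries implementing $\theta$, which can be corrected to unitaries as in Lemma \ref{lem:uApproxbyv}. Conversely, implementing unitaries give such a basis. Second, the same correspondence must hold for central triviality. This is exactly the observation already used in the final proof of Section \ref{section:ConnesSES}, so I would cite that argument, and \cite{MR4753059}, where it is stated that $\Chi$ generalizes $\chi$.
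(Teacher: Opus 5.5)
Your proposal is correct and is the argument the paper intends, which it leaves implicit: take $G=A\times A_5$, $H=A\times A_6$, use $\Chi\bigl((\bigootimes N)\rtimes G\bigr)\cong\Rep(G)$ to get $\Chi(M)\ncong\Chi(N)$, and read off $\chi\cong\Inv(\Chi)\cong\widehat{G}\cong A$ using the identification of $\chi$ with the invertible objects of $\Chi$ from \cite{MR4753059}. Your tail-splitting justification of McDuffness is vague; a clean argument is that $((M_1)_\omega)^G\subseteq (M_1\rtimes G)_\omega$ contains non-commuting central sequences, obtained by placing fixed elements of the $\rm II_1$ factor $N_1^G$ in the $n$-th tensor slot. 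The paper itself asserts this McDuff property without proof.
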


\bibliographystyle{alpha}
{\footnotesize{
\bibliography{bibliography}}}

\end{document}